\documentclass[11pt]{amsart}
\usepackage{amsmath, amsfonts, amsthm, amssymb}
\usepackage{tikz-cd}

\usepackage{cite}
\usepackage{xcolor}

\usepackage[all,cmtip,line]{xy}
\usepackage{array}
\usepackage{enumerate}
\usepackage{srcltx} % for inverse search in LINUX -- does not cause any problems in Windows

\newtheorem{thm}{Theorem}[section]
\newtheorem{lem}{Lemma}[section]
\newtheorem{prop}[lem]{Proposition}
\newtheorem{cor}[lem]{Corollary}

\newtheorem{defn}[lem]{Definition}
\newtheorem{rem}[lem]{Remark}

\numberwithin{equation}{section}

\newcommand{\bR}{ \mathbb{R}} %%%% real
\newcommand \eps{\varepsilon}

\newlength{\originalbase}
\title{An improvement of regularity result for pseudo Calabi flow}

\author{Jingrui Cheng,  Junhao Tian}

\begin{document}
\maketitle

\begin{abstract}
In this paper,  we observe that if the initial data of pseudo Calabi flow has volume form $C^0$ close to a smooth one,  then the flow is immediately smooth for $t>0$.  As an application,  we show that if the initial data has volume form $C^0$ close to that of a cscK metric,  then the pseudo Calabi flow exists for $t\in (0,+\infty)$.  We also prove similar improvement of regularity and long time existence result for pseudo Calabi flow on a Fano manifold when the volume form is bounded and the class is close to $c_1(M)$.  
\end{abstract}

\section{Introduction}

The study of canonical metrics in K\"ahler geometry has long been a central theme in complex differential geometry, with roots tracing back to Calabi's seminal program in the 1950s.  In the absense of holomorphic vector fields,  these canonical metrics reduces to K\"ahler metrics with constant scalar curvature (cscK). This program has shaped a vast body of research, linking complex analysis, algebraic geometry, and nonlinear partial differential equations. \\

Let $M$ be a compact K\"ahler manifold,  $\omega_0$ is a K\"ahler form on $M$.  From the $\partial\bar{\partial}$-lemma,  the K\"ahler metric which is in the same cohomology class as $[\omega_0]$ can be represented as $\omega_{\varphi}:=\omega_0+\sqrt{-1}\partial\bar{\partial}\varphi,\,\varphi\in C^2(M)$.  Then the cscK equation reads:
\begin{equation}\label{cscK}
R(\omega_{\varphi})=\underline{R},\,\,\,\omega_0+\sqrt{-1}\partial\bar{\partial}\varphi>0.
\end{equation}
In the above,  $R(\omega_{\varphi})$ denotes the scalar curvature of the metric $\omega_{\varphi}$,  and $\underline{R}$ is a topological constant that only depends on $[\omega_0]$.\\

In the seminal work by Calabi \cite{calabi82},  he also proposed a flow to find a cscK metric within a K\"ahler class by solving for 
\begin{equation}\label{Calabi-flow}
\partial_t\varphi=R(\omega_{\varphi})-\underline{R}.
\end{equation}
In addition to being a possible approach to find cscK metrics,  the study of the flow equation (\ref{Calabi-flow}) is also of independent interest.  The short time existence of (\ref{Calabi-flow}) was studied in \cite{chenhe05}.  \cite{chenhe05} also proved the long time existence of (\ref{Calabi-flow}) when the initial data is close to cscK under smooth topology.  
However,  the analytical study of (\ref{Calabi-flow}) is much harder than its elliptic version (\ref{cscK}).  The apriori estimates developed in Chen-Cheng \cite{cc1} do not work for (\ref{Calabi-flow}).  \\

In view of the difficulties of studying Calabi flow (\ref{Calabi-flow}) as a 4th order fully nonlinear parabolic equation,  Chen-Zheng \cite{Chen-Zheng} proposed to study the so-called pseudo-Calabi flow,  which is formally of second order:
\begin{equation}\label{PCF}
\partial_t\varphi=\Delta_{\varphi}^{-1}(\underline{R}-R(\omega_{\varphi})).
\end{equation}
In the above,  $\Delta_{\varphi}^{-1}(\underline{R}-R(\omega_{\varphi}))$ denotes the function $f_{\varphi}$ such that $\Delta_{\varphi}f_{\varphi}=\underline{R}-R(\omega_{\varphi})$.  
Here $\Delta_{\varphi}$ denotes the Laplace operator under the metric $\omega_{\varphi}$.  The choice of normalization of $f_{\varphi}$ (up to an additive constant) is not that important,  because different choices of normalization will only lead to adding a function of $t$ to $\varphi$,  which has no effect on the metric $\omega_{\varphi}$.

Note that (\ref{PCF}) can be re-written as:
\begin{equation}\label{PCF-2}
\partial_t\varphi=\log\frac{\omega_{\varphi}^n}{\omega_0^n}+P,\,\,\Delta_{\varphi}P=\underline{R}-tr_{\omega_{\varphi}}(Ric(\omega_0)).
\end{equation}
Here $tr_{\omega_{\varphi}}(Ric(\omega_0))$ denotes the trace of $Ric(\omega_0)$ under the metric $\omega_{\varphi}$.  We are going to choose the normalization of $P$ as:
\begin{equation*}
\int_MP\omega_{\varphi}^n=0.
\end{equation*}
Note that (\ref{PCF-2}) is apparently of second order,  which suggests that it might be easier to study than (\ref{Calabi-flow}) from analytic point of view. 
Chen-Zheng \cite{Chen-Zheng} studied (\ref{PCF}) (or equivalently (\ref{PCF-2})) and they proved:
\begin{thm}\label{t1.1}
Let $(M,\omega_0)$ be a compact K\"ahler manifold,  then:
\begin{enumerate}
\item Let $\varphi_0\in PSH(\omega_0)\cap C^{2,\alpha}(M)$ for some $0<\alpha<1$.  Then there exists $T_0>0$,  such that there exists a pseudo Calabi flow which exists on $M\times [0,T_0]$ with $\varphi_0$ being the initial data.  Moreover,  the flow is $C^{\infty}$ smooth for $t>0$.
\item If $\omega_0$ is a cscK metric,  then there exists $\eps_0>0$,  such that if $||\varphi_0||_{2,\alpha}\le \eps_0$,  the pseudo Calabi flow starting from $\varphi_0$ exists on $M\times [0,+\infty)$.  Moreover,  the flow converges to a cscK metric (possibly different from $\omega_0$) as $t\rightarrow \infty$.
\end{enumerate}
\end{thm}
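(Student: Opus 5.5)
Part (1) is a short-time existence statement, and I would prove it by a fixed point argument in parabolic Hölder spaces, using the second order form (\ref{PCF-2}). Write $F(\varphi)=\log\frac{\omega_\varphi^n}{\omega_0^n}+P(\varphi)$, where $P(\varphi)$ solves $\Delta_\varphi P=\underline{R}-tr_{\omega_\varphi}Ric(\omega_0)$ with $\int_M P\omega_\varphi^n=0$. The key observation is that $P$ is a nonlocal term of lower order. If $\varphi\in C^{2,\alpha}$, then the coefficients of $\Delta_\varphi$ lie in $C^\alpha$ and so does the right hand side. Schauder theory then gives $P\in C^{2,\alpha}$, with bounds that depend continuously on $\|\varphi\|_{2,\alpha}$, provided $\omega_\varphi$ stays uniformly equivalent to $\omega_0$. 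Moreover, the map $\varphi\mapsto P(\varphi)$ is Lipschitz from $C^{2,\alpha}$ to $C^{2,\alpha}$, as one sees by differentiating the elliptic equation.

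I would therefore freeze $P$ and iterate. Given $\psi$ in a closed ball of $C^{2+\alpha,1+\alpha/2}(M\times[0,T])$ around the time-independent extension of $\varphi_0$, let $\varphi$ solve the complex Monge--Ampère type parabolic equation
\[
\partial_t\varphi=\log\frac{\omega_\varphi^n}{\omega_0^n}+P(\psi(t)),\qquad \varphi(0)=\varphi_0 .
\]
This equation is uniformly parabolic near $\varphi_0$, and its source term is Hölder in space and $C^{\alpha/2}$ in time, because $P$ is Lipschitz in $\psi$. Standard linearization plus the inverse function theorem in parabolic Hölder spaces, or the known theory for the Kähler--Ricci-type flow, then gives a solution on a short interval. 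The solution is controlled in $C^{2+\alpha,1+\alpha/2}$ by the data. For $T$ small the map $\psi\mapsto\varphi$ sends the ball into itself and is a contraction in a slightly weaker norm, and a fixed point of this map is a pseudo Calabi flow.

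Smoothness for $t>0$ follows by bootstrapping on $[\delta,T_0]$. Differentiate the equation in space (and in time), and apply interior parabolic Schauder estimates to the linear equation satisfied by the derivatives; this improves $\varphi$ by one derivative at each step. Each gain in $\varphi$ gives a gain in $P$ through elliptic Schauder theory, so the iteration closes and yields $C^\infty$ for $t>0$.

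For part (2), I would linearize at the cscK metric $\omega_0$. The pseudo Calabi flow is the gradient-like flow of the Mabuchi energy. Its linearization at $\varphi=0$ is $\partial_t u=-\Delta^{-1}\mathcal{L}u$ up to sign, where $\mathcal{L}=\mathcal{D}^*\mathcal{D}$ is the Lichnerowicz operator. This operator is nonnegative, and its kernel corresponds to holomorphy potentials, i.e. to the tangent space of the family of cscK metrics obtained by automorphisms.

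The plan is to show exponential decay modulo this kernel. First, obtain uniform $C^{2,\alpha}$ bounds as long as $\varphi$ stays $\eps$-close to the manifold of cscK potentials; this uses the parabolic Schauder estimates from part (1), applied on unit time intervals. Next, combine these with monotonicity of the Mabuchi energy, $\frac{d}{dt}\mathcal{M}=-\int|\nabla f_\varphi|^2\omega_\varphi^n$, and a Łojasiewicz-type or spectral-gap inequality near the cscK orbit. Since the orbit of $\mathrm{Aut}_0(M)$ through $\omega_0$ is a finite-dimensional smooth manifold of critical points, a Łojasiewicz--Simon inequality holds, and it forces $\int_t^\infty\|\partial_t\varphi\|\,dt<\infty$. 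This keeps the flow inside the neighborhood, so it never leaves (a continuity/bootstrap argument) and exists for all time. Integrability of $\partial_t\varphi$ also gives convergence to a limit, and the limit is cscK because $\mathcal{M}$ is stationary there.

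The main obstacle is this last step: showing that the flow cannot drift out of the neighborhood along the degenerate directions of the kernel. I would handle this through the Łojasiewicz inequality, with exponent $1/2$ since the critical set is a nondegenerate manifold (Morse--Bott). Making it quantitative in $C^{2,\alpha}$ requires interpolating the Hölder bounds against $L^2$ decay, using the smoothing estimates from part (1).
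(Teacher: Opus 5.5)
The paper gives no proof of Theorem~\ref{t1.1}. It is quoted from Chen--Zheng \cite{Chen-Zheng} and used as a black box: part (1) restarts the flow in Lemma~\ref{l4.6N}, a class-parametrized form of part (2) appears as Theorem~\ref{t6.2N}, and Chen--Zheng's linear theory appears as Proposition~\ref{p6.10}. So there is no argument in the text to compare yours with. Judged on its own, your outline is a workable strategy, but two steps do not go through as written.

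\textbf{Part (1).} Lipschitz dependence of $P$ as a map $C^{2,\alpha}\to C^{2,\alpha}$ does not give the $C^{\alpha/2}$ time regularity of $P(\psi(t))$ that you claim. For $\psi\in C^{2+\alpha,1+\alpha/2}$, the curve $t\mapsto\psi(t)$ is $C^{\alpha/2}$ into $C^{2}(M)$, but in general it is not even continuous into $C^{2,\alpha}(M)$. What you need, both here and for the contraction, is Lipschitz dependence in a weaker topology, e.g. $\|P(\psi_1)-P(\psi_2)\|_{C^{1,\beta}}\le C\|\psi_1-\psi_2\|_{C^2}$. This follows from $W^{2,p}$ estimates applied to $\Delta_{\psi_1}(P_1-P_2)=(\Delta_{\psi_2}-\Delta_{\psi_1})P_2+tr_{\omega_{\psi_2}}Ric(\omega_0)-tr_{\omega_{\psi_1}}Ric(\omega_0)$, together with the normalizations.

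The radius of your ball must also be fixed by the data, because $\partial_t\varphi|_{t=0}=\log\frac{\omega_{\varphi_0}^n}{\omega_0^n}+P(\varphi_0)$ does not shrink with $T$. Smallness has to come from the fact that $\psi(t)-\varphi_0$ vanishes at $t=0$. A cleaner route, and the structure the paper itself sets up in Section 6 (the map $\mathcal{F}$ and Lemma~\ref{l6.9}), is the inverse function theorem for the full operator $\varphi\mapsto(\partial_t\varphi-\log\frac{\omega_\varphi^n}{\omega_0^n}-P(\varphi),\varphi|_{t=0})$. Its linearization is $\partial_t v-\Delta_\varphi v-\delta P(v)$, where $\Delta_\varphi\delta P(v)=\langle\sqrt{-1}\partial\bar\partial v,\sqrt{-1}\partial\bar\partial P+Ric(\omega_0)\rangle_{\omega_\varphi}$ (suitably normalized). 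This is a bounded perturbation of the heat operator on $C^{2,\alpha}$.

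\textbf{Part (2).} The pseudo Calabi flow dissipates the $K$-energy at the rate $\|\nabla_\varphi f_\varphi\|^2_{L^2(\omega_\varphi^n)}$, which is the squared $\dot H^{-1}$ norm of $R(\omega_\varphi)-\underline R$. The standard (Simon) {\L}ojasiewicz inequality instead bounds $|K-K_c|^{1-\theta}$ by the $L^2$ norm of $R(\omega_\varphi)-\underline R$. Inserted into $-\frac{d}{dt}(K-K_c)^\theta$, that version only gives the lower bound $c\|\nabla_\varphi f_\varphi\|^2/\|R(\omega_\varphi)-\underline R\|_{L^2}$. Since $\|\nabla_\varphi f_\varphi\|\lesssim\|R(\omega_\varphi)-\underline R\|_{L^2}$, this is in general much smaller than $\|\nabla_\varphi f_\varphi\|$. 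So it does not yield finite length in the flow's metric, and the step ``it forces $\int_t^\infty\|\partial_t\varphi\|dt<\infty$'' is unjustified.

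You need $|K(\varphi)-K_c|^{1-\theta}\le C\|\nabla_\varphi f_\varphi\|_{L^2}$. In your Morse--Bott setting this holds with $\theta=\frac12$. Write $\varphi=\psi+w$ with $\psi$ on the $\mathrm{Aut}_0$-orbit and $w\perp\ker\mathcal{L}_\psi$. Then $K(\varphi)-K_c\lesssim\|w\|_{H^2}^2$, while $\|\mathcal{L}_\psi w\|_{H^{-1}}\gtrsim\|w\|_{H^3}$, and the nonlinear errors are absorbed using smallness of higher norms from smoothing plus interpolation. Interpolating the $L^2$ version against uniform $H^k$ bounds also works, at the cost of the exponent.

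Alternatively, you can bypass {\L}ojasiewicz entirely. Up to sign, $f_\varphi$ is the function in Futaki's definition, so the vanishing of the Futaki invariant gives $\int_M X(f_\varphi)\,\omega_\varphi^n=0$ for every holomorphic vector field $X$, i.e. $\nabla^{1,0}_\varphi f_\varphi\perp\mathfrak{h}$ in $L^2(\omega_\varphi^n)$. Combine this with $\frac{d}{dt}\int_M|\nabla_\varphi f_\varphi|^2\omega_\varphi^n=-2\int_M|\mathcal{D}f_\varphi|^2\omega_\varphi^n+(\text{cubic terms})$ and a uniform spectral gap for $\bar\partial$ on vector fields orthogonal to $\mathfrak{h}$. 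This gives exponential decay of the energy, hence finite length and exponential convergence directly. With either repair, your continuity argument (smoothing on unit intervals, then interpolation back to $C^{2,\alpha}$) closes.
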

Chen-Zheng \cite{Chen-Zheng} also conjectured that the pseudo Calabi flow exists on $M\times [0,+\infty)$ for any smooth initial data unconditionally.  \\

In this paper,  we partially generalize the elliptic estimates developed in Chen-Cheng \cite{cc1} to study (\ref{PCF}).  However,  we are not able to prove the unconditional long time existence.  The difficulty is mainly related to the fact that the linearized complex Monge-Ampère equation lacks similar $C^{0,\alpha}$ estimate as in the real version,  which was proved by Caffarelli-Guterriez \cite{CG}.  What we are able to prove is the following improvement of regularity result.
\begin{prop}\label{p1.1}
Let $(M,\omega_0)$ be a compact K\"ahler manifold.  Then there exists $\eps_0>0$ depending only on the initial data,  such that for any $\varphi_0\in PSH(M,\omega_0)$ with $||\log\frac{\omega_{\varphi_0}^n}{\omega_0^n}||_0\le \eps_0$,  the following holds for the pseudo Calabi flow starting from $\varphi_0$:
\begin{enumerate}
\item There exists $T_0>0$ depending only on the background metric such that the flow exists on $M\times [0,T_0]$,
\item For any $\eps_1>0$,  one can bound all derivatives of the solution on $M\times (\eps_1,T_0]$.
\end{enumerate}
\end{prop}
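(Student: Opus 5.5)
The plan is to reduce to smooth initial data and prove estimates that depend only on $\|\log\frac{\omega_{\varphi_0}^n}{\omega_0^n}\|_0$. I would approximate $\varphi_0$ by smooth $\varphi_0^{(k)}\in PSH(M,\omega_0)$. These should be obtained by solving $\omega_{\varphi^{(k)}_0}^n=e^{F_0^{(k)}}\omega_0^n$ with smooth $F_0^{(k)}$, normalized so that the volume is correct, with $F_0^{(k)}\to F_0:=\log\frac{\omega_{\varphi_0}^n}{\omega_0^n}$ in $L^p$ and $\|F_0^{(k)}\|_0\le 2\eps_0$. Theorem \ref{t1.1}(1) gives a smooth flow $\varphi^{(k)}$ on a short interval. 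The goal is then to show two things: this flow extends to a uniform interval $[0,T_0]$, and it satisfies uniform bounds on $M\times(\eps_1,T_0]$. Passing to the limit $k\to\infty$, with $\varphi$ controlled in $C^0$ by Kolodziej-type stability, then gives the statement.

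\emph{Step 1: the evolution of the volume form.} Set $F=\log\frac{\omega_\varphi^n}{\omega_0^n}$. Differentiating (\ref{PCF-2}) in $t$ gives $\partial_tF=\Delta_\varphi(\partial_t\varphi)=\Delta_\varphi F+\Delta_\varphi P$. Using the equation for $P$, this becomes
\begin{equation*}
(\partial_t-\Delta_\varphi)F=\underline{R}-tr_{\omega_\varphi}Ric(\omega_0).
\end{equation*}
By the maximum principle,
\begin{equation*}
\sup_M|F(t)|\le 2\eps_0+C\int_0^t\sup_M tr_{\omega_\varphi}\omega_0\,ds.
\end{equation*}
So $F$ stays small as long as $tr_{\omega_\varphi}\omega_0$ is integrable in time with a small integral. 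Moreover, $tr_{\omega_\varphi}\omega_0\le ne^{-F}(tr_{\omega_0}\omega_\varphi)^{n-1}$, so everything reduces to controlling $tr_{\omega_0}\omega_\varphi$.

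\emph{Step 2: the bootstrap.} I would run a continuity argument under the assumption $\sup|F|\le 4\eps_0$ on $[0,T]$. Under this assumption, the $L^\infty$ bound on $\varphi$ (Kolodziej) and the Green's function estimates for $\omega_\varphi$ (Guo--Phong--Tong type, which only need an entropy bound) give a uniform bound on $\|P\|_0$. The same estimates, applied as in Chen-Cheng \cite{cc1} to $\Delta_\varphi P=\underline R-tr_{\omega_\varphi}Ric(\omega_0)$, should give $L^p$ bounds on $\nabla_\varphi P$.

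The core is a parabolic Chen--Cheng type $C^2$ estimate. I would apply the maximum principle to
\begin{equation*}
Q=t\,e^{-\lambda(F+P)-A\varphi}\,tr_{\omega_0}\omega_\varphi ,
\end{equation*}
or to its logarithm, where the time weight $t$ accounts for the rough initial data. Here $\lambda,A$ are chosen as in \cite{cc1}: the weight $e^{-\lambda(F+P)}$ is designed to cancel the term $\Delta_{\omega_0}(F+P)$ coming from $\partial_t\omega_\varphi=\sqrt{-1}\partial\bar\partial(F+P)$, and $-A\varphi$ produces the good term $-tr_{\omega_\varphi}\omega_0$. This should yield a bound of the form $tr_{\omega_0}\omega_\varphi\le C/t^{m}$ with $C$ uniform. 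I expect the resulting bound $\sup tr_{\omega_\varphi}\omega_0\le Ct^{-m(n-1)}$ to be too singular to integrate at $t=0$ directly, and this is where $\eps_0$ must enter.

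If so, the fallback is to use instead an $L^p$ bound on $tr_{\omega_0}\omega_\varphi$ for large $p$. In \cite{cc1} such bounds depend only on $\|F\|_0$ and $\|P\|_0$. I would try to get them here, uniformly in $t$ with no time weight, and then rerun the maximum-principle estimate for $F$ in an integrated (Moser iteration) form. Since $F$ has initial size $\eps_0$ and the source is in $L^p$, taking $T_0$ small, depending only on the background data, keeps $\sup|F|\le 4\eps_0$ and closes the bootstrap. The $L^\infty$ bound on $tr$ then follows for $t\ge\eps_1$ from the weighted estimate.

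\emph{Step 3: higher regularity.} On $M\times(\eps_1,T_0]$ we then have $C^{-1}\omega_0\le\omega_\varphi\le C\omega_0$. The equation $\partial_t\varphi=\log\det+P$ is uniformly parabolic and concave in $\sqrt{-1}\partial\bar\partial\varphi$, and $P$ is $C^{1,\alpha}$ by elliptic theory. Parabolic Evans--Krylov therefore gives $C^{2+\alpha,1+\alpha/2}$ bounds. Schauder bootstrapping between the flow equation and the elliptic equation for $P$ then bounds all derivatives.

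The main obstacle is Step 2. The term $\Delta_{\omega_0}P$ and the cross terms involving $\nabla P$ must be absorbed in the parabolic $C^2$ computation, and one must get a $C^2$ or $L^p$ bound that is uniform up to $t=0$ while only knowing that $F$ is $C^0$-small. That is exactly where the smallness of $\eps_0$ has to be used to close the bootstrap.
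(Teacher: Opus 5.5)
Your skeleton (keep $\|F\|_0$ below a small threshold on a uniform interval by a continuity argument, then smooth out for $t>0$) is the same as the paper's (Lemma \ref{l5.1New}, Corollary \ref{c5.2}). However, both analytic ingredients that make it work are missing, and the substitutes you propose do not close.

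\textbf{First gap: $C^0$ control of $F$.} You reduce it to bounding $\int_0^t\sup_M tr_{\omega_\varphi}\omega_0\,ds$, that is, to second-order information all the way down to $t=0$. The hypothesis cannot supply this. The initial datum only satisfies $\|F_0\|_0\le\eps_0$, so a bound on $tr_{\omega_0}\omega_\varphi$ (pointwise or in $L^p$) that is uniform up to $t=0$ with no time weight would in particular be a uniform $W^{2,p}$ bound on the initial data, and you give no argument for that. Your appeal to \cite{cc1} does not help either. The Chen--Cheng $W^{2,p}$ estimate uses the elliptic equation $\Delta_\varphi F=tr_{\omega_\varphi}Ric(\omega_0)-\underline R$. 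Along the flow one only has $\Delta_\varphi F=\partial_tF-\underline R+tr_{\omega_\varphi}Ric(\omega_0)$, which contains the uncontrolled $\partial_tF$.

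The paper needs no second-order information at this stage. Since $(\partial_t-\Delta_\varphi)\varphi=F+P-n+tr_{\omega_\varphi}\omega_0$, for $\lambda>|Ric(\omega_0)|$ one gets $(\partial_t-\Delta_\varphi)(\pm F-\lambda\varphi)\le C-\lambda(F+P)$. The bad term $\mp tr_{\omega_\varphi}Ric(\omega_0)$ is absorbed by $-\lambda\, tr_{\omega_\varphi}\omega_0$. Combined with the bound on $\|P\|_0$ (Theorem \ref{MT21}, which you also obtain by other means) and $\partial_t\varphi=F+P$, this gives Theorem \ref{MT22}. The continuity argument of Lemma \ref{l5.1New} then produces a lifespan $t_0$ depending only on the background data.

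\textbf{Second gap: how the smallness of $\eps_0$ yields regularity.} You say $\eps_0$ ``must enter'' in Step 2 but never say how, and the pointwise estimate you propose does not close. In $(\partial_t-\Delta_\varphi)(n+\Delta\varphi)$ the $\Delta F$ terms already cancel by Yau's computation (see the proof of Lemma \ref{Lem314}). What remains is $\Delta_{\omega_0}P$. The equation for $P$ controls only $\Delta_\varphi P$, so it gives no pointwise bound on $\Delta_{\omega_0}P$. Your weight $e^{-\lambda(F+P)}$ makes things worse, since it adds $(\partial_t-\Delta_\varphi)(F+P)=\partial_tP$, which is not controlled either.

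The paper instead works with integral estimates, and smallness enters in Lemma \ref{Lem322}:
\begin{itemize}
\item For $u=e^{KF^2}|\nabla_\varphi F|^2$, the good term is $-(2K-16K^2\|F\|_0^2)\int u^p|\nabla_\varphi F|^2\omega_\varphi^n$. This is of size $K$ exactly when $\|F\|_0\lesssim K^{-1/2}$.
\item This term absorbs $\int|\nabla_\varphi P|^{2(p+1)}\omega_\varphi^n$ and $\int(tr_{\omega_\varphi}\omega_0)^{p+1}\omega_\varphi^n$. Lemmas \ref{Lem321} and \ref{Lem31} bound both by $C\int|\nabla_\varphi F|^{2(p+1)}\omega_\varphi^n+C$, with $C$ independent of $K$, so taking $K$ large closes the estimate.
\item The time weight $t^{p+1}$ then gives $W^{1,p}$ bounds on $F$ for $t>0$.
\item Corollary \ref{Lem32} and Theorem \ref{TM32} (Chen--He) give $C^{2,\alpha}$ bounds slice by slice, and Theorem \ref{TM31} gives all higher derivatives.
\end{itemize}
This route also avoids your Step 3. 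There, uniform equivalence $C^{-1}\omega_0\le\omega_\varphi\le C\omega_0$ gives only H\"older (not $C^{1,\alpha}$) control of $P$ in space, and you have no time regularity of $P$. Parabolic Evans--Krylov would need space-time H\"older control of $P$.
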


As an application of Proposition \ref{p1.1} we show that if the initial data is a small perturbation of a cscK metric in the sense of volume forms,  then the pseudo Calabi flow exists globally.  More precisely,  we have:
\begin{thm}\label{t1.2}
Let $(M,\omega_0,J)$ be a cscK manifold.  Then there exists a constant $\eps_0>0$ small enough,  such that for any initial data $\omega\in [\omega_0]$ with $1-\eps_0\le \frac{\omega^n}{\omega_0^n}\le 1+\eps_0$,  the pseudo Calabi flow starting from $\omega$ exists on $M\times [0,+\infty)$.  Moreover,  the flow converges to a cscK metric (possibly different from $\omega_0$) exponentially fast as $t\rightarrow \infty$.
\end{thm}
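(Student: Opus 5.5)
Our plan is to reduce Theorem \ref{t1.2} to Proposition \ref{p1.1} combined with the small-data long time existence in Theorem \ref{t1.1}(2). Write $\omega=\omega_{\varphi_0}$ with $\|\log\frac{\omega_{\varphi_0}^n}{\omega_0^n}\|_0\le 2\eps_0$. By Proposition \ref{p1.1}, the flow exists on $[0,T_0]$ and is smooth for $t>0$, with $T_0$ depending only on $(M,\omega_0)$. The task is to show that, for $\eps_0$ small, the solution at a fixed positive time, say $t_1=T_0/2$, is $\eps$-close to a cscK potential in $C^{2,\alpha}$, where $\eps$ is the threshold of Theorem \ref{t1.1}(2). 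Restarting the flow at $t_1$ and using uniqueness then gives global existence and convergence. For exponential convergence, we would go back into the Chen--Zheng argument: near a cscK metric the linearized operator is essentially $-\mathcal{D}^*\mathcal{D}$ composed with $\Delta^{-1}$, which has a spectral gap modulo holomorphy potentials.

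The closeness step would go as follows. First, since the volume form ratio of $\omega_{\varphi_0}$ to $\omega_0$ is within $\eps_0$ of $1$, we would show $\varphi_0$ is close to $0$ (after normalization) in $C^0$ and in weaker norms. Kołodziej's stability estimate for the complex Monge--Ampère equation gives $\|\varphi_0-c\|_0\le C\eps_0^{\gamma}$. We would then propagate this along the flow. The estimates underlying Proposition \ref{p1.1} presumably give uniform $C^k$ bounds on $[t_1,T_0]$ depending only on the background, once $\eps_0\le$ some fixed threshold. On the other hand, the map from initial data to solution should be continuous in a weak topology (e.g. $C^0$ of potentials, or $L^1$). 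Interpolation with the uniform higher-order bounds would then upgrade $C^0$-closeness of $\varphi(t_1)$ to a cscK potential into $C^{2,\alpha}$-closeness.

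The key point is therefore a compactness/contradiction argument. Suppose there are initial data $\varphi_0^{(j)}$ with volume ratios tending to $1$, but with $\varphi^{(j)}(t_1)$ staying $\eps$-far in $C^{2,\alpha}$ from every cscK potential. Then:
\begin{enumerate}
\item The $\varphi_0^{(j)}$ converge uniformly to $0$ (by Kołodziej's stability, normalizing suitably).
\item The uniform smooth bounds on $M\times[t_1/2,T_0]$ from Proposition \ref{p1.1}(2) let us extract a smooth limit flow on $(0,T_0]$.
\item We must show this limit is the stationary flow $\varphi\equiv 0$ (up to time-dependent constants and automorphisms). This requires identifying the initial trace of the limit as $0$ and using uniqueness of the flow with $C^0$-type initial data. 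Uniqueness is delicate; the alternative is a monotone quantity. The Mabuchi K-energy is nonincreasing along pseudo Calabi flow, as in Chen--Zheng. Its value at $\varphi_0^{(j)}$ tends to its minimum $K(0)$: the entropy part converges because the density converges uniformly, and the energy parts converge because the potentials converge in $C^0$. Hence the limit flow has constant K-energy on $(0,T_0]$, so $\frac{d}{dt}K=0$, so $R(\omega_{\varphi(t)})=\underline R$ there. The limit metric at $t_1$ is therefore cscK, contradicting the assumed $\eps$-distance.
\end{enumerate}

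The main obstacle is step (3), namely controlling $K(\varphi^{(j)}(t_1))-\inf K$ and converting it into closeness to a cscK metric. We would use the K-energy monotonicity $\frac{d}{dt}K=-\int (R-\underline R)\,\Delta_\varphi^{-1}(\underline R-R)\,\omega_\varphi^n\le 0$. Its integrated dissipation then vanishes in the limit, and the uniform smooth bounds pass it to the limit. A secondary issue is that the limiting cscK metric may differ from $\omega_0$ by an automorphism. This is harmless, since Theorem \ref{t1.1}(2) applies with any cscK background in the class, and the threshold is uniform by compactness of the automorphism orbit modulo the isotropy.
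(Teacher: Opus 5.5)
Your proposal is essentially the paper's proof of Theorem \ref{t1.2}. It has the same ingredients: a contradiction argument, $C^0$-stability of the complex Monge--Amp\`ere equation for the initial potentials, the uniform smooth bounds of Corollary \ref{c5.2} to extract a smooth limit flow, K-energy monotonicity together with $K(0)=\inf K$ to force the limit to be cscK, and then Theorem \ref{t1.1}(2).

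The one thing to fix is your closing remark on automorphisms. The $Aut_0(M,J)$-orbit of $\omega_0$ is in general non-compact (e.g.\ on $\mathbb{CP}^n$), so a uniform Chen--Zheng threshold cannot be obtained from compactness. You do not need it, however. Since $\partial_t\varphi^{(j)}=F_j+P_j$ is uniformly bounded (Lemma \ref{l5.1New} and Theorem \ref{MT21}), the limit flow is $C^0$-continuous at $t=0$ with trace $0$. Being cscK, it is stationary, so $\omega_{\varphi_\infty(t)}\equiv\omega_0$. This is exactly how the paper pins the limit down, and it requires no uniqueness theorem for rough initial data.
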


If the manifold admits a K\"ahler-Einstein metric and if the class is close to the canonical class,  then we no longer need the closeness of the volume form.  In this case,  we have:
\begin{thm}\label{t1.3}
Let $(M,J,\omega_0)$ be a compact K\"ahler-Einstein manifold with $Aut^0(M,J)=0$.  Then for any $\Gamma>0$,  there exists a neighborhood $\mathcal{U}$ of $[\omega_0]$ in $H^{1,1}(M,\bR)$ such that for any K\"ahler metric $\omega$ whose class is in $\mathcal{U}$ and $\frac{1}{\Gamma}\le \frac{\omega^n}{\omega_0^n}\le \Gamma$,  the pseudo Calabi flow starting from $\omega$ exists on $M\times [0,+\infty)$ and converges to a cscK metric as $t\rightarrow \infty$. 
\end{thm}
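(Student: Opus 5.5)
Since $M$ is K\"ahler--Einstein with $Aut^0(M,J)=0$, we have $[\omega_0]=\lambda c_1(M)$. We may take $\lambda>0$ (Fano), and after scaling $Ric(\omega_0)=\omega_0$. The hypothesis $Aut^0=0$ gives uniqueness and nondegeneracy of the KE metric, and it lets us apply the implicit function theorem (LeBrun--Simanca) to obtain cscK metrics $\omega_{\beta}$ in every class $\beta$ near $[\omega_0]$, depending smoothly on $\beta$. The plan has two steps. First, use the Fano structure to show that the volume form of the flow becomes $C^0$-close to that of a cscK metric in a finite time $T_1$ depending only on $\Gamma$. Second, apply the Fano analogue of Proposition \ref{p1.1} and the stability argument of Theorem \ref{t1.2}, centered at the cscK metric $\omega_\beta$ in the current class.

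\textbf{Step 1 (short time and regularization).} With only $\frac1\Gamma\le\omega^n/\omega_0^n\le\Gamma$ we cannot invoke Proposition \ref{p1.1}, so we exploit the Fano structure. Write $\omega\in\beta$ as $\omega_\beta+\sqrt{-1}\partial\bar\partial\varphi_0$ and use $\omega_\beta$ as the background metric. Then $Ric(\omega_\beta)=\omega_\beta+\theta_\beta$ with $\|\theta_\beta\|_{C^k}\to0$ as $\beta\to[\omega_0]$. Substituting into (\ref{PCF-2}) gives $tr_{\omega_\varphi}Ric(\omega_\beta)=n-\Delta_\varphi\varphi+tr_{\omega_\varphi}\theta_\beta$. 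The equation for $P$ then becomes $\Delta_\varphi(P-\varphi)=(\underline R-n)-tr_{\omega_\varphi}\theta_\beta$, where $\underline R-n$ is small. So $P=\varphi+Q+c(t)$, where $Q$ solves a Poisson equation with a small right side, and the flow reads
\[
\partial_t\varphi=\log\frac{\omega_\varphi^n}{\omega_\beta^n}+\varphi+Q+c(t).
\]
This is a perturbed K\"ahler--Ricci-type flow. Kolodziej-type $L^\infty$ estimates, using the $L^\infty$ volume bound propagated by the maximum principle, should control $\varphi$. The point is that with $\theta_\beta\equiv0$ the flow is exactly normalized K\"ahler--Ricci flow, which is known to smooth out initial data with bounded volume form. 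Note $Q$ requires an estimate of $\Delta_\varphi^{-1}$ applied to $tr_{\omega_\varphi}\theta_\beta$. I would bound it by $\|\theta_\beta\|\cdot\|tr_{\omega_\varphi}\omega_\beta\|$ through Green's function estimates for $\omega_\varphi$ with bounded volume ratio, in the style of Chen--Cheng \cite{cc1}.

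\textbf{Step 2 (closeness to cscK and long time).} The unperturbed normalized K\"ahler--Ricci flow on a KE Fano manifold with $Aut^0=0$ converges smoothly (Perelman, Tian--Zhu). For classes close enough to $[\omega_0]$, continuous dependence on $\theta_\beta$ then gives the following: after a fixed time $T_1(\Gamma)$ the volume ratio $\omega_{\varphi(T_1)}^n/\omega_\beta^n$ lies within $\eps_0$ of $1$. The pseudo Calabi flow restarted at $T_1$ falls under the hypotheses of Theorem \ref{t1.2} relative to $\omega_\beta$, with $\eps_0$ uniform for $\beta\in\mathcal U$ because $\omega_\beta$ depends smoothly on $\beta$. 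This yields global existence and convergence.

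\textbf{Main obstacle.} The hardest part is Step 1: getting uniform a priori estimates up to $T_1$ from merely bounded volume form. Within that, the compactness/continuity argument that gives closeness to the KE-type limit uniformly in the initial data, with only $L^\infty$ volume control, is the delicate point. I would first try to prove monotonicity of a Ding/Mabuchi-type functional along the perturbed flow, and combine it with properness (from $Aut^0=0$) to get a uniform $C^0$ bound. A contradiction/compactness argument in the space of bounded-volume potentials would then show the volume form becomes $\eps_0$-close to $\omega_\beta^n$.
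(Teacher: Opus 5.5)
Your outline has the same architecture as the paper's proof: write $P$ as $\lambda\varphi$ plus a small error, compare with normalized K\"ahler--Ricci flow, use its convergence, then apply a stability result near the cscK metric $\omega_\beta$. But the two steps that carry the weight are asserted rather than proved.

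\textbf{First gap: regularization of the perturbed flow.} You need to regularize $\partial_t\varphi=\log(\omega_\varphi^n/\omega_\beta^n)+\varphi+Q+c(t)$ starting from data with merely bounded volume form. This is not K\"ahler--Ricci flow, and you control the nonlocal term $Q$ only in $C^0$. So KRF smoothing does not transfer. Any second-order estimate (compare Lemma \ref{Lem314}) produces $\Delta P=\Delta\varphi+\Delta Q$, and you have no bound on $\Delta Q$. Kolodziej-type estimates give only H\"older control of $\varphi$.

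The paper's key input here is Theorem \ref{MT31} and Corollary \ref{Cor31}. If $\|F\|_0$ is bounded and $P$ is $\delta_*$-close in $C^0$ to a function with a fixed modulus of continuity, then integrating by parts against a mollification $\tilde P$ in the $L^p$ estimates bounds all derivatives for $t>0$ and extends the flow. Lemma \ref{l6.5} supplies this hypothesis with $\tilde P=\lambda\varphi+c$, which is uniformly H\"older because the volume form is bounded.

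The proof of Lemma \ref{l6.5} also needs more than you wrote. It uses the Guo--Phong--Song--Sturm bounds $\int_M|G|\,\omega_\varphi^n\le K$ and $G\ge -K$, together with $tr_{\omega_\varphi}\omega_\beta=n-\Delta_\varphi\varphi$, to integrate by parts. Since $tr_{\omega_\varphi}\omega_\beta$ has no a priori $L^\infty$ bound, a naive bound by $\|\theta_\beta\|\cdot\|tr_{\omega_\varphi}\omega_\beta\|$ does not close. A continuity argument coupling Lemma \ref{l6.5} with Theorem \ref{MT22} then yields a uniform lifespan and uniform smoothing (Lemma \ref{l6.6}). Your proposal has no substitute for this mechanism.

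\textbf{Second gap: closeness to KRF.} You invoke \emph{continuous dependence on $\theta_\beta$} to keep the flow in class $\beta$ near KRF up to a large time $T_1$, with volume forms close at $T_1$. That requires $C^2$ closeness, and it requires existence of the flow up to $T_1$, which you never establish. With rough initial data this cannot come from an implicit function theorem at $t=0$.

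The paper splits the step in two:
\begin{enumerate}
\item On a short interval $[0,t_0]$, Lemma \ref{l6.3} gives $C^{10,\alpha}$ closeness at $t_0$ by compactness. This uses the uniform smoothing above, equicontinuity at $t=0$ because $\partial_t\varphi=F+P$ is bounded, and maximum-principle uniqueness of normalized KRF with merely continuous initial data to identify the limit.
\item The implicit function theorem for smooth data (Lemma \ref{l6.4}) then propagates existence and closeness to $[t_0,t_0+T_0]$.
\end{enumerate}
Your proposed remedy, a monotone Ding/Mabuchi functional plus properness for a uniform $C^0$ bound, targets the wrong quantity. On a finite interval $C^0$ bounds follow from Gronwall; compactness needs uniform higher-order estimates and uniqueness of the limit flow.

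\textbf{Smaller points.} Restricting to Fano is unjustified: the statement includes $c_1(M)<0$ and $c_1(M)=0$, though your argument adapts with $\lambda\varphi$ in place of $\varphi$. Also, the $\eps_0$ of Theorem \ref{t1.2} is not automatically uniform in $\beta$. It comes from a contradiction argument and Chen--Zheng stability for a fixed cscK metric, so you need the uniform version (Theorem \ref{t6.2N}) and a rerun of that contradiction argument over varying classes.
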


Now we explain the organization of this paper.  

In Section 2,  we explain some background and preliminaries.

Section 3 focuses on $C^0$ estimates,  as given by the following diagram.
		\begin{center}
			$||P||_0$ bounded $\ \  \Longleftrightarrow\ \  ||\log\frac{\omega_{\varphi}^n}{\omega_0^n}||_0$ bounded $\ \  \Longrightarrow\ \  ||\varphi ||_{C^{0,\alpha}}$ bounded \\[4mm]
		\end{center} 
Here $P$ is defined in (\ref{PCF-2}).  

Section 4 proves higher order estimates,  which are shown below:
\begin{center}
	\begin{tikzcd}
		\text{$P$ is close to a continuous function}  \arrow[rd] &       &     &    \\
           & {||\log\frac{\omega_{\varphi}^n}{\omega_0^n}||_{W^{1,p}}} \arrow[r] & {||\varphi||_{C^{2,\alpha}}} \arrow[r] & {||\varphi||_{C^{k,\alpha}}} \\
		{||\log\frac{\omega_{\varphi}^n}{\omega_0^n}(0)||<\eps} \arrow[ru]     &     &      &                             
	\end{tikzcd}\\[4mm]
	\end{center}	

Section 5 is devoted to the proof of Theorem \ref{t1.2}.  The key assumption needed is the smallness of $\log\frac{\omega_{\varphi}^n}{\omega_0^n}$.

Section 6 is devoted to the proof of Theorem \ref{t1.3}.  The key assumption needed is the closeness of the function $P$ to a continuous funtion.

\section{preliminary and background}

We denote $\mathcal{H}$ to be the space of K\"ahler potentials,  namely $\mathcal{H}=\{\varphi\in C^{\infty}(M):\omega_0+\sqrt{-1}\partial\bar{\partial}\varphi>0\}$.  One can view $\mathcal{H}$ as an infinite dimensional Riemannian manifold under the metric (see \cite{Dona96},  \cite{Mabuchi} and \cite{semmes}):
\begin{equation}\label{2.1}
(f,g)=\int_Mf\bar{g}\omega_{\varphi}^n,\,\,\varphi\in \mathcal{H},\,\,f,\,g\in T_{\varphi}\mathcal{H}=C^{\infty}(M).
\end{equation}

In the study of cscK metrics,  the $K$-energy plays a crucial role,  which was first defined in terms of its Frechet derivative,  given as:
\begin{equation*}
\delta K(\varphi)=\int_M\delta\varphi(\underline{R}-R(\omega_{\varphi}))\omega_{\varphi}^n.
\end{equation*}
Let $\chi$ be a closed $(1,1)$ form,  we put:
\begin{equation*}
\delta J_{\chi}(\varphi) = \int_M \delta \varphi (tr_\varphi\chi - \underline \chi)\ \omega_\varphi^n.
\end{equation*}

Assuming $J_{\chi}(0)=0$, integrate along the line segment $\varphi_t = t\varphi$, the explicit expression of $J$--functional can be written as

\begin{align}
	J_\chi(\varphi) &= \int_0^1 \frac{d}{dt}J_\chi(t\varphi)\,dt  \notag\\
	&=\int_0^1 \int_M \varphi \big(\chi \wedge \omega_{t\varphi}^{\,n-1} - \underline \chi\, \omega_{t\varphi}^{\,n}\big)\, dt \notag\\
	&=\int_0^1 \int_M \varphi \big( \sum_{j=0}^{n-1} \binom{n-1}{j} t^j (\sqrt{-1}\partial\bar\partial\varphi)^{j} \wedge \chi \wedge \omega^{n-1-j} - \underline \chi\, \omega_{t\varphi}^{\,n}\big)\, dt \notag\\ 
	&= \sum_{j=0}^{n-1} \frac{j+1}{n+1} \int_M \sqrt{-1}\partial\varphi\wedge\bar\partial\varphi \wedge \chi^{\,j} \wedge \omega_\varphi^{\,n-1-j} 
\end{align}\\

It is observed in Chen \cite{chen00} that the $K$-energy admits the following decomposition:
\begin{align}
	K(\varphi) &=\int_M \log\frac{\omega_{\varphi}^n}{\omega_0^n} \omega_\varphi^n +J_{-Ric}(\varphi)
\end{align}
The first term is called Entropy and second term is called Energy which is continuous under uniform convergence of the potential function $\varphi$. 

The Calabi flow can be viewed as the gradient flow of $K$-energy under the Riemmanian structure (\ref{2.1}).  On the other hand,  it has been observed in Chen-Zheng \cite{Chen-Zheng} that pseudo Calabi flow (\ref{PCF}) can also be viewed as a gradient flow of $K$-energy under an alternative Riemannian structure,  given as:
\begin{equation*}
(f,g)=\int_M\nabla_{\varphi}f\cdot\nabla_{\varphi}g\omega_{\varphi}^n,\,\,\,\varphi\in \mathcal{H},\,\,f,\,g\in T_{\varphi}\mathcal{H}=C^{\infty}(M).
\end{equation*}

Throughout this paper,  we are going to denote:
\begin{equation*}
F=\log\frac{\omega_{\varphi}^n}{\omega_0^n}.
\end{equation*}

Note that one has: (denoting $\omega_0=\sqrt{-1}g_{i\bar{j}}dz_i\wedge d\bar{z}_j$)
\begin{equation*}
R(\omega_{\varphi})=-g_{\varphi}^{i\bar{j}}\partial_{i\bar{j}}\log\det(g_{a\bar{b}}+\varphi_{a\bar{b}})=-\Delta_{\varphi}F+tr_{\omega_{\varphi}}Ric(\omega_0).
\end{equation*}
Therefore,  one can re-write the pseudo Calabi flow in the following form:
\begin{equation*}
\begin{split}
\partial_t\varphi&=\Delta_{\varphi}^{-1}(\underline{R}-R(\omega_{\varphi}))=\Delta_{\varphi}^{-1}(\Delta_{\varphi}F+\underline{R}-tr_{\omega_{\varphi}}Ric(\omega_0))=F+\Delta_{\varphi}^{-1}(\underline{R}-tr_{\omega_{\varphi}}Ric(\omega_0))\\
&=F+P,\,\,\Delta_{\varphi}P=\underline{R}-tr_{\omega_{\varphi}}Ric(\omega_0),\,\int_MP\omega_{\varphi}^n=0.
\end{split}
\end{equation*}

\textbf{Acknowledgment:} The second author would like to express gratitude to Professor Claude LeBrun, Xiuxiong Chen,  Kai Zheng, Yulun Xu for inspiring discussions and helpful comments.  This work is partially supported by the Simons foundation Award with ID: 605796.

\section{$C^0$ estimate along Pseudo Calabi flow}

In this section, we show some $C^0$ estimates of the solution of Pseudo Calabi Flow. \\ 

\begin{center}
\begin{tikzcd}
{||P||_0 \text{ bound}} \arrow[r, "Thm \ref{MT22}", bend left] & {||F||_0 \text{ bound}} \arrow[r, "Thm \ref{TMDD}"] \arrow[l, "Thm \ref{MT21}", bend left] & {||\varphi||_{C^{0,\alpha}} \text{ bound}}
\end{tikzcd} \\[6mm]
\end{center}

\begin{thm}[Theorem $A^*$ in \cite{DDGH2008}]\label{TMDD}
	Let $\varphi$ be a solution to $(\omega_0+dd^c\varphi)^n=e^F\omega_0^n$,  with $e^F\in L^p(\omega_0^n)$ for some $1<p<\infty$.  Then for any $0<\alpha<\frac{2p-2}{np+p-1}$,  there exists $C>0$ such that:
	\begin{equation*}
	||\varphi||_{0,\alpha}\le  C,
	\end{equation*}
	where $C$ depends on the background metric,  $||e^F||_{L^p}$,  $p$,  and $n$. \\
\end{thm}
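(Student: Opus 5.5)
This is Theorem A$^*$ of Dinew--Dinew--Guedj--Hiep et al.; the paper cites it rather than proving it, so the plan below is the known strategy, not an original argument.

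\textbf{Step 1 (uniform bound).} By Kołodziej's estimate, $e^F\in L^p$ with $p>1$ gives $\|\varphi\|_0\le C$ after the normalization $\sup_M\varphi=0$. The underlying mechanism is a capacity bound. By Hölder,
\begin{equation*}
\int_K e^F\omega_0^n\le \|e^F\|_{L^p}\,\mathrm{Vol}(K)^{1/q}, \qquad \frac1p+\frac1q=1,
\end{equation*}
and volume is dominated by $\exp(-c\,\mathrm{Cap}(K)^{-1/n})$. Together these say the measure $\mu=e^F\omega_0^n$ is dominated by capacity with a fast-decaying rate function. An iteration on sublevel sets $\{\varphi<-s\}$ then gives the $L^\infty$ bound.

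\textbf{Step 2 (regularization and stability).} To get Hölder continuity, compare $\varphi$ with a regularization of itself. Let $\rho_\delta\varphi$ be the Kiselman--Demailly regularization, the average of $\varphi$ over geodesic balls of radius $\delta$ via the exponential map. Demailly's estimate says $\rho_\delta\varphi+K\delta^2$ is almost increasing in $\delta$ and that $\rho_\delta\varphi-A\delta^2\log(1/\delta)$ is nearly $\omega_0$-psh after a small loss of positivity. Hölder continuity of $\varphi$ is then equivalent to
\begin{equation*}
\sup_M(\rho_\delta\varphi-\varphi)\le C\delta^{\alpha}.
\end{equation*}
Write $\psi_\delta$ for a suitably corrected, genuinely $\omega_0$-psh version of $\rho_\delta\varphi$. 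Kołodziej's stability theorem in its refined form controls $\sup(\psi-\varphi)$ by $C\|(\psi-\varphi)_+\|_{L^1(\mu)}^{\gamma}$, with an explicit exponent $\gamma$ depending on $n$ and $p$. So the task reduces to an $L^1$ bound.

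\textbf{Step 3 ($L^1$ bound).} It remains to show
\begin{equation*}
\int_M(\rho_\delta\varphi-\varphi)\,\omega_0^n\le C\delta^{2}.
\end{equation*}
The approach is Jensen--Green type: express $\rho_\delta\varphi-\varphi$ through the Laplacian of $\varphi$ averaged over small balls, and use that $\int\Delta_{\omega_0}\varphi$ is controlled because $\omega_0+dd^c\varphi$ has bounded total mass. Passing from $L^1(\omega_0^n)$ to $L^1(\mu)$ via Hölder with $e^F\in L^p$ costs the power $1/q=(p-1)/p$. Combining this with Step 2 gives an exponent of the form $2\cdot\frac{p-1}{p}\cdot\gamma$, which yields $\alpha<\frac{2p-2}{np+p-1}$.

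\textbf{Main obstacle.} The hard part is Step 2. One must obtain the sharp exponent $\gamma$ in the stability estimate, roughly $\gamma<\frac{1}{n+\frac{p}{p-1}}$ up to how the powers combine, by tracking the capacity decay carefully. One must also handle the non-positivity of $\rho_\delta\varphi$: rescale it by $(1-\delta^{\beta})$ and add a small multiple of a strictly psh correction. Getting both right simultaneously is where the precise threshold in the statement is decided.
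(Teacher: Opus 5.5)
The paper gives no proof of this statement; it only quotes Theorem A$^*$ of the cited paper of Demailly--Dinew--Guedj--Hiep--Ko{\l}odziej--Zeriahi. Measured against that argument, your skeleton is the right one: capacity domination, Demailly's regularization $\rho_\delta$, a stability estimate, and $\int_M(\rho_\delta\varphi-\varphi)\,\omega_0^n\le C\delta^2$. However, the exponent bookkeeping that is supposed to produce the threshold does not work.

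\textbf{The exponent.} Suppose you prove stability in $L^1(\mu)$, $\mu=e^F\omega_0^n$. The capacity argument gives $\sup_M(\psi-\varphi)\le 2s+C_\tau\,\mathrm{Cap}(\{\varphi<\psi-2s\})^{\tau/n}$. It also gives $\mathrm{Cap}(\{\varphi<\psi-2s\})\le Cs^{-n}\mu(\{\psi-\varphi>s\})\le Cs^{-n-1}\|(\psi-\varphi)_+\|_{L^1(\mu)}$, which yields only $\gamma<\frac{1}{n+1}$. Paying the extra power $\frac1q=\frac{p-1}{p}$ afterwards, to convert $L^1(\omega_0^n)$ into $L^1(\mu)$, leaves you with $\alpha<\frac{2}{q(n+1)}=\frac{2p-2}{np+p}$. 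This is strictly below $\frac{2p-2}{np+p-1}$. Your guess $\gamma<\frac{1}{n+q}$ is also inconsistent with your own conclusion: for $n=1$, $p=2$ it gives $\alpha<\frac13$ rather than $\frac23$.

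The missing point is that H\"older must be applied to the sublevel set \emph{before} Chebyshev, in Lebesgue measure:
\[
\mu(\{\psi-\varphi>s\})\le \|e^F\|_{L^p}\,\mathrm{Vol}(\{\psi-\varphi>s\})^{1/q}\le C\big(s^{-1}\|(\psi-\varphi)_+\|_{L^1(\omega_0^n)}\big)^{1/q}.
\]
Chebyshev then costs $s^{-1/q}$ instead of $s^{-1}$. Optimizing $2s+C_\tau\big(s^{-n-1/q}\|(\psi-\varphi)_+\|^{1/q}_{L^1(\omega_0^n)}\big)^{\tau/n}$ in $s$ gives stability directly in $L^1(\omega_0^n)$ with $\gamma=\frac{\tau}{nq+(nq+1)\tau}\uparrow\frac{1}{nq+1}$. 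Combined with the $O(\delta^2)$ bound this gives exactly $\alpha<\frac{2}{nq+1}=\frac{2p-2}{np+p-1}$.

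\textbf{The competitor in Step 2.} Step 2 overstates what Demailly's estimate provides. The negativity of $\omega_0+dd^c\rho_\delta\varphi$ is bounded by $A\lambda(z,\delta)+K\delta^2$, where $\lambda(z,\delta)=\partial_{\log t}(\rho_t\varphi+Kt^2)|_{t=\delta}$ is a Lelong number at scale $\delta$. For a merely bounded $\varphi$ there is no a priori bound $\lambda(z,\delta)\le C\delta^\beta$; such a bound is essentially the H\"older continuity you are trying to prove. So $(1-\delta^\beta)\rho_\delta\varphi$ need not be $\omega_0$-psh, and you have no admissible competitor for the stability estimate.

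The device used in the cited proof is the Kiselman--Legendre transform $\varphi_{c,\delta}=\inf_{0<t\le\delta}\big(\rho_t\varphi+Kt^2+K_1t-K_1\delta-c\log\frac{t}{\delta}\big)$. Its loss of positivity is at most $Ac+2K_1\delta$, with $c$ a free parameter taken to be a power of $\delta$. One rescales it and shifts it by a constant, then applies stability using $\varphi_{c,\delta}\le\rho_\delta\varphi+K\delta^2$ together with your Step 3. Finally one converts the bound on $\sup_M(\varphi_{c,\delta}-\varphi)$ into a bound on $\rho_{\delta'}\varphi-\varphi$ at a comparable scale $\delta'$. This uses the minimizing $t$ in the infimum and the monotonicity of $t\mapsto\rho_t\varphi+Kt^2$. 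This construction is what your Step 2 is missing.
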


\noindent
Our goal in Section 2 is to establish Theorem \ref{MT21} and \ref{MT22}. \\

\begin{thm}\label{MT21}
	Let $\varphi$ be a smooth $\omega_0$-psh function,  and $P$ solve the following linear equation:
	\begin{align}
		\Delta_{\varphi}P=-tr_{\omega_{\varphi}}(Ric(\omega_0))+\underline{R},\,\,\,\int_MP\omega_{\varphi}^n=0. 
	\end{align}
	Then there exists a constant $C$ such that $||P||_0\le  C$,  where $C$ depends on upper bounds of $||\frac{\omega_{\varphi}^n}{\omega_0^n}||_{L^p}$ for some $p>1$, $||\frac{\omega_{0}^n}{\omega_{\varphi}^n}||_{L^1}$ and Ricci curvature of background metric. \\
\end{thm}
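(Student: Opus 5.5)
The plan is to reduce the linear equation for $P$ to two one-sided differential inequalities, by adding or subtracting a multiple of $\varphi$, and then to apply a mean-value (Green's function) type inequality for $\Delta_\varphi$. The $C^0$ bound on $\varphi$ will come from Theorem \ref{TMDD}. Fix $K>0$ depending only on the background metric with $-K\omega_0\le Ric(\omega_0)\le K\omega_0$. Since $\Delta_\varphi\varphi = n-tr_{\omega_\varphi}\omega_0$, we get
\begin{equation*}
\Delta_\varphi(P-K\varphi)=\big(K\,tr_{\omega_\varphi}\omega_0-tr_{\omega_\varphi}Ric(\omega_0)\big)+\underline{R}-Kn\ \ge\ \underline{R}-Kn,
\end{equation*}
and similarly $\Delta_\varphi(P+K\varphi)\le \underline{R}+Kn$. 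So $u_-:=P-K\varphi$ is a subsolution and $u_+:=P+K\varphi$ is a supersolution, each with a constant right-hand side. Independently, Theorem \ref{TMDD} with $e^F=\omega_\varphi^n/\omega_0^n\in L^p$ bounds $\|\varphi\|_{0,\alpha}$ once we normalize, say $\sup_M\varphi=0$. Since $P$ is unchanged by this normalization, we may assume $\|\varphi\|_0\le C$.

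The key step is a mean-value inequality for $\Delta_\varphi$ that is uniform in $\varphi$ under our integrability hypotheses. For a function $u$ with $\Delta_\varphi u\ge -A$, we want
\begin{equation*}
\sup_M u\ \le\ \frac{1}{V}\int_M u\,\omega_\varphi^n + C,
\end{equation*}
with $C$ depending only on $A$, $n$, the background metric, $\|e^F\|_{L^p}$ and $\|e^{-F}\|_{L^1}$. The symmetric statement $\inf u\ge \frac1V\int u\,\omega_\varphi^n-C$ should hold for supersolutions. I would prove this following Chen--Cheng and Guo--Phong--Tong. Set $s=\sup u-\frac1V\int u\,\omega_\varphi^n$ and consider the truncation $(u-\frac1V\int u\,\omega_\varphi^n-k)_+$. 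Solve an auxiliary complex Monge--Amp\`ere equation $(\omega_0+dd^c\psi)^n = c_k\,\frac{(u-\cdots-k)_+}{A_k}\,e^F\omega_0^n$, and compare $u$ with $-\epsilon(-\psi+\Lambda)^{n/(n+1)}$ using the maximum principle. Theorem \ref{TMDD}, or Ko{\l}odziej's estimate, controls $\psi$ in terms of the $L^p$ norm of $e^F$. This yields an iteration in the level sets (De Giorgi type) that bounds $s$. Alternatively one can use a Green's function bound for $\Delta_\varphi$. Either way, I expect the quantity $\|\omega_0^n/\omega_\varphi^n\|_{L^1}$ to enter here, for two reasons. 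It is needed to pass between the measures $\omega_\varphi^n$ and $\omega_0^n$ when we estimate the $L^1$ norm of $u$ near its maximum. It is also needed to control $\int tr_{\omega_\varphi}\omega_0\,\omega_\varphi^n$-type terms produced by the comparison.

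Granting the mean-value inequality, the conclusion is immediate. Applied to $u_-$, together with $\int P\omega_\varphi^n=0$, it gives $\sup_M(P-K\varphi)\le -\frac{K}{V}\int\varphi\,\omega_\varphi^n + C\le C$. Hence $\sup P\le C+K\|\varphi\|_0\le C$. Applied to $u_+$, it gives $\inf P\ge -C$ in the same way. I expect the main obstacle to be the uniform mean-value inequality for the degenerate operator $\Delta_\varphi$, since we only have integrability information on the volume form $e^F$ and no bound on $tr_{\omega_0}\omega_\varphi$. The algebraic reduction and the use of Theorem \ref{TMDD} are routine.
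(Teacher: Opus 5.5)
Your argument is correct, but it takes a genuinely different route from the paper.

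The paper works in two stages. It first bounds $\int_M P^2\,\omega_\varphi^n$: it tests the equation against $P$, writes $tr_{\omega_\varphi}\omega_0=n-\Delta_\varphi\varphi$, integrates by parts against $\sqrt{P^2+1}$, and invokes the uniform Poincar\'e/Sobolev inequality of Guo--Phong--Song--Sturm, with $\int_M|\nabla_\varphi\varphi|^2\omega_\varphi^n$ taken from Lemma \ref{Lem21}. It then upgrades to $L^\infty$ by a Chen--Cheng ABP argument for $P+\varepsilon_0\psi-C_0\varphi$, using an auxiliary Monge--Amp\`ere potential $\psi$, a cut-off and the $\alpha$-invariant. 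The hypothesis on $\|e^{-F}\|_{L^1}$ is used there to bound $\int_M P_+\,\omega_0^n$ by $\int_M(P^2e^F+e^{-F})\,\omega_0^n$.

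You skip both stages. You observe that $P\mp K\varphi$ are sub/supersolutions with constant right-hand side, and a uniform mean-value inequality finishes. That inequality is exactly what the uniform Green's function bounds of Guo--Phong--Song--Sturm give, and it is how the paper itself handles $P-\lambda\varphi$ in Lemma \ref{l6.5}. Take the convention $u(x)=\frac1V\int_M u\,\omega_\varphi^n+\int_M G(x,y)(-\Delta_\varphi u)(y)\,\omega_\varphi^n(y)$, with $\int_M G(x,\cdot)\,\omega_\varphi^n=0$ and $G\ge -K'$. Then every $u$ with $\Delta_\varphi u\ge -A$ satisfies
\[
u(x)-\frac1V\int_M u\,\omega_\varphi^n=\int_M\big(G(x,y)+K'\big)\big(-\Delta_\varphi u\big)(y)\,\omega_\varphi^n(y)\le AK'V.
\]
So state the key step this way and cite that theorem, rather than sketching a De Giorgi iteration.

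If you do run the iteration, two cautions apply. Take the auxiliary potential relative to $\omega_\varphi$ (i.e.\ work with $\psi-\varphi$), so that no $-tr_{\omega_\varphi}\omega_0$ term appears. Also, the iteration needs an a priori $L^1$ bound on $(u-\bar u)_+$, which is essentially the $L^1$ Green's function bound and is the hard part of their work.

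Your guess about where $\|\omega_0^n/\omega_\varphi^n\|_{L^1}$ enters is off. It does not enter through $\int_M tr_{\omega_\varphi}\omega_0\,\omega_\varphi^n=n\int_M\omega_0\wedge\omega_\varphi^{n-1}$, which is cohomological. In your route it enters only through the hypotheses of the Green's function theorem: an $L^p$-type bound on $e^F$ and, in its original form, an integral lower bound on the volume form.

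Your route is much shorter. It needs no $L^2$ step, no gradient energy of $\varphi$ and no ABP/cut-off argument, and it yields $\|P\|_0\le C(1+\|\varphi\|_0)$ directly. The paper's route rests instead on a uniform Poincar\'e constant plus the Chen--Cheng maximum-principle machinery, and it shows concretely where the integrability of $e^{-F}$ is used.
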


\begin{thm}\label{MT22}
	Let $\varphi(t)$ be a solution of Pseudo Calabi flow on $[0,T)\times M$. If 
	\begin{align}
		\max_{t\in [0,T)} ||P(t)||_0 < V_1 , \, |Ric(\omega_0)|<\Lambda
	\end{align}
	then there exist some constants $\alpha_i$ depending on $V_1, \Lambda$ such that 
	\begin{align}
		||\varphi(t)||_0 \le  ||\varphi(0)||_0 e^{\alpha_1t}+\alpha_2 t
	\end{align}
	and 
	\begin{align}
		||F(t)||_0 \le   ||F(0)||_0 e^{\alpha_3 t} +\alpha_4 t
	\end{align} \\
\end{thm}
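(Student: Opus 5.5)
The plan is to apply the parabolic maximum principle twice: first to $\varphi$ directly, then to $F$ through its evolution equation. The flow is $\partial_t\varphi=F+P$ with $\|P(t)\|_0<V_1$.

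\textbf{Bound on $\varphi$.} At a spatial maximum point of $\varphi(\cdot,t)$ we have $\sqrt{-1}\partial\bar\partial\varphi\le 0$. Hence $\omega_\varphi^n\le\omega_0^n$ there, so $F\le 0$ at that point. It follows that $\partial_t\varphi\le P\le V_1$ at the maximum point. By the standard argument (Hamilton's lemma for $\max_M\varphi(\cdot,t)$, which is Lipschitz in $t$), $\max\varphi(t)\le \max\varphi(0)+V_1t$. At a minimum point $F\ge 0$, so $\partial_t\varphi\ge -V_1$ and $\min\varphi(t)\ge\min\varphi(0)-V_1t$. This gives $\|\varphi(t)\|_0\le\|\varphi(0)\|_0+V_1t$, which is of the stated form with $\alpha_1=0$ (or any $\alpha_1\ge0$) and $\alpha_2=V_1$. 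This step is routine.

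\textbf{Bound on $F$.} Differentiating $F=\log(\omega_\varphi^n/\omega_0^n)$ in time gives $\partial_tF=\Delta_\varphi\partial_t\varphi=\Delta_\varphi F+\Delta_\varphi P$. Using the equation for $P$,
\begin{equation*}
\partial_tF=\Delta_\varphi F+\underline{R}-tr_{\omega_\varphi}Ric(\omega_0).
\end{equation*}
The difficulty is the term $tr_{\omega_\varphi}Ric(\omega_0)$. It is bounded by $\Lambda\, tr_{\omega_\varphi}\omega_0$, which is not a priori bounded. The key point is that $tr_{\omega_\varphi}\omega_0$ is controlled by the linearized operator applied to $\varphi$:
\begin{equation*}
\Delta_\varphi\varphi=n-tr_{\omega_\varphi}\omega_0 .
\end{equation*}
So I will consider the auxiliary functions $H_\pm=F\pm A\varphi$ with a constant $A>\Lambda$.

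For the upper bound, compute
\begin{equation*}
(\partial_t-\Delta_\varphi)(F-A\varphi)=\underline R-tr_{\omega_\varphi}Ric(\omega_0)-A(F+P)-An+A\,tr_{\omega_\varphi}\omega_0 .
\end{equation*}
Since $-tr_{\omega_\varphi}Ric\le\Lambda\,tr_{\omega_\varphi}\omega_0$, the right-hand side is at most $(\Lambda+A)tr_{\omega_\varphi}\omega_0-AF+C$, and the trace term has the wrong sign for an upper bound. So I use $F+A\varphi$ instead:
\begin{equation*}
(\partial_t-\Delta_\varphi)(F+A\varphi)\le(\Lambda-A)tr_{\omega_\varphi}\omega_0+AF+C(A,V_1,n,\underline R).
\end{equation*}
With $A\ge\Lambda$ the trace term can be dropped. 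At a maximum point of $H_+=F+A\varphi$, write $AF=AH_+-A^2\varphi$. Using the bound $|\varphi|\le\|\varphi(0)\|_0+V_1t$ from the first step, this gives $\partial_t\max H_+\le A\max H_++C+A^2(\|\varphi(0)\|_0+V_1t)$. Gronwall then yields exponential-in-$t$ growth for $\max H_+$, and subtracting $A\varphi$ gives the upper bound for $F$.

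For the lower bound I use $F-A\varphi$ symmetrically. Now $-tr_{\omega_\varphi}Ric\ge-\Lambda\,tr_{\omega_\varphi}\omega_0$, and the $+A\,tr_{\omega_\varphi}\omega_0$ term from $-A\varphi$ dominates it once $A\ge\Lambda$, with the trace term having the good sign. One additional point: at a minimum of $F$ one can also use the AM–GM inequality $tr_{\omega_\varphi}\omega_0\ge n e^{-F/n}$, but it is not needed.

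\textbf{Expected obstacle.} The main difficulty is that the Gronwall constants involve $\|\varphi(0)\|_0$ and the linear-in-$t$ bound on $\varphi$. Getting precisely the stated shape $\|F(0)\|_0e^{\alpha_3t}+\alpha_4t$ with $\alpha_i$ depending only on $V_1$ and $\Lambda$ requires care. I would normalize by using $\varphi(t)-\varphi(0)$, or rather $\varphi(t)$ minus its value along the flow, in the auxiliary function. Alternatively, I would note that $\varphi$ enters only through $\varphi(t)-\varphi(x_0,0)$ oscillation terms, and absorb the remaining $t$-dependence into $e^{\alpha_3t}$ and $\alpha_4t$ for bounded $t$. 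If exact constants cannot be obtained this way, I would accept constants depending additionally on $\|\varphi(0)\|_0$, which suffices for the later applications.
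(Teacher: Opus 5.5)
Your bound on $\varphi$ is correct and simpler than the paper's route; the paper obtains $\|\varphi(t)\|_0$ only at the end, by Gronwall from $\partial_t\varphi=F+P$ after estimating $F$. The $F$ step, however, rests on a sign error. Since $\Delta_\varphi\varphi=n-tr_{\omega_\varphi}\omega_0$, as you note yourself,
\[
(\partial_t-\Delta_\varphi)\varphi=F+P-n+tr_{\omega_\varphi}\omega_0 .
\]
So $(\partial_t-\Delta_\varphi)(-A\varphi)$ contributes $+An-A\,tr_{\omega_\varphi}\omega_0$, not $-An+A\,tr_{\omega_\varphi}\omega_0$.

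This breaks both halves of your $F$ estimate:
\begin{itemize}
\item For $F+A\varphi$, the trace part of $(\partial_t-\Delta_\varphi)(F+A\varphi)$ is $-tr_{\omega_\varphi}Ric(\omega_0)+A\,tr_{\omega_\varphi}\omega_0\ge(A-\Lambda)tr_{\omega_\varphi}\omega_0$, which is positive and unbounded. Your inequality with $(\Lambda-A)tr_{\omega_\varphi}\omega_0$ is false, and the maximum principle for $F+A\varphi$ gives nothing.
\item For the lower bound via the minimum of $F-A\varphi$, the trace part is $\le-(A-\Lambda)tr_{\omega_\varphi}\omega_0$, so that fails too.
\end{itemize}

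The fix is to swap the pairing, which is exactly the paper's choice $\nu F-\lambda\varphi$ with $\nu=\pm1$. Your first candidate was the right one for the upper bound:
\[
(\partial_t-\Delta_\varphi)(F-A\varphi)\le(\Lambda-A)\,tr_{\omega_\varphi}\omega_0-AF+|\underline R|+AV_1+An .
\]
At a maximum of $H=F-A\varphi$, the term $-AF=-AH-A^2\varphi$ now contributes a damping term $-AH$. For the lower bound, take the minimum of $F+A\varphi$, where
\[
(\partial_t-\Delta_\varphi)(F+A\varphi)\ge(A-\Lambda)tr_{\omega_\varphi}\omega_0+AF-|\underline R|-AV_1-An .
\]
With $A=\Lambda+1$, the AM--GM bound $tr_{\omega_\varphi}\omega_0\ge ne^{-F/n}$ you mention makes the right side bounded below, so no Gronwall is needed there.

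With this correction your argument is valid and far more elementary than the paper's. The paper localizes with a cutoff, uses parabolic ABP, and needs the $\alpha$-invariant and the functional $I$ to bound $\sup_M\varphi$ from below. For a smooth solution on a closed manifold the plain maximum principle suffices.

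On your expected obstacle, the corrected argument gives
\[
\|F(t)\|_0\le\|F(0)\|_0+A\operatorname{osc}_M\varphi(0)+Ct ,
\]
with $C$ depending on $V_1,\Lambda,n,\underline R$, $\operatorname{osc}_M\varphi(0)$ and an upper bound for $t$. The middle term cannot be removed by renormalizing with $\varphi(t)-\varphi(0)$, since nothing controls $\Delta_{\varphi(t)}\varphi(0)$. Nor can it be absorbed into $\alpha_4t$ near $t=0$.

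This looks intrinsic rather than a weakness of your method. Suppose $n\ge2$ and $Ric(\omega_0)\le-c\,\omega_0$. At a point where $F(0)$ is maximal and locally constant,
\[
\partial_tF|_{t=0}=\underline R-tr_{\omega_\varphi}Ric(\omega_0)\ge\underline R+c\,tr_{\omega_\varphi}\omega_0 .
\]
This is not controlled by $V_1,\Lambda,\|F(0)\|_0$ when $\omega_{\varphi(0)}$ is very anisotropic there. The paper's own proof likewise only reaches $\|F(t)\|_0\le\alpha_1\|\varphi(t)\|_0+\alpha_2\|F(0)\|_0+C_{11}t$, with constants depending on $I(\varphi_0)$, before asserting the final form.

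So state the dependence on $\operatorname{osc}_M\varphi(0)$ explicitly. This is harmless in Lemma~\ref{l5.1New}, where $\operatorname{osc}_M\varphi(0)$ is small once $\|F(0)\|_0$ is. The thresholds $1.01\Gamma$, $1.5\Gamma$, $2\Gamma$ in Lemma~\ref{l6.6} would need adjusting.
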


\noindent
We start with the following energy bound along PCF. 

 \begin{lem}\label{Lem21}
 	Let $\varphi$ be a solution of PCF on $M\times [0,T]$.
	\begin{enumerate}
	\item There exists $\lambda>0$ large enough,  depending only on the background metric,  such that $t\mapsto K(\varphi(\cdot,t))+\lambda J_{\omega_0}(\varphi(\cdot,t))$ is bounded for $t\in [0,T]$,  where the bound depends on the initial data and  $T$.  
	\item $t\mapsto \int_M|\nabla_{\varphi}\varphi(\cdot,t)|^2_{\omega_{\varphi}}\omega_{\varphi}^n$ is bounded on $[0,T]$,  with a bound depending on the initial data,  the background metric and $T$.\\
	\end{enumerate}
 \end{lem}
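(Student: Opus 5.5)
The plan is to exploit the gradient-flow structure: compute the time derivatives of $K$ and of $J_{\omega_0}$ along the flow, and close a Gronwall inequality for $G(t):=K(\varphi(t))+\lambda J_{\omega_0}(\varphi(t))$. Two structural facts will be used throughout.

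First, $J_\chi$ depends linearly on $\chi$, since its Frechet derivative does. Second, by the explicit formula above, $J_\chi(\varphi)\ge 0$ whenever $\chi\ge 0$: every term is an integral of $\sqrt{-1}\partial\varphi\wedge\bar\partial\varphi\wedge\chi^j\wedge\omega_\varphi^{n-1-j}$, which is nonnegative. Moreover the $j=0$ term of $J_{\omega_0}$ equals $\frac{1}{n(n+1)}\int_M|\nabla_\varphi\varphi|^2_{\omega_\varphi}\omega_\varphi^n$. Hence
\begin{equation*}
\int_M|\nabla_\varphi\varphi|^2_{\omega_\varphi}\omega_\varphi^n\le n(n+1)J_{\omega_0}(\varphi).
\end{equation*}

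\textbf{Choice of $\lambda$ and lower bound.} Choose $\lambda$, depending only on the background metric, such that $\lambda\omega_0-Ric(\omega_0)\ge\omega_0$. By the decomposition of $K$ and linearity,
\begin{equation*}
G=\int_M F\,\omega_\varphi^n+J_{\lambda\omega_0-Ric}(\varphi)=\int_M F\,\omega_\varphi^n+J_{\omega_0}(\varphi)+J_{(\lambda-1)\omega_0-Ric}(\varphi).
\end{equation*}
The entropy is bounded below by a constant $-C_0$, since $x\log x\ge -e^{-1}$. The last $J$ term is nonnegative. Therefore $G\ge -C_0+J_{\omega_0}(\varphi)$. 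This gives the lower bound in part (1), and it also shows that $J_{\omega_0}(\varphi(t))\le G(t)+C_0$.

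\textbf{Derivatives along the flow.} Since $\Delta_\varphi\partial_t\varphi=\underline R-R(\omega_\varphi)$, we get
\begin{equation*}
\frac{d}{dt}K=\int_M\partial_t\varphi\,\Delta_\varphi\partial_t\varphi\,\omega_\varphi^n=-\int_M|\nabla_\varphi\partial_t\varphi|^2\omega_\varphi^n.
\end{equation*}
For the other term, note that $\underline{\omega_0}=n$ and $tr_{\omega_\varphi}\omega_0-n=-\Delta_\varphi\varphi$. Hence
\begin{equation*}
\frac{d}{dt}J_{\omega_0}=-\int_M\partial_t\varphi\,\Delta_\varphi\varphi\,\omega_\varphi^n=\int_M\nabla_\varphi\partial_t\varphi\cdot\nabla_\varphi\varphi\,\omega_\varphi^n.
\end{equation*}
Apply Cauchy--Schwarz to the cross term to absorb it into the negative term coming from $\frac{d}{dt}K$. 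Then use the gradient bound above and the comparison $J_{\omega_0}\le G+C_0$:
\begin{equation*}
\frac{dG}{dt}\le\frac{\lambda^2}{4}\int_M|\nabla_\varphi\varphi|^2\omega_\varphi^n\le C\lambda^2 J_{\omega_0}\le C(G+C_0).
\end{equation*}
Gronwall's inequality then bounds $G(t)$ above on $[0,T]$ by a constant depending on $G(0)$, $T$ and the background metric, which finishes part (1).

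\textbf{Part (2).} Combining the two bounds already obtained,
\begin{equation*}
\int_M|\nabla_\varphi\varphi|^2\omega_\varphi^n\le n(n+1)J_{\omega_0}\le n(n+1)(G+C_0),
\end{equation*}
and the right-hand side is bounded on $[0,T]$. The additive normalization of $\varphi$ in time plays no role, since only $\nabla\varphi$ enters.

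\textbf{Main obstacle.} The delicate point is the cross term in $\frac{d}{dt}J_{\omega_0}$, which has no sign. It is handled by absorbing it into the dissipation $-\int_M|\nabla_\varphi\partial_t\varphi|^2\omega_\varphi^n$ and then using the coercivity $J_{\omega_0}\gtrsim\int_M|\nabla_\varphi\varphi|^2\omega_\varphi^n$. This coercivity is exactly why the lemma is stated for $K+\lambda J_{\omega_0}$ rather than for $K$ alone.
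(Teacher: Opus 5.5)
Your computations of $\frac{d}{dt}K$ and $\frac{d}{dt}J_{\omega_0}$, the absorption of the cross term, and the Gronwall closure match the paper and are fine. The gap is in the coercive lower bound $G\ge -C_0+J_{\omega_0}(\varphi)$, on which both the Gronwall step and part (2) depend.

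You derive that bound from the claim that $J_\chi(\varphi)\ge 0$ for every closed $\chi\ge 0$, applied to $\chi=(\lambda-1)\omega_0-Ric(\omega_0)$. This claim is false. The explicit formula at the start of Section 2 is valid only for $\chi=\omega_0$, where it is Aubin's $I-J$ functional. For general $\chi$ the term $-\underline{\chi}\,\omega_{t\varphi}^n$ does not cancel, and $\chi$ cannot enter as $\chi^j$. That expression is not even linear in $\chi$, which contradicts the linearity you use one line earlier. Concretely, $J_\chi(0)=0$, and the derivative of $s\mapsto J_\chi(s\psi)$ at $s=0$ is $\int_M\psi\,(tr_{\omega_0}\chi-\underline{\chi})\,\omega_0^n$. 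This is nonzero for suitable $\psi$ whenever $tr_{\omega_0}\chi$ is nonconstant, so $J_\chi$ is negative somewhere near $0$. For your $\chi$ one has $tr_{\omega_0}\chi-\underline{\chi}=\underline{R}-R(\omega_0)$, independent of $\lambda$. So no choice of $\lambda$ saves the sign claim unless $\omega_0$ is already cscK.

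What you actually need is a linear growth bound $|J_{Ric(\omega_0)}(\varphi)|\le C(1+J_{\omega_0}(\varphi))$. Then $G\ge -C+(\lambda-C)J_{\omega_0}(\varphi)$, and the rest of your proof goes through unchanged. An additive constant is unavoidable here, since $K$ has nonzero first variation at $\varphi=0$ unless $\omega_0$ is cscK. The paper gets this step from Darvas's $d_1$-theory: the energy part of $K$ is controlled by $d_1$, and $d_1$ is controlled by $J_{\omega_0}$ up to the additive normalization.

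You can also check the growth bound by hand.
\begin{enumerate}
\item Both functionals are invariant under adding constants, so take $\sup_M\varphi=0$.
\item Set $\chi_0=Ric(\omega_0)-\frac{\underline{R}}{n}\omega_0$. Then $\underline{\chi_0}=0$ and $J_{Ric}=\frac{\underline{R}}{n}J_{\omega_0}+J_{\chi_0}$.
\item Since $\underline{\chi_0}=0$, $J_{\chi_0}(\varphi)$ is a positive combination of the integrals $\int_M\varphi\,\chi_0\wedge\omega_0^k\wedge\omega_\varphi^{n-1-k}$.
\item Bound each integral using $-A\omega_0\le\chi_0\le A\omega_0$ and the monotonicity $\int_M(-\varphi)\,\omega_0^{k+1}\wedge\omega_\varphi^{n-1-k}\le\int_M(-\varphi)\,\omega_\varphi^n$, which follows by integrating by parts.
\item Use $\int_M(-\varphi)\omega_\varphi^n=\int_M(-\varphi)\omega_0^n+I(\varphi)$ with $\int_M(-\varphi)\omega_0^n\le C$, and $I\le (n+1)J_{\omega_0}$.
\end{enumerate}
Together these give $|J_{\chi_0}(\varphi)|\le C(1+J_{\omega_0}(\varphi))$.
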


\begin{proof}[Proof of Lemma \ref{Lem21}]
	First we observe that $K$ energy is decreasing along the flow,  indeed:
\begin{align}
	\frac{dK(\varphi)}{dt} &=\int_M\partial_t\varphi(\underline{R}-R(\omega_{\varphi}))\omega_{\varphi}^n \notag\\
	& =\int_M(F+P)\Delta_{\varphi}(F+P)\omega_{\varphi}^n =-\int_M|\nabla_{\varphi}(F+P)|^2\omega_{\varphi}^n.
\end{align}\\
On the other hand,
\begin{align}
	\frac{dJ_{\omega_0}(\varphi)}{dt} &=\int_M\partial_t\varphi(tr_{\omega_{\varphi}}\omega_0-n)\omega_{\varphi}^n \notag\\
	& =\int_M(F+P)(-\Delta_{\varphi}\varphi)\omega_{\varphi}^n =\int_M\nabla_{\varphi}(F+P)\cdot \nabla_{\varphi}\varphi\;\omega_{\varphi}^n.
\end{align}\\
Therefore we see that:
\begin{align}
	\frac{d}{dt}\big(K(\varphi)+\lambda J_{\omega_0}(\varphi)\big) \le  \int_M\frac{\lambda^2}{2}|\nabla_{\varphi}\varphi|^2\omega_{\varphi}^n =\frac{\lambda^2}{2}\int_Md^c\varphi\wedge d\varphi\wedge \omega_{\varphi}^{n-1}.
\end{align}\\
Notice that 
\begin{align}
	J_{\omega_0}(\varphi) &= \frac{1}{V}\sum_{j=0}^{n-1} \frac{j+1}{n+1} \int_M i\partial\varphi\wedge\bar\partial\varphi \wedge \omega_0^{\,j} \wedge \omega_\varphi^{\,n-1-j} \notag\\
	&\ge  \frac{1}{V} \frac{1}{n+1} \int_M i\partial\varphi\wedge\bar\partial\varphi \wedge \omega_\varphi^{\,n-1} \notag\\
	& \ge  c_n\int_Md^c\varphi\wedge d\varphi\wedge \omega_{\varphi}^{n-1}
\end{align}
Also it controls the $d_1$-distance (See\cite{Darvas1402}). Therefore, with $\lambda$ sufficiently large, we have:
\begin{align}
	K(\varphi)+\lambda J_{\omega_0}(\varphi)\ge  \frac{\lambda}{2}J_{\omega_0}(\varphi).
\end{align}\\
So we obtain:
\begin{align}
	\frac{d}{dt}\big(K(\varphi)+\lambda J_{\omega_0}(\varphi)\big)	&\le  C\big(K(\varphi)+\lambda J_{\omega_0}(\varphi)\big).
\end{align}\\
Hence by Gronwall lemma,  $K(\varphi)+\lambda J_{\omega_0}(\varphi)$ is bounded on any finite time interval.  Also we see that $\int_M|\nabla_{\varphi}\varphi|^2\omega_{\varphi}^n$ is bounded on any finite time interval. \\
\end{proof}

\begin{proof}[Proof of Theorem \ref{MT21}]
	
	Due to the normalization of $P$ that $\int_MP\omega_{\varphi}^n=0$,  we have:
	\begin{align}
	&\int_M|\nabla_{\varphi}P|^2\omega_{\varphi}^n =\int_MP(-\underline{R}+tr_{\omega_{\varphi}} Ric(\omega_0))\omega_{\varphi}^n \notag \\
	&\le  C_3 \int_M|P|(1+tr_{\omega_{\varphi}}\omega_0)\omega_{\varphi}^n \notag \\
	&\le  C_3 \int_M\sqrt{P^2+1} (1+n-\Delta_{\varphi}\varphi)\omega_{\varphi}^n \notag \\
	&\le  C_4 \int_M|P|\omega_{\varphi}^n+\int_M\frac{P}{\sqrt{P^2+1}} \nabla_{\varphi}P\cdot \nabla_{\varphi}\varphi\;\omega_{\varphi}^n \notag\\
	& \le  C_4 \int_M|P|\omega_{\varphi}^n + \frac{1}{2} \int_M|\nabla_\varphi P|^2\omega_{\varphi}^n + \frac{1}{2} \int_M|\nabla_\varphi \varphi |^2\omega_{\varphi}^n
	\end{align}\\
According to \cite{GPSS2023}, upper bounds of $||\frac{\omega_{\varphi}^n}{\omega_0^n}||_{L^p}$ implies a uniformly Sobolev constant bound 
\begin{align}
	\int_M |P|^2\omega_\varphi^n \le  C_s \int_M|\nabla_\varphi P|^2 \omega_{\varphi}^n 
\end{align}
Hence we have 
\begin{align}
	\int_M |P|^2\omega_\varphi^n \le  2C_4 C_s \int_M|P|\omega_{\varphi}^n +C_s \int_M|\nabla_\varphi \varphi |^2\omega_{\varphi}^n \notag\\
	\le  \frac{1}{2}\int_M |P|^2\omega_\varphi^n + (C_4C_s)^2+ C_s \int_M|\nabla_\varphi \varphi |^2\omega_{\varphi}^n
\end{align}\\
By lemma \ref{Lem21}, $\int_M|\nabla_\varphi \varphi |^2\omega_{\varphi}^n$ is bounded. Hence we get a bound of $\int_M |P|^2\omega_\varphi^n$. \\

\noindent
Once we have this, we next derive the $L^{\infty}$ bound of $P$. To see this, let $\psi$ be the solution to the auxiliary problem:
\begin{align}
(\omega_0+dd^c\psi)^n=\frac{e^F\sqrt{F^2+1}}{\int_Me^F\sqrt{F^2+1}\omega_0^n}\omega_0^n,  \qquad \sup_M\psi=0.
\end{align}\\
For convenient we denote $A=\int_Me^F\sqrt{F^2+1}\omega_0^n$, which can be bounded by an $L^p$ bound of $e^F$. Now we calculate:
\begin{align}
	&\Delta_{\varphi}(P+\varepsilon_0\psi-C_0\varphi) =\underline{R}-tr_{\omega_{\varphi}}Ric(\omega_0)+\varepsilon_0 \Delta_{\varphi}\psi-C_0\Delta_{\varphi}\varphi \notag \\
	&=\underline{R}-tr_{\omega_{\varphi}} Ric(\omega_0)+\varepsilon_0tr_{\omega_{\varphi}}\omega_{\psi} -\varepsilon_0tr_{\omega_{\varphi}}\omega_0-C_0n+C_0tr_{\omega_{\varphi}}\omega_0 
\end{align}\\
Fix a small constant $\varepsilon_0$, and choose $C_0$ large enough so that
\begin{align}
	\frac{1}{2}C_0> |Ric(\omega_0)|+\varepsilon_0 +1
\end{align}\\
Then 
\begin{align}
	&\Delta_{\varphi}(P+\varepsilon_0\psi-C_0\varphi) \ge  -C_5 +\frac{C_0}{2}tr_{\omega_{\varphi}}\omega_0 +\varepsilon_0 n\Big(\frac{\omega_{\psi}^n}{\omega_{\varphi}^n}\Big)^{\frac{1}{n}} \notag \\
	& =-C_5 +\frac{C_0}{2}tr_{\omega_{\varphi}}\omega_0 +\varepsilon_0nA^{-\frac{1}{n}}(F^2+1)^{\frac{1}{2n}}.
\end{align}\\
Assume that $P+\varepsilon_0\psi-C_0\varphi$ achieves maximum at $p_0$, we define $\eta_{p_0}$ being a cut-off function: Let $0<\theta_0<1$ be sufficiently small to be specified later,  $\eta_{p_0}$ to be a smooth function on $M$ such that $\eta_{p_0}(p_0)=1$, and $\eta_{p_0}=1-\theta_0$ on $B_{r_0}^c(p_0)$ with the following conditions 
\begin{align}\label{equacutoff}
	|\nabla\eta_{p_0}| \le  \frac{4\theta_0}{r_0}, \quad |\nabla^2\eta_{p_0}|\le  \frac{4\theta_0}{r_0^2}
\end{align}\\
Then we can calculate:
\begin{align}
	&\Delta_{\varphi}\big(e^{\delta_0(P+\varepsilon_0\psi-C_0(\varphi-\sup_M\varphi))}\eta_{p_0}\big) e^{-\delta_0(P+\varepsilon_0\psi-C_0(\varphi-\sup_M\varphi))} \notag \\
	&\ge \left( \delta_0\Delta_{\varphi}(P+\varepsilon_0\psi-C_0(\varphi-\sup_M\varphi)) +\delta_0^2|\nabla_{\varphi}(P+\varepsilon_0\psi-C_0(\varphi-\sup_M\varphi))|^2\right)\eta_0 \notag \\
	&\quad+2\delta_0\nabla_{\varphi}(P+\varepsilon_0\psi-C_0(\varphi-\sup_M\varphi)) \cdot \nabla_{\varphi}\eta_{p_0}+\Delta_{\varphi}\eta_{p_0}  
\end{align}\\
Notice the following completion of square 
\begin{align}
	&\delta_0^2|\nabla_{\varphi}(P+\varepsilon_0\psi-C_0(\varphi-\sup_M\varphi))|^2 \eta_{p_0} \notag\\
	&\quad + 2\delta_0\nabla_{\varphi}(P+\varepsilon_0\psi-C_0(\varphi-\sup_M\varphi)) \cdot \nabla_{\varphi}\eta_{p_0} + \frac{|\nabla_\varphi\eta_{p_0}|^2}{\eta_{p_0}} \ge  0
\end{align}\\
We then have 
\begin{align}
	&\Delta_{\varphi}\big(e^{\delta_0(P+\varepsilon_0\psi-C_0(\varphi-\sup_M\varphi))}\eta_{p_0}\big) e^{-\delta_0(P+\varepsilon_0\psi-C_0(\varphi-\sup_M\varphi))}\notag\\
	& \ge  \left( -\delta_0C_5 +\frac{C_0\delta_0}{2}tr_{\omega_{\varphi}}\omega_0 +\varepsilon_0n\delta_0A^{-\frac{1}{n}}(F^2+1)^{\frac{1}{2n}}\right)\eta_{p_0} +\Delta_\varphi\eta_{p_0} - \frac{|\nabla_\varphi\eta_{p_0}|^2}{\eta_{p_0}} 
\end{align}\\
Let $\theta_0$ be small enough so that 
\begin{align}
	\Delta_\varphi\eta_{p_0} - \frac{|\nabla_\varphi\eta_{p_0}|^2}{\eta_{p_0}} \le  \frac{4\theta_0}{r_0^2}tr_{\omega_{\varphi}}\omega_0 (1+\frac{1}{1-\theta_0} ) \le  (1-\theta_0) tr_{\omega_{\varphi}}\omega_0
\end{align}\\
Choosing $\delta_0=1$, $C_0$ sufficiently large so that $\frac{C_0\delta_0}{2}(1-\theta_0) \ge  2$, we see that:
\begin{align}
	\Delta_{\varphi}\big(e^{\delta_0(P+\varepsilon_0\psi-C_0(\varphi-\sup_M\varphi))}\eta_{p_0} \big) \ge  e^{\delta_0(P+\varepsilon_0\psi-C_0(\varphi-\sup_M\varphi))} \Big(-C_5+\varepsilon_0n A^{-\frac{1}{n}}(F^2+1)^{\frac{1}{2n}}\Big)\eta_{p_0}
\end{align}\\
Therefore, we can use Aleksandrov-Bakelman-Pucci (ABP) maximum principle (See Lemma 5.5 in \cite{cc1}) to get:
\begin{align}
	&\sup_{B_{r_0}(p_0)}e^{\delta_0(P+\varepsilon_0\psi-C_0(\varphi-\sup_M\varphi))} \le  \sup_{\partial B_{r_0}(p_0)}e^{\delta_0(P+\varepsilon_0\psi-C_0(\varphi-\sup_M\varphi))} \notag \\
	&\quad+C_nr_0\bigg(\int_{B_{r_0}(p_0)}e^{2F}\cdot e^{2n\delta_0(P+\varepsilon_0\psi-C_0(\varphi-\sup_M\varphi))} \cdot\big(-C_5+\varepsilon_0nA^{-\frac{1}{n}}(F^2+1)^{\frac{1}{2n}}\big)_+^{2n}\omega_0^n\bigg)^{\frac{1}{2n}}
\end{align}\\
Where we denote $x_+=\max(0,x)$. Therefore,  the integrand is non-zero only on the domain where  
\begin{align}
	-C_5+\varepsilon_0nA^{-\frac{1}{n}}(F^2+1)^{\frac{1}{2n}} \le  0
\end{align}\\
i.e. 
\begin{align}
	\sqrt{F^2+1}\le  \left( \frac{C_5}{\varepsilon_0 n}\right)^n A
\end{align}\\
We get that $F$ is bounded by $A$ on the set where the integrand is nonzero. Hence
\begin{align}
	&\sup_Me^{P+\varepsilon_0\psi-C_0(\varphi-\sup_M\varphi)} \le  \frac{C_6 r_0}{\theta_0}\bigg(\int_Me^{2n(P+\varepsilon_0\psi-C_0(\varphi-\sup_M\varphi))}\omega_0^n\bigg)^{\frac{1}{2n}} \notag \\
	&\le  \frac{C_6 r_0}{\theta_0}\sup_M\frac{e^{P+\varepsilon_0\psi-C_0(\varphi-\sup_M\varphi)}}{\big(1+(P+\varepsilon_0\psi-C_0(\varphi-\sup_M\varphi))_+\big)^{\frac{1}{2n}}}
	\bigg(\int_M(P+\varepsilon_0\psi-C_0(\varphi-\sup_M\varphi))_+\omega_0^n\bigg)^{\frac{1}{2n}}
\end{align}\\
Since we already know that $P$ has $L^2(\omega_\varphi)$ bound, and $\varphi$ has $\alpha$-invariant bound 
\begin{align}
	\int_M(P+\varepsilon_0\psi-C_0(\varphi-\sup_M\varphi))_+\omega_0^n \le  \int_M \big(|P|^2e^F + e^{-F} + \frac{C_0}{\alpha} e^{\alpha(\sup \varphi -\varphi)}\big) \, \omega_0^n
\end{align}\\
We see that the integral on the right hand side is bounded. Therefore, we get:
\begin{align}
	\sup_Me^{P+\varepsilon_0\psi-C_0(\varphi-\sup_M\varphi)} \le  C_7 \sup_M\frac{e^{P+\varepsilon_0\psi-C_0(\varphi-\sup_M\varphi)}}{\big(1+(P+\varepsilon_0\psi-C_0(\varphi-\sup_M\varphi))_+\big)^{\frac{1}{2n}}}.
\end{align}\\
Therefore we get an upper bound for $P+\varepsilon_0\psi-C_0(\varphi-\sup_M\varphi)$. Running the same argument with $-P+\varepsilon\psi-C_0(\varphi-\sup_M\varphi)$ (In order not to make the process too lengthly, we omit the details), We then get an upper bound for $|P|+\varepsilon_0\psi-C_0(\varphi-\sup_M\varphi)$. When $N_p$ bounded, by Theorem \ref{TMDD}, $\psi, \varphi$ is uniformly bounded, hence $|P|+\varepsilon_0\psi-C_0(\varphi-\sup_M\varphi)$ is bounded, which proves the bound of $||P||_0$. \\ 

\end{proof}

\noindent
Now we proceed to the proof of Theorem \ref{MT22}. 

\begin{proof}[Proof of Theorem \ref{MT22}]
	Let $\nu=1$ or $-1$. The following estimate can be done for both case.  Assume that the function $\nu F-\lambda \varphi$ achieves maximum at $(p_0,t_0)\in M\times [0,T]$. Let $\eta$ be a cut-off function on $M$ as defined in \eqref{equacutoff}.	Let $\delta_0>0$, we compute:
	\begin{align}
		& (\partial_t-\Delta_\varphi)(e^{\delta_0 (\nu F-\lambda \varphi)}\,\eta) e^{-\delta_0 (\nu F-\lambda \varphi)} = \eta \left(\delta_0 (\partial_t-\Delta_\varphi) (\nu F-\lambda \varphi) - \delta_0^2 |\nabla_\varphi (\nu F -\lambda\varphi)|^2 \right) \notag\\
		& \qquad\qquad\qquad\qquad\qquad\qquad\qquad\qquad\qquad + 2\delta_0 \nabla_\varphi(\nu F-\lambda\varphi)\cdot \nabla_\varphi\eta - \Delta_\varphi\eta \notag\\ 
		& \qquad\qquad \le   \eta \delta_0 (\partial_t-\Delta_\varphi) (\nu F-\lambda \varphi) + \frac{|\nabla_\varphi\eta|^2}{\eta} - \Delta_\varphi\eta \notag\\
		& \qquad\qquad \le  \eta \delta_0 \left( - \nu\, tr_\varphi Ric(\omega_0) +\nu \underline R -\lambda(F+P-n+tr_{\omega_{\varphi}}\omega_0 )  \right) + \frac{16\theta_0}{r_0^2} tr_{\omega_{\varphi}}\omega_0 
	\end{align}\\
	We choose 
	\begin{align}
		\lambda>|Ric(\omega_0)|+2,\qquad \theta_0 < \frac{1}{1000} r_0^2\delta_0
	\end{align}
	 then 
	\begin{align}
		(\partial_t-\Delta_\varphi)(e^{\delta_0 (\nu F-\lambda \varphi)}\,\eta) \le  e^{\delta_0 (\nu F-\lambda \varphi)}\,\eta \delta_0 \big( C_6- \lambda(F+P)\big)
	\end{align}\\
	Now we apply the parabolic ABP on $B_{r_0}(p_0)\times [0,t_0]$, and see that: 
	\begin{align}
		& \sup_{B_{r_0}(p_0)\times [0,t_0]}\big(e^{\delta_0(\nu F-\lambda\varphi)}\eta\big)\le  \sup_{\partial B_{r_0}(p_0)\times [0,t_0] \cup B_{r_0}(p_0)\times \{0\}}\big( e^{\delta_0(\nu F-\lambda\varphi)}\eta \big) \notag\\
		& +C_7\bigg(\int_{B_{r_0}(p_0)\times [0,t_0]}e^{\frac{2n+1}{n}F}\cdot e^{(2n+1)\delta_0(\nu F-\lambda\varphi)} (\eta \delta_0)^{2n+1} \big( C_6- \lambda(F+P)\big)^{2n+1}_+ \,\omega_0^n \bigg)^{\frac{1}{2n+1}}
	\end{align}	\\
	The integral of the second line above is on the domain where 
	\begin{align}
		C_6- \lambda(F+P)>0
	\end{align}
	$P$ is bounded, $F$ is also bounded from above on this domain 
	\begin{align}
		F\le  \frac{C_6}{\lambda}+||P||_0
	\end{align}\\
	 We choose
	\begin{align}
		\delta_0 < \min \{\ \frac{1}{2n} ,\ \frac{\alpha \big(M,[\omega_0]\big)}{(2n+1)\lambda}\  \}
	\end{align}
	where $\alpha \big(M,[\omega]\big)$ is the $\alpha$-invariant (See \cite{tian87}). Then we have  
	\begin{align}
		&\int_{B_{r_0}(p_0)\times [0,t_0]}e^{\frac{2n+1}{n}F}\cdot e^{(2n+1)\delta_0(\nu F-\lambda \varphi)} (\eta\delta_0)^{2n+1}(C_6-\lambda(P+F))_+^{2n+1} \omega_0^n \notag \\
		&\le  \int_{B_{r_0}(p_0)\times [0,t_0]}\exp\big((\frac{2n+1}{n}+(2n+1)\delta_0\nu)(\frac{C_6}{\lambda}+||P||_0)\big)\cdot e^{-(2n+1)\delta_0\lambda\varphi}\delta_0^{2n+1}\omega_0^n \notag \\
		&=\exp\big((\frac{2n+1}{n}+(2n+1)\delta_0\nu)(\frac{C_6}{\lambda}+||P||_0)\big)\delta_0^{2n+1} \notag \\
		&\qquad\qquad\qquad\qquad  \cdot\int_{B_{r_0}(p_0)\times [0,t_0]}e^{-(2n+1)\delta_0\lambda(\varphi-\sup_M\varphi(\cdot,t))}\cdot e^{-(2n+1)\delta_0\lambda \sup_M\varphi(\cdot,t)}\omega_0^n \notag \\
		&\le  C_8\int_0^{t_0}e^{-(2n+1)\delta_0\lambda\sup_M\varphi(\cdot,t)}dt.
	\end{align} \\
	The second inequality used the $\alpha$-invariant estimate, as long as we choose $\delta_0$ small enough and $C_8$ depends also on $||P||_0$. Moreover,  our choice of $\delta_0$ guarantees $\frac{2n+1}{n}+(2n+1)\delta_0\nu>0$,  regardless whether $\nu=1$ or $-1$. Therefore we have 
	\begin{align}
		& \sup_{B_{r_0}(p_0)\times [0,t_0]}\big(e^{\delta_0(\nu F-\lambda\varphi)}\eta\big) - \sup_{\partial B_{r_0}(p_0)\times [0,t_0] \cup B_{r_0}(p_0)\times \{0\}}\big( e^{\delta_0(\nu F-\lambda\varphi)}\eta \big) \notag\\
		&\qquad\qquad\qquad\qquad\qquad\qquad\qquad\qquad \le  C_9  \int_{M\times [0,t_0]} e^{-(2n+1)\delta_0\lambda\sup\varphi}\,\omega_0^n
	\end{align}\\
	Next it only remains to estimate $\sup_M\varphi(\cdot,t)$ from below. Define 
	\begin{align}
		I(\varphi)=\frac{1}{n+1}\int_M\varphi\sum_{j=0}^n\omega_{\varphi}^{n-j}\wedge \omega_0^j
	\end{align}\\
	Then we know that:
	\begin{align}
		\frac{d}{dt}I(\varphi)=\int_M\partial_t\varphi\omega_{\varphi}^n=\int_M(F+P)\omega_{\varphi}^n=\int_MF\omega_{\varphi}^n\ge  0.
	\end{align}\\
	The last inequality is due to that $xe^x\ge  e^x-1$. In particular,  $I(\varphi(\cdot,t))\ge  I(\varphi_0)$.  On the other hand,  $I(\varphi(\cdot,t))\le  \sup_M\varphi(\cdot,t)\,\cdot\,$vol($M$). Therefore we have 
	\begin{align}
		\sup_{B_{r_0}(p_0)\times [0,t_0]}\big(e^{\delta_0(\nu F-\lambda\varphi)}\eta\big) - \sup_{\partial B_{r_0}(p_0)\times [0,t_0] \cup B_{r_0}(p_0)\times \{0\}}\big( e^{\delta_0(\nu F-\lambda\varphi)}\eta \big) \le  C_{10} t_0
	\end{align}\\
	which means
	\begin{align}
		||e^{\delta_0(\nu F-\lambda\varphi)}|_{t=t_0} ||_0\le  ||e^{\delta_0(\nu F-\lambda\varphi)}|_{t=0} ||_0 + \frac{C_{10}}{\theta_0} t_0
	\end{align}
	Notice that this inequality hold for both $\nu=1$ or $-1$ and any $t_0$, hence 
	\begin{align}
		||e^{F(t)}||_0 \le  e^{\alpha_1 \varphi(t)} \bigg(||e^{\alpha_2 F(0)}|| + C_{11}t\bigg)
	\end{align}
	Notice that $\log (e^a+b)\le  a+b$ for $a,b\ge  0$, then we have
	\begin{align}
		||F(t)||_0 \le  \alpha_1 ||\varphi(t)||_0 +\alpha_2 ||F(0)||_0 +C_{11}t
	\end{align}
	According to the definition of PDE, 
	\begin{align}
		\partial_t \varphi = F+P \le  \alpha_1 ||\varphi(t)||_0 +\alpha_2 ||F(0)||_0 +C_{11}t + ||P||_0
	\end{align}
	By Gronwall, we have 
	\begin{align}
		||\varphi(t)||_0 \le  ||\varphi(0)||_0 e^{\alpha_3 t} + C_{12}t
	\end{align}
	and 
	\begin{align}
		||F(t)||_0 \le   ||F(0)||_0 e^{\alpha_4 t} + C_{13}t
	\end{align} \\
\end{proof}

\section{Higher order estimate\\}\label{Sec3}

In this section we will estimate the higher order derivatives.  The following diagram shows the outline of regularity improvement.  \\

\begin{tikzcd}
\text{$P$ close to a continuous function}  \arrow[rd, "Thm \ref{MT31}"] &     &      &       &      \\
   & {||F||_{W^{1,p}(\omega_\varphi)}} \arrow[r, "Lem \ref{Lem32}"] & {||F||_{W^{1,p}(\omega_0)}} \arrow[r, "Thm \ref{TM32}"] & {||\varphi||_{C^{2,\alpha}}}  \\
{||F(0)||_0<\eps} \arrow[ru, "Thm \ref{MT32}"]    &     &    &    & 
\end{tikzcd} \\

Chen-Zheng \cite{Chen-Zheng} have shown that higher derivative bounds of $\varphi$ follow from $C^{2,\alpha}$ bound. 

\begin{thm}[Theorem 5.37 in \cite{Chen-Zheng}]\label{TM31}
	Let $\varphi$ be a solution of Pseudo Calabi Flow on $[0,T)\times M$. If there exist a constant so that 
	\begin{align}
		\max_{t\in [0,T)} ||\varphi(t)||_{C^{2,\alpha}}< C
	\end{align}
	then for any $\eps,k$, we have 
	\begin{align}
		\max_{t\in [\eps ,T)} ||\varphi(t)||_{C^{k,\alpha}}< C (\eps,k)\\
	\end{align}
\end{thm}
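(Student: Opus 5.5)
The plan is parabolic bootstrapping, using the structure of the flow as a coupled system
\begin{equation*}
\partial_t\varphi=F+P,\qquad \Delta_{\varphi}P=\underline{R}-tr_{\omega_{\varphi}}Ric(\omega_0),\qquad F=\log\frac{\omega_{\varphi}^n}{\omega_0^n},
\end{equation*}
on $M\times[0,T)$, combined with the parabolic interior Schauder theory. The uniform $C^{2,\alpha}$ bound makes $\omega_{\varphi}$ uniformly equivalent to $\omega_0$. It also bounds the coefficients $g_{\varphi}^{i\bar j}$ in $C^{\alpha}$ in space.

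First I would obtain spatial regularity of $P$ at each fixed time. The equation for $P$ is a linear elliptic equation. Its coefficients $g_{\varphi}^{i\bar j}$ are in $C^{\alpha}$, and its right hand side $\underline{R}-g_{\varphi}^{i\bar j}R_{i\bar j}(\omega_0)$ is also in $C^{\alpha}$. The normalization $\int_M P\,\omega_{\varphi}^n=0$, together with the uniform equivalence of metrics and the Poincar\'e inequality, gives $\|P\|_0\le C$. Elliptic Schauder estimates then give $\|P(t)\|_{C^{2,\alpha}}\le C$ uniformly in $t$.

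Second, I need time regularity of $P$, since parabolic Schauder theory requires parabolic H\"older norms of the coefficients. I would differentiate the $P$-equation in $t$, writing $\dot\varphi=\partial_t\varphi$, to get
\begin{equation*}
\Delta_{\varphi}\dot P=\langle \sqrt{-1}\partial\bar\partial \dot\varphi,\sqrt{-1}\partial\bar\partial P-Ric(\omega_0)\rangle_{\varphi},
\end{equation*}
up to the correction coming from the time-dependent normalization. Here $\dot\varphi=F+P$ is only $C^{\alpha}$ in space, so this equation is not directly usable. Instead I would work with difference quotients in $t$, or with the linearized flow. Differentiating $\partial_t\varphi=F+P$ shows that $u=\dot\varphi$ satisfies
\begin{equation*}
\partial_t u=\Delta_{\varphi}u+\dot P,
\end{equation*}
where $\dot P$ is controlled in $L^p$ or $C^0$ by $\|u\|_{C^{1,\alpha}}$ through the elliptic equation for $\dot P$ after integrating by parts. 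Krylov--Safonov, or the parabolic De Giorgi--Nash estimate for non-divergence operators with $C^{\alpha}$ coefficients, applied on $[\eps/2,T)$, then gives parabolic H\"older continuity of $u$. Consequently $\varphi$ is in $C^{2+\alpha,1+\alpha/2}$ in the parabolic sense on $[\eps/2,T)$, with constants depending on $\eps$.

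Third, once the coefficients are parabolically H\"older, I would run the standard bootstrap. Differentiate the equation $\partial_t\varphi=\log\det(g+\partial\bar\partial\varphi)/\det g+P$ in a spatial direction. The derivative $\partial\varphi$ satisfies a linear parabolic equation $\partial_t(\partial\varphi)=\Delta_{\varphi}(\partial\varphi)+(\text{lower order})+\partial P$. The term $\partial P$ is controlled in $C^{k-1,\alpha}$ by elliptic Schauder estimates applied to the $P$-equation, using the $C^{k,\alpha}$ bound already obtained for $\varphi$. Parabolic interior Schauder estimates then gain one derivative, with a time cutoff so that the result holds on $[\eps,T)$. Induction on $k$, shrinking the time interval slightly at each step, gives $\|\varphi(t)\|_{C^{k,\alpha}}\le C(\eps,k)$.

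The main obstacle is the second step: establishing time H\"older regularity of $F+P$ when only spatial $C^{2,\alpha}$ control is assumed. The nonlocal term $P$ must be shown to depend H\"older continuously on $t$, and I would handle this through the $\dot P$ elliptic estimate in weak ($W^{1,2}$ or $L^p$) norms.
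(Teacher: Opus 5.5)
The paper gives no proof of this statement; it is quoted from Chen--Zheng \cite{Chen-Zheng}, so your argument has to stand on its own. Step 1 is fine. Step 2, which you rightly call the crux, has a gap.

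You propose to control $\dot P$ by $\|u\|_{C^{1,\alpha}}$, where $u=\partial_t\varphi=F+P$. The hypothesis gives no such bound. $F=\log\frac{\omega_\varphi^n}{\omega_0^n}$ involves second derivatives of $\varphi$, so $u$ is only $C^{\alpha}$ in space. The right-hand side $g_\varphi^{i\bar q}g_\varphi^{p\bar j}u_{p\bar q}(P_{i\bar j}+R_{i\bar j})$ of the $\dot P$-equation is then a $C^\alpha$ coefficient times second derivatives of a $C^\alpha$ function. Integrating by parts to remove derivatives from $u$ puts them on that coefficient, which needs $\varphi,P\in C^3$, precisely what you are trying to prove. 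So the source term you feed into Krylov--Safonov is not controlled, and the step is circular.

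The passage from H\"older continuity of $\partial_t\varphi$ to $\varphi\in C^{2+\alpha,1+\alpha/2}$ is also only asserted. It needs time regularity of $F$ (or of $P$), which is the same missing ingredient. Separately, a $C^{2,\alpha}$ bound gives only $\omega_\varphi\le C\omega_0$. Uniform equivalence needs a lower bound on $F$, which must be part of the hypothesis; in the paper's use (Lemma \ref{l4.6N}) it comes from the bound on $\|F\|_0$.

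The missing idea is to work with $F$ rather than $\partial_t\varphi$. Since $\partial_tF=\Delta_\varphi(F+P)$ and $\Delta_\varphi P$ is explicit,
\begin{equation*}
(\partial_t-\Delta_\varphi)F=\underline{R}-tr_{\omega_\varphi}Ric(\omega_0).
\end{equation*}
This equation is local: the nonlocal term cancels. Its coefficients are uniformly elliptic and need no time regularity, and its right-hand side is bounded. The paper uses this identity, e.g., in the proofs of Theorem \ref{MT22} and Lemma \ref{Lem312}. Krylov--Safonov now gives $F\in C^{\beta,\beta/2}$ on $M\times[\eps/2,T)$ directly.

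Then write $F(t)-F(s)=a^{i\bar j}(\varphi(t)-\varphi(s))_{i\bar j}$ with $a^{i\bar j}=\int_0^1g^{i\bar j}_{\tau\varphi(t)+(1-\tau)\varphi(s)}\,d\tau$. Use $\|\varphi(t)-\varphi(s)\|_0\le C|t-s|$, and interpolate $F(t)-F(s)$ between $C^0$ and $C^\alpha$. Elliptic Schauder then gives $\|\varphi(t)-\varphi(s)\|_{C^{2,\gamma}}\le C|t-s|^\theta$ for some $0<\gamma<\alpha$ and $\theta>0$. Differencing the $P$-equation at two times gives the same for $P$.

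Parabolic Schauder applied to the $F$-equation then gives $F\in C^{2+\gamma',1+\gamma'/2}$, hence $\varphi(t)\in C^{4,\gamma'}$. From there your step 3 bootstrap takes over, tracking the time regularity of $P$ at each stage in the same way.

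Your route through $\partial_t\varphi$ can also be repaired. The forms $\omega_\varphi$ and $\chi=Ric(\omega_0)+\sqrt{-1}\partial\bar\partial P$ are closed, and $tr_{\omega_\varphi}\chi=\underline{R}$. Hence $\Delta_\varphi\dot P\,\omega_\varphi^n=\sqrt{-1}\partial\bar\partial u\wedge\Theta$ with $\Theta$ closed. Duality and $W^{2,p}$ estimates then bound $\dot P$ (modulo its mean) in $L^p$ by $\|u\|_{L^p}$. The $F$-equation avoids the nonlocal term entirely and is simpler.
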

Chen-He reduced the $C^{2,\alpha}$ bound of the K\"ahler potential $\varphi$ to the $W^{1,p}$ bound of the volume form,  with $p>2n$.
\begin{thm}[Section 4 in \cite{ChHe2011}]\label{TM32}
	Let $\varphi$ be a solution of the Monge-Amp\'ere equation on a compact K\"ahler manifold $(M,\omega_0)$ and $C$ be a constant so that 
	\begin{align}\label{equa34}
		(\omega_0+dd^c \varphi)^n=e^F \omega_0, \quad ||F||_{W^{1,p}(\omega_0)}<C , \quad p>2n.
	\end{align}
	then there exist a constant $C_1$ depending on $C,p$ and background metric such that 
	\begin{align}
		||\varphi||_{C^{2,\alpha}} < C. 
	\end{align}\\
\end{thm}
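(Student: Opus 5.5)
The plan follows the standard three-step scheme for complex Monge-Amp\`ere equations: a $C^0$ bound, then a Laplacian bound, then $C^{2,\alpha}$. The only twist is that $F$ is controlled only in $W^{1,p}$, so $\Delta F$ never appears pointwise. Throughout, the key fact is that $p>2n=\dim_{\bR}M$, so Morrey's embedding gives $\|F\|_{C^{0,\beta}(\omega_0)}\le C$ with $\beta=1-\frac{2n}{p}$. In particular $e^F$ is bounded above and below.

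\textbf{Step 1 ($C^0$ bound).} Since $e^F\in L^\infty$, Theorem \ref{TMDD} (or Ko{\l}odziej's estimate) gives $\|\varphi-\sup_M\varphi\|_{0,\alpha}\le C$. After normalizing $\sup_M\varphi=0$, this yields $\|\varphi\|_0\le C$.

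\textbf{Step 2 (Laplacian bound).} This is the main obstacle. Set $u=e^{-\lambda\varphi}(n+\Delta_0\varphi)$ with $\lambda$ large in terms of the bisectional curvature of $\omega_0$. The Aubin--Yau computation gives
\[
\Delta_\varphi u\ \ge\ e^{-\lambda\varphi}\Delta_0 F-C u+c\,u^{1+\frac{1}{n-1}}e^{-\frac{F}{n-1}}e^{\frac{\lambda\varphi}{n-1}}.
\]
The term $\Delta_0F$ cannot be bounded pointwise, so I would not use the maximum principle. Instead I would run a Moser iteration:
\begin{itemize}
\item multiply by $u^{q}$ and integrate against $\omega_\varphi^n=e^F\omega_0^n$;
\item rewrite $\int e^{-\lambda\varphi}u^q\Delta_0F\,\omega_\varphi^n$ as an integral over $\omega_0^n$, using that $e^F$ is bounded;
\item integrate by parts, so that the derivative lands on $u^q e^{-\lambda\varphi}e^F$.
\end{itemize}
This produces terms like $q\int u^{q-1}|\nabla_0 u||\nabla_0F|$ and $\int u^q|\nabla_0F|^2$. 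Converting $|\nabla_0 u|^2$ into $|\nabla_\varphi u|^2$ costs a factor of $tr_{\omega_0}\omega_\varphi\le n+\Delta_0\varphi\approx u$. Using Cauchy--Schwarz and H\"older with $\nabla F\in L^p$, these terms are absorbed into the good gradient term $q\int u^{q-1}|\nabla_\varphi u|^2\omega_\varphi^n$ and the superlinear term, at the price of lower norms $\|u\|_{L^{q'}}$. Here $p>2n$ is exactly what makes the H\"older exponents close up against the Sobolev inequality for $\omega_0$.

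To start the iteration I need $\int u\,\omega_0^n=\int(n+\Delta_0\varphi)\omega_0^n\le C$, which is trivial; the first few powers then come from the superlinear term. Iterating yields $\|u\|_{L^\infty}\le C$. Combined with the determinant lower bound $e^F\ge c$, this gives $C^{-1}\omega_0\le\omega_\varphi\le C\omega_0$.

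\textbf{Step 3 ($C^{2,\alpha}$).} Now the equation $\log\det(g+\varphi_{i\bar j})=F+\log\det g$ is uniformly elliptic and concave, and its right-hand side is $C^{0,\beta}$. I would apply the complex Evans--Krylov/Caffarelli estimate with H\"older right-hand side (e.g.\ Chen--Wang, or Y.~Wang's $C^{2,\alpha}$ estimate for concave equations with $C^{\beta}$ data) in local charts. This gives $\|\varphi\|_{C^{2,\alpha}}\le C$ for some $\alpha\le\beta$ depending on $C,p$ and the background metric.
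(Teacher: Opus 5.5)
Your three steps follow the same route as the Chen--He argument the paper cites; the paper itself gives no proof. That route is a $C^0$ bound from Theorem~\ref{TMDD}, then a Laplacian bound by integral iteration with $\Delta_0F$ integrated by parts, then a complex Evans--Krylov estimate with H\"older data. You also correctly identify that $p>2n$ enters as $\frac{p}{p-2}<\frac{n}{n-1}$. Steps 1 and 3 are fine. Step 2, however, misses a term.

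When you integrate $-\int_M u^q e^{-\lambda\varphi}e^F\Delta_0F\,\omega_0^n$ by parts, the derivative also falls on $e^{-\lambda\varphi}$. This produces
\[
-\lambda\int_M u^q e^{-\lambda\varphi+F}\,\nabla_0\varphi\cdot\nabla_0F\,\omega_0^n ,
\]
which is neither of the two terms you list, and nothing in your argument controls $\nabla_0\varphi$. The only free information is $\int_M|\nabla_0\varphi|^2\omega_0^n=-\int_M\varphi\,\Delta_0\varphi\,\omega_0^n\le C$. With that alone, H\"older puts $u^q$ in $L^{2p/(p-2)}$. Since $\frac{2p}{p-2}>2\ge\frac{n}{n-1}$, the Sobolev gain cannot absorb this, and the iteration cannot be run to $L^\infty$. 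To close it you need $\nabla_0\varphi\in L^b$ with $\frac1b+\frac1p<\frac1n$, i.e.\ $b>\frac{np}{p-n}$. So some control of $\nabla\varphi$ is a genuine extra ingredient. The cited Chen--He paper proves a gradient estimate under the same hypothesis $F\in W^{1,p}$, $p>2n$, alongside the Laplacian estimate.

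You can supply this ingredient inside your scheme by a bootstrap:
\begin{enumerate}
\item Since $|\Delta_0\varphi|\le n+Cu$, an $L^s$ bound on $u$ gives a $W^{2,s}$ bound on $\varphi$ by elliptic $L^s$ theory. Hence $\nabla_0\varphi\in L^{2ns/(2n-s)}$ for $s<2n$.
\item Run your superlinear estimate without the gradient term. Absorb $\int u^{q+1}$, $q\int u^q|\nabla_0F|^2$ and the new $\nabla\varphi$ term into $c\int u^{q+\frac{n}{n-1}}$ by H\"older and Young.
\item Start from $\nabla_0\varphi\in L^2$ and alternate the two steps above. Each round improves $s$ by a factor at least $\frac{n}{n-1}$.
\item This continues until the cap $s<\frac{np}{2(n-1)}$ imposed by the $|\nabla_0F|^2$ term. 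Because $p>2n$, the cap lies above $\frac{2np}{3p-2n}$.
\item $\frac{2np}{3p-2n}$ is exactly the threshold for $\frac{2ns}{2n-s}>\frac{np}{p-n}$. Once $s$ passes it, your Moser iteration closes as you describe.
\end{enumerate}
As written, though, Step 2 omits this ingredient, and the iteration does not go through without it.
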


\noindent
To compare two different norms $W^{1,p}(\omega_0)$ and $W^{1,p}(\omega_\varphi)$, we first need to estimate $tr_{\omega_{\varphi}}\omega_0$ in term of $|\nabla_\varphi F|$. \\

\begin{lem}[\textbf{Estimate of $tr_{\omega_{\varphi}}\omega_0$}]\label{Lem31}
	Let $(\varphi,F)$ satisfy the Monge-Amp\'ere equation 
	\begin{align}
		(\omega_0+dd^c \varphi)^n=e^F \omega_0^n
	\end{align}\\
	Fix a constant $V$. If $||F||_0 \le  V$, then there exist a constant $C$ depend on $p,V$, and the background metric $\omega_0$, such that
	\begin{align}
		\int_M (tr_{\omega_{\varphi}}\omega_0)^{p+1}\omega_\varphi^n \le  C\left( \int_M |\nabla_\varphi F|^{2(p+1)}\omega_\varphi^n +1 \right)
	\end{align}\\
\end{lem}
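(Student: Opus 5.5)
We will prove the estimate by an integral version of the Chern–Lu / Aubin–Yau inequality for $u:=tr_{\omega_{\varphi}}\omega_0$. Multiplying it by $u^p$ and integrating by parts gives an $L^{p+1}$ bound in terms of the gradient of $F$, which enters only through $-\Delta_\varphi F$ after integration by parts.

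\textbf{Step 1: differential inequality.} The standard computation (Chern–Lu, with the roles of $\omega_0$ and $\omega_\varphi$ as in Yau's second order estimate but with the trace taken the other way) gives
\begin{equation*}
\Delta_{\varphi}\log u\ \ge\ \frac{-g_{\varphi}^{i\bar j}(Ric(\omega_\varphi))_{i\bar j}\text{-type term}}{u}-B\,u ,
\end{equation*}
where $B$ bounds the bisectional curvature of $\omega_0$. Concretely, $\Delta_\varphi u \ge -\,g_0^{i\bar l}g_\varphi^{k\bar j}\partial_{k\bar j}\big(\text{stuff}\big)$. The cleaner route is
\begin{equation*}
\Delta_{\varphi} u \ \ge\ \langle Ric(\omega_\varphi),\omega_0\rangle_{\omega_\varphi}\text{-term} - B u^2 + \frac{|\nabla_\varphi u|^2}{u}.
\end{equation*}
Here the Ricci term is $-(\omega_0)^{i\bar j}$-contraction of $Ric(\omega_\varphi)=Ric(\omega_0)-\sqrt{-1}\partial\bar\partial F$. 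After substituting, the $Ric(\omega_0)$ part is bounded by $Cu^2$, and the remaining part is $g_\varphi^{i\bar q}g_\varphi^{p\bar j}(g_0)_{p\bar q}F_{i\bar j}$. This last term is a second derivative of $F$ contracted against a tensor $T^{i\bar j}$ with $|T|\le u^2$ (in the $\omega_\varphi$-norm up to constants).

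\textbf{Step 2: multiply by $u^p$ and integrate.} We integrate $\int_M u^p\,\Delta_\varphi u\,\omega_\varphi^n=-p\int_M u^{p-1}|\nabla_\varphi u|^2\omega_\varphi^n$ against the inequality of Step 1. The gradient terms on the right have a good sign, and we obtain
\begin{equation*}
\int u^{p-1}|\nabla_\varphi u|^2 \le C\int u^{p+2} + \Big|\int u^p\, T^{i\bar j}F_{i\bar j}\,\omega_\varphi^n\Big|.
\end{equation*}
In the $F$-term we integrate by parts once, moving one derivative onto $u^pT$. The derivative of $T$ produces $\nabla \omega_\varphi$-type terms; these are awkward, so it is better to start from the scalar identity $\Delta_\varphi \log\frac{\omega_0^n}{\omega_\varphi^n}=-\Delta_\varphi F$ and work with $\log u$. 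Since $\log u\ge \frac1n\log(\text{something})-F/n$, we can use the classical inequality
\begin{equation*}
\Delta_\varphi \log u\ge -Bu - \frac{tr_{\omega_0}Ric(\omega_\varphi)}{u}\quad\text{and}\quad tr_{\omega_0}Ric(\omega_\varphi)=tr_{\omega_0}Ric(\omega_0)-\Delta_0F .
\end{equation*}
The $\Delta_0 F$ term is still the obstacle.

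\textbf{Main obstacle and remedy.} The difficulty is that $\Delta_0F$ is not controlled by $|\nabla_\varphi F|$ pointwise. The way out is to avoid pointwise bounds entirely: test the inequality against $u^{p}$ and integrate by parts so that only first derivatives of $F$ appear. Young's inequality then gives terms of the form $\epsilon\int u^{p-1}|\nabla_\varphi u|^2+C_\epsilon\int u^{p+1}|\nabla_\varphi F|^2$, and the first of these is absorbed into the good gradient term.

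\textbf{Step 3: close the estimate.} The remaining bad term $Bu^{p+2}$ has higher homogeneity than the $u^{p+1}$ we want to bound. To handle it we work instead with $\tilde u=e^{-A\varphi}u$ for large $A$, as in Aubin–Yau; with $\|\varphi\|_0$ bounded by Theorem~\ref{TMDD} and $\|F\|_0\le V$, this is harmless. The term $A\Delta_\varphi\varphi=An-Au$ then supplies a good $-Au^{p+1}$ that dominates the curvature term. We finish with Hölder's inequality, $\int u^{p}|\nabla_\varphi F|^2\le \epsilon\int u^{p+1}+C_\epsilon\int|\nabla_\varphi F|^{2(p+1)}$. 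This yields
\begin{equation*}
\int_M u^{p+1}\omega_\varphi^n\le C\Big(\int_M|\nabla_\varphi F|^{2(p+1)}\omega_\varphi^n+1\Big),
\end{equation*}
where the constant $C$ depends on $V$, $p$ and $\omega_0$. The main point to verify carefully is that after the $e^{-A\varphi}$ twist the curvature term enters with the homogeneity $u^{p+1}$, rather than the $u^{p+2}$ that appeared in Step 2, so that the good term can absorb it.
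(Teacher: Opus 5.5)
Your overall frame is essentially the paper's: a Chern--Lu type inequality for $u=tr_{\omega_\varphi}\omega_0$, a twist by $e^{-A\varphi}$ using $\|\varphi\|_0$ from Theorem~\ref{TMDD}, testing against powers of $u$, and Young's inequality. However, the step that actually carries the lemma is never performed.

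The $F$-contribution to $\Delta_\varphi u$ is $-\langle\sqrt{-1}\partial\bar\partial F,\omega_0\rangle_{\omega_\varphi}=-\sum_i F_{i\bar i}/(1+\varphi_{i\bar i})^2$. You correctly see that integrating $\int_M u^pT^{i\bar j}F_{i\bar j}\,\omega_\varphi^n$ by parts naively puts a derivative on $T$ and creates third derivatives of $\varphi$. But you then switch to an inequality that is not valid for this $u$. The bound $\Delta_\varphi\log(\cdot)\ge -B\,tr_{\omega_\varphi}\omega_0-tr_{\omega_0}Ric(\omega_\varphi)/(\cdot)$ is the Aubin--Yau inequality for the \emph{other} trace $tr_{\omega_0}\omega_\varphi$, and $\Delta_0F$ does not occur in the inequality for $tr_{\omega_\varphi}\omega_0$. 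Your ``remedy'' then only says to integrate by parts so that only first derivatives of $F$ appear, which is exactly the step you had just flagged as problematic. As written, there is no mechanism for disposing of the Hessian of $F$, and that is the whole content of the lemma.

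The missing idea, as used in the paper, is a decomposition of the Hessian term. At a point, take coordinates with $(g_0)_{i\bar j}=\delta_{ij}$ and $\varphi_{i\bar j}$ diagonal, and set $\mu_i=1+\varphi_{i\bar i}$. Then
\[
\sum_i\frac{F_{i\bar i}}{\mu_i^2}=\Big(\sum_i\frac{F_{i\bar i}}{\mu_i}\Big)\Big(\sum_k\frac{1}{\mu_k}\Big)-\sum_{i\neq k}\frac{F_{i\bar i}}{\mu_i\mu_k},
\]
that is, $\Delta_\varphi F\cdot tr_{\omega_\varphi}\omega_0$ minus $n(n-1)\sqrt{-1}\partial\bar\partial F\wedge\omega_0\wedge\omega_\varphi^{n-2}/\omega_\varphi^n$. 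Since $\omega_0$ and $\omega_\varphi$ are closed, both pieces integrate by parts with no derivative ever landing on a metric. The resulting mixed terms are then bounded using $\big|\sqrt{-1}\partial a\wedge\bar\partial b\wedge\omega_0\wedge\omega_\varphi^{n-2}\big|/\omega_\varphi^n\le C_n\,tr_{\omega_\varphi}\omega_0\,|\nabla_\varphi a|\,|\nabla_\varphi b|$.

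Your ``harmless'' twist also hides a point. Once the weight contains $e^{-A\varphi}$, its derivative produces cross terms with $\nabla_\varphi\varphi$, which a $C^0$ bound on $\varphi$ does not control. The paper handles this by taking the weight $e^{\psi}$ with $\psi=-\lambda\varphi-\alpha F$ and $\alpha=p$, and writing $dd^cF=-\alpha^{-1}dd^c\psi-\lambda\alpha^{-1}dd^c\varphi$. Integration by parts then yields a term of good sign, $-\alpha^{-1}\int_M p u^{p-1}e^{\psi}\,d\psi\wedge d^c\psi\wedge\omega_0\wedge\omega_\varphi^{n-2}$, which absorbs all these cross terms. Alternatively, the $\nabla T$ terms of your naive route can be controlled by Cauchy--Schwarz against the full third-order term $\sum|\varphi_{i\bar jk}|^2/(\mu_i\mu_j^2\mu_k)$, which you discarded when passing to $|\nabla_\varphi u|^2/u$.

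Finally, the twist does not change homogeneity; it adds a good $Au^2$ against the bad $Cu^2$. To land on $\int_M u^{p+1}\omega_\varphi^n$ you must test with $u^{p-1}$. The error term is then $\int_M u^{p}|\nabla_\varphi F|^2\omega_\varphi^n$, not $\int_M u^{p+1}|\nabla_\varphi F|^2\omega_\varphi^n$ as stated in your ``remedy''.
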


\begin{proof}

Firstly
\begin{align}
	& \Delta_\varphi (e^{-\lambda\varphi-\alpha F}tr_{\omega_{\varphi}}\omega_0)e^{\lambda\varphi+\alpha F} = \left(-\Delta_\varphi (\lambda\varphi+\alpha F)+|\nabla_\varphi (\lambda\varphi+\alpha F)|^2\right)tr_{\omega_{\varphi}}\omega_0  \notag\\
	&\qquad\qquad\qquad\qquad +\Delta_\varphi tr_{\omega_{\varphi}}\omega_0 - 2\nabla_\varphi(\lambda\varphi+\alpha F)\cdot\nabla_\varphi tr_{\omega_{\varphi}}\omega_0
\end{align}\\
Recall 
\begin{align}
	& \Delta_\varphi tr_{\omega_{\varphi}}\omega_0 = \frac{R_{i\bar ik\bar k}}{(1+\varphi_{i\bar i})(1+\varphi_{k\bar k})} + \frac{|\varphi_{i\bar jk}|^2}{(1+\varphi_{i\bar i})(1+\varphi_{j\bar j})^2(1+\varphi_{k\bar k})} -\frac{F_{i\bar i}-Ric_{i\bar i}}{(1+\varphi_{i\bar i})^2} \notag\\
	&\qquad\qquad \ge  -C (tr_{\omega_{\varphi}}\omega_0)^2 + \frac{|\nabla_\varphi tr_{\omega_{\varphi}}\omega_0|^2}{tr_{\omega_{\varphi}}\omega_0}+ \frac{dd^c F\wedge \omega_0\wedge\omega_\varphi^{n-2}}{\omega_\varphi^n}  - \Delta_\varphi F\cdot tr_{\omega_{\varphi}}\omega_0 
\end{align}\\
And notice the following completion of square 
\begin{align}
	\frac{|\nabla_\varphi tr_{\omega_{\varphi}}\omega_0|^2}{tr_{\omega_{\varphi}}\omega_0} - 2\nabla_\varphi(\lambda\varphi+\alpha F)\cdot\nabla_\varphi tr_{\omega_{\varphi}}\omega_0 + |\nabla_\varphi (\lambda\varphi+\alpha F)|^2 tr_{\omega_{\varphi}}\omega_0  \ge  0
\end{align}\\
We then get 
\begin{align}
	& \Delta_\varphi (e^{-\lambda\varphi-\alpha F}tr_{\omega_{\varphi}}\omega_0)e^{\lambda\varphi+\alpha F} \ge  \left(\lambda tr_{\omega_{\varphi}}\omega_0 -\lambda n  \right)tr_{\omega_{\varphi}}\omega_0 \notag\\
	&\qquad -C (tr_{\omega_{\varphi}}\omega_0)^2 + \frac{dd^c F\wedge \omega_0\wedge\omega_\varphi^{n-2}}{\omega_\varphi^n}  - (\alpha+1) \Delta_\varphi F\cdot tr_{\omega_{\varphi}}\omega_0
\end{align}\\
Set $u=e^{-\lambda\varphi-\alpha F}tr_{\omega_{\varphi}}\omega_0$ 
\begin{align}
	\Delta_\varphi u\ge  \big( (\lambda-C)tr_{\omega_{\varphi}}\omega_0 -\lambda n  \big)u  +e^{-\lambda\varphi-\alpha F}\frac{dd^c F\wedge \omega_0\wedge\omega_\varphi^{n-2}}{\omega_\varphi^n} -  (\alpha+1) \Delta_\varphi F\cdot u
\end{align}\\
Let $p>1$,  we have:
\begin{align}
	\Delta_\varphi u^p = p u^{p-1}\Delta_\varphi u+p(p-1)u^{p-2}|\nabla_\varphi u|^2
\end{align}
Integrate with respect to $\omega_\varphi^n$, we have
\begin{align}
	&\int_M p(p-1)u^{p-2}|\nabla_{\varphi}u|^2\omega_{\varphi}^n = - \int_M pu^{p-1}\Delta_\varphi u \omega_\varphi^n  \notag \\
	&\qquad \le  -\int_M pu^p \big( (\lambda-C)tr_{\omega_{\varphi}}\omega_0-\lambda n\big) \omega_\varphi^n + (\alpha+1) \int_M pu^p \Delta_\varphi F \omega_\varphi^n  \notag\\
	&\quad\quad\quad\quad\qquad - \int_M pu^{p-1} e^{-\lambda\varphi-\alpha F}dd^c F\wedge \omega_0\wedge\omega_\varphi^{n-2}
\end{align}
We integrate by part to handle the last two terms
\begin{align}
	& \int_M pu^p \Delta_\varphi F \omega_\varphi^n = -\int_M p^2 u^{p-1}\nabla_\varphi u \cdot\nabla_\varphi F\,\omega_\varphi^n \notag\\
	&\qquad\qquad  \le  \int_M \frac{p(p-1)}{4(\alpha+1)} u^{p-2}|\nabla_{\varphi}u|^2\omega_{\varphi}^n + \int_M 8(\alpha+1)p^2 u^p |\nabla_\varphi F|^2\omega_\varphi^n
\end{align}
Next 
\begin{align}
	&- \int_M pu^{p-1} e^{-\lambda\varphi-\alpha F}dd^c F\wedge \omega_0\wedge\omega_\varphi^{n-2} \notag\\
	&\qquad = \frac{1}{\alpha} \int_M pu^{p-1} e^{-\lambda\varphi-\alpha F}dd^c(-\lambda\varphi-\alpha F)\wedge \omega_0\wedge\omega_\varphi^{n-2} \notag\\
	&\qquad\qquad\qquad  +\frac{\lambda}{\alpha} \int_M pu^{p-1} e^{-\lambda\varphi-\alpha F}dd^c\varphi \wedge \omega_0\wedge\omega_\varphi^{n-2} \notag\\
	&\qquad = -\frac{1}{\alpha} \int_M pu^{p-1} e^{-\lambda\varphi-\alpha F}d(-\lambda\varphi-\alpha F)\wedge d^c(-\lambda\varphi-\alpha F)\wedge \omega_0 \wedge \omega_\varphi^{n-2} \notag\\
	&\quad\quad\qquad -\frac{1}{\alpha} \int_M p(p-1)u^{p-2} e^{-\lambda\varphi-\alpha F}du\wedge d^c(-\lambda\varphi-\alpha F)\wedge \omega_0\wedge\omega_\varphi^{n-2} \notag\\
	&\quad\quad\qquad +\frac{\lambda}{\alpha} \int_M pu^{p-1} e^{-\lambda\varphi-\alpha F}\omega_0\wedge\omega_\varphi^{n-1} - \frac{\lambda}{\alpha} \int_M pu^{p-1} e^{-\lambda\varphi-\alpha F} \omega_0^2 \wedge\omega_\varphi^{n-2} \notag\\
	&\qquad = -\frac{1}{\alpha} \int_M pu^{p-1} e^{-\lambda\varphi-\alpha F}\left( |\nabla_\varphi (-\lambda\varphi-\alpha F)|^2 tr_{\omega_{\varphi}}\omega_0 -\sum_i \frac{|(-\lambda\varphi-\alpha F)_i|^2}{(1+\varphi_{i\bar i})^2} \right)\omega_\varphi^{n} \notag\\
	&\quad -\frac{1}{\alpha} \int_M p(p-1)u^{p-2} e^{-\lambda\varphi-\alpha F} \left( \big( \nabla_\varphi (-\lambda\varphi-\alpha F)\cdot\nabla_\varphi u \big) tr_{\omega_{\varphi}}\omega_0 -\sum_i \frac{(-\lambda\varphi-\alpha F)_i u_{\bar i}}{(1+\varphi_{i\bar i})^2} \right) \omega_\varphi^{n} \notag\\
	&\quad\quad +\frac{\lambda}{\alpha}\int_M pu^p \omega_\varphi^n - \frac{\lambda}{\alpha} \int_M pu^{p-1} e^{-\lambda\varphi-\alpha F} \left( (tr_{\omega_{\varphi}}\omega_0)^2 -\sum_i \frac{1}{(1+\varphi_{i\bar i})^2} \right) \omega_\varphi^{n} \notag\\
	&\qquad \le  \ \int_M \frac{p(p-1)^2}{4\alpha} u^{p-3} e^{-\lambda\varphi-\alpha F} \left( |\nabla_\varphi u|^2 tr_{\omega_{\varphi}}\omega_0 - \frac{|u_i|^2}{(1+\varphi_{i\bar i})^2} \right)\omega_\varphi^n  +\frac{\lambda}{\alpha}\int_M pu^p \omega_\varphi^n \notag\\
	&\qquad \le  \ \int_M \frac{p(p-1)^2}{4\alpha} u^{p-2}  |\nabla_\varphi u|^2 \omega_\varphi^n  +\frac{\lambda}{\alpha}\int_M pu^p \omega_\varphi^n
\end{align}
So we get 
\begin{align}
	&\int_M p(p-1)u^{p-2}|\nabla_{\varphi}u|^2\omega_{\varphi}^n \le  -\int_M pu^p \big( (\lambda-C)tr_{\omega_{\varphi}}\omega_0-\lambda n\big) \omega_\varphi^n + \frac{\lambda}{\alpha}\int_M pu^p \omega_\varphi^n \notag\\
	&\quad\quad + \int_M 8(\alpha+1)p^2 u^p |\nabla_\varphi F|^2\omega_\varphi^n  + \int_M \left( \frac{p(p-1)^2}{4\alpha} + \frac{p(p-1)}{4(\alpha+1)}\right) u^{p-2}  |\nabla_\varphi u|^2 \omega_\varphi^n 
\end{align}
By choosing $\alpha=p$ and $\lambda>2C$ 
\begin{align}
	\frac{\lambda}{2} \int_M pu^p tr_{\omega_{\varphi}}\omega_0 \omega_\varphi^n \le  C(p) \int_M u^p |\nabla_\varphi F|^2 \omega_\varphi^n + \lambda \int_M u^p\omega_\varphi^n \notag\\ 
	\le  c \int_M u^{p+1}\omega_\varphi^n +C(p) \int_M |\nabla_\varphi F|^{2(p+1)}\omega_\varphi^n
\end{align} 
where $c$ is a small constant such that 
\begin{align}
	c < \frac{1}{4} \lambda\cdot \min e^{\lambda\varphi+\alpha F}
\end{align}
which implies 
\begin{align}
	\int_M u^p tr_{\omega_{\varphi}}\omega_0 \omega_\varphi^n \le  \frac{C(p)}{\lambda} \int_M |\nabla_\varphi F|^{2(p+1)}\omega_\varphi^n
\end{align}
equivalently 
\begin{align}
	\int_M (tr_{\omega_{\varphi}}\omega_0)^{p+1}\omega_\varphi^n \le  C(p,||F||_0)\left( \int_M |\nabla_\varphi F|^{2(p+1)}\omega_\varphi^n +1 \right)
\end{align}\\
\end{proof}

\begin{cor}\label{Lem32}
	Let $\varphi$ be a solution of the Monge-Amp\'ere equation on a compact K\"ahler manifold $(M,\omega_0)$ with $||F||_0$ uniformly bounded  
	\begin{align}
		(\omega_0+dd^c \varphi)^n=e^F \omega_0^n, \quad ||F||_0< V
	\end{align}
	Let $p\in (1,\infty]$ be a constant, then there exist $C_1, C_2,q$ depending on $V,p,n$ and background metric such that 
	\begin{align}
		||F||_{W^{1,p}(\omega_0)} < C_1 ||F||_{W^{1,q}(\omega_\varphi)} +C_2
	\end{align}\\
\end{cor}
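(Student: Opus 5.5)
The plan is to compare the two gradients pointwise, convert the metric distortion into powers of $tr_{\omega_{\varphi}}\omega_0$, and control those powers with Lemma \ref{Lem31}. The zeroth order part is immediate. Since $||F||_0<V$ and $e^{-V}\omega_0^n\le \omega_{\varphi}^n\le e^V\omega_0^n$, we have $||F||_{L^p(\omega_0)}\le C(V)$ for every $p$. This is absorbed into $C_2$, so only $\int_M|\nabla F|^p_{\omega_0}\omega_0^n$ needs work. The case $p=\infty$ is read as the limit of the estimates below with constants uniform in $p$ (or replaced by a large finite $p$). I do not have this endpoint under control, and would check it separately.

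For the pointwise comparison, diagonalize $\omega_0=\sqrt{-1}\sum dz_i\wedge d\bar z_i$ and $\omega_{\varphi}=\sqrt{-1}\sum(1+\varphi_{i\bar i})dz_i\wedge d\bar z_i$ at a point. Then
\begin{equation*}
|\nabla F|^2_{\omega_0}=\sum_i|F_i|^2\le \big(\max_i(1+\varphi_{i\bar i})\big)|\nabla_{\varphi}F|^2_{\omega_{\varphi}}\le tr_{\omega_0}\omega_{\varphi}\,|\nabla_{\varphi}F|^2_{\omega_{\varphi}}.
\end{equation*}
The product of the eigenvalues is $e^F$, so $tr_{\omega_0}\omega_{\varphi}\le e^{F}(tr_{\omega_{\varphi}}\omega_0)^{n-1}\le e^V(tr_{\omega_{\varphi}}\omega_0)^{n-1}$. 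Therefore
\begin{equation*}
\int_M|\nabla F|^p_{\omega_0}\omega_0^n\le C(V)\int_M|\nabla_{\varphi}F|^p_{\omega_{\varphi}}(tr_{\omega_{\varphi}}\omega_0)^{\frac{(n-1)p}{2}}\omega_{\varphi}^n .
\end{equation*}
Hölder with exponents $2$ and $2$ (any pair works) gives
\begin{equation*}
\int_M|\nabla F|^p_{\omega_0}\omega_0^n\le C\,||\nabla_{\varphi}F||^p_{L^{2p}(\omega_{\varphi})}\Big(\int_M(tr_{\omega_{\varphi}}\omega_0)^{(n-1)p}\omega_{\varphi}^n\Big)^{1/2}.
\end{equation*}
Lemma \ref{Lem31}, applied with exponent $(n-1)p-1$, bounds the last integral by $C\big(\int_M|\nabla_{\varphi}F|^{2(n-1)p}\omega_{\varphi}^n+1\big)$. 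So $q=\max(2p,2(n-1)p)$ will be the exponent in the statement.

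Combining these yields $||\nabla F||_{L^p(\omega_0)}\le C\,||\nabla_\varphi F||_{L^{q}}\big(||\nabla_{\varphi}F||_{L^q}^{n-1}+1\big)$. This is a bound of degree $n$, not the linear bound the Corollary asserts. Getting from degree $n$ to degree one is the main obstacle, and I do not have an argument that closes it.

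The first thing I would try is to sharpen Lemma \ref{Lem31} so that the power of $tr_{\omega_{\varphi}}\omega_0$ is controlled by a bound that grows at most like a constant, or like an arbitrarily small multiple of $||\nabla_{\varphi}F||_{L^q}$, instead of polynomially. Concretely, I would revisit the integration by parts there. I would keep the good term $\lambda\int_M u^p\,tr_{\omega_{\varphi}}\omega_0\,\omega_{\varphi}^n$ and handle the term $\int_M u^p|\nabla_{\varphi}F|^2\omega_{\varphi}^n$ by Hölder with the $u$-power on the left rather than by Young's inequality. The goal is $||tr_{\omega_{\varphi}}\omega_0||_{L^{s}(\omega_\varphi)}\le C+\eps||\nabla_{\varphi}F||_{L^{q}}^{2/(n-1)}$. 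I have not carried this out, and I am not confident it holds.

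If that refinement fails, the argument above yields only the polynomial bound. It still shows that $||F||_{W^{1,p}(\omega_0)}$ is bounded in terms of $V$ and $||F||_{W^{1,q}(\omega_\varphi)}$, and this boundedness is what feeds Theorem \ref{TM32} in the outline of Section \ref{Sec3}. I would flag explicitly that the linear form stated in the Corollary does not follow from the polynomial bound. A genuinely new ingredient is needed to recover it, and I do not expect to recover it from the Hölder argument alone.
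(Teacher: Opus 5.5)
Your argument is correct for what it proves, and it is essentially the paper's proof. The paper also compares $|\nabla F|$ with $|\nabla_\varphi F|_\varphi$ in an $\omega_0$-normal frame diagonalizing $\omega_\varphi$, uses $tr_{\omega_0}\omega_\varphi\le e^{F}(tr_{\omega_\varphi}\omega_0)^{n-1}$, and converts the trace into $|\nabla_\varphi F|$ by Lemma \ref{Lem31}. The only difference is in the splitting step. Instead of your product bound plus H\"older, the paper uses $|F_i|^2\le(1+\varphi_{i\bar i})^2+\big(|F_i|^2/(1+\varphi_{i\bar i})\big)^2$, so that $|\nabla F|^2\le C(tr_{\omega_\varphi}\omega_0)^{2n-2}+|\nabla_\varphi F|^4_\varphi$. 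After Lemma \ref{Lem31} with exponent $2p(n-1)$ it ends with
\[
\int_M|\nabla F|^{2p}\omega_0^n\le C\Big(\int_M|\nabla_\varphi F|^{4p(n-1)}\omega_\varphi^n+\int_M|\nabla_\varphi F|^{4p}\omega_\varphi^n+1\Big).
\]
So the paper's own proof also gives only a polynomial bound, of degree $\max(2n-2,2)$ in the norms. Your degree $n$ is slightly sharper for $n\ge 3$, with the same $q$. The paper also treats only finite $p$, since the constant in Lemma \ref{Lem31} depends on $p$.

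The obstacle you flag is therefore genuine for the literal statement, but it is not a gap in your argument relative to the paper. Neither the linear form nor the case $p=\infty$ is established there, and no new ingredient is supplied. Nor is one needed. The corollary is only used in Lemma \ref{l4.6N}, where bounds on $\|F\|_0$ and $\|\nabla_\varphi F\|_{L^{q}(\omega_\varphi^n)}$ must give a bound on $\|\nabla F\|_{L^{p}(\omega_0^n)}$ for some finite $p>2n$, to feed Theorem \ref{TM32}. Your estimate $\|F\|_{W^{1,p}(\omega_0)}\le C\big(1+\|F\|_{W^{1,q}(\omega_\varphi)}\big)^{n}$ does exactly that. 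So rather than trying to sharpen Lemma \ref{Lem31}, restate the corollary in this polynomial form for finite $p$.

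One bookkeeping point: Lemma \ref{Lem31} is proved for exponents $p+1$ with $p>1$. When $(n-1)p\le 2$ (that is, $n=2$ and $p\le 2$), first raise the exponent of $tr_{\omega_\varphi}\omega_0$ by H\"older on the finite measure $\omega_\varphi^n$; this only enlarges $q$.
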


\begin{proof}
	In local normal coordinate respect to $\omega_0$
	\begin{align}
		F_{i}F_{\bar i} \le  (1+\varphi_{i\bar i})^2 + \bigg( \frac{1}{1+\varphi_{i\bar i}} F_{i}F_{\bar i} \bigg)^2
	\end{align}
	Hence 
	\begin{align}
		|\nabla F|^2\le  C (tr_\varphi \omega_0)^{2n-2} + |\nabla_\varphi F|^4
	\end{align} 
	Integral on manifold  and notice that $e^F$ bounded, we have 
	\begin{align}
		& \int_M |\nabla F|^{2p}\omega_0^n \le  C \bigg( \int_M (tr_\varphi \omega_0)^{2p(n-1)}\omega_\varphi^n + \int_M |\nabla_\varphi F|^{4p}\, \omega_\varphi^n \bigg) \notag\\
		& \qquad \le  C(p,||F||_0)\left( \int_M |\nabla_\varphi F|^{4p(n-1)}\omega_\varphi^n +  \int_M |\nabla_\varphi F|^{4p}\omega_\varphi^n +1 \right)
	\end{align}
\end{proof}

\noindent
By Corollary \ref{Lem32}, to get higher order estimate, we just need to estimate $||F||_{W^{1,p}(\omega_\varphi)}$. In the following subsections, we estimate $||F||_{W^{1,p}(\omega_\varphi)}$ in two cases.
	\begin{itemize}
		\item when $P$ is close to a continuous function (Definition \ref{Def31}). 
		\item The initial volume form $|F|$ is small.
	\end{itemize}

\subsection{When $P$ is close to a continuous function}

\begin{defn}\label{Def31}
	Let $\delta_*>0$. Let P be a function defined on $M\times [0,T)$. We say that $P$ is $\delta_*$-- close to a continuous function with modulus $\omega(r)$, if for any $t\in [0,T)$, there exists a continuous function $\tilde{P}(t)$ with modulus of continuity $\omega(r)$, such that $|P(t)-\tilde{P}(t)|<\delta_*$. \\
\end{defn}
\noindent
This following Proposition is a standard result in geometric analysis.

\begin{prop}[Geodesic mollification on a compact manifold]\label{prop:geodesic-mollifier}
Let $(M,g)$ be a smooth compact Riemannian manifold of dimension $n$ with injective radius $r_{inj}(M,g)>0$. 
Let $f\in C^0(M)$ be a bounded, uniformly continuous function with modulus of continuity $\omega:[0,\infty)\to[0,\infty)$, that is,
\begin{align}
	|f(x)-f(y)|\le  \omega\!\big(d_g(x,y)\big)\quad\forall x,y\in M,  \qquad \omega(r)\to 0\text{ as }r\to0.
\end{align}
Then for each $0<r<\tfrac12 r_{inj}(M,g)$ one can construct a smooth function $f_r\in C^\infty(M)$ (the \emph{geodesic mollification of $f$ at scale $r$}) satisfying:
\begin{align}
	\|f_r-f\|_{L^\infty(M)} &\le  \omega(r), \\
	 \|\nabla f_r\|_{L^\infty(M)} &\le  \frac{C}{r}\|f\|_{L^\infty(M)}, \\
	\|\nabla^2 f_r\|_{L^\infty(M)} &\le  \frac{C}{r^{2}}\|f\|_{L^\infty(M)}, 
\end{align}
where the constant $C>0$ depends only on $(M,g)$ and the dimension $n$. 
Moreover, $f_r\to f$ uniformly as $r\to0$, and $f_r$ has modulus of continuity at most $2\,\omega(r)$. \\
\end{prop}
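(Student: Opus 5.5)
Fix $r\in(0,\tfrac12 r_{inj})$. Cover $M$ by geodesic balls $B_{r_{inj}}(x)$ and work in normal coordinates $\exp_x$. We choose a nonnegative radial bump $\rho\in C_c^\infty(B_1(0)\subset\bR^n)$ with $\int\rho=1$ and set $\rho_r(v)=r^{-n}\rho(v/r)$. The candidate definition is
\begin{equation*}
f_r(x)=\int_{T_xM}\rho_r(v)\,f(\exp_x v)\,dv ,
\end{equation*}
where $dv$ is Lebesgue measure on $T_xM$ via $g_x$. This is intrinsic, because the choice of orthonormal frame does not matter for radial $\rho$. Smoothness in $x$ follows by rewriting the integral over $M$: $f_r(x)=\int_M K_r(x,y)f(y)\,dV_g(y)$ with $K_r(x,y)=\rho_r(\exp_x^{-1}y)\,J(x,y)^{-1}$. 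Here $J$ is the Jacobian of $\exp_x$, which is smooth and positive on $\{d(x,y)<r_{inj}\}$. Since $\rho_r$ is supported in $d<r<r_{inj}/2$, the kernel $K_r$ is smooth on $M\times M$.

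The $L^\infty$ estimate comes from $\int\rho_r=1$:
\begin{equation*}
|f_r(x)-f(x)|\le\int\rho_r(v)\,|f(\exp_xv)-f(x)|\,dv\le\omega(r),
\end{equation*}
since $d(x,\exp_xv)=|v|\le r$. This also gives uniform convergence $f_r\to f$.

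For the derivative bounds, differentiate in $x$ under the integral in the kernel form:
\begin{equation*}
|\nabla^k f_r(x)|\le\|f\|_\infty\int_M|\nabla_x^kK_r(x,y)|\,dV(y),\qquad k=1,2.
\end{equation*}
By the chain rule, each $x$-derivative of $\rho_r(\exp_x^{-1}y)$ produces a factor $r^{-1}|\nabla\rho|$ times derivatives of $(x,y)\mapsto\exp_x^{-1}y$. Those derivatives are bounded uniformly on the compact set $\{d(x,y)\le r_{inj}/2\}$, with bounds depending only on $(M,g)$. Derivatives of $J^{-1}$ are likewise uniformly bounded. Hence $|\nabla_x^kK_r|\le Cr^{-n-k}$ on the support, which is a ball of volume $\le Cr^n$. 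This gives $\|\nabla f_r\|\le Cr^{-1}\|f\|_\infty$ and $\|\nabla^2 f_r\|\le Cr^{-2}\|f\|_\infty$, since $r<r_{inj}$ is bounded, so lower powers of $1/r$ are absorbed. The main obstacle is exactly this uniformity: we must check that the second derivatives of $\exp^{-1}$ and of $J$ are controlled independently of $r$ and $x$. This follows from compactness and smoothness of the exponential map on the closed region $d\le r_{inj}/2$, away from the cut locus.

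For the modulus of continuity, take any $x,y\in M$. The triangle inequality gives
\begin{equation*}
|f_r(x)-f_r(y)|\le|f_r(x)-f(x)|+|f(x)-f(y)|+|f(y)-f_r(y)|\le 2\omega(r)+\omega(d(x,y)).
\end{equation*}
This gives the stated control in the regime $d(x,y)\lesssim r$. We read the claim ``modulus at most $2\omega(r)$'' in this sense, namely up to an additive $\omega(d)$ term. If a cleaner statement is wanted, we can alternatively compare $f_r(x)$ and $f_r(y)$ through a parallel-transport identification of $T_xM$ with $T_yM$, which pairs points at distance $\lesssim d(x,y)$.
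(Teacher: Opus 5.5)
The paper gives no proof of this proposition; it is quoted as ``a standard result in geometric analysis'', and only the first three estimates are ever used, through Corollary \ref{Cor30}. Your construction is the standard geodesic mollification. Your arguments for $\|f_r-f\|_\infty\le\omega(r)$, for uniform convergence, and for $\|\nabla^k f_r\|_\infty\le Cr^{-k}\|f\|_\infty$ with $k=1,2$ are correct. The kernel $K_r(x,y)=\rho_r(\exp_x^{-1}y)J(x,y)^{-1}$ is smooth because $(x,v)\mapsto(x,\exp_x v)$ is a diffeomorphism from $\{|v|<r_{inj}\}$ onto $\{d<r_{inj}\}$. Each $x$-derivative of $\rho(\exp_x^{-1}y/r)$ costs at most a factor $r^{-1}$, with lower-order terms absorbed since $r<r_{inj}/2$. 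The support has volume $\le Cr^n$. One small point: $\int\rho_r(v)\,\omega(|v|)\,dv\le\omega(r)$ needs $\omega$ nondecreasing. That is the usual convention, but say so, or replace $\omega(t)$ by $\sup_{s\le t}\omega(s)$.

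The last clause is where your proposal falls short, and reinterpreting the statement to fit your bound does not prove it. Read literally, $|f_r(x)-f_r(y)|\le 2\omega(r)$ for all $x,y$ is false, since letting $r\to0$ would force $f$ to be constant. The meaningful reading is that $f_r$ has modulus of continuity $2\omega$, i.e.\ $|f_r(x)-f_r(y)|\le 2\omega(d(x,y))$. Your triangle-inequality bound $2\omega(r)+\omega(d(x,y))$ gives nothing of this kind when $d(x,y)\ll r$. Even at scale $d(x,y)\le r$ it yields $3\omega(r)$ rather than $2\omega(r)$.

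The parallel-transport route you mention is the right one, but its two key inputs are missing. Let $\gamma$ be a unit-speed minimizing geodesic from $x$ to $y$ and $P$ parallel transport along it. $P$ is an isometry $T_xM\to T_yM$ preserving $\rho_r$ and $dv$, so
\begin{equation*}
f_r(x)-f_r(y)=\int_{T_xM}\rho_r(v)\,\big(f(\exp_x v)-f(\exp_y Pv)\big)\,dv .
\end{equation*}
First, the curve $s\mapsto\exp_{\gamma(s)}(P_s v)$ has velocity $J_s(1)$. Here $J_s$ is the Jacobi field along $\tau\mapsto\exp_{\gamma(s)}(\tau P_s v)$ with $J_s(0)=\gamma'(s)$ and $J_s'(0)=0$. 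From $|J_s''|\le\|\mathrm{Rm}\|_\infty|v|^2|J_s|$ one gets $|J_s(1)|\le L:=\cosh(\|\mathrm{Rm}\|_\infty^{1/2}r)$, hence $d(\exp_x v,\exp_y Pv)\le L\,d(x,y)$. Second, the optimal modulus $\omega_f(t)=\sup_{d(x,y)\le t}|f(x)-f(y)|$ is subadditive on a geodesic space. So $\omega_f(Lt)\le\lceil L\rceil\,\omega_f(t)\le 2\omega(t)$ once $r$ is small enough that $L\le 2$; for general $r<r_{inj}/2$ this argument only gives $\lceil L\rceil\,\omega$. Since the paper never uses this clause, the gap has no effect downstream.
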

\noindent
By this result we have the following Corollary. \\
\begin{cor}\label{Cor30}
	If a function $P$ is $\delta_*$--close to a continuous function with modulus $\omega(r)$ as in Definition \ref{Def31}, then there is a $C^2$ function  $\widetilde P$ and constant $C$ such that for any $r>0$ (Small than injective radius)
	\begin{align}
		|P-\widetilde P|<\delta_*+\omega(r),\ \ |D \widetilde P|<\frac{C}{r},\ \ |D^2\widetilde P|<\frac{C}{r^2}
	\end{align} \\ 
\end{cor}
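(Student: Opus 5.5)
The statement is Corollary \ref{Cor30}, and I would derive it by applying Proposition \ref{prop:geodesic-mollifier} to the continuous approximant of $P$ at each fixed time.

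Fix $t\in[0,T)$. By Definition \ref{Def31} there is a continuous $\tilde P(t)$ with modulus $\omega(r)$ and $|P(t)-\tilde P(t)|<\delta_*$. For $0<r<\tfrac12 r_{inj}(M,\omega_0)$, let $\widetilde P(t)$ be the geodesic mollification $(\tilde P(t))_r$ from Proposition \ref{prop:geodesic-mollifier}, taken with respect to the background metric $\omega_0$. The triangle inequality and the first conclusion of that proposition give
\begin{align*}
|P(t)-\widetilde P(t)|\le |P(t)-\tilde P(t)|+|\tilde P(t)-(\tilde P(t))_r|<\delta_*+\omega(r).
\end{align*}
The other two conclusions of the proposition give $|D\widetilde P|\le \frac{C}{r}\|\tilde P(t)\|_{L^\infty}$ and $|D^2\widetilde P|\le \frac{C}{r^2}\|\tilde P(t)\|_{L^\infty}$.

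To make the constant in the corollary independent of $t$, one needs a uniform bound on $\|\tilde P(t)\|_{L^\infty}$. I would get it from $\|\tilde P(t)\|_0\le \|P(t)\|_0+\delta_*$. In the setting where this corollary is used, $\|P(t)\|_0$ is bounded on $[0,T)$: either by Theorem \ref{MT21}, given the $C^0$ bound on $F$ from Theorem \ref{MT22}, or simply by hypothesis. Absorbing $\sup_t\|P(t)\|_0+\delta_*$ together with the geometric constant of Proposition \ref{prop:geodesic-mollifier} into $C$ gives the stated bounds.

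The step I expect to be the main obstacle is this uniformity of $C$. The corollary as written does not record that $C$ depends on $\sup_t\|P(t)\|_0$, so I would state that dependence explicitly. Alternatively, one can normalize $\tilde P(t)$ by subtracting a constant: $\tilde P(t)-\tilde P(t)(x_0)$ is bounded by $\omega(\mathrm{diam}\,M)$, which is finite, because $\omega$ is a modulus on a compact manifold and is therefore finite at the diameter after chaining along geodesics. Subtracting a constant changes none of the derivative bounds, but the first estimate would then only hold up to an additive function of $t$. For this reason I would prefer the first route.

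If the downstream use needs $\widetilde P$ to depend regularly on $t$, note that mollification in space is linear and that each estimate above is pointwise in $t$. Measurability or continuity in $t$ then follows from the corresponding property of the choice of $\tilde P(t)$. I would not pursue this beyond a remark.
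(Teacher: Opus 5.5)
Your argument is correct and is what the paper intends: the corollary is stated as an immediate consequence of Proposition \ref{prop:geodesic-mollifier}, applied to $\tilde P(t)$ together with the triangle inequality, and the paper leaves implicit that $C$ depends on $\|\tilde P(t)\|_{L^\infty}$, which you make explicit. One correction to your alternative route: if you add the constant back after mollifying, i.e.\ set $\widetilde P=(\tilde P-c)_r+c$ with $c=\tilde P(t)(x_0)$, then $|P-\widetilde P|<\delta_*+\omega(r)$ still holds exactly and $C$ depends only on $\omega$ and $(M,\omega_0)$, so that route is actually preferable because it needs no bound on $\sup_t\|P(t)\|_0$.
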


\noindent
The goal of this subsection is to prove the following theorem. \\

\begin{thm}\label{MT31}
	Let $\varphi$ be a solution to the PCF on $M\times [0,T)$,  such that for some $C_1>0$:
	\begin{align}
		\max_{t\in [0,T)}||F(\cdot,t)||_0\le  C_1.
	\end{align}
	Let $p>1$,  then there exists $\delta_*>0$ depending only on $p$,  the background metric,  $C_1$,  such that if we additionally assume that $P(\cdot,t)$ is $\delta_*$-close to a continuous function with modulus $\omega(r)$,  uniformly for $t\in [0,T)$,   then for any $\eps >0$,  one has:
	\begin{align}
		\max_{t\in [\eps ,T)}||\nabla_{\varphi} F(\cdot,t)||_{L^p(\omega_{\varphi}^n)}\le  C.
	\end{align}
	Here $C$ depends only on background metric,  $p$,  $n$,  $C_1$,  $\eps$,  and the modulus $\omega(r)$.\\
\end{thm}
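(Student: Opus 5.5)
We will derive an evolution inequality for a gradient-type quantity built from $F$ and close it with a parabolic maximum-principle/integral argument. The key observation is that along the PCF
\begin{align*}
\partial_t F = \Delta_\varphi(\partial_t\varphi)=\Delta_\varphi(F+P)=\Delta_\varphi F+\underline{R}-tr_{\omega_\varphi}Ric(\omega_0).
\end{align*}
So $F$ solves a linear heat equation $(\partial_t-\Delta_\varphi)F=\underline R-tr_{\omega_\varphi}Ric(\omega_0)$ with respect to the time-dependent metric $\omega_\varphi$, which is uniformly equivalent in volume to $\omega_0$ since $\|F\|_0\le C_1$.

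The difficulty is that we have no control yet on $tr_{\omega_\varphi}\omega_0$. To absorb it we use the auxiliary function $P$, which satisfies $\Delta_\varphi P=\underline R-tr_{\omega_\varphi}Ric(\omega_0)$. The natural combination is $G=F-P$, since $(\partial_t-\Delta_\varphi)G=-\partial_tP$. Because $\partial_t P$ is uncontrolled, we instead work with the smoothed function $\widetilde P$ from Corollary \ref{Cor30}, which is $C^2$ in space with $|D\widetilde P|\le C/r$, $|D^2\widetilde P|\le C/r^2$, and is $(\delta_*+\omega(r))$-close to $P$.

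The quantity we would study is
\begin{align*}
Q=e^{-\lambda\varphi+\beta(F-\widetilde P)}\bigl(|\nabla_\varphi F|^2+K\bigr)\quad\text{or its }L^p\text{ analogue }\int_M u^{p}\,\omega_\varphi^n,
\end{align*}
with cutoff-in-time factor $t$ (or $\min(t,\eps)$) to obtain interior-in-time estimates. We then carry out the following steps.
\begin{itemize}
\item[(i)] Compute $(\partial_t-\Delta_\varphi)|\nabla_\varphi F|^2$ along the flow, noting that the metric evolves via $\partial_t g_{\varphi,i\bar j}=(F+P)_{i\bar j}$.
\item[(ii)] Use the $-\lambda\varphi$ term to produce a good term $\lambda\, tr_{\omega_\varphi}\omega_0$. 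This dominates the bad terms $|Ric|\,tr_{\omega_\varphi}\omega_0$, $\Delta_\varphi\widetilde P$ (bounded by $Cr^{-2}tr_{\omega_\varphi}\omega_0$), and the curvature terms, as in Lemma \ref{Lem31}.
\item[(iii)] Integrate by parts in space-time, Lemma \ref{Lem31} style, replacing $P$ by $\widetilde P$ wherever second derivatives of $P$ would appear. The error $P-\widetilde P$ then enters only multiplied by quantities like $\int u^{p}|\nabla_\varphi F|^2$ or $\int u^p tr_{\omega_\varphi}\omega_0$, with coefficient at most $C(\delta_*+\omega(r))$.
\item[(iv)] Fix $r$ first, making $\omega(r)$ small, then choose $\delta_*$ small depending on $p$ and $C_1$. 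This makes these error terms absorbable into the good terms coming from $p(p-1)u^{p-2}|\nabla u|^2$ and $\lambda\int u^p tr_{\omega_\varphi}\omega_0$.
\end{itemize}
This yields a differential inequality $\frac{d}{dt}\int_M t^{N}u^p\omega_\varphi^n\le C(r,p,C_1)$. Integrating in time gives a bound on $\int_M|\nabla_\varphi F|^{2p}\omega_\varphi^n$ for $t\ge\eps$.

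If needed, we would first establish the estimate at some lower exponent $p_0$ and then iterate $p$ upward. At each stage we would use Lemma \ref{Lem31} to convert $\int(tr_{\omega_\varphi}\omega_0)^{p+1}$ into $\int|\nabla_\varphi F|^{2(p+1)}+1$. The starting point is a space-time $L^2$ bound on $\nabla_\varphi F$, obtained by multiplying the heat equation for $F$ by $F$. The term $\int F\,tr_{\omega_\varphi}Ric(\omega_0)$ is handled via $tr_{\omega_\varphi}\omega_0=n-\Delta_\varphi\varphi$, integration by parts, and the gradient bound of Lemma \ref{Lem21}.

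The main obstacle is step (iii): the terms in which $\nabla P$ or $\partial_t P$ appear. The smallness hypothesis only controls $P$ in $C^0$, not its derivatives. We must therefore arrange every occurrence of $P$ so that it appears either undifferentiated or through $\Delta_\varphi P$ (which is controlled by $tr_{\omega_\varphi}\omega_0$), and swap to $\widetilde P$ exactly where a gradient is needed. The cost $Cr^{-1}$, $Cr^{-2}$ of that swap must be beaten by $\lambda$, which is only possible because $r$ is fixed before $\lambda$ and $\delta_*$.
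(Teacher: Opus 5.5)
\textbf{There is a genuine gap in how you close the estimate.} When you compute $(\partial_t-\Delta_\varphi)|\nabla_\varphi F|^2$, the term $2\nabla_\varphi F\cdot\nabla_\varphi\Delta_\varphi P$ appears, with $\Delta_\varphi P=\underline R-tr_{\omega_\varphi}Ric(\omega_0)$. It contains third derivatives of $\varphi$, so it has to be integrated by parts. That produces $\int_M u^{p-1}(\Delta_\varphi P)^2\omega_\varphi^n\approx\int_M u^{p-1}(1+tr_{\omega_\varphi}\omega_0)^2\omega_\varphi^n$.

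This term cannot be absorbed by the good terms $\lambda\int_M u^p\,tr_{\omega_\varphi}\omega_0$ and $\int_M u^{p+1}$. Its exponents are an extrapolation of theirs, not an interpolation, so Young's inequality always leaves $C\int_M(tr_{\omega_\varphi}\omega_0)^{p+1}\omega_\varphi^n$. You therefore need an independent $L^{p+1}$ bound on the trace.

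You propose to get it from Lemma~\ref{Lem31}, i.e. $\int(tr_{\omega_\varphi}\omega_0)^{p+1}\le C(p,C_1)(\int|\nabla_\varphi F|^{2(p+1)}+1)$. That constant cannot be made small. Meanwhile, the only coercive $|\nabla_\varphi F|^4$ term comes from a convex weight in $F$ such as $e^{\delta_0F^2}$; your linear $\beta F$ gives none, since $(\partial_t-\Delta_\varphi)(\beta F)=\beta\Delta_\varphi P$. The coefficient of that coercive term ($2\delta_0-16\delta_0^2C_1^2$) is at most of order $C_1^{-2}$. So for general $C_1$ the inequality does not close, and iterating in $p$ repeats the same mismatch. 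The substitution works only when $\|F\|_0$ is small, which is exactly how the paper proves Theorem~\ref{MT32} (Lemma~\ref{Lem322}, with $K$ large and $KV^2$ small).

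\textbf{The missing ingredient is a bound on $n+\Delta\varphi$ that does not involve $\nabla F$.} The paper obtains it in Lemma~\ref{Lem313}. There one evolves $\int_M u^p\omega_\varphi^n$ with $u=e^{-\lambda\varphi+\delta_0\varphi^2+\delta_1F^2}(n+\Delta\varphi)$. The term $-\lambda\,tr_{\omega_\varphi}\omega_0\,u$ becomes superlinear because $tr_{\omega_\varphi}\omega_0\ge e^{-F/(n-1)}(n+\Delta\varphi)^{1/(n-1)}$, which gives $t^{1+np}\int_M(n+\Delta\varphi)^p\le C$. The bad term $\Delta P$ is integrated by parts. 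The resulting $\int u^p|\nabla_\varphi P|^2$ is handled by writing $\nabla P=\nabla\tilde P+\nabla(P-\tilde P)$ and integrating the second piece by parts against $\Delta_\varphi P$; this is where $\delta_*$ and $r$ enter. Then $tr_{\omega_\varphi}\omega_0\le Ce^{-F}(n+\Delta\varphi)^{n-1}$ supplies the trace bound in Lemma~\ref{Lem316}.

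The term $\langle\sqrt{-1}\partial\bar\partial P,\partial F\wedge\bar\partial F\rangle$ in Lemma~\ref{Lem316} involves the full complex Hessian of $P$, not just its trace. So it cannot simply be swapped for $\tilde P$: after one integration by parts you need $\nabla_\varphi P$. The paper controls this with the elliptic estimate Lemma~\ref{Lem315}, whose weight $e^{K(P-\tilde P)^2}$ gives a factor $1/K$ in front of $\int|\nabla_\varphi F|^{2(p+1)}$.

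\textbf{A second problem: $\tilde P$ cannot go in a weight you differentiate in $t$.} Definition~\ref{Def31} gives $\tilde P(t)$ separately for each $t$, with no time regularity. So $\partial_t\tilde P$ is uncontrolled; if you take $\tilde P$ to be a mollification of $P$, you get $(\partial_tP)_r$, which is equally uncontrolled. The paper uses $\tilde P$ only at fixed times. Also, $(\partial_t-\Delta_\varphi)(F-P)=2\Delta_\varphi P-\partial_tP$, not $-\partial_tP$.
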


\begin{lem}\label{l4.6N}
Let $\varphi$ be a solution to PCF on $M\times (0,T)$ with $T<+\infty$.  Assume that there exists $p_n>0$ large enough (depending only on $n$,  such that for some $C>0$,  
\begin{align}
	\sup_{t\in (0,T)}||F(\cdot,t)||_0\le  C,\,\,\,\sup_{t\in (0,T)}||\nabla_{\varphi}F(\cdot,t)||_{L^{p_n}(\omega_{\varphi}^n)}\le  C, 
\end{align}
then all the derivatives of $\varphi$ can be estimate for $t\in[\eps ,T)$ for any $\eps >0$.  In particular,  the PCF can be extended beyond $T$. 
\end{lem}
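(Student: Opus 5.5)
The proof chains the reductions already set up in Section \ref{Sec3}. Fix $\eps>0$ and work on $[\eps/2,T)$, or on all of $(0,T)$ if the hypotheses hold there. The order is: first convert the $\omega_\varphi$-Sobolev bound on $F$ into an $\omega_0$-Sobolev bound, then obtain $C^{2,\alpha}$ control of $\varphi$ from the Monge-Amp\`ere equation, then bootstrap to all derivatives, and finally extend the flow past $T$.

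\textbf{Step 1: Sobolev bound in the background metric.} Since $\|F\|_0\le C$, Corollary \ref{Lem32} applies uniformly in $t$. For a target exponent $p>2n$ it gives
\begin{align*}
\|F(\cdot,t)\|_{W^{1,p}(\omega_0)}\le C_1\|F(\cdot,t)\|_{W^{1,q}(\omega_\varphi)}+C_2 .
\end{align*}
By the proof of that corollary (through Lemma \ref{Lem31}), one can take $q=4p(n-1)$, or $4p$ when $n=1$. We therefore choose $p_n:=4(2n+1)\max(n-1,1)$, which corresponds to $p=2n+1$. With this choice the hypothesis $\|\nabla_\varphi F\|_{L^{p_n}(\omega_\varphi^n)}\le C$ bounds $\|F(\cdot,t)\|_{W^{1,2n+1}(\omega_0)}$ uniformly in $t\in(0,T)$. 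The zero-order part of the norm is controlled by $\|F\|_0$, and the lower exponents that appear are controlled by H\"older's inequality, because the volume of $\omega_\varphi^n$ is fixed.

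\textbf{Step 2: $C^{2,\alpha}$ bound.} At each fixed time, $\varphi(\cdot,t)$ solves $(\omega_0+dd^c\varphi)^n=e^F\omega_0^n$ with $\|F\|_{W^{1,2n+1}(\omega_0)}$ bounded. Theorem \ref{TM32} (Chen-He) therefore gives $\|\varphi(\cdot,t)\|_{C^{2,\alpha}}\le C$. One point needs care: the Chen-He estimate is stated for normalized $\varphi$, so we apply it to $\varphi-\sup_M\varphi$. The oscillation is then controlled, and the additive constant is bounded separately. For that constant, note that $\partial_t\varphi=F+P$. Here $F$ is bounded by hypothesis, and $P$ is bounded by Theorem \ref{MT21}, since $e^F$ is bounded above and below and so all the needed $L^p$ norms are controlled. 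Hence $\sup_M\varphi$ changes at bounded speed on the finite interval $(0,T)$.

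\textbf{Step 3: Higher derivatives.} With $\max_t\|\varphi(t)\|_{C^{2,\alpha}}$ bounded, Theorem \ref{TM31} (Chen-Zheng, Theorem 5.37) gives bounds on $\|\varphi(t)\|_{C^{k,\alpha}}$ for $t\in[\eps,T)$, for every $k$.

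\textbf{Step 4: Extension past $T$.} The uniform $C^{k,\alpha}$ bounds, together with the bound on $\partial_t\varphi=F+P$, show that $\varphi(\cdot,t)$ converges smoothly as $t\to T$ by Arzel\`a-Ascoli. The limit is strictly $\omega_0$-psh, because $e^F\ge e^{-C}$ and $\omega_\varphi$ has bounded trace, which together give two-sided metric equivalence. Applying part (1) of Theorem \ref{t1.1} at a time slightly before $T$ produces a flow on a uniform time interval beyond $T$. Uniqueness of smooth solutions glues this to the original flow.

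The main obstacle is the exponent bookkeeping in Step 1. Corollary \ref{Lem32} as stated only asserts that some $q$ exists, so I would recheck from its proof that $q$ depends only on $n$ and $p$, with no dependence on $\varphi$. Otherwise $p_n$ would not depend only on $n$, as the lemma requires.
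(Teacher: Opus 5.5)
Your proposal is correct and follows essentially the same route as the paper. Lemma \ref{Lem31} and Corollary \ref{Lem32} turn the $\omega_\varphi$-gradient bound into a $W^{1,p}(\omega_0)$ bound with $p>2n$; Theorem \ref{TM32} then gives $C^{2,\alpha}$ control, Theorem \ref{TM31} gives all higher derivatives on $[\eps,T)$, and a smooth limit at $T$ together with Theorem \ref{t1.1}(1) extends the flow. Your handling of the additive constant in $\varphi$ adds useful detail. The one minor difference is that you restart slightly before $T$, which tacitly assumes the existence time in Theorem \ref{t1.1}(1) is uniform over data with bounded $C^{2,\alpha}$ norm and uniform positivity; the paper instead restarts directly from the limit $\varphi(T)$, which you have already constructed.
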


\begin{proof}
	By Lemma \ref{Lem31} and Corollary \ref{Lem32}, we get 
	\begin{align}
		\sup_{t\in (0,T)}||\nabla F(\cdot,t)||_{L^{q_n}(\omega_{0}^n)}\le  C
	\end{align}
	Then by Theorem \ref{TM31} and \ref{TM32}, we have upper bound of $||\varphi||_{k,\alpha}$ for any $k$ on $[\eps,T)$. Hence we can extract a sequence $\varphi(t_i)$ smoothly converge to a function, denote as $\varphi(T)$. By Theorem \ref{t1.1},  part (1),  there exists a short time solution starting at $\varphi(T)$.  Therefore,  we get a solution to the pseudo Calabi flow which exists on $M\times [0,T+\eps_0)$ for some $\eps_0>0$.\\
\end{proof}

\begin{cor}\label{Cor31}
	 Let $\varphi$ be a solution to PCF on $M\times (0,T)$ such that for some $C_1>0$, $\sup_{t\in (0,T)}||F(\cdot,t)||_0\le C_1$,  then there exists $\delta_*>0$ depending only on the background metric and $C_1$ such that if $P$ is $\delta_*$-close to a continuous function with modulus $\omega(r)$,  then one can bound all derivatives of $\varphi$ for $t\in (\eps ,T)$ for any $\eps >0$,  and the PCF can be extended beyond $T$.\\ 
\end{cor}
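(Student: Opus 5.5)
This corollary should follow by chaining Theorem \ref{MT31} with Lemma \ref{l4.6N}. Both results assume the uniform bound $\sup_{t}||F(\cdot,t)||_0\le C_1$, and that is exactly our hypothesis. The only real choice is the exponent $p$ in Theorem \ref{MT31}. We take $p=p_n$, the dimensional exponent required in Lemma \ref{l4.6N}. With this choice, the $\delta_*$ produced by Theorem \ref{MT31} depends only on $p_n$ (hence on $n$), the background metric and $C_1$, as the statement claims.

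\textbf{Step 1 (gradient bound).} Fix $\eps>0$. Assume $P$ is $\delta_*$-close to a continuous function with modulus $\omega(r)$, uniformly in $t$. Theorem \ref{MT31} then gives
\begin{equation*}
\sup_{t\in[\eps/2,T)}||\nabla_{\varphi}F(\cdot,t)||_{L^{p_n}(\omega_{\varphi}^n)}\le C(\eps,\omega,C_1).
\end{equation*}
Theorem \ref{MT31} is stated on $[0,T)$, while here the flow is given on $(0,T)$. To handle this, we apply it to the time-shifted flow $t\mapsto\varphi(\cdot,t+\tau)$ for small $\tau>0$. The constants do not depend on $\tau$, since they involve only $C_1$, $\omega$, $p_n$ and $\eps$.

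\textbf{Step 2 (higher derivatives and extension).} On the interval $(\eps/2,T)$ we now have both a uniform $C^0$ bound on $F$ and a uniform $L^{p_n}(\omega_\varphi^n)$ bound on $\nabla_\varphi F$. These are precisely the hypotheses of Lemma \ref{l4.6N}, applied on this interval with starting time shifted to $\eps/2$.

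Inside that lemma the chain runs as follows. Lemma \ref{Lem31} and Corollary \ref{Lem32} convert the bound into a $W^{1,q}(\omega_0)$ bound on $F$ with $q>2n$, using the $C^0$ bound on $F$. Theorem \ref{TM32} then gives a uniform $C^{2,\alpha}$ bound on $\varphi$. Theorem \ref{TM31} upgrades this to $C^{k,\alpha}$ bounds on $[\eps,T)$. The lemma's conclusion is therefore that all derivatives of $\varphi$ are bounded on $(\eps,T)$, and that the flow extends beyond $T$ by taking a smooth limit $\varphi(T)$ and restarting with Theorem \ref{t1.1}(1).

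\textbf{Main obstacle.} The step to watch is the bookkeeping of exponents. Corollary \ref{Lem32} needs a higher integrability exponent $q$ on the $\omega_\varphi$ side, of order $4p(n-1)$, to produce $p>2n$ on the $\omega_0$ side. So $p_n$ must be chosen large enough to cover this, and Theorem \ref{MT31} must be invoked with that same $p_n$ so that $\delta_*$ is fixed once and for all.

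The other point to check is that $\delta_*$ does not depend on $\eps$ or on the modulus $\omega(r)$. Theorem \ref{MT31} asserts exactly this: its $\delta_*$ depends only on $p$, the background metric and $C_1$, and the dependence on $\omega$ and $\eps$ enters only the final constant $C$.
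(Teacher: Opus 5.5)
Your proposal is correct and follows the paper's own route. The corollary is stated without a separate proof right after Lemma~\ref{l4.6N}; the intended argument, made explicit in the parallel proof of Corollary~\ref{c5.2}, is to take $p=p_n$ from Lemma~\ref{l4.6N}, let $\delta_*$ be the constant of Theorem~\ref{MT31} for that $p$, and then apply Lemma~\ref{l4.6N}. Your handling of the time shift from $(0,T)$ to $[0,T)$ and of the exponents through Corollary~\ref{Lem32} just fills in details the paper leaves implicit.
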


As a consequence of Theorem \ref{MT22} (a $C^0$ bound of $P$ implies a $C^0$ bound of $F$) as well as Proposition \label{prop:geodesic-mollifier},  we get:

\begin{cor}\label{Cor32}
	Let $\varphi$ be a solution of pseudo Calabi Flow on $M\times [0,T]$.  Assume that $\sup_{t\in [0,T]}||P(\cdot,t)||_{C^{0,\alpha}(M)}<\infty$,  then the pseudo Calabi flow can be extended beyond $T$.
\end{cor}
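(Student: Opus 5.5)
The argument reduces Corollary \ref{Cor32} to Corollary \ref{Cor31}. Two hypotheses of Corollary \ref{Cor31} have to be verified on $[0,T]$: a uniform bound on $\|F(\cdot,t)\|_0$, and the fact that $P$ is $\delta_*$-close to a continuous function with a fixed modulus of continuity, where $\delta_*$ is the constant that Corollary \ref{Cor31} produces from that $C^0$ bound of $F$.

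\emph{Step 1: the $C^0$ bound of $F$.} The assumption $\sup_{t\in[0,T]}\|P(\cdot,t)\|_{C^{0,\alpha}}<\infty$ gives in particular $\sup_t\|P(\cdot,t)\|_0\le V_1$. Theorem \ref{MT22} then yields
\[
\|F(t)\|_0\le \|F(0)\|_0e^{\alpha_3 t}+\alpha_4 t,
\]
which is bounded on $[0,T]$ because $T<\infty$. I take the initial data to be smooth, or at least to have bounded $F(0)$, as in the setting of Theorem \ref{t1.1}. This gives a constant $C_1$ with $\sup_{t}\|F(\cdot,t)\|_0\le C_1$. Corollary \ref{Cor31} then produces a threshold $\delta_*>0$ depending only on $C_1$ and the background metric.

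\emph{Step 2: the closeness condition.} We have $|P(x,t)-P(y,t)|\le A\, d(x,y)^\alpha$ with $A=\sup_t\|P(t)\|_{C^{0,\alpha}}$. So each $P(t)$ is itself continuous with modulus $\omega(r)=Ar^\alpha$, uniform in $t$. Taking $\tilde P(t)=P(t)$ in Definition \ref{Def31}, the difference is $0<\delta_*$ for every $\delta_*>0$. This is the point of the statement: Hölder continuity removes any smallness requirement. If a smooth approximant is needed, as in Corollary \ref{Cor30}, I would use the geodesic mollification of Proposition \ref{prop:geodesic-mollifier}. It gives $|P-\tilde P|\le Ar^\alpha$, and choosing $r$ small makes this less than $\delta_*$.

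\emph{Step 3: conclusion and main obstacle.} Corollary \ref{Cor31} now gives bounds on all derivatives of $\varphi$ on $(\eps,T)$, and Lemma \ref{l4.6N} extends the flow beyond $T$. One technicality is that Corollary \ref{Cor31} is stated on an open interval $(0,T)$; the bounds apply equally on $[0,T)$. The only genuine input is Theorem \ref{MT22}, which converts the $C^0$ bound on $P$ into a $C^0$ bound on $F$ with constants independent of the modulus. I expect the main obstacle, if any, to be keeping track of dependencies. $\delta_*$ must depend only on $C_1$, while the modulus $\omega$ is allowed to depend on $A$ and $\alpha$. This ordering matters: $C_1$ is fixed first, then $\delta_*$, and the closeness condition then holds trivially, so the argument is not circular.
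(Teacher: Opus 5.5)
Your proposal is correct and follows essentially the same route as the paper. The paper likewise uses Theorem \ref{MT22} to turn the $C^0$ bound on $P$ into a $C^0$ bound on $F$, uses the H\"older bound (with the geodesic mollification of Proposition \ref{prop:geodesic-mollifier}) to get $\delta_*$-closeness to a continuous function with modulus $Ar^\alpha$ for the $\delta_*$ fixed by that $F$-bound, and then concludes via Corollary \ref{Cor31}.
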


\noindent
The rest of this subsection is devoted to the proof of Theorem \ref{MT31}. We start the estimate from the first order estimate of potential function. \\

\begin{lem}[\textbf{Estimate of $|\nabla\varphi|^2$}]\label{Lem311}
		Let $(\varphi,F,P)$ be a solution of Pseudo Calabi Flow on $[0,T)\times M$ satisfying the conditions in Theorem \ref{MT31}, then for any $p>1$,  there exists a constant $C$ depending only on $||F||_0, p, T$ and the background metric,  such that 
	\begin{align}
		 \sup_{t\in (0,T)} t^{1+np} \int_M |\nabla\varphi|^{2p}\omega_0^n \le  C\big(||F||_0,p,T\big)
	\end{align} \\
\end{lem}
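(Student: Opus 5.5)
We will prove the estimate by a parabolic $L^p$ (energy) argument for the Euclidean gradient. Since $\|F\|_0$ is bounded, $\omega_\varphi^n$ and $\omega_0^n$ are comparable, and Theorem \ref{TMDD} gives a uniform bound for $\|\varphi(t)\|_{C^{0,\alpha}}$ on $[0,T)$. By Theorem \ref{MT21} (the $L^p$ bound on $e^F$ holds trivially), $\|P\|_0$ is also bounded. The time weight $t^{1+np}$ indicates that the initial data contributes nothing: the argument must be a smoothing estimate, similar to a Bernstein-type or Moser iteration argument weighted in time.

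\textbf{Step 1: differential inequality for $|\nabla\varphi|^2$.} We differentiate $\partial_t\varphi=F+P$ and use the standard Błocki-type computation for the Monge--Ampère operator. With $\beta=|\nabla\varphi|^2_{\omega_0}$ we compute $(\partial_t-\Delta_\varphi)\beta$. The Laplacian part gives the good terms $-\sum_{i,j}\frac{|\varphi_{ij}|^2+|\varphi_{i\bar j}|^2}{1+\varphi_{i\bar i}}$, up to curvature errors $C\beta\, tr_{\omega_\varphi}\omega_0$. The time derivative contributes $2\mathrm{Re}\langle\nabla\varphi,\nabla(F+P)\rangle$. The $\nabla F$ piece cancels exactly with the term coming from differentiating $\log\det$, as in the elliptic gradient estimate. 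We are left with $2\mathrm{Re}\langle \nabla\varphi,\nabla P\rangle$, which must be integrated by parts, since no pointwise bound on $\nabla P$ is available.

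\textbf{Step 2: integral estimate.} We set $u=e^{-A\varphi}\beta$ with $A$ large, so that $-A\Delta_\varphi\varphi=-An+A\,tr_{\omega_\varphi}\omega_0$ absorbs the curvature errors. Multiplying by $u^{p-1}$ and integrating against $\omega_\varphi^n$ (whose time derivative is $\Delta_\varphi(F+P)\omega_\varphi^n$, which we also integrate by parts), we obtain
$$\frac{d}{dt}\int u^p\omega_\varphi^n+c\int u^{p}\,tr_{\omega_\varphi}\omega_0\,\omega_\varphi^n+c\int u^{p-2}|\nabla_\varphi u|^2\omega_\varphi^n\le C\int u^p\omega_\varphi^n+(\text{terms with }\nabla P).$$
For the $\nabla P$ terms we move the derivative off $P$ using $\|P\|_0$ bounded. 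This produces $\int |P|\,u^{p-1}(|\nabla^2\varphi|\text{-type terms})$, which are controlled by the good Hessian terms and $\int u^{p}\,tr_{\omega_\varphi}\omega_0$. We also use $\Delta_\varphi P=\underline R-tr_{\omega_\varphi}Ric$, which is bounded by $C(1+tr_{\omega_\varphi}\omega_0)$.

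\textbf{Step 3: closing the inequality.} The key lower bound is $tr_{\omega_\varphi}\omega_0\ge c\,e^{-F/n}\ge c'$. More importantly, via the Monge--Ampère structure $\beta\lesssim$ (something controlled by $tr_{\omega_\varphi}\omega_0$ and $\mathrm{osc}\,\varphi$) in an integral sense. We interpolate $\int\beta^{p+1/n}\le C\int \beta^p\,tr_{\omega_\varphi}\omega_0+C$, for instance by integrating $\beta^{p}\Delta\varphi$ by parts against the bounded $\varphi$ (the Hölder bound on $\varphi$ gives $\int|\nabla\varphi|^{2p+2}\lesssim \int |\nabla\varphi|^{2p}|\nabla^2\varphi|$-type control). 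This yields the superlinear ODE inequality
$$y'\le C y - c\,y^{1+1/(np)},\qquad y(t)=\int u^p\omega_\varphi^n .$$
Comparison with $y=Kt^{-np}$ then gives $y(t)\le C t^{-np}$ independently of $y(0)$. Multiplying by the extra factor of $t$ to absorb constants yields the stated $t^{1+np}$ weight. Alternatively, we can multiply by $t^{1+np}$ directly and apply Young's inequality to the term $(1+np)t^{np}y$.

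\textbf{Main obstacle.} The main obstacle is the $\nabla P$ term in Step 2 together with the superlinear absorption in Step 3. $P$ is only bounded, so every integration by parts generates $\nabla^2\varphi$ weighted by $1/(1+\varphi_{i\bar i})$ or by $(1+\varphi_{i\bar i})$. These must be matched against the good terms with the correct weights, and the exponent $1/n$ in the gain comes precisely from the AM--GM step $tr_{\omega_\varphi}\omega_0\ge (e^{-F})^{1/n}(\dots)$. We will first try to handle this step by choosing $A$ and the power $p$ carefully and using Young's inequality with exponents adapted to $n$.
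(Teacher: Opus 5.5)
There is a genuine gap in Step 2. You never use the second hypothesis of Theorem \ref{MT31}: that $P(\cdot,t)$ is $\delta_*$-close to a continuous function, with $\delta_*$ allowed to depend on $p$. The bound $\|P\|_0\le C$ alone does not let you absorb the $\nabla P$ term. Write your weight as $e^{H}$ with $H=-A\varphi$, and recall $\omega_\varphi^n=e^F\omega_0^n$. Integrating $2p\int_M u^{p-1}e^{H}\nabla\varphi\cdot\nabla P\,\omega_\varphi^n$ by parts then produces three terms:
\[
-2p\int_M P\,u^{p-1}e^{H}\Delta\varphi\,\omega_\varphi^n,\qquad
-2p(p-1)\int_M P\,u^{p-2}e^{H}\nabla u\cdot\nabla\varphi\,\omega_\varphi^n,\qquad
-2p\int_M P\,u^{p-1}e^{H}\nabla\varphi\cdot\nabla(H+F)\,\omega_\varphi^n .
\]
Only mixed second derivatives occur. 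The only good term of the same homogeneity comes from $-\sum_{i,k}|\varphi_{i\bar k}|^2/(1+\varphi_{i\bar i})\le -(n+\Delta\varphi)+2n$. This is linear in $\Delta\varphi$ with an absolute coefficient. The term $\int_M u^p\,tr_{\omega_\varphi}\omega_0\,\omega_\varphi^n$ cannot dominate $\int_M u^{p-1}(n+\Delta\varphi)\,\omega_\varphi^n$, because $n+\Delta\varphi$ is only bounded by $C(tr_{\omega_\varphi}\omega_0)^{n-1}$.

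So the first term already forces $\|P\|_0$ below an absolute constant. After Cauchy--Schwarz, the second term costs $4p(p-1)\|P\|_0^2\int_M u^{p-1}e^H(n+\Delta\varphi)\,\omega_\varphi^n$ against a good term of size $\frac p2\int_M u^{p-1}e^H(n+\Delta\varphi)\,\omega_\varphi^n$. You would therefore need $(p-1)\|P\|_0^2\ll 1$. Choosing $A$ does not help, since the weight multiplies good and bad terms alike, and taking $p$ large makes it worse.

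The paper's remedy is the mollifier $\tilde P$ of Corollary \ref{Cor30}. Split $\nabla P=\nabla\tilde P+\nabla(P-\tilde P)$. The first piece is bounded directly by $\frac{C}{r}u^{p-1/2}$ using $|\nabla\tilde P|\le C/r$, with no integration by parts. Only the second piece is integrated by parts, and its coefficient is at most $\delta_*+\omega(r)$. This is exactly the paper's condition $4(p-1)(\delta_*+\omega(r))^2+2(\delta_*+\omega(r))<\frac14$.

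Second, your displayed inequality omits the $\nabla_\varphi F$ terms. The evolution of the measure gives $\int_M u^p\Delta_\varphi F\,\omega_\varphi^n=-p\int_M u^{p-1}\nabla_\varphi u\cdot\nabla_\varphi F\,\omega_\varphi^n$. The integration by parts above also gives $\nabla\varphi\cdot\nabla F$. Both lead to $u^p|\nabla_\varphi F|^2$, which $e^{-A\varphi}$ cannot absorb.

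The paper instead takes $H=-\lambda\varphi+\varphi^2+\delta_1F^2$. Because $(\partial_t-\Delta_\varphi)F=\Delta_\varphi P=\underline R-tr_{\omega_\varphi}Ric(\omega_0)$, the $\delta_1F^2$ part yields $-2\delta_1|\nabla_\varphi F|^2$ at the cost of $C\delta_1\|F\|_0(1+tr_{\omega_\varphi}\omega_0)$. The $\varphi^2$ part yields $-2|\nabla_\varphi\varphi|^2$. Combine this with $|\nabla\varphi|^2\le(n+\Delta\varphi)|\nabla_\varphi\varphi|^2$ and $tr_{\omega_\varphi}\omega_0\ge e^{-F/(n-1)}(n+\Delta\varphi)^{1/(n-1)}$. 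This gives the pointwise gain $\frac\lambda2 tr_{\omega_\varphi}\omega_0+\frac12|\nabla_\varphi\varphi|^2\ge c|\nabla\varphi|^{2/n}$, which is what produces $y'\le C-c\,y^{1+1/(np)}$.

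That pointwise gain replaces your Step 3 interpolation, which does not work as sketched. Integrating by parts against $\varphi$ gives terms like $\int_M\beta^{p}|\nabla^2\varphi|\,\omega_0^n$. These carry one more power of $\beta$ than the Hessian good terms you actually have. With these two ingredients in place, your final time-weighted ODE argument with $t^{1+np}$ is the same as the paper's.
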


\begin{lem}[Differential inequality for Lemma \ref{Lem311}]\label{Lem312}
	Denote $H(F,\varphi)=-\lambda \varphi+\varphi^2+\delta_1F^2$. For sufficiently large $\lambda, K$ and small $\delta_1$ (depending on background metric and $C^0$ bound of $F$), we have a point-wise estimate 
	\begin{align}\label{K1}
	&(\partial_t-\Delta_{\varphi})(e^{H(F,\varphi)}(|\nabla\varphi|^2+K))\cdot e^{-H(F,\varphi)}  \le  2\nabla P\cdot\nabla\varphi -\frac{1}{2}(n+\Delta\varphi)       \notag \\
	&\quad\quad\quad\quad\quad\quad +(|\nabla\varphi|^2+K)\bigg( C -\frac{\lambda}{4}tr_{\omega_{\varphi}}\omega_0 -|\nabla_\varphi\varphi|^2 -\delta_1|\nabla\varphi F|^2 - c|\nabla\varphi|^{\frac{2}{n}}\bigg)  
\end{align} \\
where $C$ is a constant also depending on background metric and $C^0$ bound of $F$. \\
\end{lem}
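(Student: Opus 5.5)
The plan is to compute the heat operator of each factor separately and then combine them. Set $u=|\nabla\varphi|^2+K$, where norms without subscript are taken with respect to $\omega_0$. Use normal coordinates for $\omega_0$ in which $\varphi_{i\bar j}$ is diagonal at the point. The basic product formula is
\begin{align*}
(\partial_t-\Delta_\varphi)(e^{H}u)e^{-H}=(\partial_t-\Delta_\varphi)u+u\big((\partial_t-\Delta_\varphi)H-|\nabla_\varphi H|^2\big)-2\mathrm{Re}\langle\nabla_\varphi H,\nabla_\varphi u\rangle_\varphi .
\end{align*}

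\textbf{Step 1: the flow identities.} Along PCF we have $\partial_t\varphi=F+P$ and $\partial_tF=\Delta_\varphi(F+P)$. Together with $\Delta_\varphi P=\underline R-tr_{\omega_\varphi}Ric(\omega_0)$, these give
\begin{align*}
(\partial_t-\Delta_\varphi)\varphi=F+P-n+tr_{\omega_\varphi}\omega_0,\qquad (\partial_t-\Delta_\varphi)F=\underline R-tr_{\omega_\varphi}Ric(\omega_0).
\end{align*}
From these, $(\partial_t-\Delta_\varphi)\varphi^2$ equals $2\varphi(\partial_t-\Delta_\varphi)\varphi-2|\nabla_\varphi\varphi|^2$. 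Similarly, $(\partial_t-\Delta_\varphi)F^2$ equals $2F(\underline R-tr_{\omega_\varphi}Ric)-2|\nabla_\varphi F|^2$.

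Under the hypotheses of Theorem \ref{MT31} we have $\|F\|_0\le C_1$. Then Theorem \ref{MT21} bounds $\|P\|_0$, and Theorem \ref{TMDD} bounds $\|\varphi\|_0$ (after normalization). Hence
\begin{align*}
(\partial_t-\Delta_\varphi)H\le C\lambda-(\lambda-C-C\delta_1)\,tr_{\omega_\varphi}\omega_0-2|\nabla_\varphi\varphi|^2-2\delta_1|\nabla_\varphi F|^2 .
\end{align*}
Taking $\lambda$ large, this already produces the terms $-\frac{\lambda}{4}tr_{\omega_\varphi}\omega_0$, $-|\nabla_\varphi\varphi|^2$ and $-\delta_1|\nabla_\varphi F|^2$ in (\ref{K1}), with room to spare.

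\textbf{Step 2: the gradient term.} Differentiate the equation $\log\det(g+\varphi_{i\bar j})=F+\log\det g$, which gives $F_i=\varphi^{k\bar k}\varphi_{k\bar k i}$ (up to curvature terms of $\omega_0$). The standard Bochner computation then yields
\begin{align*}
\Delta_\varphi|\nabla\varphi|^2\ge \sum_{i,k}\frac{|\varphi_{ik}|^2+|\varphi_{i\bar k}|^2}{1+\varphi_{k\bar k}}+2\mathrm{Re}(F_i\varphi_{\bar i})-C|\nabla\varphi|^2tr_{\omega_\varphi}\omega_0 .
\end{align*}
Since $\partial_t|\nabla\varphi|^2=2\mathrm{Re}\,\nabla(F+P)\cdot\nabla\varphi$, the $\nabla F$ terms cancel and only $2\nabla P\cdot\nabla\varphi$ survives.

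In the mixed Hessian term, $\varphi_{i\bar i}^2/(1+\varphi_{i\bar i})=(1+\varphi_{i\bar i})-2+1/(1+\varphi_{i\bar i})$. This yields $-(n+\Delta\varphi)+2n-tr_{\omega_\varphi}\omega_0$, and part of it gives $-\frac12(n+\Delta\varphi)$. The remaining constant and trace terms are absorbed into $u(C-\frac{\lambda}{4}tr_{\omega_\varphi}\omega_0)$ because $u\ge K$. The curvature term $C|\nabla\varphi|^2tr_{\omega_\varphi}\omega_0$ is absorbed by the $\lambda\, tr_{\omega_\varphi}\omega_0$ gained in Step 1.

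\textbf{Step 3 (the main obstacle): the cross term.} The term $-2\mathrm{Re}\langle\nabla_\varphi H,\nabla_\varphi u\rangle$ must be controlled. Write $u_{\bar j}=\varphi_i\varphi_{\bar i\bar j}+\varphi_{\bar j}\varphi_{j\bar j}+(\text{curvature})$; the middle summand equals $\varphi_{\bar j}(1+\varphi_{j\bar j})-\varphi_{\bar j}$.

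The pure-Hessian part is estimated by Cauchy--Schwarz against $\sum|\varphi_{ij}|^2/(1+\varphi_{j\bar j})$, which costs $|\nabla_\varphi H|^2|\nabla\varphi|^2\le|\nabla_\varphi H|^2u$. The remaining part is explicit and produces $\langle\nabla H,\nabla\varphi\rangle$-type terms. Here $\nabla H=(2\varphi-\lambda)\nabla\varphi+2\delta_1F\nabla F$, so these are bounded by $C\lambda|\nabla\varphi|^2+C\delta_1|\nabla F||\nabla\varphi|$, plus terms controlled by $u\,tr_{\omega_\varphi}\omega_0$.

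The risk is that the coefficient of $|\nabla_\varphi H|^2u$ coming from Cauchy--Schwarz exceeds the available $-|\nabla_\varphi H|^2u$. I would first try the Chen--Cheng device of keeping only the fraction $\frac{|\nabla\varphi|^2}{|\nabla\varphi|^2+K}<1$, which is where the constant $K$ enters. If that fails, I would add a small multiple of the full Hessian term. The $\delta_1|\nabla F|$ piece is handled by choosing $\delta_1$ small relative to $\lambda$, using $|\nabla F|^2\le|\nabla_\varphi F|^2(n+\Delta\varphi)$ together with the good terms $-\frac12(n+\Delta\varphi)$ and $-\delta_1|\nabla_\varphi F|^2u$.

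\textbf{Step 4: the $|\nabla\varphi|^{2/n}$ term.} Since $\prod(1+\varphi_{i\bar i})=e^F$ is bounded above and below, we have $tr_{\omega_\varphi}\omega_0\ge c(n+\Delta\varphi)^{1/(n-1)}$. We also have $|\nabla\varphi|^2\le|\nabla_\varphi\varphi|^2(n+\Delta\varphi)$. Combining these,
\begin{align*}
|\nabla\varphi|^{2/n}\le C|\nabla_\varphi\varphi|^{2/n}(tr_{\omega_\varphi}\omega_0)^{(n-1)/n}\le C\big(|\nabla_\varphi\varphi|^2+tr_{\omega_\varphi}\omega_0\big)
\end{align*}
by Young's inequality. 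So the term $-c|\nabla\varphi|^{2/n}u$ is obtained by sacrificing part of the $-|\nabla_\varphi\varphi|^2u$ and $-\lambda\, tr_{\omega_\varphi}\omega_0\,u$ already available. Collecting the constants gives (\ref{K1}).
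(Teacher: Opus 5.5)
Your proposal is correct and follows essentially the same route as the paper. Both use the product formula, the flow identities for $(\partial_t-\Delta_\varphi)H$, and the Bochner formula for $|\nabla\varphi|^2$ with the $\nabla F$ terms cancelling. Both split the mixed Hessian into $-(n+\Delta\varphi)+2n$ and absorb the $\delta_1F\,\nabla F\cdot\nabla\varphi$ term into $\delta_1|\nabla_\varphi F|^2|\nabla\varphi|^2+\frac12(n+\Delta\varphi)$. Both obtain the $|\nabla\varphi|^{2/n}$ term from $tr_{\omega_\varphi}\omega_0\ge e^{-F/(n-1)}(n+\Delta\varphi)^{1/(n-1)}$ and $|\nabla_\varphi\varphi|^2$.

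The risk you flag in Step 3 does not occur, so neither fallback device is needed. Cauchy--Schwarz on the pure-Hessian piece costs exactly $|\nabla_\varphi H|^2|\nabla\varphi|^2$. The paper then spends the leftover $-K|\nabla_\varphi H|^2$ (with $K>2$) on the non-Hessian piece, via $2|\langle\nabla_\varphi H,\nabla_\varphi\varphi\rangle_\varphi|\le|\nabla_\varphi H|^2+|\nabla_\varphi\varphi|^2$.

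Do not instead bound the $\lambda|\nabla_\varphi\varphi|^2$ part of that piece in absolute value by $u\,tr_{\omega_\varphi}\omega_0$. It would compete at the same order with your $-\lambda\,tr_{\omega_\varphi}\omega_0\,u$. It is harmless only if you use its favorable sign or take $K\gg\lambda$.
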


\begin{proof}[Proof of Lemma \ref{Lem311}]
Set $u=e^{H(F,\varphi)}(|\nabla\varphi|^2+K)$ and consider the following equality for very large $p$
\begin{align}
	(\Delta_\varphi-\partial_t)(\frac{1}{p}u^p) = u^{p-1}(\Delta_\varphi-\partial_t) u + (p-1)u^{p-2}|\nabla_\varphi u|^2_\varphi
\end{align}\\
Integral with respect to $ \omega_\varphi^n = e^{F} \omega_0^n $, we have
\begin{align}
	\int_M (p-1) u^{p-2} |\nabla_\varphi u|^2_\varphi e^{F} \omega_0^n
	= \int_M u^{p-1} (\partial_t - \Delta_\varphi) u \, e^{F} \omega_0^n 
	- \frac{1}{p} \int_M \partial_t u^p \, e^{F} \omega_0^n
\end{align}\\
We estimate the second term of right hand side by the following:
\begin{align}
	& -\int_M \partial_t u^p \, e^{F} \omega_0^n = \int_M u^p \, \partial_t e^{F} - \partial_t(u^p e^{F}) \, \omega_0^n \notag \\
	&\quad =  \int_M u^p (\Delta_\varphi F - tr_{\omega_{\varphi}} Ric(\omega_0) + \underline{R}) e^{F} \omega_0^n - \partial_t\big( \int_M u^p e^{F} \omega_0^n \big)
\end{align}\\
We use integration by parts to handle the term $ \Delta_\varphi F $
\begin{align}
	\int_M u^p \Delta_\varphi F \, e^{F} \omega_0^n = - \int_M p u^{p-1} \langle \nabla_\varphi F, \nabla_\varphi u \rangle_\varphi \, e^{F} \omega_0^n
\end{align}\\
Putting those together, we get:
\begin{align}
	&\int_M (p-1) u^{p-2} |\nabla_\varphi u|^2_\varphi e^{F} \omega_0^n \notag\\
	&\quad\quad =  \int_M u^{p-1} (\partial_t - \Delta_\varphi) u \, e^{F} \omega_0^n + \frac{1}{p}  \int_M \left( u^p \, \partial_t e^{F} - \partial_t(u^p e^{F}) \right) \omega_0^n  \notag \\
	&\quad\quad =  \int_M u^{p-1} \left\{ (\partial_t - \Delta_\varphi) u - \langle \nabla_\varphi F, \nabla_\varphi u \rangle_\varphi \right\} e^{F} \omega_0^n \notag\\
	& \quad\quad\quad\quad + \frac{1}{p}   \int_M u^p (-tr_{\omega_{\varphi }}Ric(\omega_0) + \underline{R}) e^{F} \omega_0^n - \frac{1}{p} \left(\partial_t \int_M u^p e^{F} \omega_0^n  \right)
\end{align}	\\
By using AM-GM inequality 
\begin{align}
	\big|u^{p-1}\langle \nabla_\varphi F_\varphi, \nabla_\varphi u \rangle_\varphi\big| \le  \frac{p-1}{2} u^{p-2}|\nabla_\varphi u|^2+ \frac{1}{2(p-1)}u^p|\nabla_\varphi F|^2
\end{align}
we get 
\begin{align}\label{K3}
	& \partial_t\big( \int_M u^p\omega_\varphi^n\big)\le  - \int_M \frac{p(p-1)}{2} u^{p-2} |\nabla_\varphi u|^2_\varphi\omega_\varphi^n \notag\\
	&\quad\quad\quad +\int_M pu^{p-1}\big[(\partial_t-\Delta_\varphi)u+  \frac{1}{2(p-1)}|\nabla_\varphi F|^2 u  \big]\omega_\varphi^n \notag\\
	&\quad\quad\quad +\int_M u^p(|Ric(\omega_0) |tr_{\omega_{\varphi}}\omega_0+\underline R)\omega_\varphi^n
\end{align}
Insert the inequality (\ref{K1}) here, we have 
\begin{align}
	& \partial_t\big( \int_M u^p\omega_\varphi^n\big)\le  - \int_M \frac{p(p-1)}{2} u^{p-2} |\nabla_\varphi u|^2_\varphi\omega_\varphi^n +\int_M u^p(|Ric(\omega_0)|tr_{\omega_{\varphi}}\omega_0+\underline R)\omega_\varphi^n \notag\\
	&\quad +\int_M pu^{p}\bigg( C -\frac{\lambda}{4}tr_{\omega_{\varphi}}\omega_0 -|\nabla_\varphi\varphi|^2 -\delta_1|\nabla_\varphi F|^2 - c|\nabla\varphi|^{\frac{2}{n}} +  \frac{1}{2(p-1)}|\nabla_\varphi F|^2   \bigg)\omega_\varphi^n \notag\\
	& \quad\quad\quad\quad\qquad +\int_M pu^{p-1}e^{H(F,\varphi)}\big[2\nabla P\cdot\nabla\varphi -\frac{1}{2}(n+\Delta\varphi)\big]\omega_\varphi^n
\end{align}
We can also assume $\lambda>8|Ric(\omega_0)|$ because we just need $\lambda$ sufficiently large in our calculations. We also choose $p$ large enough that 
\begin{align}
	\frac{\delta_1}{2}>\frac{1}{2(p-1)}
\end{align}
Then we will get 
\begin{align}\label{K2}
	& \partial_t\big( \int_M u^p\omega_\varphi^n\big)\le  - \int_M \frac{p(p-1)}{2} u^{p-2} |\nabla_\varphi u|^2_\varphi\omega_\varphi^n \notag\\
	&\qquad\qquad +\int_M pu^{p-1}e^{H(F,\varphi)}\big[2\nabla P\cdot\nabla\varphi -\frac{1}{2}(n+\Delta\varphi)\big]\omega_\varphi^n \notag\\
	&\qquad\qquad\quad +\int_M pu^{p} \big(C_1 -\frac{\lambda}{8}tr_{\omega_{\varphi}}\omega_0 -|\nabla_\varphi\varphi|^2  -\frac{\delta_1}{2} |\nabla_\varphi F|^2 - c|\nabla\varphi|^{\frac{2}{n}}  \big) \omega_\varphi^n
\end{align}\\
We need to use the condition that $P$ is $\delta_*$-close to a continuous function with modulus $\omega(r)$.  Let $\tilde{P}$ be the $C^2$ function given by Corollary \ref{Cor30}.  
\begin{align}
 \int_M 2p u^{p-1} e^{H} \nabla \varphi \cdot \nabla P \, \omega_{\varphi}^n = \int_M 2p u^{p-1} e^{H} \nabla \varphi \cdot \nabla \tilde{P} \, \omega_{\varphi}^n + \int_M 2p u^{p-1} e^{H} \nabla \varphi \cdot \nabla(P - \tilde{P}) \, \omega_{\varphi}^n
\end{align}\\
In the above, we have:
\begin{align}
	\int_M 2p u^{p-1} e^{H} \nabla \varphi \cdot \nabla \tilde{P} \, \omega_{\varphi}^n \le  \int_M C_2 p u^{p - \frac{1}{2}} |\nabla \tilde{P}| \, \omega_{\varphi}^n \le  \int_M p u^{p - \frac{1}{2}} \frac{C_{3}}{r} \, \omega_{\varphi}^n.
\end{align}\\
Moreover,
\begin{align}
	&\int_M 2p u^{p-1} e^{H} \nabla \varphi \cdot \nabla(P - \tilde{P}) \, \omega_{\varphi}^n = \int_M 2p u^{p-1} e^{H} \nabla \varphi \cdot \nabla(P - \tilde{P}) e^F \omega_0^n \notag \\ 
	&\quad = -\int_M 2p(p - 1) u^{p - 2} e^{H} \nabla u \cdot \nabla \varphi (P - \tilde{P}) \, \omega_{\varphi}^n -\int_M 2p u^{p - 1} e^{H} \Delta \varphi (P - \tilde{P}) \, \omega_{\varphi}^n \notag \\ 
	&\quad\quad\qquad -\int_M 2p u^{p - 1} e^{H} \nabla \varphi \cdot \nabla (H+F)\ (P - \tilde{P}) \, \omega_{\varphi}^n
\end{align}\\
Then we can estimate as follows:
\begin{align}
	&-\int_M 2p(p - 1) u^{p - 2} e^{H} \nabla u \cdot \nabla \varphi (P - \tilde{P}) \, \omega_{\varphi}^n \le  \int_M 2p(p - 1)(\delta_* + \omega(r)) u^{p - 2}e^{H} |\nabla u||\nabla \varphi| \, \omega_{\varphi}^n \notag \\
	&\quad\quad \le  \int_M 2p(p - 1)(\delta_* + \omega(r)) u^{p - 2}e^{H} |\nabla_{\varphi} u| (n + \Delta \varphi)^{1/2} |\nabla \varphi| \, \omega_{\varphi}^n \notag \\
	&\quad\quad \le  \frac{p(p - 1)}{4} \int_M u^{p - 2} |\nabla_{\varphi} u|^2 \, \omega_{\varphi}^n + 4 p(p - 1)(\delta_* + \omega(r))^2 \int_M u^{p - 1} e^{H} (n + \Delta \varphi) \, \omega_{\varphi}^n.
\end{align}
Next
\begin{align}
	-\int_M 2p u^{p-1}e^{H} \Delta \varphi (P - \tilde{P}) \, \omega_{\varphi}^n \le  \int_M 2p(\delta_* + \omega(r)) u^{p - 1}e^H (n + \Delta \varphi) \, \omega_{\varphi}^n.
\end{align}\\
Finally,
\begin{align}
	&-\int_M 2p u^{p - 1} e^{H} \nabla \varphi \cdot \nabla (H+F)\ (P - \tilde{P}) \, \omega_{\varphi}^n \notag\\
	&\qquad\qquad \le  \int_M 2p(\delta_* + \omega(r))u^{p-1}e^H |\nabla\varphi|(n+\Delta\varphi)^{1/2}|\nabla_\varphi (H+F)|\omega_\varphi^n      \notag\\       
	&\qquad\qquad \le  \int_M 2p(\delta_* + \omega(r)) u^{p - \frac{1}{2}} |\nabla_{\varphi}(F+H)| e^{H/2} (n + \Delta \varphi)^{1/2} \, \omega_{\varphi}^n \notag \\
	&\qquad\qquad \le  \frac{p}{4} \int_M u^{p - 1}e^H (n + \Delta \varphi) \, \omega_{\varphi}^n + \int_M 4p(\delta_* + \omega(r))^2 u^p |\nabla_{\varphi} (F+H)|^2 \, \omega_{\varphi}^n
\end{align}\\
In summary, one has:
\begin{align}
	&\int_M 2p u^{p - 1}e^H \nabla \varphi \cdot \nabla P \, \omega_{\varphi}^n \le  \frac{p(p - 1)}{4} \int_M u^{p - 2} |\nabla_{\varphi} u|^2 \, \omega_{\varphi}^n + \int_M p u^{p - \frac{1}{2}} \frac{C_{3}}{r} \, \omega_{\varphi}^n \notag \\
	&\quad + \left( 4(p - 1)(\delta_* + \omega(r))^2 + 2(\delta_* + \omega(r))+\frac{1}{4} \right) \int_M p u^{p - 1} e^H (n + \Delta \varphi) \, \omega_{\varphi}^n \notag\\
	&\quad + \int_M 4p(\delta_* + \omega(r))^2 u^p |\nabla_{\varphi} (F+H)|^2 \, \omega_{\varphi}^n
\end{align} \\
Plug this back into (\ref{K2}), one has:
\begin{align}
	&\partial_t\big( \int_M u^p\omega_\varphi^n\big)\le  -\frac{1}{4} p(p-1)\int_M  u^{p-2} |\nabla_\varphi u|^2_\varphi\omega_\varphi^n + \int_M p u^{p - \frac{1}{2}} \frac{C_{3}}{r} \, \omega_{\varphi}^n  \notag\\
	&\quad + \left( 4(p - 1)(\delta_* + \omega(r))^2 + 2(\delta_* + \omega(r))-\frac{1}{4} \right) \int_M p u^{p - 1} e^H (n + \Delta \varphi) \, \omega_{\varphi}^n \notag\\
	&\quad +\int_M pu^{p} \bigg(C_1 -\frac{\lambda}{8}tr_{\omega_{\varphi}}\omega_0 -|\nabla_\varphi\varphi|^2  -\frac{\delta_1}{2} |\nabla_\varphi F|^2 - c|\nabla\varphi|^{\frac{2}{n}}  \notag\\
	&\qquad\qquad\qquad\qquad\qquad\qquad\qquad\qquad\qquad\qquad  +4(\delta_* + \omega(r))^2 |\nabla_{\varphi} (F+H)|^2  \bigg) \omega_\varphi^n 
\end{align}\\
We can choose $r$ sufficiently small, and assume $\delta_*$ small enough, so that:
\begin{align}
	4(p - 1)(\delta_* + \omega(r))^2 + 2(\delta_* + \omega(r)) < \frac{1}{4}
\end{align}
and 
\begin{align}
	4(\delta_* + \omega(r))^2 |\nabla_{\varphi} (F+H)|^2 < |\nabla_\varphi\varphi|^2 + \frac{\delta_1}{2} |\nabla_\varphi F|^2 
\end{align}
then we can get rid of some terms and get 
\begin{align}
	&\partial_t\big( \int_M u^p\omega_\varphi^n\big)\le   \int_M p u^{p-\frac{1}{2}} \frac{C_{3}}{r} \, \omega_{\varphi}^n +\int_M pu^{p} \big(C_1 - c|\nabla\varphi|^{\frac{2}{n}} \big) \omega_\varphi^n 
\end{align}\\
By Young's inequality
\begin{align}
	ab\le  \frac{1}{\gamma}a^\gamma+\frac{\gamma-1}{\gamma}b^{\frac{\gamma}{\gamma-1}}
\end{align}
for $a,b>0$ and $\gamma>1$. We can let $a=u^p$ and $\gamma=1+\frac{1}{pn}$, then choose a sufficiently large number $b$. We will get 
\begin{align}
	C_1 p u^p\le  \frac{c}{4}u^{p+\frac{1}{n}}+C_4 
\end{align}\\
Similarly  
\begin{align}
	C_3 p \frac{1}{r} u^{p-\frac{1}{2}}\le  \frac{c}{4}u^{p+\frac{1}{n}}+C_5
\end{align}\\
Where $C_4, C_5$ depends on $p$. We then obtain, with \( p \) large enough:
\begin{align}
	\partial_t\big( \int_M u^p \omega_{\varphi}^n \big) \le  - \int_M \frac{pc}{2} u^{p + \frac{1}{n}} \, \omega_{\varphi}^n + C_{6}(p)
\end{align}\\
Let \( \lambda_p > 0 \), we then compute:
\begin{align}
	\partial_t \left( t^{\lambda_p} \int_M u^p \, \omega_{\varphi}^n \right) \le  \lambda_p t^{\lambda_p - 1} \int_M u^p \, \omega_{\varphi}^n - \frac{pc}{2} t^{\lambda_p} \int_M u^{p + \frac{1}{n}} \, \omega_{\varphi}^n + C_{6}(p) t^{\lambda_p} \notag \\
	\le  \lambda_p t^{\lambda_p - 1} \int_M u^p \, \omega_{\varphi}^n - \frac{pc}{2} \left( t^{\lambda_p \frac{np}{1 + np}} \int_M u^p \, \omega_{\varphi}^n \right)^{1 + \frac{1}{np}}  + C_{6}(p) t^{\lambda_p}.
\end{align}\\
We wish to choose \( \lambda_p = 1 + np \) so that \( \lambda_p - 1 = \lambda_p \frac{np}{1 + np} \), and we now have:
\begin{align}
	\partial_t \left( t^{\lambda_p} \int_M u^p \, \omega_{\varphi}^n \right) \le  \lambda_p \left( t^{np} \int_M u^p \, \omega_{\varphi}^n \right) - \frac{pc}{2} \left( t^{np} \int_M u^p \, \omega_{\varphi}^n \right)^{1 + \frac{1}{np}} + C_{6}(p) t^{\lambda_p}.
\end{align}\\
Note that the right-hand side above is bounded from above on any finite time interval. We integrate in \( t \), and see that \( t^{\lambda_p} \int_M u^p \, \omega_{\varphi}^n \) is bounded on a finite time interval, which depends on $||F||_0, p$ and background metric. Specifically 
\begin{align}
	\sup_{t\in (0,T)} t^{\lambda_p} \int_M |\nabla\varphi|^{2p}\omega_0^n \le  C\big(||F||_0,p,T\big) 
\end{align} \\
\end{proof}

\begin{proof}[Proof of Lemma \ref{Lem312}]
First,  we compute: \\
\begin{align}
	&(\partial_t-\Delta_{\varphi})(e^{H(F,\varphi)}(|\nabla\varphi|^2+K))=e^{H(F,\varphi)}\bigg( (\partial_t-\Delta_{\varphi})H(F,\varphi)-|\nabla_{\varphi}H|^2|\bigg) (|\nabla\varphi|^2+K)\notag \\ 
	&\quad\quad\quad\quad\quad\quad\quad\quad +e^{H(F,\varphi)}(\partial_t-\Delta_{\varphi})|\nabla\varphi|^2 -2\,\nabla_{\varphi}e^{H(F,\varphi)} \cdot \nabla_{\varphi}|\nabla\varphi|^2. 
\end{align}\\
We compute this expression term by term under the choice of normal coordinates,  namely $g_{i\bar{j}}(p)=\delta_{ij},\,\nabla g_{i\bar{j}}(p)=0, \,\varphi_{i\bar{j}}(p)=\varphi_{i\bar{i}}\delta_{ij}$. We have the following: 
\begin{align}
	&(\partial_t-\Delta_{\varphi})H(F,\varphi)= (2\varphi-\lambda)(F+P-n+tr_{\omega_{\varphi}}\omega_0) \notag\\
	&\quad\quad\quad\quad +2\delta_1F(\underline R-tr_{\omega_{\varphi}} Ric(\omega_0))-2|\nabla_\varphi\varphi|^2-2\delta_1|\nabla_\varphi F|^2 \notag \\
	&\le  (\lambda+1)C_1 -(\lambda- C_1-\delta_1C_1)tr_{\omega_{\varphi}}\omega_0 -2|\nabla_\varphi\varphi|^2-2\delta_1|\nabla_\varphi F|^2
\end{align}
where $C_1$ depends on the background metric and the bound of $||F||_0,||P||_0$. We need a higher exponential of $|\nabla\varphi|$ for later use, so when we notice this: 
\begin{align}
	tr_{\omega_{\varphi}}\omega_0\ge  e^{-\frac{F}{n-1}}(n+\Delta\varphi)^{\frac{1}{n-1}}
\end{align}
we will have:
\begin{align}
	\frac{\lambda}{2}tr_{\omega_{\varphi}}\omega_0+\frac{1}{2} |\nabla_{\varphi}\varphi|^2\ge  c|\nabla\varphi|^{\frac{2}{n}}
\end{align}
for a small constant $c$. So we can have 
\begin{align}
	(\partial_t-\Delta_{\varphi})H(F,\varphi) \le  (\lambda+1)C_1 -(\frac{\lambda}{2} - C_1-\delta_1C_1)tr_{\omega_{\varphi}}\omega_0 \notag\\
	 -\frac{3}{2} |\nabla_\varphi\varphi|^2-2\delta_1|\nabla_\varphi F|^2 - c|\nabla\varphi|^{\frac{2}{n}}
\end{align}\\
Then we compute the second term 
\begin{align}
	&(\partial_t-\Delta_{\varphi})(|\nabla\varphi|^2) = \varphi_k(F+P)_{\bar k}+\varphi_{\bar k}(F+P)_k \notag\\
	&\quad\quad\quad - \frac{1}{1+\varphi_{i\bar i}}(R_{k\bar l i\bar i}\varphi_{k}\varphi_{\bar l}+|\varphi_{ik}|^2+|\varphi_{i\bar k}|^2+\varphi_k\varphi_{\bar k i\bar i}+\varphi_{\bar k}\varphi_{k i\bar i})
\end{align}\\
By differentiating the following equation with $z_k$
	\begin{align}
		F=\log\frac{\det(g_{i\bar j}+\varphi_{i\bar j})}{\det g_{i\bar j}}
	\end{align}
we get 
	\begin{align}
		F_k=\sum\frac{\varphi_{i\bar i k}}{1+\varphi_{i\bar i}}
	\end{align}\\
So we have 
\begin{align}
	& (\partial_t-\Delta_{\varphi})(|\nabla\varphi|^2) \le  \varphi_k P_{\bar k}+\varphi_{\bar k}P_k + \frac{C_2}{1+\varphi_{i\bar i}}|\nabla\varphi|^2 -\frac{|\varphi_{ik}|^2+|\varphi_{i\bar k}|^2}{1+\varphi_{i\bar i}} \notag\\
	&\quad\quad\quad\quad\quad\quad \le  \varphi_k P_{\bar k}+\varphi_{\bar k}P_k + \frac{C_2}{1+\varphi_{i\bar i}}|\nabla\varphi|^2 -\frac{|\varphi_{ik}|^2}{1+\varphi_{i\bar i}} - (n+\Delta\varphi)+2n
\end{align}\\
Where in the last line above we use the estimate
\begin{align}
	\sum_{i,k} \frac{|\varphi_{i\bar k}|^2}{1+\varphi_{i\bar i}} \ge  \sum_i \frac{\varphi_{i\bar i}^2}{1+\varphi_{i\bar i}} = n+\Delta\varphi - \sum_i \frac{2\varphi_{i\bar i}+1}{1+\varphi_{i\bar i}} \ge  n+\Delta\varphi-2n
\end{align} \\
Next we estimate the last term 
\begin{align}
	&\nabla_{\varphi}(e^{H(F,\varphi)})\cdot \nabla_{\varphi}(|\nabla\varphi|^2)=e^{H(F,\varphi)}\nabla_{\varphi}H\cdot \nabla_{\varphi}(|\nabla\varphi|^2) \notag\\
	&\quad =e^{H(F,\varphi)}Re\big(\frac{H_i\varphi_{j}\varphi_{,\bar{i}\bar{j}}+H_i\varphi_{\bar{j}}\varphi_{j\bar{i}}}{1+\varphi_{i\bar{i}}}\big)\\
	&\quad =e^{H(F,\varphi)}Re\big(\frac{H_i\varphi_j\varphi_{,\bar{i}\bar{j}}}{1+\varphi_{i\bar{i}}}\big)+e^{H(F,\varphi)}Re\big(\frac{H_i\varphi_{\bar{i}}\varphi_{i\bar{i}}}{1+\varphi_{i\bar{i}}}\big)
\end{align}\\
We notice that one of the item above can be eliminated by the following completion of square 
\begin{align}
	|\nabla_{\varphi}H|^2|\nabla\varphi|^2-2Re\big(\frac{H_i\varphi_j\varphi_{,\bar{i}\bar{j}}}{1+\varphi_{i\bar{i}}}\big)+\frac{|\varphi_{,ii}|^2}{1+\varphi_{i\bar{i}}}\ge  0
\end{align}\\
and the other term is estimated as 
\begin{align}
	& 2Re\big(\frac{H_i\varphi_{\bar{i}}\varphi_{i\bar{i}}}{1+\varphi_{i\bar{i}}}\big) = (1-\frac{1}{1+\varphi_{i\bar i}})(H_i\varphi_{\bar i}+H_{\bar i}\varphi_i) \notag\\
	&\quad = (-\lambda\varphi+\varphi^2)_i\varphi_{\bar i}+(-\lambda\varphi+\varphi^2)_{\bar i}\varphi_i + 2\delta_1F(F_i\varphi_{\bar i}+F_{\bar i}\varphi_i)-\frac{H_i\varphi_{\bar i}+H_{\bar i}\varphi_i}{1+\varphi_{i\bar i}} \notag\\
	&\quad \le  (\lambda+1)C_3|\nabla\varphi|^2 + 2\delta_1F \bigg( \eps \frac{|F_i|^2}{1+\varphi_{i\bar i}}\cdot |\varphi_i|^2 + \frac{1}{\eps}(1+\varphi_{i\bar i})\bigg)+|\nabla_\varphi H|^2+|\nabla_\varphi\varphi|^2 \notag\\
	&\quad \le  (\lambda+1)C_3|\nabla\varphi|^2 + 2\delta_1F\bigg(\eps |\nabla_\varphi F|^2\cdot |\nabla\varphi|^2+\frac{1}{\eps}(n+\Delta\varphi )\bigg)+|\nabla_\varphi H|^2+|\nabla_\varphi\varphi|^2
\end{align}\\
Now we choose $\eps$ sufficiently small such that $\eps ||F||_0<\frac{1}{2}$ and then choose $\delta_1$ small so that $\delta_1||F||_0<\frac{\eps}{4}$. Then we have 
\begin{align}
	& 2Re\big(\frac{H_i\varphi_{\bar{i}}\varphi_{i\bar{i}}}{1+\varphi_{i\bar{i}}}\big) \le  (\lambda+1)C_3|\nabla\varphi|^2 + \delta_1 |\nabla_\varphi F|^2\cdot |\nabla\varphi|^2+\frac{1}{2}(n+\Delta\varphi ) +|\nabla_\varphi H|^2+|\nabla_\varphi\varphi|^2
\end{align}\\
Combining all the calculations above, we get 
\begin{align}
	&(\partial_t-\Delta_{\varphi})(e^{H(F,\varphi)}(|\nabla\varphi|^2+K))\cdot e^{-H(F,\varphi)} \le  -K |\nabla_\varphi H|^2+ 2\nabla P\cdot\nabla\varphi + C_2 tr_{\omega_{\varphi}}\omega_0 |\nabla\varphi|^2 +2n \notag \\
	&\qquad +(|\nabla\varphi|^2+K)\bigg( (\lambda+1)C_1 -(\frac{\lambda}{2} - C_1-\delta_1C_1)tr_{\omega_{\varphi}}\omega_0 - \frac{3}{2} |\nabla_\varphi\varphi|^2-2\delta_1|\nabla\varphi F|^2 - c|\nabla\varphi|^{\frac{2}{n}}\bigg) \notag\\
	&\qquad\qquad +(\lambda+1)C_3|\nabla\varphi|^2 + \delta_1 |\nabla_\varphi F|^2\cdot |\nabla\varphi|^2-\frac{1}{2}(n+\Delta\varphi ) +|\nabla_\varphi H|^2+|\nabla_\varphi\varphi|^2
\end{align} \\
For $K>2$, and $\lambda$ large enough so that
\begin{align}
	\frac{\lambda}{4}>(1+\delta_1)C_1+C_2+1
\end{align}
then we have 
\begin{align}
	&(\partial_t-\Delta_{\varphi})(e^{H(F,\varphi)}(|\nabla\varphi|^2+K))\cdot e^{-H(F,\varphi)}  \le  2\nabla P\cdot\nabla\varphi -\frac{1}{2}(n+\Delta\varphi)       \notag \\
	&\quad\quad\quad\quad\quad\quad +(|\nabla\varphi|^2+K)\bigg( C_4 -\frac{\lambda}{4}tr_{\omega_{\varphi}}\omega_0 -|\nabla_\varphi\varphi|^2 -\delta_1|\nabla\varphi F|^2 - c|\nabla\varphi|^{\frac{2}{n}}\bigg) 
\end{align}\\
\end{proof}

\noindent
Next we are going to get the second derivative bound. \\

\begin{lem}[\textbf{Estimate of $\Delta\varphi$}]\label{Lem313}
		Let $(\varphi,F,P)$ be a solution of Pseudo Calabi Flow on $[0,T)\times M$ satisfying the conditions in Theorem \ref{MT31}, then for any $p>1$,  there exists a constant $C$ depending only on $||F||_0, p, T$ and the background metric,  such that 
		\begin{align}
		\sup_{t \in (0,T)} t^{1 + np} \int_M (n + \Delta\varphi)^p \omega_{0}^n \le  C (p,||F||_0,T)
		\end{align} \\
\end{lem}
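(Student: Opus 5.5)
We follow the template of Lemma \ref{Lem311}: a pointwise parabolic differential inequality for a weighted quantity, then an $L^p$ integration in $\omega_\varphi^n$, then a time-weighted ODE argument. Set
\begin{align*}
v=e^{-\lambda\varphi+\varphi^2+\delta_1F^2}(n+\Delta\varphi)
\end{align*}
(possibly adding a multiple of $|\nabla\varphi|^2$ to exploit the good term $-(n+\Delta\varphi)$ from Lemma \ref{Lem312}). The first step is to compute $(\partial_t-\Delta_\varphi)(n+\Delta\varphi)$ in normal coordinates. Since $\partial_t\varphi=F+P$, the time derivative contributes $\Delta F+\Delta P$. The Laplacian term is the standard Aubin--Yau computation:
\begin{align*}
\Delta_\varphi(n+\Delta\varphi)\ge \Delta F-C(n+\Delta\varphi)\,tr_{\omega_\varphi}\omega_0+\frac{|\varphi_{i\bar jk}|^2}{(1+\varphi_{i\bar i})(1+\varphi_{j\bar j})}.
\end{align*}
The $\Delta F$ terms cancel exactly, which is the key structural feature of PCF. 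We are left with
\begin{align*}
(\partial_t-\Delta_\varphi)(n+\Delta\varphi)\le \Delta P+C(n+\Delta\varphi)\,tr_{\omega_\varphi}\omega_0-\frac{|\varphi_{i\bar jk}|^2}{(1+\varphi_{i\bar i})(1+\varphi_{j\bar j})}.
\end{align*}
The third-order term absorbs the cross term $\nabla_\varphi H\cdot\nabla_\varphi(n+\Delta\varphi)$ after completing the square, exactly as in the proof of Lemma \ref{Lem312}. The weight $H=-\lambda\varphi+\varphi^2+\delta_1F^2$ then supplies the good terms $-\lambda\,tr_{\omega_\varphi}\omega_0-|\nabla_\varphi\varphi|^2-\delta_1|\nabla_\varphi F|^2$. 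By $tr_{\omega_\varphi}\omega_0\ge e^{-F/(n-1)}(n+\Delta\varphi)^{1/(n-1)}$, the first of these gives a superlinear absorbing term $-c(n+\Delta\varphi)^{1/(n-1)}v$.

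Next we integrate $v^{p-1}(\partial_t-\Delta_\varphi)v$ against $\omega_\varphi^n=e^F\omega_0^n$. We treat $\partial_t e^F$ by integrating $\Delta_\varphi F$ by parts and absorbing it with AM--GM and the $\delta_1|\nabla_\varphi F|^2$ term for $p$ large, exactly as in the derivation of (\ref{K3}). This yields
\begin{align*}
\partial_t\int_M v^p\omega_\varphi^n\le -\frac{p(p-1)}{2}\int_M v^{p-2}|\nabla_\varphi v|^2\omega_\varphi^n+\int_M p\,v^{p-1}e^{H}\Delta P\,\omega_\varphi^n+\int_M p\,v^p\bigl(C-c(n+\Delta\varphi)^{\frac1{n-1}}\bigr)\omega_\varphi^n .
\end{align*}

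The main obstacle is the term containing $\Delta P$ (Euclidean Laplacian of $P$), since we only know that $P$ is $\delta_*$-close to a continuous function. We split $P=\tilde P+(P-\tilde P)$ via Corollary \ref{Cor30}.

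For the $\tilde P$ part, $|\Delta\tilde P|\le C/r^2$, so it contributes at most $Cr^{-2}\int_M v^{p-1}$, which Young's inequality absorbs into the superlinear term.

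For the $(P-\tilde P)$ part, we integrate by parts once in the $\omega_0$ metric, writing $e^F\omega_0^n$ for the measure. The derivative then falls on $v^{p-1}$, $e^H$ and $e^F$. This gives terms bounded by $(\delta_*+\omega(r))$ times
\begin{align*}
\int_M p(p-1)v^{p-2}|\nabla v|\,e^H+\int_M p\,v^{p-1}e^H|\nabla(H+F)|.
\end{align*}
We convert $|\nabla\,\cdot\,|\le (n+\Delta\varphi)^{1/2}|\nabla_\varphi\,\cdot\,|$. The first term is then absorbed by the gradient term $\frac{p(p-1)}{4}\int_M v^{p-2}|\nabla_\varphi v|^2$ plus $(\delta_*+\omega(r))^2p(p-1)\int_M v^{p-1}(n+\Delta\varphi)e^{H}$. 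The second term is absorbed by $\delta_1|\nabla_\varphi F|^2$ and $|\nabla_\varphi\varphi|^2$, provided $\delta_*,\omega(r)$ are small.

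I expect the delicate point to be the leftover $(\delta_*+\omega(r))^2 p^2\int_M v^{p}(n+\Delta\varphi)^{0}\cdots$-type terms. Powers $v^{p-1}(n+\Delta\varphi)\sim v^p$ are acceptable only linearly. If a term is worse than linear, we instead use $\nabla P$ directly, bounding it via Lemma \ref{Lem311} and the elliptic equation for $P$. If necessary we also weaken $p$ to a range where the smallness of $\delta_*$ (chosen depending on $p$) suffices.

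Once these terms are absorbed we reach
\begin{align*}
\partial_t\int_M v^p\omega_\varphi^n\le -c_p\int_M v^{p+\frac1{n-1}}\omega_\varphi^n+C_p .
\end{align*}
Multiplying by $t^{\lambda_p}$ with $\lambda_p$ chosen as in the proof of Lemma \ref{Lem311} (an exponent of the form $1+np$, adjusted to the power $1/(n-1)$) and using Hölder gives the $t$-weighted bound. Since $F$ and $\varphi$ are bounded (Theorem \ref{MT22}, Theorem \ref{TMDD}), this converts to the stated bound on $\int_M(n+\Delta\varphi)^p\omega_0^n$.
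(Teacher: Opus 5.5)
Your pointwise inequality and the final time-weighted ODE argument match the paper. This includes the exact cancellation of $\Delta F$ between $\partial_t(n+\Delta\varphi)$ and Yau's inequality, the completion of the square, and the good terms coming from $H$. The $\Delta\tilde P$ piece is also harmless.

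\textbf{The gap is in the $\Delta(P-\tilde P)$ piece.} Integrating $\int_M p\,v^{p-1}e^{H}\Delta(P-\tilde P)\,e^F\omega_0^n$ by parts once moves only one of the two derivatives. You get $-\int_M\nabla(p\,v^{p-1}e^{H+F})\cdot\nabla(P-\tilde P)\,\omega_0^n$. The $C^0$ bound $|P-\tilde P|\le\delta_*+\omega(r)$ says nothing about $\nabla(P-\tilde P)$.

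The estimate you wrote, with $(\delta_*+\omega(r))$ in front of first-order quantities, is the one from Lemma \ref{Lem311}. There the $P$-term was the first-order expression $\nabla\varphi\cdot\nabla P$. For the second-order $\Delta P$ it is false. A second integration by parts would give the undifferentiated factor $P-\tilde P$, but at the price of $\Delta(v^{p-1}e^{H+F})$. That produces $\Delta v$ and $\Delta F$, which are fourth order in $\varphi$ and controlled by none of your good terms.

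So you are left needing control of $\nabla P$, and your argument has none. The equation $\Delta_\varphi P=\underline R-tr_{\omega_\varphi}Ric(\omega_0)$ is never actually used. The fallback via Lemma \ref{Lem311} does not help either, since that lemma bounds $\nabla\varphi$, not $\nabla P$.

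\textbf{The missing idea,} which is how the paper proceeds, is to pass through $|\nabla_\varphi P|^2$ and only then use the equation. After the single integration by parts, use $|\nabla f|\le(n+\Delta\varphi)^{1/2}|\nabla_\varphi f|$. This gives $v^{p-2}e^H|\nabla v||\nabla P|\le v^{p-1}|\nabla_\varphi v||\nabla_\varphi P|$, and similarly for the $\nabla H$ and $\nabla F$ terms, so that
\begin{align*}
\int_M p\,v^{p-1}e^H\Delta P\,\omega_\varphi^n\le\frac{p(p-1)}{4}\int_M v^{p-2}|\nabla_\varphi v|^2\omega_\varphi^n+\eps\int_M p\,v^p\big(|\nabla_\varphi\varphi|^2+|\nabla_\varphi F|^2\big)\omega_\varphi^n+C\int_M p\,v^p|\nabla_\varphi P|^2\omega_\varphi^n .
\end{align*}
Then write $|\nabla_\varphi P|^2=\nabla_\varphi P\cdot\nabla_\varphi\tilde P+\nabla_\varphi P\cdot\nabla_\varphi(P-\tilde P)$.
\begin{itemize}
\item The first term is handled by Cauchy--Schwarz and $|\nabla_\varphi\tilde P|^2\le Cr^{-2}tr_{\omega_\varphi}\omega_0$.
\item The second is integrated by parts in the $\omega_\varphi$-metric. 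The small undifferentiated factor $P-\tilde P$ then multiplies $\Delta_\varphi P$, which the equation bounds by $C(1+tr_{\omega_\varphi}\omega_0)$. A leftover $\nabla_\varphi v$ term is absorbed into the gradient term.
\end{itemize}
This is the essential point: the flow produces the $\omega_0$-Laplacian $\Delta P$, while the equation only controls $\Delta_\varphi P$, and $|\nabla_\varphi P|^2$ is the bridge. Everything then reduces to $\int_M v^p(1+tr_{\omega_\varphi}\omega_0)\omega_\varphi^n$, to be absorbed by the $\lambda\,tr_{\omega_\varphi}\omega_0$ term.

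When you carry this out, keep track of $\lambda$. The $\nabla H\cdot\nabla P$ term carries a factor $\lambda$, so the coefficient of the resulting $r^{-2}\int_M v^p\,tr_{\omega_\varphi}\omega_0\,\omega_\varphi^n$ grows like $\lambda^2$. The order in which $\lambda$, $r$ and $\delta_*$ are fixed must be checked carefully at this step.
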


\begin{lem}[Differential inequality for Lemma \ref{Lem313}]\label{Lem314}
	We put $H(F,\varphi) = -\lambda \varphi + \delta_0\varphi^2 + \delta_1F^2$, then for sufficiently large $\lambda$ depending on background metric and $C^0$ bound of $F$, we have the following point-wise estimate 
	\begin{align}
	&(\partial_t - \Delta_{\varphi})(e^H(n+\Delta\varphi))e^{-H} \le    \Delta P + C \notag\\
	&\quad\quad\qquad + (n+\Delta\varphi)\left(C - \frac{\lambda}{2}tr_{\omega_{\varphi}}\omega_0 - 2\delta_0|\nabla_{\varphi}\varphi|^2 - 2\delta_1|\nabla_{\varphi}F|^2 \right)
\end{align}
where $C$ depends on background metric and $C^0$ bound of $F$. \\
\end{lem}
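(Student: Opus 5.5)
The plan is to run the standard Aubin--Yau computation for $\Delta\varphi$ along the flow, twisted by $e^H$. The structural point to exploit is that, by the equation, the fourth-order term $\Delta F$ coming from $\partial_t\Delta\varphi$ cancels against the one coming from $\Delta_\varphi\Delta\varphi$. What survives is only $\Delta P$ together with curvature terms controlled by $(n+\Delta\varphi)\,tr_{\omega_\varphi}\omega_0$.

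Write $u=n+\Delta\varphi$, with $\Delta$ the Laplacian of $\omega_0$. I will first expand
\begin{align*}
(\partial_t-\Delta_\varphi)(e^Hu)e^{-H}=(\partial_t-\Delta_\varphi)u+u\big((\partial_t-\Delta_\varphi)H-|\nabla_\varphi H|^2\big)-2Re\langle\nabla_\varphi H,\nabla_\varphi u\rangle_\varphi .
\end{align*}
Each piece is then handled separately.

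For the first term, the flow gives $\partial_t u=\Delta(F+P)$. Yau's computation in normal coordinates for $\omega_0$ diagonalizing $\varphi_{i\bar j}$ gives
\begin{align*}
\Delta_\varphi u\ge \Delta F-C-C\,u\,tr_{\omega_\varphi}\omega_0+\sum_{i,j,k}\frac{|\varphi_{i\bar jk}|^2}{(1+\varphi_{i\bar i})(1+\varphi_{j\bar j})}.
\end{align*}
The Cauchy--Schwarz step of Yau shows that the last sum is at least $|\nabla_\varphi u|^2_\varphi/u$. Hence
\begin{align*}
(\partial_t-\Delta_\varphi)u\le \Delta P+C+Cu\,tr_{\omega_\varphi}\omega_0-\frac{|\nabla_\varphi u|^2}{u}.
\end{align*}
The cross term is then absorbed by completing the square:
\begin{align*}
-\frac{|\nabla_\varphi u|^2}{u}-2Re\langle\nabla_\varphi H,\nabla_\varphi u\rangle-u|\nabla_\varphi H|^2\le 0 .
\end{align*}
This is the step I expect to be the main obstacle, namely verifying carefully that the third-order term available from $\Delta_\varphi\Delta\varphi$ is really large enough to dominate $|\nabla_\varphi u|^2/u$, with the right constants. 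After this, the $-|\nabla_\varphi H|^2$ term is used up and no gradient-of-$H$ term remains.

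Next I compute $(\partial_t-\Delta_\varphi)H$. Using $\partial_t\varphi=F+P$ and $\Delta_\varphi\varphi=n-tr_{\omega_\varphi}\omega_0$, and noting that $\partial_tF=\Delta_\varphi(F+P)$ gives $(\partial_t-\Delta_\varphi)F=\Delta_\varphi P=\underline R-tr_{\omega_\varphi}Ric(\omega_0)$, I obtain
\begin{align*}
(\partial_t-\Delta_\varphi)H=(2\delta_0\varphi-\lambda)(F+P-n+tr_{\omega_\varphi}\omega_0)+2\delta_1F(\underline R-tr_{\omega_\varphi}Ric(\omega_0))-2\delta_0|\nabla_\varphi\varphi|^2-2\delta_1|\nabla_\varphi F|^2 .
\end{align*}
To control this I need $\|P\|_0$, $\|F\|_0$ and $\|\varphi\|_0$ on $[0,T)$. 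The first comes from Theorem \ref{MT21}, using the $C^0$ bound on $F$ and Lemma \ref{Lem21}. The bound on $\varphi$ then follows from Theorem \ref{MT22}; alternatively, after normalization, from Theorem \ref{TMDD}.

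With these bounds, $|F+P-n|\le C$. I choose $\delta_0$ and $\delta_1$ small so that $2\delta_0|\varphi|+2\delta_1|F||Ric(\omega_0)|\le 1$, and then $\lambda$ large compared with this and with the curvature constant $C$ in Yau's inequality. The $tr_{\omega_\varphi}\omega_0$ contributions then combine to at most $-\tfrac{\lambda}{2}tr_{\omega_\varphi}\omega_0$, and the remaining constants are of the form $C(\lambda)$.

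Collecting everything yields
\begin{align*}
(\partial_t-\Delta_\varphi)(e^Hu)e^{-H}\le \Delta P+C+u\Big(C-\tfrac{\lambda}{2}tr_{\omega_\varphi}\omega_0-2\delta_0|\nabla_\varphi\varphi|^2-2\delta_1|\nabla_\varphi F|^2\Big),
\end{align*}
which is the claimed inequality.
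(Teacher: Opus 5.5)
Your proposal is correct and follows essentially the same route as the paper. You expand the twisted operator and use Yau's formula for $\Delta_\varphi\Delta\varphi$ so that its $\Delta F$ term cancels against $\partial_t\Delta\varphi=\Delta(F+P)$. You then bound the third-order term below by $|\nabla_\varphi(n+\Delta\varphi)|^2/(n+\Delta\varphi)$ via Cauchy--Schwarz, absorb the cross term by completing the square, and take $\lambda$ large against the $tr_{\omega_\varphi}\omega_0$ contributions. You are also more explicit than the paper about needing bounds on $\|P\|_0$ and $\|\varphi\|_0$ in $(\partial_t-\Delta_\varphi)H$.

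The only adjustment is that shrinking $\delta_0,\delta_1$ is unnecessary; the paper simply takes $\frac{\lambda}{2}>(\delta_0+\delta_1)C_1+C_2$. It is better not to impose the smallness, since the proof of Lemma \ref{Lem313} later uses $\delta_0=\delta_1=1$.
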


\begin{proof}[Proof of Lemma \ref{Lem313}]
Now we set \( u = e^H(n + \Delta\varphi) \) bring this into (\ref{K3}), then we will have:
\begin{align}\label{K4}
	 & \partial_t\big( \int_M u^p\omega_\varphi^n\big)\le  - \int_M \frac{p(p-1)}{2} u^{p-2} |\nabla_\varphi u|^2_\varphi\omega_\varphi^n +\int_M u^p(|Ric|tr_\varphi\omega_0+\underline R)\omega_\varphi^n \notag\\
	 &\quad\quad\qquad\qquad\qquad +\int_M pu^{p-1}\big[(\partial_t-\Delta_\varphi)u+  \frac{1}{2(p-1)}|\nabla_\varphi F|^2 u  \big]\omega_\varphi^n \notag\\
	&\quad\qquad \le  \int_M pu^p \left(C_1 - (\frac{\lambda}{2}-|Ric|)tr_{\omega_{\varphi}}\omega_0 - 2\delta_0|\nabla_{\varphi}\varphi|^2 - (2\delta_1-\frac{1}{2(p-1)}) |\nabla_{\varphi}F|^2 \right) \notag\\
	&\quad\qquad\qquad - \int_M \frac{p(p-1)}{2} u^{p-2} |\nabla_\varphi u|^2_\varphi\omega_\varphi^n +C_2 \int_M pu^{p-1}e^H\omega_\varphi^n+ \int_M pu^{p-1}e^H\Delta P\, \omega_\varphi^n
\end{align}\\
The term $\Delta P$ can be handled using integral by part. 
\begin{align}\label{0.2N}
	& \int_M p u^{p-1} e^H \Delta P \omega_{\varphi}^n = \int_M p u^{p-1} e^H \Delta P e^F \omega_0^n \notag\\
	& = - \int_M p(p-1) u^{p-2} e^H \nabla u \cdot \nabla P \omega_{\varphi}^n - \int_M p u^{p-1} e^H \nabla H \cdot \nabla P \omega_{\varphi}^n  - \int_M p u^{p-1} e^H \nabla P \cdot \nabla F \omega_{\varphi}^n.
\end{align}\\
In the above, we estimate:
\begin{align}
	& - \int_M p(p-1) u^{p-2} e^H \nabla u \cdot \nabla P \omega_{\varphi}^n \le  \int_M p(p-1) u^{p-2} e^H |\nabla u||\nabla P| \omega_{\varphi}^n \notag\\
	&\qquad \le  \int_M p(p-1) u^{p-1} |\nabla_{\varphi} u||\nabla_{\varphi} P| \omega_{\varphi}^n \notag\\
	&\qquad \le  \int_M \frac{p(p-1)}{4} u^{p-2} |\nabla_{\varphi} u|^2 \omega_{\varphi}^n  + \int_M 4p(p-1) u^p |\nabla_{\varphi} P|^2 \omega_{\varphi}^n.
\end{align}\\
Next:
\begin{align}\label{0.4N}
	& -\int_M p u^{p-1} e^H \nabla H \cdot \nabla P \omega_{\varphi}^n - \int_M p u^{p-1} e^H \nabla P \cdot \nabla F \omega_{\varphi}^n \notag\\
	& \qquad\qquad \le  \int_M p u^{p-1} e^H C_3( |\nabla \varphi| + |\nabla F|) |\nabla P| \omega_{\varphi}^n \notag\\
	& \qquad\qquad   \le  \int_M p u^{p-1} e^H C_3( |\nabla_\varphi \varphi| + |\nabla_\varphi F|) |\nabla_\varphi P|(n+\Delta\varphi) \omega_{\varphi}^n \notag\\
	&\qquad\qquad \le  \int_M pu^p\bigg(\eps ( |\nabla_\varphi \varphi|^2 + |\nabla_\varphi F|^2)+ \frac{C_3^2}{\eps} |\nabla_\varphi P|^2 \bigg) \omega_\varphi^n
\end{align}\\
Therefore we have 
\begin{align}
	& \int_M p u^{p-1} e^H \Delta P \omega_{\varphi}^n \le  \int_M \frac{p(p-1)}{4} u^{p-2} |\nabla_{\varphi} u|^2 \omega_{\varphi}^n \notag\\
	&\qquad + \int_M pu^p\bigg(\eps ( |\nabla_\varphi \varphi|^2 + |\nabla_\varphi F|^2)+ \big(\frac{C_3^2}{\eps}+4(p-1)\big) |\nabla_\varphi P|^2 \bigg) \omega_\varphi^n
\end{align}\\
Bring this back to (\ref{K4}), we have 
\begin{align}
	&\partial_t\big( \int_M u^p\omega_\varphi^n\big)\le  \int_M pu^p \bigg(C_1 - (\frac{\lambda}{2}-|Ric|)tr_{\omega_{\varphi}}\omega_0 - (2\delta_0-\eps) |\nabla_{\varphi}\varphi|^2 \notag \\
	&\qquad\qquad\qquad\qquad\qquad\qquad\qquad\qquad\qquad - (2\delta_1-\frac{1}{2(p-1)}-\eps ) |\nabla_{\varphi}F|^2 \bigg)\,\omega_\varphi^n \notag\\
	&\qquad\qquad - \int_M \frac{p(p-1)}{4} u^{p-2} |\nabla_\varphi u|^2_\varphi\omega_\varphi^n +C_2 \int_M pu^{p-1}e^H \omega_\varphi^n \notag\\
	&\qquad\qquad\qquad\qquad\qquad + \bigg(\frac{C_3^2}{\eps}+4(p-1)\bigg) \int_M p u^p |\nabla_{\varphi} P|^2 \omega_{\varphi}^n 
\end{align}\\
Let $\delta_0=\delta_1=1$ and choose $\eps$ small and $p$ very large such that 
\begin{align}
	2\delta_0-\eps>0,\qquad 2\delta_1-\frac{1}{2(p-1)}-\eps >0
\end{align}
We can get rid of the terms $|\nabla_\varphi\varphi|^2$ and $|\nabla_\varphi F|^2$. Also notice that 
\begin{align}
	\int_M pu^{p-1}e^H \omega_\varphi^n \le  \int_M pu^{p} \omega_\varphi^n +C_4
\end{align}\\
We then get 
\begin{align}\label{K41}
	&\partial_t\big( \int_M u^p\omega_\varphi^n\big)\le  \int_M pu^p \bigg(C_5 - (\frac{\lambda}{2}-|Ric|)tr_{\omega_{\varphi}}\omega_0 \bigg)\,\omega_\varphi^n +C_4 \notag\\
	&\qquad\qquad  - \int_M \frac{p(p-1)}{4} u^{p-2} |\nabla_\varphi u|^2_\varphi\omega_\varphi^n + C_6\int_M p u^p |\nabla_{\varphi} P|^2 \omega_{\varphi}^n 
\end{align}\\
For the last term, we need to use the condition that $P$ is $\delta_*$-close to a continuous function with modulus $\omega(r)$.  Let $\tilde{P}$ be the $C^2$ function given by Corollary \ref{Cor30}.
\begin{align}\label{K5}
	\int_M p u^p |\nabla_{\varphi} P|^2 \omega_{\varphi}^n = \int_M p u^p \nabla_{\varphi} P \cdot \nabla_{\varphi} \tilde{P} \omega_{\varphi}^n + \int_M p u^p \nabla_{\varphi} P \cdot \nabla_{\varphi}(P - \tilde{P}) \omega_{\varphi}^n.
\end{align}\\
Notice 
\begin{align}
	|\nabla \tilde{P}| \le  \frac{C_{7}}{r}\  \Longrightarrow\  |\nabla_{\varphi} \tilde{P}| \le  \frac{C_{7}}{r} (tr_{\varphi} \omega_0)^{1/2} 
\end{align}
Therefore:
\begin{align}\label{K6}
	&\int_M p u^p \nabla_{\varphi} P \cdot \nabla_{\varphi} \tilde{P} \omega_{\varphi}^n \le  \int_M p \frac{C_{7}}{r} u^p |\nabla_{\varphi} P| (tr_{\omega_{\varphi} }\omega_0)^{1/2} \omega_{\varphi}^n \notag\\
	&\le  \int_M \frac{p}{2} u^p |\nabla_{\varphi} P|^2 \omega_{\varphi}^n + \int_M \frac{p}{2} \frac{C_{7}^2}{r^2} u^p tr_{\omega_{\varphi}} \omega_0 \omega_{\varphi}^n 
\end{align}\\
Hence
\begin{align}
	\int_M p u^p |\nabla_{\varphi} P|^2 \omega_{\varphi}^n \le  C(p,r) \int_M  u^p tr_{\omega_{\varphi} }\omega_0 \omega_{\varphi}^n + 2\int_M p u^p \nabla_{\varphi} P \cdot \nabla_{\varphi}(P - \tilde{P}) \omega_{\varphi}^n 
\end{align}
And
\begin{align}\label{K7}
	&\int_M p u^p \nabla_{\varphi} P \cdot \nabla_{\varphi}(P - \tilde{P}) \omega_{\varphi}^n \notag\\
	&\qquad =  \int_M\nabla_\varphi P\cdot \left(\nabla_\varphi\big( pu^p(P-\tilde P)\big)- (P-\tilde P)p(p-1)u^{p-1} \nabla_{\varphi} u \right) \omega_{\varphi}^n \notag\\
	&\qquad =-\int_M pu^p(P-\tilde P)\Delta_\varphi P \omega_{\varphi}^n -\int_M (P-\tilde P)p(p-1)u^{p-1} \nabla_\varphi P\cdot\nabla_\varphi u\, \omega_\varphi^n           \notag\\
	&\qquad \le  \int_M (\delta_* + \omega(r))pu^p C_{8}(tr_{\omega_{\varphi}}\omega_0+1)\,\omega_\varphi^n +  \int_M  \frac{p(p-1)}{16C_6} u^{p-2} |\nabla_{\varphi} u|^2 \omega_{\varphi}^n \notag\\
	&\qquad\qquad \quad  + \int_M 4C_6 p(p-1) u^p (\delta_* + \omega(r))^2 |\nabla_{\varphi} P|^2 \omega_{\varphi}^n.
\end{align}\\
Putting these together, we will have
\begin{align}
	& \int_M p u^p |\nabla_{\varphi} P|^2 \omega_{\varphi}^n \le  C(p,r) \int_M  u^p tr_{\omega_{\varphi}} \omega_0 \omega_{\varphi}^n +\int_M  \frac{p(p-1)}{8C_6} u^{p-2} |\nabla_{\varphi} u|^2 \omega_{\varphi}^n  \notag\\
	& +  \int_M 2(\delta_* + \omega(r))pu^p C_{8}(tr_{\omega_{\varphi}}\omega_0+1)\,\omega_\varphi^n  + \int_M 8C_6 p(p-1) u^p (\delta_* + \omega(r))^2 |\nabla_{\varphi} P|^2 \omega_{\varphi}^n
\end{align}\\
Then we choose $r,\ \delta_*$ small enough such that 
\begin{align}
	8C_6 p(p-1) u^p (\delta_* + \omega(r))^2 <\frac{1}{2}
\end{align}\\
We will have 
\begin{align}
	& C_6 \int_M p u^p |\nabla_{\varphi} P|^2 \omega_{\varphi}^n \le  2C_6 C(p,r) \int_M  u^p tr_{\omega_{\varphi} }\omega_0 \omega_{\varphi}^n +\int_M  \frac{p(p-1)}{4} u^{p-2} |\nabla_{\varphi} u|^2 \omega_{\varphi}^n  \notag\\
	&\qquad\qquad\qquad\qquad + \int_M 4C_6 C_{8}(\delta_* + \omega(r))pu^p (tr_{\omega_{\varphi}}\omega_0+1)\,\omega_\varphi^n   \notag\\
	&\qquad\qquad \le   \int_M C_{9}pu^p (tr_{\omega_{\varphi}}\omega_0+1)\,\omega_\varphi^n   +\int_M  \frac{p(p-1)}{4} u^{p-2} |\nabla_{\varphi} u|^2 \omega_{\varphi}^n 
\end{align}\\
Combining those together, the expression (\ref{K41}) turn to be
\begin{align}
	&\partial_t\big( \int_M u^p\omega_\varphi^n\big)\le  \int_M pu^p \bigg(C_{10} - (\frac{\lambda}{2}-|Ric|-C_{11})tr_{\omega_{\varphi}}\omega_0 \bigg)\,\omega_\varphi^n +C_{12} 
\end{align}\\
Notice that
\begin{align}
	tr_{\varphi} \omega_0 \ge  e^{-\frac{F}{n-1}} (n + \Delta\varphi)^{\frac{1}{n-1}} \ge  c (n+\Delta\varphi)^{\frac{1}{n}}-C_{13}
\end{align}\\
and choosing $\lambda$ very large so that
\begin{align}
	\frac{\lambda}{2}-|Ric(\omega_0)|-C_{11}> \max e^{H/n}
\end{align}
we get:
\begin{align}
	\partial_t\left( \int_M u^p \omega_{\varphi}^n \right) \le  -c \int_M u^{p + \frac{1}{n}} \omega_{\varphi}^n + C_{14}\int_M p u^p \omega_{\varphi}^n + C_{12} 
\end{align}
for every $p$ large enough. Using previous calculations, let \( \lambda_p > 0 \), we then compute:
\begin{align}
	& \partial_t \left( t^{\lambda_p} \int_M u^p \, \omega_{\varphi}^n \right) \le  \lambda_p t^{\lambda_p - 1} \int_M u^p \, \omega_{\varphi}^n - c t^{\lambda_p} \int_M u^{p + \frac{1}{n}} \, \omega_{\varphi}^n + C_{14}t^{\lambda_p} \int_M p u^p \omega_{\varphi}^n + C_{12}t^{\lambda_p}  \notag \\
	&\qquad\qquad \le  \lambda_p t^{\lambda_p - 1} \int_M u^p \, \omega_{\varphi}^n - c \left( t^{\lambda_p \frac{np}{1 + np}} \int_M u^p \, \omega_{\varphi}^n \right)^{1 + \frac{1}{np}} + C_{14}t^{\lambda_p} \int_M p u^p \omega_{\varphi}^n + C_{12}t^{\lambda_p} 
\end{align}\\
Choose \( \lambda_p = 1 + np \) so that \( \lambda_p - 1 = \lambda_p \frac{np}{1 + np} \), and we now have:
\begin{align}
	&\partial_t \left( t^{\lambda_p} \int_M u^p \, \omega_{\varphi}^n \right) \le  C_{15} \left( t^{\lambda_p-1} \int_M u^p \, \omega_{\varphi}^n \right) - c \left( t^{\lambda_p-1} \int_M u^p \, \omega_{\varphi}^n \right)^{1 + \frac{1}{np}} + C_{12} t^{\lambda_p}
\end{align}\\
Which means, on finite time interval
\begin{align}
	\sup_{t \in (0,T)} t^{1 + np} \int_M (n + \Delta\varphi)^p \omega_{\varphi}^n \le  C_{16}
\end{align}\\
Hence 
\begin{align}
	\sup_{t \in (0,T)} t^{1 + np} \int_M (n + \Delta\varphi)^p \omega_{0}^n \le  C(p,||F||_0,T)
\end{align}\\
\end{proof}

\begin{proof}[Proof of Lemma \ref{Lem314}]
 We compute:
\begin{align}
	&(\partial_t - \Delta_{\varphi})(e^{H(F,\varphi)}(n+\Delta\varphi))e^{-H} = (\partial_t - \Delta_{\varphi})H(F,\varphi)(n+\Delta\varphi) \notag\\
	&\quad + (\partial_t - \Delta_{\varphi})(n+\Delta\varphi) - |\nabla_{\varphi}H|^2(n+\Delta\varphi) - 2\nabla_{\varphi}H \cdot \nabla_{\varphi}(n+\Delta\varphi).
\end{align}\\
We already know that:
\begin{align}
	(\partial_t-\Delta_{\varphi})H(F,\varphi) \le  (\lambda+\delta_0)C_1 -(\lambda-\delta_0 C_1-\delta_1C_1)tr_{\omega_{\varphi}}\omega_0 -2\delta_0|\nabla_\varphi\varphi|^2-2\delta_1|\nabla_\varphi F|^2 
\end{align}\\
Following Yau's calculations \cite{Yau1978}, we get
\begin{align}\label{Yau}
	&\Delta_\varphi\Delta\varphi =\frac{1}{1+\varphi_{k\bar k}}(g^{i\bar j}\varphi_{i\bar j})_{k\bar k} = \frac{1}{1+\varphi_{k\bar k}} (R_{k\bar ki\bar i}\varphi_{i\bar i} +\varphi_{k\bar ki\bar i}) \notag \\	
	&\quad	=R_{k\bar ki\bar i}\frac{1+\varphi_{i\bar i}}{1+\varphi_{k\bar k}} + \frac{|\varphi_{k\bar l i}|^2}{(1 + \varphi_{k\bar k})(1 + \varphi_{l\bar l})} +\Delta F-\underline R \notag \\
	&\quad \ge  -C_2 tr_{\omega_{\varphi}}\omega_0\,(n+\Delta\varphi) + \frac{|\varphi_{k\bar l i}|^2}{(1 + \varphi_{k\bar k})(1 + \varphi_{l\bar l})} +\Delta F-\underline R
\end{align}\\
We also observe the following 
\begin{align}
	\frac{|(\Delta\varphi)_i|^2}{1 + \varphi_{i\bar{i}}} = \frac{\left|\sum_k \varphi_{k\bar{k}i}\right|^2}{1 + \varphi_{i\bar{i}}} \le  \frac{|\varphi_{k\bar{k}i}|^2 (n + \Delta\varphi)}{(1 + \varphi_{i\bar{i}})(1 + \varphi_{k\bar{k}})} \le  \frac{|\varphi_{k\bar{l}i}|^2 (n + \Delta\varphi)}{(1 + \varphi_{k\bar{k}})(1 + \varphi_{l\bar{l}})}.
\end{align}\\
So we have 
\begin{align}
	\Delta_\varphi\Delta\varphi \ge  -C_2 tr_{\omega_{\varphi}}\omega_0\,(n+\Delta\varphi) + \frac{|\nabla_{\varphi}(n+\Delta\varphi)|^2}{n+\Delta\varphi} +\Delta F-\underline R
\end{align}\\
Then:
\begin{align}
	(\partial_t - \Delta_{\varphi})(n+\Delta\varphi) \le  C_2(tr_{\omega_{\varphi}} \omega_0)(n+\Delta\varphi) - \frac{|\nabla_{\varphi}(n+\Delta\varphi)|^2}{n+\Delta\varphi} + \Delta P + \underline R
\end{align}\\
We note the completion of square:
\begin{align}
	|\nabla_{\varphi}H|^2(n+\Delta\varphi) + 2\nabla_{\varphi}H \cdot \nabla_{\varphi}(n+\Delta\varphi) + \frac{|\nabla_{\varphi}(n+\Delta\varphi)|^2}{n+\Delta\varphi} \ge  0.
\end{align}\\
Therefore, we get:
\begin{align}
	&(\partial_t - \Delta_{\varphi})(e^H(n+\Delta\varphi))e^{-H} \le  C_2(tr_{\omega_{\varphi}} \omega_0)(n+\Delta\varphi)  + \Delta P + \underline R \notag\\
	& (n+\Delta\varphi)\left( (\lambda+\delta_0)C_1 -(\lambda-\delta_0 C_1-\delta_1C_1)tr_{\omega_{\varphi}}\omega_0 -2\delta_0|\nabla_\varphi\varphi|^2-2\delta_1|\nabla\varphi F|^2 \right) \notag\\
\end{align}\\
We choose $\lambda$ large enough so that 
\begin{align}
	\frac{\lambda}{2}> (\delta_0+\delta_1)C_1+C_2
\end{align}\\
then it will give us 
\begin{align}
	&(\partial_t - \Delta_{\varphi})(e^H(n+\Delta\varphi))e^{-H} \le    \Delta P + C_3 \notag\\
	&\quad\quad\qquad + (n+\Delta\varphi)\left(C_3 - \frac{\lambda}{2}tr_{\omega_{\varphi}}\omega_0 - 2\delta_0|\nabla_{\varphi}\varphi|^2 - 2\delta_1|\nabla_{\varphi}F|^2 \right) 
\end{align}\\
\end{proof}

\noindent
Then we estimate $|\nabla_\varphi P|^2$.  \\

\begin{lem}[\textbf{Estimate of $|\nabla_\varphi P|^2$}]\label{Lem315}
	Let $\varphi$ be a smooth $\omega_0$-psh function.  Let $P$ solve $\Delta_{\varphi}P=-tr_{\omega_{\varphi}}Ric(\omega_0)+\underline{R}$,  and $\int_MP\omega_{\varphi}^n=0$.  Let $p>1$,  $K\ge 2$ large enough,  there exists $\delta_*>0$ small enough,  depending on $p$,  background metric,  and $||\log\frac{\omega_{\varphi}^n}{\omega_0^n}||_0$ and the choice of $K$,  such that if $P$ is $\delta_*$-close to a continuous function with modulus $\omega(r)$,  one has 
	\begin{align}
		K\int_M|\nabla_{\varphi}P|^{2(p+1)}\omega_{\varphi}^n\le C(p)\big(\int_M(1+tr_{\omega_{\varphi}}\omega_0)^{p+1}\omega_{\varphi}^n+\int_M|\nabla_{\varphi}F|^{2(p+1)}\omega_{\varphi}^n\big)+C. 
	\end{align}\\
\end{lem}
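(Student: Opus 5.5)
We will run a Bochner-type computation for $|\nabla_\varphi P|^2$ and integrate it against a high power, in the spirit of the Chen--Cheng gradient estimate. The closeness of $P$ to a continuous function will absorb the bad second-order terms.

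\textbf{Pointwise inequality.} Set $w=|\nabla_\varphi P|^2_\varphi+K$. Differentiate $\Delta_\varphi P=\underline R-tr_{\omega_\varphi}Ric(\omega_0)$ and commute derivatives. The Ricci term of $\omega_\varphi$ is $Ric(\omega_\varphi)=Ric(\omega_0)-\sqrt{-1}\partial\bar\partial F$, so the Bochner formula produces a term $-F_{i\bar j}P_{\bar i}P_j$. This gives schematically
\begin{align*}
\Delta_\varphi w\ \ge\ |\nabla_\varphi\nabla_\varphi P|^2+|\nabla_\varphi\bar\nabla_\varphi P|^2-2\,\mathrm{Re}\big(\nabla_\varphi P\cdot\nabla_\varphi(tr_{\omega_\varphi}Ric(\omega_0))\big)-F_{i\bar j}P_{\bar i}P_j\ \text{(in $\omega_\varphi$-normal coordinates)}.
\end{align*}
Expanding $\nabla(tr_{\omega_\varphi}Ric)$ gives terms of the form $\varphi_{i\bar j k}\,Ric\cdot(\text{inverse metric})^2$. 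These are bounded by $|\nabla_\varphi P|$ times third derivatives of $\varphi$.

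\textbf{Integral version.} Multiply by $w^{p}$ and integrate against $\omega_\varphi^n$. The Hessian term will be integrated by parts: $\int w^p F_{i\bar j}P_{\bar i}P_j\,\omega_\varphi^n$ moves one derivative onto $w^pP_{\bar i}P_j$. This produces $\int w^{p}|\nabla_\varphi F|\,|\nabla_\varphi\nabla_\varphi P|\,|\nabla_\varphi P|$ together with $\int p\,w^{p-1}|\nabla_\varphi w|\,|\nabla_\varphi F|\,w$ and a term involving $\Delta_\varphi P$, which is bounded by $C(1+tr_{\omega_\varphi}\omega_0)$.

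The term $\nabla_\varphi P\cdot\nabla_\varphi tr_{\omega_\varphi}Ric$ will also be integrated by parts. Writing it as a divergence, it becomes $\int tr_{\omega_\varphi}Ric\,\cdot\,\mathrm{div}(w^p\nabla_\varphi P)$, which is controlled by $\int (1+tr_{\omega_\varphi}\omega_0)\big(w^p|\Delta_\varphi P|+p\,w^{p-1}|\nabla_\varphi w||\nabla_\varphi P|\big)$. Cauchy--Schwarz then absorbs the $|\nabla w|^2$ and Hessian terms into the good terms on the left. What remains is
\begin{align*}
\int w^{p}\big(C(p)(1+tr_{\omega_\varphi}\omega_0)^2+C|\nabla_\varphi F|^2\big)w\,\omega_\varphi^n\quad\text{and}\quad \int w^{p+1}\cdot(\text{lower order}).
\end{align*}
Young's inequality with exponents $p+1$ and $\frac{p+1}{p}$ will split these products into $\eps\int w^{p+1}$ plus the right-hand side quantities $\int(1+tr_{\omega_\varphi}\omega_0)^{p+1}$ and $\int|\nabla_\varphi F|^{2(p+1)}$. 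The weight exponents need bookkeeping, since the powers in the statement are $p+1$; I would adjust the test power to $w^{p-1}$ or $w^{p}$ as needed.

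\textbf{Main obstacle.} After these steps there is still a term $\int w^{p}\,|\nabla_\varphi P|^2\cdot(\ldots)$, or equivalently a coercive quantity $K\int w^{p}|\nabla_\varphi P|^2$, which a priori cannot be absorbed. This is where the $\delta_*$-closeness enters, exactly as in (\ref{K5})--(\ref{K7}). Let $\tilde P$ be given by Corollary \ref{Cor30}, and write $|\nabla_\varphi P|^2=\nabla_\varphi P\cdot\nabla_\varphi\tilde P+\nabla_\varphi P\cdot\nabla_\varphi(P-\tilde P)$.

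The first piece is bounded using $|\nabla_\varphi\tilde P|\le \frac{C}{r}(tr_{\omega_\varphi}\omega_0)^{1/2}$. For the second piece, integrate by parts onto $P-\tilde P$, which has size $\le\delta_*+\omega(r)$. This produces $(\delta_*+\omega(r))\int w^{p}|\Delta_\varphi P|$ and $(\delta_*+\omega(r))\,p\int w^{p-1}|\nabla_\varphi w||\nabla_\varphi P|$. The latter is absorbed into the gradient term on the left once $p^2(\delta_*+\omega(r))^2$ is small compared with $1/K$.

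Choosing $r$ small and then $\delta_*$ small, depending on $p$, $K$ and $\|F\|_0$, closes the estimate. Finally, $w\ge|\nabla_\varphi P|^2$ gives the stated inequality with the factor $K$ on the left. I expect the delicate point to be the correct integration by parts of the third-derivative term $\nabla tr_{\omega_\varphi}Ric$, so that no uncontrolled $|\nabla^3\varphi|$ remains.
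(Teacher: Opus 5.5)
Your ingredients are sound and the route can be made to work. As written, though, the argument never produces the one feature that makes the lemma useful: the factor $K$ on the left, with a $K$-independent coefficient in front of $\int_M|\nabla_\varphi F|^{2(p+1)}\omega_\varphi^n$. Lemma \ref{Lem316} uses the lemma exactly this way, dividing by $K$ and taking $K$ large to absorb the $|\nabla_\varphi F|$-contribution.

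Your $K$ is an additive constant in $w=|\nabla_\varphi P|^2+K$. It does not affect $\nabla_\varphi w$ or $\Delta_\varphi w$, so your Bochner computation contains no term proportional to $K$. Nothing generates the ``coercive quantity $K\int_M w^p|\nabla_\varphi P|^2$'', and the closing sentence (``$w\ge|\nabla_\varphi P|^2$ gives the stated inequality with the factor $K$'') is a non sequitur.

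You need to say where $K$, or equivalently the smallness, comes from. One way is to take the closeness identity
\[
\int_M w^p|\nabla_\varphi P|^2\omega_\varphi^n=\int_M w^p\,\nabla_\varphi P\cdot\nabla_\varphi\tilde P\,\omega_\varphi^n-\int_M (P-\tilde P)\big(w^p\Delta_\varphi P+p\,w^{p-1}\nabla_\varphi w\cdot\nabla_\varphi P\big)\omega_\varphi^n ,
\]
multiply it by $K$, and add it to the Bochner inequality integrated against $w^{p-1}$. Use $w^{p-1}$ rather than $w^p$ so that all Young splittings land on the exponent $p+1$. Then:
\begin{itemize}
\item the left side becomes $K\int_M w^p|\nabla_\varphi P|^2\omega_\varphi^n+c_p\int_M w^{p-2}|\nabla_\varphi w|^2\omega_\varphi^n$;
\item the $|\nabla_\varphi F|$-terms come only from the Bochner part, with constants depending on $p$ alone;
\item the cross term $K(\delta_*+\omega(r))\,p\int_M w^{p-1}|\nabla_\varphi w||\nabla_\varphi P|\,\omega_\varphi^n$ is absorbed under your condition $Kp^2(\delta_*+\omega(r))^2\ll 1$.
\end{itemize}
Your stated condition suggests this is what you had in mind. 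If so, the $K$ on the left comes from this multiplication, and the additive $K$ in $w$ is superfluous.

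Alternatively, you can skip the factor $K$. In your scheme $|\nabla_\varphi F|$ enters only through the Bochner bound on $\int_M w^{p-2}|\nabla_\varphi w|^2\omega_\varphi^n$. That quantity appears in the closeness identity multiplied by $p^2(\delta_*+\omega(r))^2$, so the coefficient of $\int_M|\nabla_\varphi F|^{2(p+1)}\omega_\varphi^n$ can be made $\le C(p)/K$ by choosing $r,\delta_*$ depending on $K$. In either version the coefficient of $\int_M(1+tr_{\omega_\varphi}\omega_0)^{p+1}\omega_\varphi^n$ depends on $r$, as it already does in the paper's (\ref{K8}). This is harmless, because Lemma \ref{Lem316} controls that term via Lemma \ref{Lem313}.

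For comparison, the paper does not use the integral identity in this lemma; that is the device of (\ref{K5})--(\ref{K7}) in Lemma \ref{Lem313}. Here it builds the closeness into the pointwise weight $u=e^{K(P-\tilde P)^2}|\nabla_\varphi P|^2$. The Laplacian of $K(P-\tilde P)^2$ contributes $2K|\nabla_\varphi(P-\tilde P)|^2u$ to $\Delta_\varphi u$. The losses from completing squares against the weight's gradient are dominated once $K(\delta_*+\omega(r))^2$ is small, and the result is the coercive term in (\ref{K8}). The terms $\nabla_\varphi P\cdot\nabla_\varphi\Delta_\varphi P$, $\Delta_\varphi F\,u$ and the $dd^cF$ term are then integrated by parts much as you propose.

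Your first repair is the integrated form of this weight. The paper's version makes the $K$-structure automatic. Yours avoids the completed squares involving $\nabla_\varphi\tilde P$ and needs only $|\nabla\tilde P|\le C/r$, whereas the paper also uses the second-derivative bound of Corollary \ref{Cor30} to control $\Delta_\varphi\tilde P$. Finally, the third-order term you flag as delicate is routine: the divergence form you describe is exactly what the paper does in (\ref{4.135N}).
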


\begin{proof}[Proof of Lemma \ref{Lem315}]
 Let $K>0$,  we compute:
\begin{align}
	& \Delta_{\varphi}\big(e^{K(P-\tilde P)^2}|\nabla_{\varphi}P|^2\big)e^{-K(P-\tilde P)^2} =\left( \Delta_{\varphi}(K(P-\tilde P)^2)+|\nabla_{\varphi}(K(P-\tilde P)^2)|^2 \right)(|\nabla_\varphi P|^2)  \notag \\
	&\qquad\qquad\qquad\qquad\qquad +\Delta_{\varphi}(|\nabla_{\varphi}P|^2) +2\nabla_{\varphi}(K(P-\tilde P)^2)\cdot \nabla_{\varphi}(|\nabla_{\varphi}P|^2).
\end{align}\\
In the above,  we have:
\begin{align}
\Delta_{\varphi}(|\nabla_{\varphi}P|^2)=g_{\varphi}^{i\bar{q}}g_{\varphi}^{p\bar{j}}P_{,iq}P_{,\bar{p}\bar{j}}+g_{\varphi}^{i\bar{q}}g_{\varphi}^{p\bar{j}}P_{i\bar{j}}P_{p\bar{q}}+2\nabla_{\varphi}P\cdot \nabla_{\varphi}\Delta_{\varphi}P+g_{\varphi}^{i\bar{q}}g_{\varphi}^{p\bar{j}}(Ric(\omega_{\varphi}))_{i\bar{j}}P_pP_{\bar{q}}.
\end{align}\\
Also
\begin{align}
2\nabla_{\varphi}(K(P-\tilde P)^2)\cdot \nabla_{\varphi}(|\nabla_{\varphi}P|^2)=2Re\big(g_{\varphi}^{i\bar{j}}g_{\varphi}^{p\bar{q}}((K(P-\tilde P)^2)_iP_{p\bar{j}}P_{\bar{q}}+(K(P-\tilde P)^2)_iP_pP_{,\bar{q}\bar{j}})\big).
\end{align}\\
We have the following completion of squares:
\begin{align}
|\nabla_{\varphi}(K(P-\tilde P)^2)|^2 |\nabla_\varphi P|^2 +2Re\big(g_{\varphi}^{i\bar{j}}g_{\varphi}^{p\bar{q}}(K(P-\tilde P)^2)_iP_pP_{,\bar{q}\bar{j}}\big)+g_{\varphi}^{i\bar{q}}g_{\varphi}^{p\bar{j}}P_{,ip}P_{,\bar{q}\bar{j}}\ge  0
\end{align}\\
And 
\begin{align}
	4|\nabla_{\varphi}(K(P-\tilde P)^2)|^2|\nabla_\varphi P|^2 + 2Re\big(g_{\varphi}^{i\bar{j}}g_{\varphi}^{p\bar{q}}((K(P-\tilde P)^2)_iP_{p\bar{j}}P_{\bar{q}}) +\frac{1}{4}g_{\varphi}^{i\bar{q}}g_{\varphi}^{p\bar{j}}P_{i\bar{j}}P_{p\bar{q}}\ge  0
\end{align}\\
So we obtain:
\begin{align}
	&\Delta_{\varphi}\big(e^{K(P-\tilde P)^2}|\nabla_{\varphi}P|^2\big)e^{-K(P-\tilde P)^2} \ge  (2K-16K^2(P-\tilde P)^2)|\nabla_\varphi(P-\tilde P)|^2 |\nabla_{\varphi}P|^2  \notag \\
	&\qquad\qquad\qquad\qquad +2K(P-\tilde P)\Delta_{\varphi}(P-\tilde P)|\nabla_{\varphi}P|^2 +\frac{3}{4}g_{\varphi}^{i\bar{q}}g_{\varphi}^{p\bar{j}}P_{i\bar{j}}P_{p\bar{q}} \notag\\
	&\qquad\qquad\qquad\qquad +2\nabla_{\varphi}P\cdot \nabla_{\varphi}\Delta_{\varphi}P+g_{\varphi}^{i\bar{q}}g_{\varphi}^{p\bar{j}}(Ric(\omega_{\varphi}))_{i\bar{j}}P_pP_{\bar{q}} 
\end{align}\\
In the above,  we can estimate: 
\begin{align}
	&(2K-16K^2(P-\tilde P)^2)|\nabla_\varphi(P-\tilde P)|^2 |\nabla_{\varphi}P|^2 \notag\\
	&\quad  \ge  \big(2K-16K^2(\delta_*+\omega(r))^2 \big)\bigg( |\nabla_\varphi P|^4 -\frac{C}{r^2}|\nabla_\varphi P|^2 \bigg)
\end{align}\\
And
\begin{align}
	2K(P-\tilde P)\Delta_{\varphi}(P-\tilde P)|\nabla_{\varphi}P|^2\ge  -K(\delta_*+\omega(r)) \frac{C}{r^2} (tr_{\omega_{\varphi}}\omega_0+1)|\nabla_\varphi P|^2
\end{align}\\
Also
\begin{align}
	& g_{\varphi}^{i\bar{q}}g_{\varphi}^{p\bar{j}}(Ric(\omega_{\varphi}))_{i\bar{j}}P_pP_{\bar{q}} =g_{\varphi}^{i\bar{q}}g_{\varphi}^{p\bar{j}}(Ric(\omega_0))_{i\bar{j}}P_pP_{\bar{q}} - g_{\varphi}^{i\bar{q}}g_{\varphi}^{p\bar{j}}F_{i\bar{j}}P_pP_{\bar{q}} \notag \\
	&\qquad \ge  -C tr_{\omega_{\varphi}}\omega_0|\nabla_{\varphi}P|^2 - \Delta_{\varphi}F|\nabla_{\varphi}P|^2 - \frac{dd^cF\wedge d^c P\wedge d P\wedge \omega_{\varphi}^{n-2}}{\omega_{\varphi}^n}.
\end{align}\\
So we get:
\begin{align}
	&\Delta_{\varphi}\big(e^{K(P-\tilde P)^2}|\nabla_{\varphi}P|^2\big)e^{-K(P-\tilde P)^2}  \notag\\
	&\quad  \ge  \big(2K-16K^2(\delta_*+\omega(r))^2 \big)\bigg( |\nabla_\varphi P|^4 -\frac{C}{r^2}|\nabla_\varphi P|^2 \bigg)   \notag \\
	&\qquad +\frac{3}{4}g_{\varphi}^{i\bar{q}}g_{\varphi}^{p\bar{j}}P_{i\bar{j}}P_{p\bar{q}} -C tr_{\omega_{\varphi}}\omega_0|\nabla_{\varphi}P|^2 -K(\delta_*+\omega(r)) \frac{C}{r^2} (tr_{\omega_{\varphi}}\omega_0+1)|\nabla_\varphi P|^2 \notag\\
	&\qquad +2\nabla_{\varphi}P\cdot \nabla_{\varphi}\Delta_{\varphi}P - \Delta_{\varphi}F|\nabla_{\varphi}P|^2 - \frac{dd^cF\wedge d^cP\wedge dP\wedge \omega_{\varphi}^{n-2}}{\omega_{\varphi}^n}.
\end{align}\\
Denote $u=e^{K(P-\tilde{P})^2}|\nabla_{\varphi}P|^2$,  then the above inequality is equivalent to:
\begin{align}\label{4.133New}
	&\Delta_{\varphi}u  \ge  (2K-16K^2(\delta_*+\omega(r))^2)\bigg( |\nabla_\varphi P|^2 - \frac{C}{r^2}\bigg) u    \notag \\
	&\quad +\frac{3}{4}e^{K(P-\tilde P)^2} g_{\varphi}^{i\bar{q}}g_{\varphi}^{p\bar{j}}P_{i\bar{j}}P_{p\bar{q}} -C tr_{\omega_{\varphi}}\omega_0 u -K(\delta_*+\omega(r)) \frac{C}{r^2} (tr_{\omega_{\varphi}}\omega_0+1) u \notag\\
	&\quad+2e^{K(P-\tilde P)^2}\nabla_{\varphi}P\cdot\nabla_{\varphi}\Delta_{\varphi}P - \Delta_{\varphi}Fu - e^{K(P-\tilde P)^2}\frac{dd^cF\wedge d^cP\wedge dP\wedge \omega_{\varphi}^{n-2}}{\omega_{\varphi}^n}.
\end{align}\\

\noindent
Let $p>1$,  we have:
\begin{align}
	\Delta_\varphi u^p = p u^{p-1}\Delta_\varphi u+p(p-1)u^{p-2}|\nabla_\varphi u|^2.
\end{align}

Integrate with respect to $\omega_\varphi^n$, we have
\begin{equation}\label{4.135New}
\begin{split}
&\int_Mp(p-1)u^{p-2}|\nabla_{\varphi}u|^2\omega_{\varphi}^n=-\int_Mpu^{p-1}\Delta_{\varphi}u\omega_{\varphi}^n\\
&\le (2K-16K^2(\delta_*+\omega(r))^2)\bigg(-\int_Mpu^p|\nabla_{\varphi}P|^2\omega_{\varphi}^n+\frac{C}{r^2}\int_Mpu^p\omega_{\varphi}^n\bigg)\\
&-\int_M\frac{3p}{4}u^{p-1}e^{K(P-\tilde{P})^2}|\sqrt{-1}\partial\bar{\partial}P|^2_{\varphi}\omega_{\varphi}^n+(K(\delta_*+\omega(r))+1)\frac{C}{r^2}\int_Mpu^p(tr_{\omega_{\varphi}}\omega_0+1)\omega_{\varphi}^n\\
&-\int_M2pu^{p-1}e^{K(P-\tilde{P})^2}\nabla_{\varphi}P\cdot \nabla_{\varphi}\Delta_{\varphi}P\omega_{\varphi}^n+\int_Mpu^p\Delta_{\varphi}F\omega_{\varphi}^n\\
&+\int_Mpu^{p-1}e^{K(P-\tilde{P})^2}dd^cF\wedge d^cP\wedge dP\wedge \omega_{\varphi}^{n-2}.
\end{split}
\end{equation}
We are going to integrate by parts in the last three terms of the above.
\begin{align}\label{4.135N}
	&-\int_M2pu^{p-1}e^{K(P-\tilde P)^2}\nabla_{\varphi}P\cdot \nabla_{\varphi}\Delta_{\varphi}P\omega_{\varphi}^n =\int_M2p(p-1)u^{p-2}e^{K(P-\tilde P)^2}\nabla_{\varphi}u\cdot \nabla_{\varphi}P\Delta_{\varphi}P\omega_{\varphi}^n \notag \\
	&\quad\qquad\qquad +\int_M2pu^{p-1}e^{K(P-\tilde P)^2}2K (P-\tilde P)\left( |\nabla_{\varphi}P|^2-\nabla_\varphi\tilde P\cdot\nabla_\varphi P \right) \Delta_{\varphi}P\omega_{\varphi}^n \notag\\
	&\qquad\qquad\qquad\qquad + \int_M2pu^{p-1}e^{K(P-\tilde P)^2}(\Delta_{\varphi}P)^2\omega_{\varphi}^n.
\end{align}\\
Also
\begin{equation}\label{4.136}
\int_Mpu^p\Delta_{\varphi}F\omega_{\varphi}^n=-\int_Mp^2u^{p-1}\nabla_{\varphi}u\cdot \nabla_{\varphi}F\omega_{\varphi}^n.
\end{equation}

Next:
\begin{equation}\label{4.137}
\begin{split}
&\int_Mpu^{p-1}e^{K(P-\tilde{P})^2}dd^cF\wedge d^cP\wedge dP\wedge \omega_{\varphi}^{n-2}\\
&=-\int_Mp(p-1)u^{p-2}e^{K(P-\tilde{P})^2}du\wedge d^cF\wedge d^cP\wedge dP\wedge \omega_{\varphi}^{n-2}\\
&-\int_Mpu^{p-1}e^{K(P-\tilde{P})^2}2K(P-\tilde{P})d(P-\tilde{P})\wedge d^cF\wedge d^cP\wedge dP\wedge \omega_{\varphi}^{n-2}\\
&-\int_Mpu^{p-1}e^{K(P-\tilde{P})^2}d^cF\wedge dd^cP\wedge dP\wedge \omega_{\varphi}^{n-2}.
\end{split}
\end{equation}

Now we wish to estimate the right hand sides of (\ref{4.135N})-(\ref{4.137}).
We start with the right hand side of (\ref{4.135N}):
\begin{align}
	&\int_M2p(p-1)u^{p-2}e^{K(P-\tilde P)^2}\nabla_{\varphi}u\cdot \nabla_{\varphi}P\Delta_{\varphi}P\omega_{\varphi}^n \notag \\
	&\le  \frac{p(p-1)}{16}\int_Mu^{p-2}|\nabla_{\varphi}u|^2\omega_{\varphi}^n + C \int_M p(p-1)u^{p-1}e^{K(P-\tilde P)^2}(1+tr_{\omega_{\varphi}}\omega_0)^2\omega_{\varphi}^n.
\end{align}\\
Next
\begin{align}
	\int_M2pu^{p-1}e^{K(P-\tilde P)^2}(\Delta_{\varphi}P)^2\omega_{\varphi}^n\le  C \int_M pu^{p-1}e^{K(P-\tilde P)^2}(1+tr_{\omega_{\varphi}}\omega_0)^2\omega_{\varphi}^n.
\end{align}
Next,  keeping in mind that $u=e^{K(P-\tilde{P})^2}|\nabla_{\varphi}P|^2$ and that $|P-\tilde{P}|\le \delta_*+\omega(r)$.
\begin{align}
	&\int_M pu^{p-1}e^{K(P-\tilde P)^2} (P-\tilde P)\left( |\nabla_{\varphi}P|^2-\nabla_\varphi\tilde P\cdot\nabla_\varphi P \right) \Delta_{\varphi}P\omega_{\varphi}^n \notag\\
	& \le  \int_M pu^p |P-\tilde P||\Delta_\varphi P|\omega_\varphi^n + \int_M pu^{p-1}e^{K(P-\tilde P)^2} |P-\tilde P|\left( |\nabla_{\varphi}P|^2 +|\nabla_\varphi\tilde P|^2 \right) |\Delta_{\varphi}P| \omega_{\varphi}^n \notag\\
	&  \le  C(\delta_*+\omega(r)) \int_M pu^p  (1+tr_{\omega_{\varphi}}\omega_0)\omega_\varphi^n + \big(\delta_*+\omega(r)\big)\frac{C}{r^2} \int_M pu^{p-1} e^{K(P-\tilde P)^2}(1+tr_{\omega_{\varphi}}\omega_0)\omega_\varphi^n 
\end{align}\\

In the above,  we noted that $|\nabla_{\varphi}\tilde{P}|^2\le tr_{\omega_{\varphi}}\omega_0|\nabla \tilde{P}|^2\le tr_{\omega_{\varphi}}\omega_0\cdot \frac{C}{r^2}$.

Summing up we have 
\begin{align}\label{4.142}
	& -\int_M2pu^{p-1}e^{K(P-\tilde P)^2}\nabla_{\varphi}P\cdot \nabla_{\varphi}\Delta_{\varphi}P\omega_{\varphi}^n \le  \frac{p(p-1)}{16}\int_Mu^{p-2}|\nabla_{\varphi}u|^2\omega_{\varphi}^n \notag\\
	&\quad\quad + C \int_M p^2u^{p-1}e^{K(P-\tilde P)^2}(1+tr_{\omega_{\varphi}}\omega_0)^2\omega_{\varphi}^n +CK(\delta_*+\omega(r)) \int_M pu^p  (1+tr_{\omega_{\varphi}}\omega_0)\omega_\varphi^n \notag\\
	&\quad\quad\quad\quad\quad\quad\quad + K(\delta_*+\omega(r))\frac{C}{r^2} \int_M pu^{p-1} e^{K(P-\tilde P)^2}(1+tr_{\omega_{\varphi}}\omega_0)\omega_\varphi^n 
\end{align}

Now we look at (\ref{4.136}) and we can estimate:
\begin{equation}\label{4.143}
-\int_Mp^2u^{p-1}\nabla_{\varphi}u\cdot \nabla_{\varphi}F\omega_{\varphi}^n\le \int_M\frac{p(p-1)}{16}u^{p-2}|\nabla_{\varphi}P|^2+\int_M\frac{4p^3}{p-1}u^p|\nabla_{\varphi}F|^2\omega_{\varphi}^n.
\end{equation}
Next we are going to estimate the right hand side of (\ref{4.137}).
We have:
\begin{equation*}
\begin{split}
&-\int_Mp(p-1)u^{p-2}e^{K(P-\tilde{P})^2}du\wedge d^cF\wedge d^cP\wedge dP\wedge \omega_{\varphi}^{n-2}\\
&\le \int_Mp(p-1)u^{p-2}e^{K(P-\tilde{P})^2}|\nabla_{\varphi}u||\nabla_{\varphi}F||\nabla_{\varphi}P|^2\omega_{\varphi}^n=\int_Mp(p-1)u^{p-1}|\nabla_{\varphi}u||\nabla_{\varphi}F|\omega_{\varphi}^n\\
&\le \int_M\frac{p(p-1)u^{p-2}}{16}|\nabla_{\varphi}u|^2\omega_{\varphi}^n+\int_M4p(p-1)u^p|\nabla_{\varphi}F|^2\omega_{\varphi}^n.
\end{split}
\end{equation*}
Then
\begin{equation*}
\begin{split}
&-\int_Mpu^{p-1}e^{K(P-\tilde{P})^2}2K(P-\tilde{P})d(P-\tilde{P})\wedge d^cF\wedge d^cP\wedge dP\wedge \omega_{\varphi}^{n-2}\\
&\le \int_Mpu^{p-1}e^{K(P-\tilde{P})^2}2K(\delta_*+\omega(r))(|\nabla_{\varphi}P|+|\nabla_{\varphi}\tilde{P}|)|\nabla_{\varphi}F||\nabla_{\varphi}P|^2\omega_{\varphi}^n\\
&\le\int_Mpu^p\cdot 2K(\delta_*+\omega(r))(|\nabla_{\varphi}P|+\frac{C}{r}(tr_{\omega_{\varphi}}\omega_0)^{\frac{1}{2}})|\nabla_{\varphi}F|\omega_{\varphi}^n\\
&\le p(p-1)\int_Mu^p|\nabla_{\varphi}F|^2\omega_{\varphi}^n+\int_M\frac{2p^2}{p-1}K^2(\delta_*+\omega(r))^2u^p(|\nabla_{\varphi}P|^2+\frac{C'}{r^2}tr_{\omega_{\varphi}}\omega_0)\omega_{\varphi}^n.
\end{split}
\end{equation*}
For the last term in the right hand side of (\ref{4.137}):
\begin{equation*}
\begin{split}
-&\int_Mpu^{p-1}e^{K(P-\tilde{P})^2}d^cF\wedge dd^cP\wedge dP\wedge \omega_{\varphi}^{n-2}\le \int_M\frac{p}{8}u^{p-1}e^{K(P-\tilde{P})^2}|\sqrt{-1}\partial\bar{\partial}P|^2_{\varphi}\omega_{\varphi}^n\\
&+2p\int_Mu^{p-1}e^{K(P-\tilde{P})^2}|\nabla_{\varphi}F|^2|\nabla_{\varphi}P|^2\omega_{\varphi}^n.
\end{split}
\end{equation*}
Summing up,  we obtain that:
\begin{equation}\label{4.144}
\begin{split}
&\int_Mpu^{p-1}e^{K(P-\tilde{P})^2}dd^cF\wedge d^cP\wedge dP\wedge \omega_{\varphi}^{n-2}\\
&\le \int_M\frac{p(p-1)u^{p-2}}{16}|\nabla_{\varphi}u|^2\omega_{\varphi}^n+\int_M\frac{pu^{p-1}}{8}e^{K(P-\tilde{P})^2}|\sqrt{-1}\partial\bar{\partial}P|^2_{\varphi}\omega_{\varphi}^n\\
&+\int_MCp^2u^p|\nabla_{\varphi}F|^2\omega_{\varphi}^n+\int_MCpK^2(\delta_*+\omega(r))^2(|\nabla_{\varphi}P|^2+\frac{C'}{r^2}tr_{\omega_{\varphi}}\omega_0)\omega_{\varphi}^n
\end{split}
\end{equation}
Now we plug (\ref{4.142})-(\ref{4.144}) back to (\ref{4.135New}),  and we obtain:
\begin{equation}
\begin{split}
&\int_M\frac{3p(p-1)}{4}u^{p-2}|\nabla_{\varphi}u|^2\omega_{\varphi}^n\le -\int_M\frac{p}{2}u^{p-1}e^{K(P-\tilde{P})^2}|\sqrt{-1}\partial\bar{\partial}P|^2_{\varphi}\omega_{\varphi}^n\\
&-(2K-C''pK^2(\delta_*+\omega(r))^2)\int_Mpu^p|\nabla_{\varphi}P|^2\omega_{\varphi}^n+\frac{C_1(p)K}{r^2}\int_Mpu^p\omega_{\varphi}^n\\
&+\frac{\tilde{C}p}{r^2}\int_M(K(\delta_*+\omega(r))+K^2(\delta_*+\omega(r))^2)u^ptr_{\omega_{\varphi}}\omega_0\omega_{\varphi}^n+Cp\int_Mu^p|\nabla_{\varphi}F|^2\omega_{\varphi}^n.
\end{split}
\end{equation}

We are going to assume that $\delta_*$ and $r$ are sufficiently small so that:
\begin{equation}
2K-C''pK^2(\delta_*+\omega(r))^2\ge \frac{3K}{2}.
\end{equation}
Therefore we get
\begin{equation}\label{K8}
\frac{3K}{2} \int_M u^p|\nabla_{\varphi}P|^2 \omega_{\varphi}^n \le   \frac{C(p) }{r^2}\int_M u^p (1+tr_{\omega_{\varphi}}\omega_0)\omega_\varphi^n +C(p)\int_M u^p |\nabla_\varphi F|^2\omega_\varphi^n + \frac{K}{r^2}C(p)\int_M u^p \omega_\varphi^n.
\end{equation}\\
We also assume that $K(\delta_*+\omega(r))^2\le 1$,  so that:
\begin{equation*}
1\le e^{K(P-\tilde{P})^2}\le e^{K(\delta_*+\omega(r))^2}\le e.  
\end{equation*}
Therefore,  we get:
\begin{equation*}
\begin{split}
&K\int_M|\nabla_{\varphi}P|^{2(p+1)}\omega_{\varphi}^n\le \frac{C'(p)}{r^2}\int_M|\nabla_{\varphi}P|^{2p}(1+tr_{\omega_{\varphi}}\omega_0)\omega_{\varphi}^n+C'(p)\int_Mu^p|\nabla_{\varphi}F|^2\omega_{\varphi}^n\\
&+\frac{KC'(p)}{r^2}\int_Mu^p\omega_{\varphi}^n.
\end{split}
\end{equation*}

Then we just need to apply the following version of Young's inequality to the right hand side above and the claimed estimate will follow:
\begin{equation*}
a^pb\le \frac{p}{p+1}\eps a^{p+1}+\frac{1}{p+1}\eps^{-p}b^{p+1},\,a,\,b>0,\,p>0,\,\eps>0.
\end{equation*}
\end{proof}

\noindent
Finally, we estimate $|\nabla_\varphi F|^2$ in term of $tr_{\omega_{\varphi}}\omega_0$ and $|\nabla_\varphi P|^2$. \\
\begin{lem}[\textbf{Estimate of $|\nabla_\varphi F|^2$}]\label{Lem316}
	 Let $(\varphi,F,P)$ be a solution of Pseudo Calabi Flow on $[0,T)\times M$ satisfying the conditions in Theorem \ref{MT31}, then for $p$ large enough, there exist $\lambda_p >0$ and a constant depends on  $ p, \|F\|_0,T$, such that  
\begin{align}
	\sup_{t\in (0,T)} t^{\lambda_p} \int_M |\nabla_\varphi F|^{2p}\omega_\varphi^n \le  C(p,||F||_0,T)
\end{align}\\
\end{lem}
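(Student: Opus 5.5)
I would follow the same template as Lemmas \ref{Lem311} and \ref{Lem313}: derive a parabolic differential inequality for a weighted quantity involving $|\nabla_\varphi F|^2$, integrate its $p$-th power against $\omega_\varphi^n$ using (\ref{K3}), and then close with the time weight $t^{\lambda_p}$ and a superlinear absorbing term.

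\textbf{Step 1: the evolution of $F$.} Along the flow,
\begin{align*}
\partial_t F=\Delta_\varphi(F+P)=\Delta_\varphi F+\underline R-tr_{\omega_\varphi}Ric(\omega_0).
\end{align*}
So $F$ solves a heat equation whose forcing term is bounded by $C(1+tr_{\omega_\varphi}\omega_0)$. I would set
\begin{align*}
u=e^{H}\big(|\nabla_\varphi F|^2+K\big),\qquad H=-\lambda\varphi+\delta_0\varphi^2+\delta_1F^2.
\end{align*}
A Bochner computation for $(\partial_t-\Delta_\varphi)|\nabla_\varphi F|^2$ produces three kinds of terms.
\begin{itemize}
\item The good terms $-|\nabla_\varphi\nabla_\varphi F|^2-|\nabla_\varphi\bar\nabla_\varphi F|^2$.
\item A term $2Re\langle\nabla_\varphi F,\nabla_\varphi(tr_{\omega_\varphi}Ric(\omega_0))\rangle$, which involves third derivatives of $\varphi$.
\item Terms coming from $\partial_t g_\varphi^{i\bar j}$, which bring in $\sqrt{-1}\partial\bar\partial(F+P)$ contracted against $\nabla F\otimes\bar\nabla F$.
\end{itemize}
The Ricci term of $\omega_\varphi$ in the Bochner formula cancels against part of the $\partial_t g_\varphi$ contribution, in the same way as in Lemma \ref{Lem315}. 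What remains is $-P_{i\bar j}F_{\bar i}F_j$ together with curvature terms bounded by $C\,tr_{\omega_\varphi}\omega_0\,|\nabla_\varphi F|^2$.

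\textbf{Step 2: handling the bad terms.} The third-order term I would not estimate pointwise. Instead I would integrate it by parts in the $L^p$ integral, as was done for $\Delta P$ in (\ref{0.2N}). This moves the derivative onto $u^{p-1}e^H\nabla F$. The pieces with $\nabla u$ are absorbed by $\frac{p(p-1)}{2}\int u^{p-2}|\nabla_\varphi u|^2$. The piece with $\nabla\bar\nabla F$ is absorbed by the good Hessian term, at the cost of $C(p)\int u^{p-1}e^H(tr_{\omega_\varphi}\omega_0)^2\omega_\varphi^n$. I would treat the term $P_{i\bar j}F_{\bar i}F_j$ the same way, integrating by parts to trade it for $\nabla_\varphi P$ times derivatives of $u$ and $F$. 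By Young's inequality this leaves $C(p)\int u^p|\nabla_\varphi P|^2$.

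The time derivative of $H$ contributes
\begin{align*}
-\lambda\, tr_{\omega_\varphi}\omega_0-2\delta_0|\nabla_\varphi\varphi|^2-2\delta_1|\nabla_\varphi F|^2,
\end{align*}
exactly as in Lemma \ref{Lem314}. The term $-2\delta_1|\nabla_\varphi F|^2\cdot u$ is the superlinear good term, of the form $-c\,u^{p+1}$ after integration. The outcome should be
\begin{align*}
\partial_t\int_M u^p\omega_\varphi^n\le -c\int_M u^{p+1}\omega_\varphi^n+C(p)\int_M\Big(u^p|\nabla_\varphi P|^2+u^{p-1}(1+tr_{\omega_\varphi}\omega_0)^2+u^p\,tr_{\omega_\varphi}\omega_0\Big)\omega_\varphi^n+C.
\end{align*}

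\textbf{Step 3 (main obstacle): closing the estimate.} The right-hand side involves $|\nabla_\varphi P|$ and $tr_{\omega_\varphi}\omega_0$, and these are themselves controlled only by $|\nabla_\varphi F|$.
\begin{itemize}
\item Lemma \ref{Lem31} bounds $\int(tr_{\omega_\varphi}\omega_0)^{p+1}$ by $C\int|\nabla_\varphi F|^{2(p+1)}+C$.
\item Lemma \ref{Lem315} bounds $K\int|\nabla_\varphi P|^{2(p+1)}$ by the same quantities, once $\delta_*$ is small.
\end{itemize}
Applying H\"older to the mixed terms reduces everything to $\int u^{p+1}$ with some constant in front. The point is to take $K$ in Lemma \ref{Lem315} large, with $\delta_*$ correspondingly small, so that the $|\nabla_\varphi P|$ contribution has coefficient at most $c/4$. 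The $tr_{\omega_\varphi}\omega_0$ contributions must be kept lower order in $u$, or be absorbed by the $\lambda\,tr_{\omega_\varphi}\omega_0$ term coming from $H$. I expect making these constants compatible to be the delicate part. If the trace term cannot be made small this way, my fallback is to use the already-proved bound from Lemma \ref{Lem313}, namely that $t^{1+np}\int(n+\Delta\varphi)^p$ is bounded. Together with the uniform bound on $F$, this controls $tr_{\omega_\varphi}\omega_0\le C(n+\Delta\varphi)^{n-1}$ in every $L^q$ for $t>0$, which makes those terms genuinely lower order.

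This yields
\begin{align*}
\partial_t Y\le -cY^{1+\frac1p}+C(t),\qquad Y=\int_M u^p\omega_\varphi^n,
\end{align*}
where $C(t)$ grows at most polynomially in $t^{-1}$. The $t^{\lambda_p}$ ODE argument from Lemma \ref{Lem311}, with $\lambda_p$ chosen large, then gives the stated bound.
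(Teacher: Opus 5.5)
Your outline is essentially the paper's proof, provided you commit to what you call the fallback. The paper also:
\begin{enumerate}
\item derives a Bochner inequality for a weighted $|\nabla_\varphi F|^2$ and inserts it into (\ref{K3});
\item integrates by parts the $\nabla_\varphi F\cdot\nabla_\varphi\Delta_\varphi P$ term and the $P_{i\bar j}F_pF_{\bar q}$ term, the latter written as $\Delta_\varphi P\,|\nabla_\varphi F|^2$ plus a $dd^cP\wedge d^cF\wedge dF\wedge\omega_\varphi^{n-2}$ term;
\item uses Lemma \ref{Lem315} with $K$ large to make the coefficient of $\int|\nabla_\varphi P|^{2(p+1)}$ small;
\item closes with the $t^{\lambda_p}$ argument.
\end{enumerate}
The fallback is not optional, however; your primary plan cannot close. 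Lemma \ref{Lem31} gives $\int(tr_{\omega_\varphi}\omega_0)^{p+1}\omega_\varphi^n\le C\int|\nabla_\varphi F|^{2(p+1)}\omega_\varphi^n+C$ with a constant that is not small. It therefore returns $C\int u^{p+1}$ against a good term whose coefficient is only of order $\delta_1$. Nor can $-\lambda u^p\,tr_{\omega_\varphi}\omega_0$ absorb $u^{p-1}(tr_{\omega_\varphi}\omega_0)^2$ where the trace is large and $u$ is not. The paper bounds $tr_{\omega_\varphi}\omega_0+1$ by a power of $n+\Delta\varphi$ and invokes Lemma \ref{Lem313}; this is where $\lambda_p=1+n^2(p+1)$ comes from.

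You should also drop $-\lambda\varphi+\delta_0\varphi^2$ from $H$. The paper's weight is only $e^{\delta_0F^2}$, with no $+K$ and no $\varphi$-terms. Completing squares in $-2e^H\nabla_\varphi H\cdot\nabla_\varphi|\nabla_\varphi F|^2$ against the $(2,0)$ and $(1,1)$ Hessians of $F$ costs two copies of $|\nabla_\varphi H|^2|\nabla_\varphi F|^2e^H$. The factor $e^H$ supplies only one, so a positive multiple of $|\nabla_\varphi H|^2u$ survives.
\begin{itemize}
\item For $H=\delta_0F^2$ this leftover is $O(\delta_0^2\|F\|_0^2)|\nabla_\varphi F|^2u$, which $-2\delta_0|\nabla_\varphi F|^2u$ absorbs.
\item For your $H$ it contains $\lambda^2|\nabla_\varphi\varphi|^2u$. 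The term $-2\delta_0|\nabla_\varphi\varphi|^2u$ absorbs this only if $\delta_0\gtrsim\lambda^2$. But the gradient of $\delta_0\varphi^2$ itself forces $\delta_0\varphi^2\lesssim 1$. So this fails for large $\lambda$ unless $\varphi$ has small oscillation.
\end{itemize}
Since you handle the trace through Lemma \ref{Lem313} anyway, the $\varphi$-terms buy nothing. If you keep them, you would have to push that leftover to lower order using Lemmas \ref{Lem311} and \ref{Lem313}.
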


\begin{rem}
	This Lemma implies Theorem \ref{MT31}. 
\end{rem}

\begin{cor}
	Using same notation above, we can combine Lemma \ref{Lem315} and Lemma \ref{Lem316} to get an $L^p$ estimate of $|\nabla_\varphi P|^2$ 
	\begin{align}
		\sup_{t\in (0,T)} t^{\lambda_p} \int_M |\nabla_\varphi P|^{2p}\omega_\varphi^n \le  C(p,||F||_0,T)
	\end{align}\\
\end{cor}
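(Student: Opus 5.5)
The plan is to chain three earlier estimates at a fixed time $t\in(0,T)$ and then multiply through by a power of $t$. Fix $p$ large, and apply Lemma \ref{Lem315} with exponent $p$ and a fixed admissible choice of $K$, say $K=2$. This fixes $\delta_*$ depending only on $p$, the background metric and $C_1\ge\sup_t\|F(\cdot,t)\|_0$. Since $P(\cdot,t)$ is $\delta_*$-close to a continuous function with modulus $\omega(r)$ uniformly in $t$, Lemma \ref{Lem315} holds at every time:
\begin{align*}
\int_M|\nabla_{\varphi}P|^{2(p+1)}\omega_{\varphi}^n\le C(p)\Big(\int_M(1+tr_{\omega_{\varphi}}\omega_0)^{p+1}\omega_{\varphi}^n+\int_M|\nabla_{\varphi}F|^{2(p+1)}\omega_{\varphi}^n\Big)+C .
\end{align*}

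Next I remove the trace term. Since $\|F\|_0\le C_1$, Lemma \ref{Lem31} applies and gives
\begin{align*}
\int_M(tr_{\omega_{\varphi}}\omega_0)^{p+1}\omega_{\varphi}^n\le C\Big(\int_M|\nabla_{\varphi}F|^{2(p+1)}\omega_{\varphi}^n+1\Big).
\end{align*}
Combined with $(1+x)^{p+1}\le 2^{p}(1+x^{p+1})$ and $\int_M\omega_{\varphi}^n=\int_M\omega_0^n$, this yields
\begin{align*}
\int_M|\nabla_{\varphi}P|^{2(p+1)}\omega_{\varphi}^n\le C(p,C_1)\Big(\int_M|\nabla_{\varphi}F|^{2(p+1)}\omega_{\varphi}^n+1\Big)
\end{align*}
for every $t\in(0,T)$.

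Finally I invoke Lemma \ref{Lem316} with exponent $p+1$, which is large whenever $p$ is. It gives $\lambda_{p+1}>0$ with
\begin{align*}
\sup_{t\in(0,T)}t^{\lambda_{p+1}}\int_M|\nabla_{\varphi}F|^{2(p+1)}\omega_{\varphi}^n\le C(p,\|F\|_0,T).
\end{align*}
Multiply the previous inequality by $t^{\lambda_{p+1}}$ and bound the additive constant using $t^{\lambda_{p+1}}\le T^{\lambda_{p+1}}$. This gives the claim with exponent $2(p+1)$ on $|\nabla_\varphi P|$ and weight exponent $\lambda_{p+1}$. Relabeling $p+1\mapsto p$ and redefining $\lambda_p$ accordingly gives the stated form. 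For smaller $p$, Hölder's inequality with respect to the fixed-volume measure $\omega_\varphi^n$ reduces to the large-$p$ case, after adjusting $\lambda_p$ by the Hölder exponent; this is harmless because $t\le T$.

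The main point to check is bookkeeping rather than analysis. The constants in Lemma \ref{Lem315} must be uniform in $t$. This holds because $\delta_*$, $r$ and $\omega(r)$ depend only on $p$, $C_1$ and the modulus, and the $C^2$ approximant $\tilde P(t)$ from Corollary \ref{Cor30} has bounds $C/r$ and $C/r^2$ independent of $t$. One must also use Lemma \ref{Lem316} at exponent $p+1$ rather than $p$, so the weight $\lambda_p$ in the corollary is the one attached to the shifted exponent.
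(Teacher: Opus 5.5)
Your proposal is correct and is essentially the argument the paper intends when it says to "combine" Lemma \ref{Lem315} and Lemma \ref{Lem316}: apply Lemma \ref{Lem315} on each time slice, remove the trace term (you use Lemma \ref{Lem31}; the proof of Lemma \ref{Lem316} controls the same kind of term through Lemma \ref{Lem313}, which works equally well since $t\le T$), and then invoke Lemma \ref{Lem316} at exponent $p+1$ and relabel. After relabeling, the weight is exactly the $\lambda_p$ of Lemma \ref{Lem316}, so $\lambda_p$ does not actually need to be redefined.
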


\begin{proof}[Proof of Lemma \ref{Lem316}]
First, we compute:
\begin{align}
	& (\partial_t-\Delta_{\varphi})\big(e^{\delta_0F^2}|\nabla_{\varphi}F|^2\big)e^{-\delta_0F^2} =(\partial_t-\Delta_{\varphi})(\delta_0F^2)|\nabla_{\varphi}F|^2-|\nabla_{\varphi}(\delta_0F^2)|^2|\nabla_{\varphi}F|^2 \notag \\
	&\qquad \quad +(\partial_t-\Delta_{\varphi})|\nabla_{\varphi}F|^2-2\nabla_{\varphi}(\delta_0F^2)\cdot \nabla_{\varphi}(|\nabla_{\varphi}F|^2) \notag\\
	&\qquad = -2\delta_0|\nabla_{\varphi}F|^4 + 2\delta_0F(\underline{R}-tr_{\omega_{\varphi}}(Ric(\omega_0)))|\nabla_{\varphi}F|^2 + (\partial_t-\Delta_{\varphi})|\nabla_{\varphi}F|^2 \notag\\
	&\qquad \quad -|\nabla_{\varphi}(\delta_0F^2)|^2|\nabla_{\varphi}F|^2 - 2Re\big(g_{\varphi}^{i\bar{j}}g_{\varphi}^{p\bar{q}}((\delta_0 F^2)_iF_{p\bar{j}}F_{\bar{q}}+(\delta_0 F^2)_iF_pF_{,\bar{q}\bar{j}})\big) 
\end{align}\\
We can find that
\begin{align}
	\partial_t|\nabla_{\varphi}F|^2=(-1)g_{\varphi}^{i\bar{q}}g_{\varphi}^{p\bar{j}}(\partial_t\varphi)_{i\bar{j}}F_pF_{\bar{q}}+2\nabla_{\varphi}F\cdot \nabla_{\varphi}\partial_t F \notag\\
	= - g_{\varphi}^{i\bar{q}}g_{\varphi}^{p\bar{j}}(F+P)_{i\bar{j}}F_pF_{\bar{q}}+2\nabla_{\varphi}F\cdot \nabla_{\varphi}\partial_t F
\end{align}\\
Also
\begin{align}
	\Delta_{\varphi}|\nabla_{\varphi}F|^2 = g_{\varphi}^{i\bar{q}}g_{\varphi}^{p\bar{j}}(F_{,ip}F_{,\bar{j}\bar{q}}+F_{i\bar{j}}F_{p\bar{q}}) + 2\nabla_{\varphi}F\cdot \nabla_{\varphi}\Delta_{\varphi}F + g_{\varphi}^{i\bar{q}}g_{\varphi}^{p\bar{j}}(Ric(\omega_{\varphi}))_{i\bar{j}}F_pF_{\bar{q}} \notag\\
	=g_{\varphi}^{i\bar{q}}g_{\varphi}^{p\bar{j}}(F_{,ip}F_{,\bar{j}\bar{q}}+F_{i\bar{j}}F_{p\bar{q}}) + 2\nabla_{\varphi}F\cdot \nabla_{\varphi}\Delta_{\varphi}F + g_{\varphi}^{i\bar{q}}g_{\varphi}^{p\bar{j}} \left( (Ric(\omega_0))_{i\bar{j}}-F_{i\bar j}\right) F_pF_{\bar{q}}
\end{align}\\
Therefore
\begin{align}
	& (\partial_t-\Delta_{\varphi})|\nabla_{\varphi}F|^2 =-g_{\varphi}^{i\bar{q}}g_{\varphi}^{p\bar{j}}(P_{i\bar{j}}+Ric_{0,i\bar j}) F_pF_{\bar{q}} \notag\\
	&\qquad\qquad +2\nabla_{\varphi}F\cdot \nabla_{\varphi}(\partial_t-\Delta_{\varphi})F -g_{\varphi}^{i\bar{q}}g_{\varphi}^{p\bar{j}}(F_{,ip}F_{,\bar{j}\bar{q}}+F_{i\bar{j}}F_{p\bar{q}}).
\end{align}\\
Notice that there is a completion of square
\begin{align}
	|\nabla_{\varphi}(\delta_0F^2)|^2|\nabla_{\varphi}F|^2 + 2Re\big(g_{\varphi}^{i\bar{j}}g_{\varphi}^{p\bar{q}}(\delta_0 F^2)_iF_pF_{,\bar{q}\bar{j}} \big) +g_{\varphi}^{i\bar{q}}g_{\varphi}^{p\bar{j}}F_{,ip}F_{,\bar{j}\bar{q}} \ge  0
\end{align}\\
So that we get (after the completion of square):
\begin{align}
	& (\partial_t-\Delta_{\varphi})\big(e^{\delta_0F^2}|\nabla_{\varphi}F|^2\big)e^{-\delta_0F^2} \le  -2\delta_0|\nabla_{\varphi}F|^4 + 2\delta_0 F (\underline{R}-tr_{\omega_{\varphi}}(Ric(\omega_0))) |\nabla_{\varphi}F|^2 \notag \\
	&\qquad \qquad -g_{\varphi}^{i\bar{q}}g_{\varphi}^{p\bar{j}}(P_{i\bar{j}}+Ric_{0,i\bar j}) F_pF_{\bar{q}} +2\nabla_{\varphi}F\cdot \nabla_{\varphi}(\partial_t-\Delta_{\varphi})F \notag\\
	&\qquad\qquad -g_{\varphi}^{i\bar{q}}g_{\varphi}^{p\bar{j}}F_{i\bar{j}}F_{p\bar{q}} + 2Re\big(g_{\varphi}^{i\bar{j}}g_{\varphi}^{p\bar{q}}((\delta_0 F^2)_iF_{p\bar{j}}F_{\bar{q}}\big) \notag\\
	&\quad \le  -2\delta_0|\nabla_{\varphi}F|^4 + C(2\delta_0 ||F||_0 +1)(1+tr_{\omega_{\varphi}}\omega_0) |\nabla_\varphi F|^2 -g_{\varphi}^{i\bar{q}}g_{\varphi}^{p\bar{j}}F_{i\bar{j}}F_{p\bar{q}} - g_{\varphi}^{i\bar{q}}g_{\varphi}^{p\bar{j}}P_{i\bar{j}} F_pF_{\bar{q}} \notag\\
	&\qquad\qquad  +2\nabla_{\varphi}F\cdot \nabla_{\varphi}(\partial_t-\Delta_{\varphi})F + 2Re\big(g_{\varphi}^{i\bar{j}}g_{\varphi}^{p\bar{q}}((\delta_0 F^2)_iF_{p\bar{j}}F_{\bar{q}}\big) 
\end{align}\\
We also have the following completion of square  
\begin{align}
	4|\nabla_{\varphi}(\delta_0 F^2)|^2|\nabla_\varphi F|^2 + 2Re\big(g_{\varphi}^{i\bar{j}}g_{\varphi}^{p\bar{q}}((\delta_0 F^2)_iF_{p\bar{j}}F_{\bar{q}})\big) +\frac{1}{4}g_{\varphi}^{i\bar{q}}g_{\varphi}^{p\bar{j}}F_{i\bar{j}}F_{p\bar{q}}\ge  0
\end{align}\\
Then we have 
\begin{align}
	& (\partial_t-\Delta_{\varphi})\big(e^{\delta_0F^2}|\nabla_{\varphi}F|^2\big)e^{-\delta_0F^2} \le  -(2\delta_0-16\delta_0^2 ||F||_0^2)|\nabla_\varphi F|^4 \notag\\
	&\qquad\qquad + C(2\delta_0 ||F||_0 +1)(1+tr_{\omega_{\varphi}}\omega_0) |\nabla_\varphi F|^2 \notag\\
	& \qquad\qquad -\frac{3}{4}g_{\varphi}^{i\bar{q}}g_{\varphi}^{p\bar{j}}F_{i\bar{j}}F_{p\bar{q}} - g_{\varphi}^{i\bar{q}}g_{\varphi}^{p\bar{j}}P_{i\bar{j}} F_pF_{\bar{q}} +2\nabla_{\varphi}F\cdot \nabla_{\varphi}(\partial_t-\Delta_{\varphi})F 
\end{align}\\

\noindent
Set $u=e^{\delta_0 F^2}|\nabla_\varphi F|^2$, then the inequality above becomes  
\begin{align}
	& (\partial_t-\Delta_{\varphi})u\le  -(2\delta_0-16\delta_0^2 ||F||_0^2)|\nabla_\varphi F|^2 u + C(2\delta_0 ||F||_0 +1)(1+tr_{\omega_{\varphi}}\omega_0)u  \notag\\
	&\quad\quad -\frac{3}{4} e^{\delta_0 F^2} g_{\varphi}^{i\bar{q}}g_{\varphi}^{p\bar{j}}F_{i\bar{j}}F_{p\bar{q}} - e^{\delta_0 F^2} g_{\varphi}^{i\bar{q}}g_{\varphi}^{p\bar{j}}P_{i\bar{j}} F_pF_{\bar{q}} + 2e^{\delta_0 F^2} \nabla_{\varphi}F\cdot \nabla_{\varphi}(\partial_t-\Delta_{\varphi})F \notag\\
	& \le  - (2\delta_0-16\delta_0^2 ||F||_0^2)|\nabla_\varphi F|^2 u + C(2\delta_0 ||F||_0 +1)(1+tr_{\omega_{\varphi}}\omega_0)u -\frac{3}{4} e^{\delta_0 F^2} g_{\varphi}^{i\bar{q}}g_{\varphi}^{p\bar{j}}F_{i\bar{j}}F_{p\bar{q}} \notag\\
	&\quad\quad - e^{\delta_0 F^2}\Delta_{\varphi}P|\nabla_{\varphi}F|^2 - e^{\delta_0 F^2}\frac{dd^c P\wedge d^c F\wedge d F\wedge \omega_{\varphi}^{n-2}}{\omega_{\varphi}^n} + 2e^{\delta_0 F^2} \nabla_{\varphi}F\cdot \nabla_{\varphi}\Delta_\varphi P
\end{align}\\
We insert the above inequality back to (\ref{K3}), and then we get 
\begin{align}
	&\partial_t\big( \int_M u^p\omega_\varphi^n\big)\le  - \int_M \frac{p(p-1)}{2} u^{p-2} |\nabla_\varphi u|^2_\varphi\omega_\varphi^n +\int_M u^p(|Ric|tr_\varphi\omega_0+\underline R)\omega_\varphi^n \notag\\
	&\qquad\qquad\qquad\qquad +\int_M pu^{p-1}\left( (\partial_t-\Delta_\varphi)u+  \frac{1}{2(p-1)}|\nabla_\varphi F|^2 u  \right) \omega_\varphi^n \notag\\
	 &\qquad \le  - \int_M \frac{p(p-1)}{2} u^{p-2} |\nabla_\varphi u|^2_\varphi\omega_\varphi^n - \frac{3}{4}\int_M pu^{p-1}e^{\delta_0 F^2} g_{\varphi}^{i\bar{q}}g_{\varphi}^{p\bar{j}}F_{i\bar{j}}F_{p\bar{q}}\omega_\varphi^n \notag\\
	 & +\int_M pu^{p}\left( -(2\delta_0-16\delta_0^2 ||F||_0^2-\frac{1}{2(p-1)})|\nabla_\varphi F|^2 +C(2\delta_0 ||F||_0 +1)(1+tr_{\omega_{\varphi}}\omega_0)\right)\omega_\varphi^n \notag\\
	 & \quad -\int_M pu^{p-1}e^{\delta_0 F^2}dd^c P\wedge d^c F\wedge d F\wedge \omega_{\varphi}^{n-2} + 2\int_M pu^{p-1}e^{\delta_0 F^2} \nabla_{\varphi}F\cdot \nabla_{\varphi}\Delta_\varphi P \omega_\varphi^n 
\end{align}\\
Just like what  we did with the estimate of $|\nabla_\varphi P|^2$, we are going to integrate by parts in the last two terms
\begin{align}
	& \int_M2pu^{p-1}e^{\delta_0 F^2}\nabla_{\varphi}F \cdot \nabla_{\varphi}\Delta_{\varphi}P\omega_{\varphi}^n = - \int_M2p(p-1)u^{p-2}e^{\delta_0 F^2}\nabla_{\varphi}u\cdot \nabla_{\varphi}F\, \Delta_{\varphi}P\omega_{\varphi}^n \notag \\
	&\quad - \int_M2pu^{p-1}e^{\delta_0 F^2}2\delta_0 F |\nabla_{\varphi}F|^2 \Delta_{\varphi}P\omega_{\varphi}^n - \int_M2pu^{p-1}e^{\delta_0 F^2}(\Delta_{\varphi}F)(\Delta_{\varphi}P)\omega_{\varphi}^n.
\end{align}\\
In the above,  we have:
\begin{align}
	&\int_M2p(p-1)u^{p-2}e^{\delta_0 F^2}\nabla_{\varphi}u\cdot \nabla_{\varphi}F\, \Delta_{\varphi}P\omega_{\varphi}^n \notag \\
	&\le  \frac{p(p-1)}{16}\int_Mu^{p-2}|\nabla_{\varphi}u|^2\omega_{\varphi}^n +C\int_M p(p-1)u^{p-1}e^{\delta_0 F^2}(1+tr_{\omega_{\varphi}}\omega_0)^2\omega_{\varphi}^n 
\end{align}\\
Next
\begin{align}
	& \int_M2pu^{p-1}e^{\delta_0 F^2}2\delta_0 F |\nabla_{\varphi}F|^2 \Delta_{\varphi}P\omega_{\varphi}^n = \int_M 4p\delta_0 u^p F\Delta_\varphi P\omega_\varphi^n \notag\\
	&\qquad\qquad\qquad \le  C\delta_0||F||_0 \int_M pu^p  (1+tr_{\omega_{\varphi}}\omega_0)\omega_\varphi^n 
\end{align}\\
Next
\begin{align}
	& \int_M2pu^{p-1}e^{\delta_0 F^2}(\Delta_{\varphi}F)(\Delta_{\varphi}P)\omega_{\varphi}^n \notag\\
	&\qquad\qquad \le  \frac{1}{4} \int_M pu^{p-1}e^{\delta_0 F^2}(\Delta_{\varphi}F)^2\omega_{\varphi}^n + 4\int_M pu^{p-1}e^{\delta_0 F^2}(\Delta_{\varphi}P)^2\omega_{\varphi}^n \notag\\
	&\qquad\qquad \le  \frac{1}{4} \int_M pu^{p-1}e^{\delta_0 F^2}|D_\varphi^2 F|^2 \omega_{\varphi}^n + C\int_M pu^{p-1}e^{\delta_0 F^2}(1+tr_{\omega_{\varphi}}\omega_0)^2\omega_{\varphi}^n 
\end{align}\\
Then we estimate 
\begin{align}
	&\int_M pu^{p-1}e^{\delta_0 F^2}dd^c P\wedge d^c F\wedge d F\wedge \omega_{\varphi}^{n-2} \notag \\
	&=-\int_Mp(p-1)u^{p-2}e^{\delta_0F^2}du\wedge d^cP\wedge d^cF\wedge dF\wedge \omega_{\varphi}^{n-2} \notag \\
	&\quad-\int_Mpu^{p-1}e^{\delta_0F^2}2\delta_0 F dF\wedge d^cP\wedge d^cF\wedge dF\wedge \omega_{\varphi}^{n-2} \notag \\
	&\quad +\int_M pu^{p-1}e^{\delta_0F^2}d^cP\wedge dd^cF\wedge dF\wedge \omega_{\varphi}^{n-2} \notag\\
	&= \int_Mp(p-1)u^{p-2}e^{\delta_0F^2}\left( -(\nabla_\varphi u\cdot\nabla_\varphi F)(\nabla_\varphi F\cdot\nabla_\varphi P) + (\nabla_\varphi u\cdot \nabla_\varphi P) |\nabla_\varphi F|^2 \right)  \omega_{\varphi}^{n} \notag \\
	&\quad -0+\int_M pu^{p-1}e^{\delta_0F^2}\left(-(\nabla_\varphi P\cdot \nabla_\varphi F)\Delta_\varphi F+ g_\varphi^{i\bar j}g_\varphi^{p\bar q}F_{i\bar q}F_{p}P_{\bar j} \right)  \omega_{\varphi}^{n} \notag\\
	&\le  \int_M 2p(p-1)u^{p-1}|\nabla_\varphi u|\cdot|\nabla_\varphi P| \omega_\varphi^n + \int_M pu^{p-1}e^{\delta_0F^2}\left( 16|\nabla_\varphi F|^2|\nabla_\varphi P|^2 +\frac{1}{16} (\Delta_\varphi F)^2\right) \omega_\varphi^n \notag\\
	&\quad + \int_M pu^{p-1}e^{\delta_0F^2}(\frac{1}{16}g_\varphi^{i\bar j}g_\varphi^{p\bar q}F_{i\bar q}F_{p\bar j}+16 |\nabla_\varphi P|^2|\nabla_\varphi F|^2)\omega_\varphi^n \notag\\
	&\le  \frac{p(p-1)}{16}\int_Mu^{p-2}|\nabla_{\varphi}u|^2\omega_{\varphi}^n  + 32\int_M p^2u^pe^{\delta_0F^2} |\nabla_\varphi P|^2 \omega_\varphi^n \notag\\
	&\qquad\qquad  + \int_M pu^{p-1}e^{\delta_0F^2}(\frac{1}{8}g_\varphi^{i\bar j}g_\varphi^{p\bar q}F_{i\bar q}F_{p\bar j})\omega_\varphi^n
\end{align}\\
Combining all the above together, we have 
\begin{align}
	&\partial_t\big( \int_M u^p\omega_\varphi^n\big)\le  - \int_M \frac{p(p-1)}{8} u^{p-2} |\nabla_\varphi u|^2_\varphi\omega_\varphi^n - \frac{1}{4}\int_M pu^{p-1}e^{\delta_0 F^2} g_{\varphi}^{i\bar{q}}g_{\varphi}^{p\bar{j}}F_{i\bar{j}}F_{p\bar{q}}\omega_\varphi^n \notag\\
	 & +\int_M pu^{p}\left( -(2\delta_0-16\delta_0^2 ||F||_0^2-\frac{1}{2(p-1)})|\nabla_\varphi F|^2 +C(2\delta_0 ||F||_0 +1)(1+tr_{\omega_{\varphi}}\omega_0)\right)\omega_\varphi^n \notag\\
	 & \quad +C\int_M p(p-1)u^{p-1}e^{\delta_0 F^2}(1+tr_{\omega_{\varphi}}\omega_0)^2\omega_{\varphi}^n +C\delta_0||F||_0 \int_M pu^p  (1+tr_{\omega_{\varphi}}\omega_0)\omega_\varphi^n \notag\\
	 &\quad + C\int_M pu^{p-1}e^{\delta_0 F^2}(1+tr_{\omega_{\varphi}}\omega_0)^2\omega_{\varphi}^n + 32\int_M p^2u^pe^{\delta_0F^2} |\nabla_\varphi P|^2 \omega_\varphi^n 
\end{align}\\
The constant $\delta_0$ is chosen to be small enough that 
\begin{align}
	\delta_0 (||F||_0^2+||F||_0)<\frac{1}{2}
\end{align}
And $p$ large enough that 
\begin{align}
	\frac{1}{p-1} < \delta_0
\end{align}\\
Then we get
\begin{align}
	&\partial_t\big( \int_M u^p\omega_\varphi^n\big)\le  -\delta_0 \int_M u^{p+1}\omega_\varphi^n \notag\\
	 &\quad\quad + C(p)\left( \int_M u^p (tr_{\omega_{\varphi}}\omega_0+1)\omega_\varphi^n + \int_M u^{p-1} (tr_{\omega_{\varphi}}\omega_0+1)^2 \omega_\varphi^n + \int_M u^{p}|\nabla_\varphi P|^2 \omega_\varphi^n \right) \notag\\
	&\quad\quad \le  -\frac{\delta_0}{2} \int_M u^{p+1}\omega_\varphi^n + C(p,\delta_0 )\left( \int_M (tr_{\omega_{\varphi}}\omega_0+1)^{p+1} \omega_\varphi^n + \int_M |\nabla_\varphi P|^{2(p+1)} \omega_\varphi^n \right)
\end{align}
where the last line above is obtained by using Young's inequality. \\

\noindent
Recall Lemma \ref{Lem315}, we already have an estimate of $|\nabla_\varphi P|^2$ in term of $|\nabla_\varphi F|^2$, so we have 
\begin{align}
	& \partial_t\big( \int_M u^p\omega_\varphi^n\big)\le  -\frac{\delta_0}{2} \int_M u^{p+1}\omega_\varphi^n + \frac{C'(p)}{K-1}\int_M u^{p+1}\omega_\varphi^n \notag\\
	&\quad\quad\quad\quad\quad\quad + C(p) \int_M (tr_{\omega_{\varphi}}\omega_0+1)^{p+1} \omega_\varphi^n + C(p,K,r,\delta_0)
\end{align}\\
By choosing $K$ sufficiently large so that 
\begin{align}
	\frac{C'(p)}{K-1} < \frac{\delta_0}{4}
\end{align}\\
Also notice that 
\begin{align}
	tr_{\omega_{\varphi}}\omega_0+1\le  C_1 (n+\Delta \varphi)^n+C_2
\end{align}\\
We then have 
\begin{align}
	\partial_t\big( \int_M u^p\omega_\varphi^n\big)\le  -c \int_M u^{p+1}\omega_\varphi^n + C_3 \int_M (n+\Delta\varphi)^{n(p+1)} + C_4
\end{align}\\
\noindent
Let \( \lambda_p =1+n^2(p+1) \), we then compute:
\begin{align}
	& \partial_t \left( t^{\lambda_p} \int_M u^p \, \omega_{\varphi}^n \right) \le  \lambda_p t^{\lambda_p - 1} \int_M u^p \, \omega_{\varphi}^n - c t^{\lambda_p} \int_M u^{p +1} \, \omega_{\varphi}^n \notag\\
	&\qquad\qquad\qquad\qquad\qquad\qquad + C_3t^{\lambda_p} \int_M (n+\Delta\varphi)^{n(p+1)} \omega_{\varphi}^n + C_4t^{\lambda_p}  \notag \\
	&\qquad\qquad\qquad = t^{n^2(p+1)-p} \left( \lambda_p t^p \int_M u^p \, \omega_{\varphi}^n -ct^{p+1}\int_M u^{p+1}\,\omega_\varphi^n \right) \notag\\
	&\qquad\qquad\qquad\qquad\qquad\qquad + C_3t^{\lambda_p} \int_M (n+\Delta\varphi)^{n(p+1)} \omega_{\varphi}^n + C_4t^{\lambda_p} \notag\\
	&\qquad\qquad\qquad = t^{n^2(p+1)-p} \left( \lambda_p t^p \int_M u^p \, \omega_{\varphi}^n -c(t^p \int_M u^p \, \omega_{\varphi}^n)^{1+\frac{1}{p}} \right) \notag\\
	&\qquad\qquad\qquad\qquad\qquad\qquad + C_3t^{\lambda_p} \int_M (n+\Delta\varphi)^{n(p+1)} \omega_{\varphi}^n + C_4t^{\lambda_p}
\end{align}\\
Note that the first item of right-hand side above is bounded on any finite time interval. By Lemma \ref{Lem313}, we know the second item is also bounded. We integrate in $t$, and see that $ t^{\lambda_p} \int_M u^p \, \omega_{\varphi}^n $ is bounded from above on any finite time interval, which depends only on $ p, \|F\|_0$. i.e. 
\begin{align}
	\sup_{t\in (0,T)} t^{\lambda_p} \int_M |\nabla_\varphi F|^{2p}\omega_\varphi^n \le  C(p,||F||_0,T)
\end{align}\\
\end{proof}

\subsection{When initial volume form is small }

The goal of this subsection is to prove the following theorem. 

\begin{thm}\label{MT32}
	Let $\varphi$ be a solution to the PCF on $M\times [0,T)$.  Let $p>1$,  then there exists $\delta>0$ sufficiently small,  depending on $p$,  $n$ and the background metric,  such that if 
		\begin{align}
			\max_{t\in [0,T)}||F(\cdot,t)||_0\le \delta,
		\end{align}
	then for any $0<\epsilon <T$,  one has 
		\begin{align}
			\sup_{t\in [\epsilon ,T)}||\nabla_{\varphi} F(\cdot,t)||_{L^p(\omega_{\varphi}^n)}\le C,
		\end{align}
	where $C$ depends on $p$,  $n$,  $\epsilon$,  $T$,  and the background metric.\\
\end{thm}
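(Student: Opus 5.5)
The plan is to reduce Theorem \ref{MT32} to the machinery of the previous subsection. The only place where the hypothesis ``$P$ is $\delta_*$-close to a continuous function'' entered was in the treatment of $\nabla P$ terms: in Lemma \ref{Lem311}, Lemma \ref{Lem313} and Lemma \ref{Lem315}. We will show that when $\|F\|_0\le\delta$ is small, $P$ is itself uniformly close (in $C^0$) to a fixed smooth function, so that Definition \ref{Def31} holds with a fixed modulus. Once that is in place, Theorem \ref{MT31} applies directly. Its constant $\delta_*$ depends only on $p$, the background metric and $C_1$, and here $C_1=\delta\le 1$. This gives the bound on $\sup_{t\ge\epsilon}\|\nabla_\varphi F\|_{L^p(\omega_\varphi^n)}$.

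The key step is the following comparison. Let $P_0$ be the smooth solution of
\[
\Delta_{\omega_0}P_0=\underline{R}-tr_{\omega_0}Ric(\omega_0),\qquad \int_M P_0\,\omega_0^n=0.
\]
We aim to prove $\|P(\cdot,t)-P_0\|_0\le \theta(\delta)$ with $\theta(\delta)\to0$ as $\delta\to0$, uniformly in $t$. For this we first use $\|F\|_0\le\delta$ and Theorem \ref{TMDD} (applied to $\varphi$ and to $\varphi-\psi$-type comparisons) to get $C^{0,\alpha}$ control of $\varphi$. The closeness of $\omega_\varphi^n$ to $\omega_0^n$ should then make $\varphi$ $C^0$-close to a constant: by stability of the complex Monge--Amp\`ere equation (Ko{\l}odziej-type $L^1$--$L^\infty$ stability), $\|\varphi-\sup\varphi\|_0\le\theta_1(\delta)$ after normalization. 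Next we write
\[
\Delta_\varphi(P-P_0)=tr_{\omega_\varphi}(\omega_0-\omega_\varphi)\wedge\cdots
\]
More usefully, we write the equation for $P$ in divergence form against $\omega_\varphi^{n-1}$:
\[
dd^cP\wedge\omega_\varphi^{n-1}=(\underline{R}\,\omega_\varphi-Ric(\omega_0))\wedge\omega_\varphi^{n-1}.
\]
We then expand $\omega_\varphi=\omega_0+dd^c\varphi$ and integrate by parts so that every error term carries a factor of $\varphi-\sup\varphi$ or $F$. Using the $L^2$ and $L^\infty$ bounds on $P$ from Theorem \ref{MT21} (whose hypotheses hold since $e^F$ is pinched), we get $\|P-P_0\|_{L^2}\le\theta_2(\delta)$. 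We then upgrade to $L^\infty$ by rerunning the ABP argument in the proof of Theorem \ref{MT21} with $P-P_0$ in place of $P$. There the right-hand side is now small plus $C\,tr_{\omega_\varphi}\omega_0$-terms absorbed by $-C_0\varphi$. Since $\varphi$ has small oscillation, the $C_0\varphi$ correction contributes only $C_0\theta_1(\delta)$.

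With $\tilde P:=P_0$ (time independent, smooth, fixed modulus $\omega(r)\le Cr$), we can choose $\delta$ so small that $\theta(\delta)<\delta_*(p,\text{background},1)$. We then invoke Theorem \ref{MT31} on $[0,T)$ to conclude. The dependence of $C$ on $T$ comes in through Lemmas \ref{Lem311} and \ref{Lem313}.

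The main obstacle is the uniform $C^0$ smallness of $P-P_0$. The $L^2$ step is delicate because $\varphi$ is only H\"older, so terms like $\int d\varphi\wedge d^cP\wedge\omega_\varphi^{n-2}\wedge\ldots$ must be controlled via the energy bound of Lemma \ref{Lem21} and the smallness of $osc\,\varphi$ rather than by derivatives of $\varphi$. The ABP step also needs the auxiliary $\psi$ from Theorem \ref{MT21} to have small oscillation, which again follows from stability since its density is close to $1$. If this route fails, the fallback is to bypass closeness entirely: when $\|F\|_0\le\delta$, the $\nabla P$ error terms in Lemmas \ref{Lem311}--\ref{Lem315} would instead be bounded using $P-P_0$ small together with the gradient bounds on $P_0$.
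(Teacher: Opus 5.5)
Your reduction to Theorem \ref{MT31} is legitimate, and its crux is true: $\|F\|_0\le\delta$ does force $\|P-P_0\|_0\to 0$ as $\delta\to0$. But neither of the two steps you give for it works as written, so the key claim is not yet proved.

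\emph{The $L^2$ step.} Put $\gamma=Ric(\omega_0)+\sqrt{-1}\partial\bar\partial P_0$. Then $tr_{\omega_0}\gamma=\underline{R}$ and $\Delta_\varphi(P-P_0)=\underline{R}-tr_{\omega_\varphi}\gamma$, and the energy identity for $P-P_0$ has error term $\int_M(P-P_0)\big(\underline{R}(\omega_\varphi^n-\omega_0^n)-n\,\gamma\wedge(\omega_\varphi^{n-1}-\omega_0^{n-1})\big)$.

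If $\gamma$ were a multiple of $\omega_0$, your integration by parts would work: write $\omega_0=\omega_\varphi-\sqrt{-1}\partial\bar\partial\varphi$ and move $\varphi-\sup_M\varphi$ onto $\Delta_\varphi(P-P_0)$. That happens only when $[\omega_0]$ is proportional to $c_1(M)$, the situation exploited in Lemma \ref{l6.5}. For the $\omega_0$-trace-free part of $\gamma$, integrating by parts produces $\int_M d(P-P_0)\wedge d^c\varphi\wedge\gamma\wedge\omega_\varphi^{k}\wedge\omega_0^{n-2-k}$. After Cauchy--Schwarz the $\varphi$-factor is small, but the other factor is a mixed energy $\int_M dP\wedge d^cP\wedge\omega_\varphi^{k}\wedge\omega_0^{n-1-k}$ with $k<n-1$ (for $k=0$, the full $\omega_0$-gradient of $P$). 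Neither $\int_M|\nabla_\varphi P|^2\omega_\varphi^n$ nor Lemma \ref{Lem21} controls it.

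The missing idea is strong $L^1$ convergence, which makes integration by parts unnecessary. Since $P-P_0$ is bounded by Theorem \ref{MT21}, it suffices that $\|\omega_\varphi^{n-1}-\omega_0^{n-1}\|_{L^1}\to0$. This holds for the following reason:
\begin{itemize}
\item the cofactor eigenvalues $\mu_i=e^F/\lambda_i$ satisfy $\int_M\sum_i\mu_i\,\omega_0^n=n\int_M\omega_0^n$ by cohomology;
\item pointwise, $\frac1n\sum_i\mu_i\ge e^{(n-1)F/n}\ge e^{-\delta}$, so the AM--GM deficit has integral $O(\delta)$;
\item the inequality $\frac1n\sum_i x_i-G\ge\frac1n\sum_i(\sqrt{x_i}-\sqrt{G})^2$ then gives $\|\omega_\varphi^{n-1}-\omega_0^{n-1}\|_{L^1}\le C\delta^{1/2}$.
\end{itemize}
The Poincar\'e inequality used in Theorem \ref{MT21}, together with the normalizations of $P$ and $P_0$, then gives $\|P-P_0\|_{L^2}\to0$.

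\emph{The $L^\infty$ step.} Your upgrade also does not give smallness as stated, because the forcing $\underline{R}-tr_{\omega_\varphi}\gamma$ is not small. Take $w=\pm(P-P_0)-C_0(\varphi-\sup_M\varphi)$ with $C_0>B\ge|\gamma|_{\omega_0}$. One only has $\Delta_\varphi w\ge -C'+(C_0-B)\,tr_{\omega_\varphi}\omega_0$, with $C'=|\underline{R}|+C_0n$ of order one. The argument of Theorem \ref{MT21} closes with $\sup_M e^{w}\le C_7\sup_M e^{w}(1+w_+)^{-1/2n}$, which yields only $\sup_M w\le C$.

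You need a level-set version. With $m=\sup_M w=w(p_0)$, apply ABP to $e^{w}\eta_{p_0}$ on $\{w>m/2\}\cap B_{r_0}(p_0)$, with $\theta_0\sim(C_0-B)r_0^2$. This gives either $m\le Cr_0^2$ or $|\{w>m/2\}|\ge c\,r_0^{2n}$. Optimizing $r_0$ gives $m\le C\|w_+\|_{L^1}^{1/(n+1)}$. Combined with $\mathrm{osc}_M\varphi\to0$ (Monge--Amp\`ere stability, as you note), this gives $\|P-P_0\|_0\to0$.

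\emph{Comparison with the paper.} With these repairs your route is correct, and it is genuinely different from the paper's, which never shows that $P$ is close to anything. In Lemma \ref{Lem322}, the smallness of $\|F\|_0$ is used only to allow the weight $e^{KF^2}$ with $K$ huge; the condition $16KV^2<\frac12$ keeps $e^{KF^2}$ bounded. The resulting good term of order $K\int_M u^{p+1}\omega_\varphi^n$ swamps the $\nabla_\varphi P$ and $tr_{\omega_\varphi}\omega_0$ errors. Lemmas \ref{Lem321} and \ref{Lem31} bound those errors by $C\int_M|\nabla_\varphi F|^{2(p+1)}\omega_\varphi^n+C$ with $C$ independent of $K$, and the $t^{p+1}$-weighted ODE argument finishes.

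The paper's argument is a direct parabolic one. Yours extracts a structural elliptic fact: small $F$ forces $P$ to be uniformly close to the smooth $P_0$. That makes Theorem \ref{MT32} a corollary of Theorem \ref{MT31}, at the cost of the stability argument above plus the whole chain of Lemmas \ref{Lem311}--\ref{Lem316}.
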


\noindent
Doing a same argument as Corollary \ref{Cor31} and \ref{Cor32}, we have the following result. 

\begin{cor}\label{Cor33}
	Let $\varphi$ be a solution of Pseudo Calabi Flow on $[0,T)\times M,\ (t<\infty)$ with $\sup_{t\in [0,t)}||F(\cdot,t)||_0<\delta$ where $\delta$ is the constant in Theorem \ref{MT32}, then the Pseudo Calabi Flow can be extended beyond T, i.e. $\varphi\in C^\infty \big([0,T+\eps) ,C^{\infty}(M)\big)$ for a small constant $\eps$. \\
\end{cor}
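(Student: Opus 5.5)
The extension statement follows from Theorem \ref{MT32} in the same way Corollary \ref{Cor31} follows from Theorem \ref{MT31}. Lemma \ref{l4.6N} then does the rest. I will take $\delta$ to be the constant of Theorem \ref{MT32} for the exponent $p=p_n$, where $p_n$ is the large exponent (depending only on $n$) required in Lemma \ref{l4.6N}.

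\textbf{Step 1.} Fix $0<\epsilon<T$. The hypothesis $\sup_{t\in[0,T)}\|F(\cdot,t)\|_0<\delta$ is exactly the hypothesis of Theorem \ref{MT32}, which yields
\begin{align*}
\sup_{t\in[\epsilon,T)}\|\nabla_\varphi F(\cdot,t)\|_{L^{p_n}(\omega_\varphi^n)}\le C(p_n,n,\epsilon,T).
\end{align*}
Since $\|F\|_0\le\delta$ as well, both hypotheses of Lemma \ref{l4.6N} hold on the interval $(\epsilon,T)$. Lemma \ref{l4.6N} is applied after a time shift, which is harmless because the flow is autonomous.

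\textbf{Step 2.} The chain inside Lemma \ref{l4.6N} runs as follows.
\begin{enumerate}
\item Lemma \ref{Lem31} controls $\int_M (tr_{\omega_\varphi}\omega_0)^{q}\omega_\varphi^n$ by $\int_M|\nabla_\varphi F|^{2q}\omega_\varphi^n$, using only $\|F\|_0\le\delta$.
\item Corollary \ref{Lem32} then converts this into a bound on $\|F\|_{W^{1,q}(\omega_0)}$ for some $q>2n$. This needs $p_n$ large; roughly $p_n\gtrsim 4nq(n-1)$ suffices.
\item Theorem \ref{TM32} gives a uniform $C^{2,\alpha}$ bound on $\varphi$ for $t\in[\epsilon,T)$. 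One subtlety is the normalization of $\varphi$. The $C^0$ bound on $\varphi$ comes from Theorem \ref{TMDD} up to an additive constant, and from Theorem \ref{MT22}, since $P$ is bounded by Theorem \ref{MT21}, in absolute terms on the finite interval.
\item Theorem \ref{TM31} upgrades this to bounds on all $C^{k,\alpha}$ norms on $[2\epsilon,T)$.
\end{enumerate}

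\textbf{Step 3.} Take a sequence $t_i\to T$. By Arzel\`a--Ascoli, $\varphi(t_i)$ subconverges smoothly to some $\varphi(T)$. The uniform $C^{k,\alpha}$ bounds together with the equation $\partial_t\varphi=F+P$ give uniform bounds on $\partial_t\varphi$, so the limit is independent of the subsequence. The limit $\varphi(T)$ is smooth, and it is strictly $\omega_0$-psh because $e^{F}\ge e^{-\delta}$ combined with the $C^2$ bound gives $\omega_{\varphi(T)}>0$. Part (1) of Theorem \ref{t1.1} then supplies a short-time smooth solution starting from $\varphi(T)$ on $[T,T+\eps)$. Gluing it to the original flow gives $\varphi\in C^\infty([0,T+\eps),C^\infty(M))$; smoothness across $t=T$ follows from the uniform higher-order bounds and uniqueness of the short-time solution.

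\textbf{Main obstacle.} The only genuinely nontrivial input is Theorem \ref{MT32}, which is taken as given. Within this corollary, the delicate point is bookkeeping: the exponent fed into Theorem \ref{MT32} must be large enough for Corollary \ref{Lem32} to reach $q>2n$. Also, $\delta$ must be chosen once, for that fixed $p_n$, so that it depends only on $n$ and the background metric.
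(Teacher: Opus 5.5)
Your proposal is correct and follows the paper's route. The paper's proof is just ``the same argument as Corollaries \ref{Cor31} and \ref{Cor32},'' namely Theorem \ref{MT32} with $p=p_n$ followed by Lemma \ref{l4.6N}, which runs through Lemma \ref{Lem31}, Corollary \ref{Lem32}, Theorems \ref{TM32} and \ref{TM31}, a subsequential limit, and short-time existence from Theorem \ref{t1.1}(1); your extra points (fixing $\delta$ once for $p=p_n$, positivity of $\omega_{\varphi(T)}$, smoothness across $t=T$) only make explicit what the paper leaves implicit.
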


\noindent
We start with estimating $||\nabla_{\varphi}P||_{L^p}$ in terms of $tr_{\omega_{\varphi}}\omega_0$ and $|\nabla_{\varphi}F|$.\\

\begin{lem}[\textbf{Estimate of $|\nabla_\varphi P|^2$}]\label{Lem321}
	Let $(\varphi,F,P)$ satisfy the following equations 
	\begin{align}
		\begin{cases}
			F= \log \frac{\omega_\varphi^n}{\omega_0^n} \\
			\Delta_\varphi P=-tr_\varphi Ric_{\omega_0} +\underline R
		\end{cases}
	\end{align}
	with $P$ normalized as $\int_M P\,\omega_\varphi^n=0$. If $||P||_0\le V'$, then there exist a constant $C$ depends on $V', p$ and background metric such that 
	\begin{align}
		\int_M |\nabla_\varphi P |^{p+1}\omega_\varphi^n \le C\left( \int_M (tr_\varphi \omega_0 +1)^{p+1}\omega_\varphi^n + \int_M |\nabla_\varphi F|^{2(p+1)}\omega_\varphi^n \right)
	\end{align}\\
\end{lem}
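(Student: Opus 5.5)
We will redo the Bochner-type integral argument of Lemma \ref{Lem315}, but now with the plain weight $e^{KP^2}$ in place of $e^{K(P-\tilde P)^2}$. The bound $\|P\|_0\le V'$ replaces the closeness to a continuous function, and no mollified $\tilde P$ is needed. Set $u=e^{KP^2}|\nabla_\varphi P|^2$ with $K>0$ small, chosen so that $16K^2V'^2\le K$.

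First, compute $\Delta_\varphi u\, e^{-KP^2}$ exactly as in the proof of Lemma \ref{Lem315}. The Bochner formula gives
\begin{align}
\Delta_\varphi|\nabla_\varphi P|^2=|\nabla\nabla P|^2_\varphi+|\partial\bar\partial P|^2_\varphi+2\nabla_\varphi P\cdot\nabla_\varphi\Delta_\varphi P+Ric(\omega_\varphi)(\nabla_\varphi P,\nabla_\varphi P).
\end{align}
The two completions of squares absorb the cross terms $2\nabla_\varphi(KP^2)\cdot\nabla_\varphi|\nabla_\varphi P|^2$. Because $|\nabla_\varphi (KP^2)|^2=4K^2P^2|\nabla_\varphi P|^2$, this yields the good term
\begin{align}
\big(2K-16K^2P^2\big)|\nabla_\varphi P|^4\ \ge\ K|\nabla_\varphi P|^4 .
\end{align}
The other two terms are handled as before. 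The term $2KP\,\Delta_\varphi P\,|\nabla_\varphi P|^2$ is bounded by $CKV'(1+tr_\varphi\omega_0)|\nabla_\varphi P|^2$. The Ricci term is rewritten as $Ric(\omega_0)-\sqrt{-1}\partial\bar\partial F$. This gives $-C\,tr_\varphi\omega_0|\nabla_\varphi P|^2-\Delta_\varphi F|\nabla_\varphi P|^2$, plus the mixed term $dd^cF\wedge d^cP\wedge dP\wedge\omega_\varphi^{n-2}/\omega_\varphi^n$.

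Next, multiply by $pu^{p-1}$ and integrate against $\omega_\varphi^n$. The third-order term $\nabla_\varphi P\cdot\nabla_\varphi\Delta_\varphi P$ is integrated by parts as in (\ref{4.135N}). Since $\Delta_\varphi P=\underline R-tr_\varphi Ric(\omega_0)$ is pointwise bounded by $C(1+tr_\varphi\omega_0)$, all resulting terms are controlled by
\begin{align}
\tfrac{p(p-1)}{16}\int_M u^{p-2}|\nabla_\varphi u|^2\omega_\varphi^n+C(p)\int_M u^{p-1}(1+tr_\varphi\omega_0)^2\omega_\varphi^n+C\int_M u^p(1+tr_\varphi\omega_0)\omega_\varphi^n .
\end{align}
The $F$-terms are treated as in (\ref{4.136}) and (\ref{4.137}). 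We integrate by parts to move $dd^c$ off $F$, then use Cauchy--Schwarz. One part is absorbed by $\int_M u^{p-2}|\nabla_\varphi u|^2\omega_\varphi^n$, another by the Hessian term $\tfrac34\int_M u^{p-1}e^{KP^2}|\partial\bar\partial P|^2_\varphi\omega_\varphi^n$. What remains is bounded by $C(p)\int_M u^p|\nabla_\varphi F|^2\omega_\varphi^n$. The term coming from differentiating the weight, $2KP\,dP\wedge d^cF\wedge\cdots$, is bounded by $CKV'\int_M u^p|\nabla_\varphi P||\nabla_\varphi F|\omega_\varphi^n$. This is absorbed by Young's inequality into $\tfrac{K}{4}\int_M u^p|\nabla_\varphi P|^2\omega_\varphi^n$ plus $C\int_M u^p|\nabla_\varphi F|^2\omega_\varphi^n$.

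After absorbing, we obtain
\begin{align}
\tfrac K2\int_M u^p|\nabla_\varphi P|^2\omega_\varphi^n\le C(p)\int_M \Big(u^p(1+tr_\varphi\omega_0)+u^{p-1}(1+tr_\varphi\omega_0)^2+u^p|\nabla_\varphi F|^2\Big)\omega_\varphi^n .
\end{align}
Since $1\le e^{KP^2}\le e^{KV'^2}$, the left side is comparable to $\int_M|\nabla_\varphi P|^{2(p+1)}\omega_\varphi^n$. Applying Young's inequality $a^pb\le\eps a^{p+1}+C_\eps b^{p+1}$ to each term on the right gives
\begin{align}
\int_M|\nabla_\varphi P|^{2(p+1)}\omega_\varphi^n\le C\Big(\int_M(tr_\varphi\omega_0+1)^{p+1}\omega_\varphi^n+\int_M|\nabla_\varphi F|^{2(p+1)}\omega_\varphi^n\Big).
\end{align}
This is stronger than the stated bound on $\int_M|\nabla_\varphi P|^{p+1}\omega_\varphi^n$. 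The stated bound follows from Hölder's inequality together with $\int_M\omega_\varphi^n=\int_M\omega_0^n$, after raising the right side to the power $\tfrac12$ and enlarging $C$, since the right side is at least $\int_M\omega_\varphi^n$.

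The main obstacle is the mixed term $dd^cF\wedge d^cP\wedge dP$. After integration by parts it produces $\nabla\bar\nabla P$ paired with $\nabla F$, and this must be absorbed by the Hessian term. The weights must be checked carefully, because both $u^{p-1}e^{KP^2}$ and $u^p$ appear. The bound $\|P\|_0\le V'$ is exactly what makes $e^{KP^2}$ harmless; no smallness beyond choosing $K\lesssim V'^{-2}$ is required.
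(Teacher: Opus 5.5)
Your proposal is correct and follows essentially the same route as the paper. The paper also uses the weight $e^{\delta_1 P^2}$ with $\delta_1$ small depending on $\|P\|_0$, the same two completions of squares, integration by parts of the third-order term and the $dd^cF$ terms, and Young's inequality. It likewise ends with a bound on $\int_M|\nabla_\varphi P|^{2(p+1)}\omega_\varphi^n$, from which your H\"older step correctly recovers the stated exponent $p+1$.

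One small simplification: the term from differentiating the weight contains $dP\wedge d^cF\wedge d^cP\wedge dP$, which vanishes identically since $dP\wedge dP=0$ (the paper notes this). So no absorption is needed for that term.
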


\begin{proof}
Let $\delta_1>0$,  we compute:
\begin{align}
	& \Delta_{\varphi}\big(e^{\delta_1P^2}|\nabla_{\varphi}P|^2\big)e^{-\delta_1P^2} =\left( \Delta_{\varphi}(\delta_1P^2)+|\nabla_{\varphi}(\delta_1P^2)|^2 \right)(|\nabla_\varphi P|^2)  \notag \\
	&\qquad\qquad\qquad \quad +\Delta_{\varphi}(|\nabla_{\varphi}P|^2) +2\nabla_{\varphi}(\delta_1P^2)\cdot \nabla_{\varphi}(|\nabla_{\varphi}P|^2).
\end{align}\\
In the above,  we have:
\begin{align}
	& \Delta_{\varphi}(|\nabla_{\varphi}P|^2)=g_{\varphi}^{i\bar{q}}g_{\varphi}^{p\bar{j}}P_{,iq}P_{,\bar{p}\bar{j}}+g_{\varphi}^{i\bar{q}}g_{\varphi}^{p\bar{j}}P_{i\bar{j}}P_{p\bar{q}} \notag\\
	&\qquad +2\nabla_{\varphi}P\cdot \nabla_{\varphi}\Delta_{\varphi}P+g_{\varphi}^{i\bar{q}}g_{\varphi}^{p\bar{j}}(Ric(\omega_{\varphi}))_{i\bar{j}}P_pP_{\bar{q}}.
\end{align}\\
Also
\begin{align}
2\nabla_{\varphi}(\delta_1P^2)\cdot \nabla_{\varphi}(|\nabla_{\varphi}P|^2)=2Re\big(g_{\varphi}^{i\bar{j}}g_{\varphi}^{p\bar{q}}((\delta_1P^2)_iP_{p\bar{j}}P_{\bar{q}}+(\delta_1P^2)_iP_pP_{,\bar{q}\bar{j}})\big).
\end{align}\\
We have the following completion of squares:
\begin{align}
|\nabla_{\varphi}(\delta_1P^2)|^2 |\nabla_\varphi P|^2 +2Re\big(g_{\varphi}^{i\bar{j}}g_{\varphi}^{p\bar{q}}(\delta_1P^2)_iP_pP_{,\bar{q}\bar{j}}\big)+g_{\varphi}^{i\bar{q}}g_{\varphi}^{p\bar{j}}P_{,ip}P_{,\bar{q}\bar{j}}\ge  0
\end{align}\\
And 
\begin{align}
	4|\nabla_{\varphi}(\delta_1P^2)|^2|\nabla_\varphi P|^2 + 2Re\big(g_{\varphi}^{i\bar{j}}g_{\varphi}^{p\bar{q}}((\delta_1P^2)_iP_{p\bar{j}}P_{\bar{q}}) +\frac{1}{4}g_{\varphi}^{i\bar{q}}g_{\varphi}^{p\bar{j}}P_{i\bar{j}}P_{p\bar{q}}\ge  0
\end{align}\\
So we obtain:
\begin{align}
	&\Delta_{\varphi}\big(e^{\delta_1P^2}|\nabla_{\varphi}P|^2\big)e^{-\delta_1P^2} \ge  (2\delta_1-16\delta_1^2P^2) |\nabla_{\varphi}P|^4 +2\delta_1P\Delta_{\varphi}P|\nabla_{\varphi}P|^2 \notag \\
	&\quad +\frac{3}{4}g_{\varphi}^{i\bar{q}}g_{\varphi}^{p\bar{j}}P_{i\bar{j}}P_{p\bar{q}}+2\nabla_{\varphi}P\cdot \nabla_{\varphi}\Delta_{\varphi}P+g_{\varphi}^{i\bar{q}}g_{\varphi}^{p\bar{j}}(Ric(\omega_{\varphi}))_{i\bar{j}}P_pP_{\bar{q}} 
\end{align}\\
In the above,  we can estimate:
\begin{align}
	2\delta_1P\Delta_{\varphi}P\ge  -C\delta_1||P||_0 (tr_{\omega_{\varphi}}\omega_0+1).
\end{align}\\
Also
\begin{align}
	&g_{\varphi}^{i\bar{q}}g_{\varphi}^{p\bar{j}}(Ric(\omega_{\varphi}))_{i\bar{j}}P_pP_{\bar{q}} =g_{\varphi}^{i\bar{q}}g_{\varphi}^{p\bar{j}}(Ric(\omega_0))_{i\bar{j}}P_pP_{\bar{q}} - g_{\varphi}^{i\bar{q}}g_{\varphi}^{p\bar{j}}F_{i\bar{j}}P_pP_{\bar{q}} \notag \\
	&\qquad \ge  -C tr_{\omega_{\varphi}}\omega_0|\nabla_{\varphi}P|^2 - \Delta_{\varphi}F|\nabla_{\varphi}P|^2 - \frac{dd^cF\wedge d^c P\wedge d P\wedge \omega_{\varphi}^{n-2}}{\omega_{\varphi}^n}.
\end{align}\\
So we get:
\begin{align}
	&\Delta_{\varphi}\big(e^{\delta_1P^2}|\nabla_{\varphi}P|^2\big)e^{-\delta_1P^2} \notag\\
	&\qquad \ge  (2\delta_1-16\delta_1^2||P||_0^2) |\nabla_{\varphi}P|^4 -C\delta_1||P||_0 (tr_{\omega_{\varphi}}\omega_0+1)  +\frac{3}{4}g_{\varphi}^{i\bar{q}}g_{\varphi}^{p\bar{j}}P_{i\bar{j}}P_{p\bar{q}}   \notag \\
	&\qquad\quad -C tr_{\omega_{\varphi}}\omega_0|\nabla_{\varphi}P|^2 +2\nabla_{\varphi}P\cdot \nabla_{\varphi}\Delta_{\varphi}P - \Delta_{\varphi}F|\nabla_{\varphi}P|^2 - \frac{dd^cF\wedge d^cP\wedge dP\wedge \omega_{\varphi}^{n-2}}{\omega_{\varphi}^n}.
\end{align}\\
Denote $u=e^{\delta_1 P^2}|\nabla_{\varphi}P|^2$,  then the above inequality is equivalent to:
\begin{align}
	&\Delta_{\varphi}u \notag \ge  (2\delta_1-16\delta_1^2||P||_0^2) |\nabla_{\varphi}P|^2u -Ctr_{\omega_{\varphi}}\omega_0 u \notag\\
	&\quad  + \frac{3}{4} e^{\delta_1P^2}g_{\varphi}^{i\bar{q}}g_{\varphi}^{p\bar{j}}P_{i\bar{j}}P_{p\bar{q}} - C\delta_1||P||_0e^{\delta_1P^2}(tr_{\omega_{\varphi}}\omega_0 +1)  \notag \\
	&\quad+2e^{\delta_1P^2}\nabla_{\varphi}P\cdot\nabla_{\varphi}\Delta_{\varphi}P - \Delta_{\varphi}Fu - e^{\delta_1P^2}\frac{dd^cF\wedge d^cP\wedge dP\wedge \omega_{\varphi}^{n-2}}{\omega_{\varphi}^n}.
\end{align}\\

\noindent
Let $p>1$,  we have:
\begin{align}
	\Delta_\varphi u^p = p u^{p-1}\Delta_\varphi u+p(p-1)u^{p-2}|\nabla_\varphi u|^2
\end{align}\\
Integral with respect to $\omega_\varphi^n$, we have
\begin{align}
	&\int_M p(p-1)u^{p-2}|\nabla_{\varphi}u|^2\omega_{\varphi}^n = - \int_M pu^{p-1}\Delta_\varphi u \omega_\varphi^n  \notag \\
	&\le \int_M pu^p\left( -(2\delta_1-16\delta_1^2||P||_0^2)|\nabla_{\varphi}P|^2+Ctr_{\omega_{\varphi}}\omega_0\right) \omega_{\varphi}^n - \int_M\frac{3}{4} pu^{p-1}e^{\delta_1P^2}g_{\varphi}^{i\bar{q}}g_{\varphi}^{p\bar{j}}P_{i\bar{j}}P_{p\bar{q}}\omega_{\varphi}^n \notag \\
	&\quad + \int_M pu^{p-1}e^{\delta_1P^2} C\delta_1||P||_0 (tr_{\omega_{\varphi}}\omega_0+1)\omega_\varphi^n      -\int_M2pu^{p-1}e^{\delta_1P^2}\nabla_{\varphi}P\cdot \nabla_{\varphi}\Delta_{\varphi}P\omega_{\varphi}^n \notag \\
	&\quad +\int_M pu^p\Delta_{\varphi}F\omega_{\varphi}^n + \int_M pu^{p-1}e^{\delta_1P^2}dd^cF\wedge d^cP\wedge dP\wedge \omega_{\varphi}^{n-2}.
\end{align}\\
We are going to integrate by parts in the last two terms:
\begin{align}
	&-\int_M2pu^{p-1}e^{\delta_1P^2}\nabla_{\varphi}P\cdot \nabla_{\varphi}\Delta_{\varphi}P\omega_{\varphi}^n =\int_M2p(p-1)u^{p-2}e^{\delta_1P^2}\nabla_{\varphi}u\cdot \nabla_{\varphi}P\Delta_{\varphi}P\omega_{\varphi}^n \notag \\
	&\quad+\int_M2pu^{p-1}e^{\delta_1P^2}2\delta_1 P |\nabla_{\varphi}P|^2 \Delta_{\varphi}P\omega_{\varphi}^n + \int_M2pu^{p-1}e^{\delta_1P^2}(\Delta_{\varphi}P)^2\omega_{\varphi}^n.
\end{align}\\
In the above,  we have:
\begin{align}
	&\int_M2p(p-1)u^{p-2}e^{\delta_1P^2}\nabla_{\varphi}u\cdot \nabla_{\varphi}P\Delta_{\varphi}P\omega_{\varphi}^n \notag \\
	&\le \frac{p(p-1)}{16}\int_Mu^{p-2}|\nabla_{\varphi}u|^2\omega_{\varphi}^n+C \int_M p(p-1)u^{p-1}e^{\delta_1P^2}(1+tr_{\omega_{\varphi}}\omega_0)^2\omega_{\varphi}^n.
\end{align}\\
Next
\begin{align}
	\int_M2pu^{p-1}e^{\delta_1P^2}(\Delta_{\varphi}P)^2\omega_{\varphi}^n\le C \int_M pu^{p-1}e^{\delta_1P^2}(1+tr_{\omega_{\varphi}}\omega_0)^2\omega_{\varphi}^n.
\end{align}\\
Next
\begin{align}
	&\int_M2pu^{p-1}e^{\delta_1P^2}2\delta_1P |\nabla_{\varphi}P|^2 \Delta_{\varphi}P\omega_{\varphi}^n = \int_M 4p\delta_1 u^p P\Delta_\varphi P\omega_\varphi^n \notag\\
	& \qquad \le \delta_1C ||P||_0\int_M pu^p  (1+tr_{\omega_{\varphi}}\omega_0)\omega_\varphi^n 
\end{align}\\
Moreover
\begin{align}
	&\int_Mpu^{p-1}e^{\delta_1P^2}dd^cF\wedge d^cP\wedge dP\wedge \omega_{\varphi}^{n-2} \notag \\
	&=-\int_Mp(p-1)u^{p-2}e^{\delta_1P^2}du\wedge d^cF\wedge d^cP\wedge dP\wedge \omega_{\varphi}^{n-2} \notag \\
	&\quad-\int_Mpu^{p-1}e^{\delta_1P^2}2\delta_1P dP\wedge d^cF\wedge d^cP\wedge dP\wedge \omega_{\varphi}^{n-2} \notag \\
	&\quad +\int_Mpu^{p-1}e^{\delta_1P^2}d^cF\wedge dd^cP\wedge dP\wedge \omega_{\varphi}^{n-2} \notag\\
	&= \int_Mp(p-1)u^{p-2}e^{\delta_1P^2}\left( -(\nabla_\varphi u\cdot\nabla_\varphi P)(\nabla_\varphi F\cdot\nabla_\varphi P) + (\nabla_\varphi u\cdot \nabla_\varphi F) |\nabla_\varphi P|^2 \right)  \omega_{\varphi}^{n} \notag \\
	&\quad-0+\int_M pu^{p-1}e^{\delta_1P^2}\left(-(\nabla_\varphi P\cdot \nabla_\varphi F)\Delta_\varphi P+ g_\varphi^{i\bar j}g_\varphi^{p\bar q}P_{i\bar q}P_{p}F_{\bar j} \right)  \omega_{\varphi}^{n} \notag\\
	&= -\int_Mp(p-1)u^{p-2}e^{\delta_1P^2} (\nabla_\varphi u\cdot\nabla_\varphi P)(\nabla_\varphi F\cdot\nabla_\varphi P)\omega_\varphi^n - \int_M (p-1)u^p\Delta_{\varphi}F\omega_{\varphi}^n \notag\\
	&\quad + \int_M pu^{p-1}e^{\delta_1P^2}\left(-(\nabla_\varphi P\cdot \nabla_\varphi F)\Delta_\varphi P+ g_\varphi^{i\bar j}g_\varphi^{p\bar q}P_{i\bar q}P_{p}F_{\bar j} \right)  \omega_{\varphi}^{n}
\end{align}\\
Hence we have 
\begin{align}
	&\int_Mpu^{p-1}e^{\delta_1P^2}dd^cF\wedge d^cP\wedge dP\wedge \omega_{\varphi}^{n-2} + \int_Mpu^p\Delta_{\varphi}F\omega_{\varphi}^n \notag\\
	& = -\int_Mp(p-1)u^{p-2}e^{\delta_1P^2} (\nabla_\varphi u\cdot\nabla_\varphi P)(\nabla_\varphi F\cdot\nabla_\varphi P)\omega_\varphi^n + \int_M u^p\Delta_{\varphi}F\omega_{\varphi}^n \notag\\
	&\quad + \int_M pu^{p-1}e^{\delta_1P^2}\left(-(\nabla_\varphi P\cdot \nabla_\varphi F)\Delta_\varphi P+ g_\varphi^{i\bar j}g_\varphi^{p\bar q}P_{i\bar q}P_{p}F_{\bar j} \right)  \omega_{\varphi}^{n} \notag\\
	&\le \int_M p(p-1)u^{p-1}|\nabla_\varphi u|\cdot|\nabla_\varphi F| \omega_\varphi^n -\int_M pu^{p-1}\nabla_{\varphi}u\cdot \nabla_{\varphi}F\omega_{\varphi}^n \notag\\
	& +C \int_M pu^{p-1}e^{\delta_1P^2}(1+tr_{\omega_{\varphi}}\omega_0) |\nabla_{\varphi}F|\cdot |\nabla_{\varphi}P| \omega_\varphi^n + \int_M pu^{p-1}e^{\delta_1P^2}g_\varphi^{i\bar j}g_\varphi^{p\bar q}P_{i\bar q}P_{p}F_{\bar j}   \omega_{\varphi}^{n} \notag\\
	&\le \int_M p^2u^{p-1}|\nabla_\varphi u|\cdot|\nabla_\varphi F| \omega_\varphi^n + C\int_M pu^{p-1}e^{\delta_1P^2}(1+tr_{\omega_{\varphi}}\omega_0)(|\nabla_\varphi P|^2+|\nabla_\varphi F|^2)\omega_\varphi^n \notag\\
	&\quad + \int_M pu^{p-1}e^{\delta_1P^2}(\frac{1}{16}g_\varphi^{i\bar j}g_\varphi^{p\bar q}P_{i\bar q}P_{p\bar j}+16 |\nabla_\varphi P|^2|\nabla_\varphi F|^2)\omega_\varphi^n \notag \\
	&\le \int_M\frac{p(p-1)}{16}u^{p-2}|\nabla_{\varphi}u|^2\omega_{\varphi}^n+\int_M 32p^2 u^p|\nabla_{\varphi}F|^2\omega_{\varphi}^n + C\int_M pu^{p}(1+tr_{\omega_{\varphi}}\omega_0)\omega_\varphi^n \notag\\
	&\quad +C\int_M pu^{p-1}e^{\delta_1P^2}|\nabla_\varphi F|^2 (1+tr_{\omega_{\varphi}}\omega_0) \omega_\varphi^n + \int_M pu^{p-1}e^{\delta_1P^2}\frac{1}{16}g_\varphi^{i\bar j}g_\varphi^{p\bar q}P_{i\bar q}P_{p\bar j} \omega_\varphi^n
\end{align}\\
We then obtain:
\begin{align}
	&\int_M \frac{p(p-1)}{2} u^{p-2}|\nabla_{\varphi}u|^2\omega_{\varphi}^n \le -\int_M pu^p(2\delta_1-16\delta_1^2||P||_0^2)|\nabla_{\varphi}P|^2 \omega_{\varphi}^n \notag\\
	&\quad +C(p) \int_M u^{p-1}e^{\delta_1P^2} \left((1+tr_{\omega_{\varphi}}\omega_0)^2 + (1+tr_{\omega_{\varphi}}\omega_0)|\nabla_\varphi F|^2\right) \omega_\varphi^n \notag\\
	&\quad + C(p)\delta_1||P||_0 \int_M u^p (1+tr_{\omega_{\varphi}}\omega_0)\omega_\varphi^n +C(p)\int_M u^p |\nabla_\varphi F|^2\omega_\varphi^n  
\end{align}\\
By choosing $\delta_1$ sufficiently small so that 
\begin{align}
	16\delta_1||P||_0^2 <1,\ \delta_1||P||_0 <1,\ e^{\delta_1P^2}<2
\end{align}\\
then we have 
\begin{align}\label{d18}
	\delta_1 \int_M u^p|\nabla_{\varphi}P|^2 \omega_{\varphi}^n \le  C(p) \int_M u^p (1+tr_{\omega_{\varphi}}\omega_0)\omega_\varphi^n +C(p)\int_M u^p |\nabla_\varphi F|^2\omega_\varphi^n \notag\\
	\quad + C(p) \int_M u^{p-1}e^{\delta_1P^2} \left((1+tr_{\omega_{\varphi}}\omega_0)^2 + (1+tr_{\omega_{\varphi}}\omega_0)|\nabla_\varphi F|^2\right) \omega_\varphi^n
\end{align}\\
Using Young's inequality, we get
\begin{align}
	u^p A\le \frac{p}{p+1}u^{p+1}+\frac{1}{p+1}A^{p+1}
\end{align}\\
We can choose $A$ to be $C'(1+tr_{\omega_{\varphi}}\omega_0)$ or $C'|\nabla_\varphi F|^2$ as in the last line. Also 
\begin{align}
	u^{p-1}AB\le \frac{p-1}{p+1}u^{p+1}+\frac{1}{p+1}A^{p+1}+\frac{1}{p+1}B^{p+1}
\end{align}\\
And we choose $A=\sqrt{C'}(1+tr_{\omega_{\varphi}}\omega_0)$ , $B=\sqrt{C'}|\nabla_\varphi F|^2$. Here $C'$ is a very large constant depend on $\delta_1$,
\begin{align}
	C'= 10 C(p)\delta_1^{-1}
\end{align}
Then the right hand side of (\ref{d18}) will become
\begin{align}
	\frac{4}{10}\delta_1 \int_M u^{p+1}\omega_\varphi^n + \delta_1^{-p}C(p)\left( \int_M (1+tr_{\omega_{\varphi}}\omega_0)^{p+1}\omega_\varphi^n + \int_M |\nabla_\varphi F|^{2(p+1)}\omega_\varphi^n \right)
\end{align}\\
Finally, we get our estimate 
\begin{align}
	\int_M u^{p+1}\omega_\varphi^n \le \delta_1^{-p-1}C(p)\left( \int_M (1+tr_{\omega_{\varphi}}\omega_0)^{p+1}\omega_\varphi^n + \int_M |\nabla_\varphi F|^{2(p+1)}\omega_\varphi^n \right)
\end{align}\\
Where $\delta_1$ essentially depends on $||P||_0$, $C(p)$ depends on $p$ and background metric. \\ 
\end{proof}

\noindent
Lastly, we estimate $|\nabla_\varphi F|^2$. 
 \begin{lem}[$W^{1,p}$ bound of $F$]\label{Lem322}
 	Let $\varphi$ be a solution to PCF on $M\times [0,T)$.  Denote $F=\log\frac{\omega_{\varphi}^n}{\omega_0^n}$.  For any $p>1$,  there exists $V_0>0$ sufficiently small,  depending only on $p$,  the background metric,  such that if $||F||_{0,M\times [0,T)}\le V_0$,  then one has:
	\begin{align}\label{equa3210}
		\sup_{t\in(0,T)}t^{p+1}\int_M|\nabla_{\varphi}F|^{2p}\omega_{\varphi}^n\le C, 
	\end{align}
	where $C$ depends on $p$,  the background metric,  $n$ and $T$.\\
 \end{lem}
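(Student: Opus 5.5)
I would run the same $L^p$-parabolic scheme as in Lemma \ref{Lem316}, with one change. There the closeness of $P$ to a continuous function was what tamed $\int u^p|\nabla_\varphi P|^2$ and the coefficient $K$. Here that role is played by the smallness of $\|F\|_0$, combined with Lemma \ref{Lem321} and the $C^0$ bound on $P$ coming from Theorem \ref{MT21}.

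\textbf{Step 1: preliminary bounds.} By Theorem \ref{TMDD} and Theorem \ref{MT21}, $\|F\|_0\le V_0\le 1$ gives uniform bounds on $\|\varphi\|_0$ (after normalization) and on $\|P\|_0$, independent of $V_0$. Lemma \ref{Lem321} therefore applies with a fixed $V'$.

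\textbf{Step 2: differential inequality.} Set $u=e^{\delta_0F^2}|\nabla_\varphi F|^2$. Because $\|F\|_0\le V_0$, I can take $\delta_0$ of order $1$ (say $\delta_0=1$ with $16\delta_0^2V_0^2<1$) rather than tiny. Repeating the computation in the proof of Lemma \ref{Lem316} verbatim gives
\begin{align*}
\partial_t\int_Mu^p\omega_\varphi^n\le -\int_M u^{p+1}\omega_\varphi^n+C(p)\Big(\int_M(1+tr_{\omega_\varphi}\omega_0)^{p+1}\omega_\varphi^n+\int_M|\nabla_\varphi P|^{2(p+1)}\omega_\varphi^n\Big)+C.
\end{align*}
Two things feed into this:
\begin{enumerate}
\item The trace term is controlled through Lemma \ref{Lem31}, giving $\int(tr_{\omega_\varphi}\omega_0)^{p+1}\le C(\int|\nabla_\varphi F|^{2(p+1)}+1)$.
\item The $|\nabla_\varphi P|$ term is controlled through (the squared-gradient version of) Lemma \ref{Lem321} by the same two quantities.
\end{enumerate}
Substituting both, everything on the right is bounded by $C_*(p)\int u^{p+1}+C$.

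\textbf{Step 3: the main obstacle.} The constant $C_*(p)$ must be beaten by the good term $\delta_0\int u^{p+1}$, and nothing so far makes it small, unlike the large $K$ available in Lemma \ref{Lem315}. Smallness of $V_0$ has to be used quantitatively here. I would first try to track where $F$ actually enters the constants:
\begin{enumerate}
\item In Lemma \ref{Lem31}, the weight $e^{-\lambda\varphi-\alpha F}$ and the test function become close to constants.
\item The coupling terms in $\partial_t\int u^p$ that involve $P$ and $tr_{\omega_\varphi}\omega_0$ should be rearranged so that $\nabla_\varphi F$ appears multiplied by $F$, hence by a factor $O(V_0)$. Concretely, use $\Delta_\varphi P=\underline R-tr_{\omega_\varphi}Ric$ and integrate by parts against $F$.
\end{enumerate}
If that fails, the fallback is to rescale $\delta_0\sim V_0^{-1}$. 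The good coefficient $2\delta_0-16\delta_0^2V_0^2$ then grows like $V_0^{-1}$, while the cross terms, after Young's inequality, stay $O(1)$ once $\delta_0F$ is bounded. So $V_0$ small makes the negative term dominate.

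\textbf{Step 4: conclusion.} Once the differential inequality has the form $\partial_t Y\le -cY^{1+1/p}+C$ with $Y=\int u^p\omega_\varphi^n$ (using Hölder with the volume form bounded), multiply by $t^{p+1}$. The argument in Lemma \ref{Lem316}, with $\lambda_p=p+1$, then yields \eqref{equa3210}. No second-derivative input from Lemma \ref{Lem313} is needed, because the trace term has already been absorbed in Step 2.
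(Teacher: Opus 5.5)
Your Step 3 fallback is exactly the paper's argument, so the proposal is correct, and you should make that route the main line rather than a fallback: the paper uses the weight $e^{KF^2}$ with $K$ large, takes $V_0$ small depending on $K$ so that $2K-16K^2V_0^2$ stays comparable to $K$, bounds the trace and $|\nabla_\varphi P|$ terms by Lemma \ref{Lem31} and the $2(p+1)$-power version of Lemma \ref{Lem321} with $K$-independent constants, and then runs the $t^{p+1}$ argument. Option (a) in Step 3 would not work by itself, because the couplings $\int_M u^p|\nabla_\varphi P|^2\omega_\varphi^n$ and $\int_M u^{p-1}(1+tr_{\omega_\varphi}\omega_0)^2\omega_\varphi^n$ carry no factor of $F$; smallness of $V_0$ helps only by allowing a large weight constant.
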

 
 \begin{rem}
 	The inequality \eqref{equa3210} immediately implies Theorem \ref{MT32}. \\
 \end{rem}
 
 \begin{proof}
By Lemme \ref{Lem316}, we have:
\begin{align}
	& (\partial_t-\Delta_{\varphi})\big(e^{KF^2}|\nabla_{\varphi}F|^2\big)e^{-KF^2} =(\partial_t-\Delta_{\varphi})(KF^2)|\nabla_{\varphi}F|^2-|\nabla_{\varphi}(KF^2)|^2|\nabla_{\varphi}F|^2 \notag \\
	&\quad\qquad\qquad\qquad +(\partial_t-\Delta_{\varphi})|\nabla_{\varphi}F|^2-2\nabla_{\varphi}(KF^2)\cdot \nabla_{\varphi}(|\nabla_{\varphi}F|^2) \notag\\
	&\qquad\qquad = -2K|\nabla_{\varphi}F|^4 + 2KF(\underline{R}-tr_{\omega_{\varphi}}(Ric(\omega_0)))|\nabla_{\varphi}F|^2 + (\partial_t-\Delta_{\varphi})|\nabla_{\varphi}F|^2 \notag\\
	& \quad\qquad\qquad -|\nabla_{\varphi}(KF^2)|^2|\nabla_{\varphi}F|^2 - 2Re\big(g_{\varphi}^{i\bar{j}}g_{\varphi}^{p\bar{q}}((K F^2)_iF_{p\bar{j}}F_{\bar{q}}+(K F^2)_iF_pF_{,\bar{q}\bar{j}})\big) 
\end{align}\\
Integral on $M$, we have (also similar to computation in Lemma \ref{Lem316}) 
\begin{align}
	&\partial_t\big( \int_M u^p\omega_\varphi^n\big)\le - \int_M \frac{p(p-1)}{8} u^{p-2} |\nabla_\varphi u|^2_\varphi\omega_\varphi^n - \frac{1}{8}\int_M pu^{p-1}e^{K F^2} g_{\varphi}^{i\bar{q}}g_{\varphi}^{p\bar{j}}F_{i\bar{j}}F_{p\bar{q}}\omega_\varphi^n \notag\\
	 & +\int_M pu^{p}\left( -(2K-16K^2 ||F||_0^2-\frac{1}{2(p-1)})|\nabla_\varphi F|^2 +C(2K||F||_0 +1)(1+tr_{\omega_{\varphi}}\omega_0)\right)\omega_\varphi^n \notag\\
	 & \quad +C\int_M p(p-1)u^{p-1}e^{K F^2}(1+tr_{\omega_{\varphi}}\omega_0)^2\omega_{\varphi}^n +CK||F||_0 \int_M pu^p  (1+tr_{\omega_{\varphi}}\omega_0)\omega_\varphi^n \notag\\
	 &\quad + C\int_M pu^{p-1}e^{K F^2}(1+tr_{\omega_{\varphi}}\omega_0)^2\omega_{\varphi}^n + 32\int_M p^2u^pe^{KF^2} |\nabla_\varphi P|^2 \omega_\varphi^n 
\end{align}\\
By choosing $V$ small enough (depending on $K$)
\begin{align}
	16K V^2 <\frac{1}{2}
\end{align}
and by Young's inequality, we have 
\begin{align}
	&\partial_t\big( \int_M u^p\omega_\varphi^n\big)\le -(\frac{2}{3} K-C) \int_M u^{p+1}\omega_\varphi^n \notag\\
	&\qquad\qquad  +C(p,||P||_0,||F||_0) \int_M \big( |\nabla_\varphi P|^{2(p+1)} +(tr_{\omega_{\varphi}}\omega_0 +1)^{p+1}\big) \omega_\varphi^n
\end{align}\\
By lemma \ref{Lem31} and lemma \ref{Lem321}, we then have
\begin{align}
	&\partial_t\big( \int_M u^p\omega_\varphi^n\big)\le -\big(\frac{2}{3} K-C(p,||P||_0,||F||_0)\big) \int_M u^{p+1}\omega_\varphi^n +C(p,V) 
\end{align}\\
Choose $K$ sufficiently large, the first result in the lemma follows. 
\begin{align}
 		\partial_t\left( \int_M |\nabla_\varphi F|^{2p}\omega_\varphi^n\right) \le -K \left( \int_M |\nabla_\varphi F|^{2(p+1)}\omega_\varphi^n\right) + C
 	\end{align}\\
Adding time variable, we have 
	\begin{align}
		&\partial_t\left( t^{p+1} \int_M |\nabla_\varphi F|^{2p}\omega_\varphi^n\right) \le (p+1)t^p  \int_M |\nabla_\varphi F|^{2p}\omega_\varphi^n \notag\\
		&\qquad\qquad\qquad\qquad\qquad\qquad\qquad  -K \left( t^{p+1} \int_M |\nabla_\varphi F|^{2(p+1)}\omega_\varphi^n\right) + C t^{p+1} \notag\\
		&\quad\quad \le (p+1)\left( t^p  \int_M |\nabla_\varphi F|^{2p}\omega_\varphi^n \right) -K \left( t^{p} \int_M |\nabla_\varphi F|^{2p}\omega_\varphi^n\right)^{1+\frac{1}{p}} + C t^{p+1} 
	\end{align}\\
	Then by standard calculus, we get 
	\begin{align}
		\sup_{(0,T)} t^{p+1} \int_M |\nabla_\varphi F|^{2p}\omega_0^n \le C(K,V,p,T)
	\end{align}\\
\end{proof}

\section{When the K\"ahler metric is close to cscK in volume form \\}
The goal of this section is to prove Theorem \ref{t1.2}. 
The key point is to use Corollary \ref{Cor33} to get improvement of regularity,  so that we get closeness to a cscK metric in the smooth topology,  and then we may invoke Theorem \ref{t1.1} part (1) to conclude the long time existence and convergence of the pseudo Calabi flow.

As a first step,  we show that the closeness of the volume form to cscK will imply a uniform lower bound on the life span of the pseudo Calabi flow.  More precisely,  we have:

\begin{lem}\label{l5.1New}
Let $(M,\omega_0)$ be a compact K\"ahler manifold.  Assume that $|Ric(\omega_0)|_{\omega_0}\le \Lambda$.  Then for any $V>0$,  there exists $\eps_0>0$,  $t_0>0$,  such that for any $\varphi_0\in PSH(M,\omega_0)\cap C^{\infty}(M)$ with 
\begin{equation}\label{5.1New}
1-\eps_0\le \frac{\omega_{\varphi_0}^n}{\omega_0^n}\le 1+\eps_0,
\end{equation}
the pseudo Calabi flow starting from $\varphi_0$ exists on $M\times [0,t_0]$.  Moreover, 
\begin{equation*}
\max_{t\in [0,t_0]}||\log\frac{\omega_{\varphi(t)}^n}{\omega_0^n}||_0\le V.
\end{equation*}
Here $\eps_0,\,t_0$ depends only on $\Lambda$ and $V$.
\end{lem}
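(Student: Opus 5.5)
The plan is a continuity (bootstrap) argument in time. It combines the $C^0$ estimates of Section 3 with the extension criterion of Corollary \ref{Cor33}. Without loss of generality we shrink $V$ so that $V\le \min(1,\delta)$, where $\delta$ is the constant of Theorem \ref{MT32}; proving the lemma for a smaller $V$ only strengthens it. We normalize $\sup_M\varphi_0=0$, which does not change $\omega_{\varphi_0}$. Since $\varphi_0$ is smooth, Theorem \ref{t1.1} part (1) gives a smooth short-time solution. Let $T^*$ be the supremum of the times $T\le 1$ such that the flow exists on $M\times[0,T]$ and $\max_{[0,T]}\|F\|_0\le V$. For $\eps_0<V$ we have $T^*>0$ by continuity of $F$ in $t$.

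\textbf{Step 1: uniform control of the initial data.} By (\ref{5.1New}), $\|F(0)\|_0\le 2\eps_0$. Theorem \ref{TMDD} then bounds $\|\varphi_0\|_{C^{0,\alpha}}$ by a constant depending only on the background metric. Consequently the entropy part of $K(\varphi_0)$ is bounded. Moreover $J_{-Ric}(\varphi_0)$ and $J_{\omega_0}(\varphi_0)$ are bounded in terms of $\|\varphi_0\|_0$ and $\Lambda$: each $\int\sqrt{-1}\partial\varphi\wedge\bar\partial\varphi\wedge\cdots$ term equals $-\int\varphi\,\sqrt{-1}\partial\bar\partial\varphi\wedge\cdots$ after integrating by parts, and is therefore controlled by $\|\varphi_0\|_0$. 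So the constants in Lemma \ref{Lem21} on $[0,1]$ depend only on $\Lambda$ and the background metric, not on the particular $\varphi_0$.

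\textbf{Step 2: a priori bounds on $[0,T^*)$.} On this interval $\|F\|_0\le V\le 1$, so $e^F\in L^p$ and $e^{-F}\in L^1$ with uniform bounds. Lemma \ref{Lem21} bounds $\int|\nabla_\varphi\varphi|^2\omega_\varphi^n$ uniformly by Step 1. Theorem \ref{MT21} then gives $\|P(t)\|_0\le V_1$, with $V_1$ depending only on $\Lambda$ and the background metric. Feeding this into Theorem \ref{MT22} yields
\begin{equation*}
\|F(t)\|_0\le 2\eps_0e^{\alpha_3 t}+\alpha_4 t ,
\end{equation*}
where $\alpha_3,\alpha_4$ depend only on $V_1$ and $\Lambda$. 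Choose $t_0\le 1$ with $\alpha_4t_0<V/4$, and then $\eps_0$ with $2\eps_0e^{\alpha_3t_0}<V/4$. Then $\|F(t)\|_0\le V/2$ on $[0,\min(T^*,t_0))$.

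\textbf{Step 3: closing the argument.} Suppose $T^*<t_0$. On $[0,T^*)$ we have $\|F\|_0\le V/2<\delta$, so Corollary \ref{Cor33} extends the flow smoothly beyond $T^*$. By continuity, $\|F\|_0<V$ still holds on a slightly larger interval, which contradicts the maximality of $T^*$. Hence the flow exists on $M\times[0,t_0]$ with $\max_{[0,t_0]}\|F\|_0\le V$, and $\eps_0,t_0$ depend only on $\Lambda$ and $V$.

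The main obstacle I expect is the uniformity in Step 1 and Step 2. Lemma \ref{Lem21} and Theorem \ref{MT21} a priori carry constants that depend on the initial data, and the choice of $t_0$ in Step 2 must not depend on $\varphi_0$. I would handle this as sketched in Step 1: use Theorem \ref{TMDD} and the smallness of $F(0)$ to bound $K(\varphi_0)+\lambda J_{\omega_0}(\varphi_0)$ uniformly, and run the Gronwall argument of Lemma \ref{Lem21} only on the fixed interval $[0,1]$.
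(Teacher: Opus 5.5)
Your proposal is correct and follows essentially the paper's argument: a first-exit/continuity argument in which Theorem \ref{MT21} turns $\|F\|_0\le V$ into a uniform bound on $\|P\|_0$, Theorem \ref{MT22} then keeps $\|F\|_0$ strictly below $V$ for $t\le t_0$ once $\eps_0$ is small, and Corollary \ref{Cor33} (with $V\le\delta$) prevents the flow from stopping before $t_0$. Your Step 1 adds care about uniformity that the paper skips by quoting the stated dependence of Theorem \ref{MT21}. It is harmless, and it could be shortened: $\int_M|\nabla_\varphi\varphi|^2\omega_\varphi^n=\int_M(\varphi-\sup_M\varphi)(tr_{\omega_\varphi}\omega_0-n)\,\omega_\varphi^n$ is bounded at each time by a constant times $\|\varphi-\sup_M\varphi\|_0$, which Theorem \ref{TMDD} controls directly without going through the $K$-energy.
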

\begin{proof}
Assume that starting from $\varphi_0$,  the pseudo Calabi flow has smooth solution on $M\times [0,T_{\max})$ where $T_{\max}$ is the maximal life span of the smooth solution.  We first show that for any $V>0$ there exists $t_0(V)>0$ (with the said dependence),    such that if the initial data satisfies (\ref{5.1New}) with some small enough $\eps_0$ and  $T_{\max}\le t_0(V)$ ,  then one has $\sup_{t\in [0,T_{\max})}||\log\frac{\omega_{\varphi(t)}^n}{\omega_0^n}||_0\le V$.  Then we see from Corollary \ref{Cor33} that if we choose $V= \delta$ (here $\delta$ is given by Corollary \ref{Cor33}),  then one must have $T_{\max}\ge t_0(V)$,  since Corollary \ref{Cor33} allows us to extend the flow if the log of volume ratio is $\le \delta$.

Now it only remains to see the existence of such $t_0(V)$.  Choose $\eps_0>0$ sufficiently small so that $V\ge 2\eps_0$.  Let $C_1$ be the constant given by Theorem \ref{MT21} such that $||P||_0\le C_1$ if one has $e^{-V}\le \frac{\omega_{\varphi}^n}{\omega_0^n}\le e^V$ and $|Ric(\omega_0)|_{\omega_0}\le \Lambda$.  Let $\alpha_1,\,\alpha_2$ be the constants given by Theorem \ref{MT22} with $\max_{t\in [0,T)}||P(t)||_0\le C_1$,  $|Ric(\omega_0)|_{\omega_0}\le \Lambda$.  Let $t_0(V)$ be sufficiently small so that:
\begin{equation*}
\frac{3V}{4}\ge \frac{V}{2}e^{\alpha_3t}+\alpha_4t.
\end{equation*}
Now we need to show that 
\begin{equation}\label{5.2New}
\sup_{t\in [0,T_{\max})}||\log\frac{\omega_{\varphi(t)}^n}{\omega_0^n}||_0\le V.
\end{equation}
Note that with $t=0$,  one has the above $<V$.  If (\ref{5.2New}) fails,  then there exists $t_1>0$,  which is the smallest $t$ such that $||\log\frac{\omega_{\varphi(t)}^n}{\omega_0^n}||_0=V.$  In particular,  one has: $e^{-V}\le \frac{\omega_{\varphi(t)}^n}{\omega_0^n}\le e^V$ for $t\in [0,t_1]$.  Therefore,  we may conclude using Theorem \ref{MT21} that $\max_{t\in[0,t_1]}||P(t)||_0\le C_1$.  Then Theorem \ref{MT22} gives:
\begin{equation*}
||F(t_1)||_0\le ||F(0)||_0e^{\alpha_3t_1}+\alpha_4t_1\le \frac{V}{2}e^{\alpha_3t_1}+\alpha_4t\le \frac{3V}{4}.
\end{equation*}
This is in contradiction with our assumption that $||\log\frac{\omega_{\varphi(t)}^n}{\omega_0^n}||_0=V$.
\end{proof}

As a consequence,  we see that:
\begin{cor}\label{c5.2}
Let $(M,\omega_0)$ be a compact K\"ahler manifold.  Assume that $|Ric(\omega_0)|_{\omega_0}\le \Lambda$.  Then there exists $\eps_0>0,\,t_0>0$,  such that for any $\varphi_0\in PSH(M,\omega_0)\cap C^{\infty}(M)$ with
\begin{equation*}
1-\eps_0\le \frac{\omega_{\varphi_0}^n}{\omega_0^n}\le 1+\eps_0,
\end{equation*}
one has
\begin{equation*}
\sup_{t\in (\eps,t_0]}||\partial_t^kD^l\varphi(\cdot,t)||_0\le C_{\eps,k,l}.
\end{equation*}
Here $D^l$ means $l$-th order differentiation in the manifold direction.
\end{cor}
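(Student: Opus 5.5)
The plan is to combine Lemma \ref{l5.1New} with the quantitative regularity chain of Section \ref{Sec3}. First fix $V=\delta$, where $\delta$ is the constant of Theorem \ref{MT32} (equivalently Corollary \ref{Cor33}) for some fixed $p$ large enough, say $p\ge p_n$ as in Lemma \ref{l4.6N}. Lemma \ref{l5.1New} with this $V$ then supplies $\eps_0,t_0$, depending only on $\Lambda$, such that for every admissible $\varphi_0$ the flow exists on $M\times[0,t_0]$ and satisfies
\begin{equation*}
\max_{t\in[0,t_0]}\Big\|\log\frac{\omega_{\varphi(t)}^n}{\omega_0^n}\Big\|_0\le \delta .
\end{equation*}
This uniform $C^0$ bound on $F$ is the only input needed from the initial data. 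In particular, no norm of $\varphi_0$ beyond the volume ratio enters the constants.

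Next, apply Lemma \ref{Lem322} (which gives Theorem \ref{MT32}) on $[0,t_0]$, replacing $T$ by a slightly larger value or working on $[0,t_0)$. This gives
\begin{equation*}
\sup_{t\in(0,t_0]} t^{p+1}\int_M|\nabla_\varphi F|^{2p}\omega_\varphi^n\le C ,
\end{equation*}
with $C$ depending only on $p,n,t_0$ and the background metric. Hence $\|\nabla_\varphi F\|_{L^{2p}(\omega_\varphi^n)}$ is bounded on $[\eps/2,t_0]$ by a constant depending on $\eps$. Since $\|F\|_0\le\delta$, Lemma \ref{Lem31} and Corollary \ref{Lem32} convert this into a bound for $\|F\|_{W^{1,q}(\omega_0)}$ with some $q>2n$, after choosing $p$ large. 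Theorem \ref{TM32} then gives a uniform $C^{2,\alpha}$ bound for $\varphi$ on $[\eps/2,t_0]$. Applying Theorem \ref{TM31} on the time interval $[\eps/2,t_0]$, treated as a flow starting at $\eps/2$, bounds all spatial derivatives $D^l\varphi$ on $(\eps,t_0]$.

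For time derivatives I would use the equation $\partial_t\varphi=F+P$ and differentiate it repeatedly. $F$ is a smooth function of the spatial derivatives of $\varphi$. $P$ solves a linear elliptic equation with uniformly elliptic, smooth coefficients once the $C^{k,\alpha}$ bounds on $\varphi$ are available, so Schauder theory with the normalization $\int P\omega_\varphi^n=0$ and the $C^0$ bound from Theorem \ref{MT21} bounds all spatial derivatives of $P$. Time derivatives of $P$ come from differentiating its defining equation in $t$ and inverting $\Delta_\varphi$ again. Induction on $k$ then bounds $\partial_t^kD^l\varphi$.

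The main obstacle is uniformity. Every constant must depend only on $\Lambda$, $\eps$, $k$, $l$, and not on $\varphi_0$ itself, for example on its smoothness or on $\|\varphi_0\|_0$. This requires checking the dependence in Theorem \ref{TM31}, namely that the Chen--Zheng higher-order estimate uses only the $C^{2,\alpha}$ bound and the time interval. It also requires checking that the normalization of $\varphi$, via Theorem \ref{TMDD} and the $C^0$ control of Theorem \ref{MT22}, introduces no dependence on $\varphi_0$. If necessary I would normalize $\sup\varphi_0=0$, which does not change the metrics. The bound from Theorem \ref{TMDD} with $e^F\le e^\delta$ then controls $\|\varphi_0\|_0$ uniformly.
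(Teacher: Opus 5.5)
Your proposal is correct and follows essentially the same route as the paper. The paper takes $\delta$ from Theorem \ref{MT32} with $p=p_n$, feeds $V=\delta$ into Lemma \ref{l5.1New}, and then cites Lemma \ref{l4.6N}; you run the same argument but unpack Lemma \ref{l4.6N} into its ingredients (Lemma \ref{Lem31}, Corollary \ref{Lem32}, Theorem \ref{TM32}, Theorem \ref{TM31}). Your explicit handling of time derivatives through $\partial_t\varphi=F+P$, and your remark that $\varphi_0$ must be normalized (e.g.\ $\sup_M\varphi_0=0$) for the zeroth-order bound to make sense, are correct additions that the paper leaves implicit.
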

\begin{proof}
Let $p_n$ be given by Lemma \ref{l4.6N}.  Then we go to Theorem \ref{MT32},  and let $\delta_*$ be the $\delta$ given by Theorem \ref{MT32} with $p=p_n$.  Next we take $V=\delta_*$ in Lemma \ref{l5.1New},  so that we get $\eps_0>0$,  $t_0>0$ such that one has:
\begin{equation*}
\max_{t\in [0,t_0]}||\log\frac{\omega_{\varphi(t)}^n}{\omega_0^n}||_0\le C,\,\,\max_{t\in [\eps,t_0]}||\nabla_{\varphi}F(\cdot,t)||_{L^p(\omega_{\varphi}^n)}\le C_{\eps}.
\end{equation*}
From Lemma \ref{l4.6N},  we see that all the higher derivatives of $\varphi$ can be bounded on the time interval $t\in [\eps',t_0]$ for $\eps'>\eps$.
\end{proof}

Now we are ready to prove Theorem \ref{t1.2}.
\begin{proof}
(Of Theorem \ref{t1.2}) Let $t_0>0$ be given by Corollary \ref{c5.2}.  We just need to show that for any $\delta>0$,  there exists $\eps_0>0$,  such that for any $\varphi_0\in PSH(M,\omega_0)\cap C^{\infty}(M)$ with $1-\eps_0\le \frac{\omega_{\varphi_0}^n}{\omega_0^n}\le 1+\eps_0$ normalized with $\int_M\varphi_0\omega_{\varphi_0}^n=0$,  one has $||\varphi(\cdot,t_0)||_{2,\alpha}\le \delta$.  

Assume that the above claim is false,  then there exists $\delta_*>0$,  such that we can find a sequence $\varphi_i\in PSH(M,\omega_0)\cap C^{\infty}(M)$,  with $\frac{\omega_{\varphi_i}^n}{\omega_0^n}\rightarrow 1$,  normalized with $\int_M\varphi_i\omega_{\varphi_i}^n=0$,  but $||\varphi_i(\cdot,t_0)||_{2,\alpha}\ge \delta_*$.  First from the stability result of complex Monge-Ampère equations,  we have that:
\begin{equation*}
\varphi_i\rightarrow 0\text{ uniformly}.
\end{equation*}
Denote $\varphi_i(t)$ to be the pseudo Calabi flow starting from $\varphi_i$.  From Corollary \ref{c5.2} we know that $\varphi_i(t)$ exists for $t\in [0,t_0]$.  Moreover,  the derivatives of $\varphi_i(t)$ are uniformly bounded on $(\eps,t_0]$ for any $\eps>0$.  Therefore,  we can take a subsequence $\varphi_{i_k}$ such that $\varphi_{i_k}\rightarrow \varphi_{\infty}$ smoothly on $(0,t_0]\times M$.  In particular,  $\varphi_{\infty}(t)$ also solves pseudo Calabi flow on $(0,t_0]\times M$.  We wish to show that $\varphi_{\infty}(t)\equiv 0$ on $(0,t_0]\times M$.  To see this,  we just need to show that $\varphi_{\infty}(t)$ is continuous at $t=0$ with respect to $C^0$ norm and $\varphi_{\infty}(0)=0$.  Then 
\begin{equation*}
K(\varphi_{\infty}(t))\le K(\varphi_{\infty}(0))=K(0)=\inf_{\varphi\in PSH(M,\omega_0)}K(\varphi).
\end{equation*}
Therefore,  we see that $\varphi_{\infty}(t)$ is cscK for $t\in [0,t_0]$.  On the other hand,  $\varphi_{\infty}(t)$ solves the pseudo Calabi flow equation for $0<t<t_0$,  therefore $\varphi_{\infty}$ is stationary,  and we can conclude that $\varphi_{\infty}(t)=\varphi_{\infty}(0)=0$.

It only remains to see the continuity of $\varphi_{\infty}(t)$ at $t=0$ and $\varphi_{\infty}(0)=0$.  We just need to show that $\varphi_i(t)$ is equi-continuous at $t=0$.  For this we use that $\partial_t\varphi_i=P_i+F_i$.  From Lemma \ref{l5.1New} we see that $F_i$ is uniformly bounded on $[0,t_0]$.  On the other hand, we may use Theorem \ref{MT21} to conclude that $P_i$ is also uniformly bounded on $[0,t_0]$.  Therefore,  $\partial_t\varphi_i$ is also uniformly bounded on $[0,t_0]$.  Therefore the result follows.
\end{proof}

\section{When $M$ is K\"ahler-Einstein \\}

In this section, we consider the case when $M$ admit a K\"ahler-Einstein (KE) metric. Without loss of generality,  we may assume that $c_1(M)=\lambda[\omega_0]$ where $\lambda=\pm 1$ or 0. \\

\begin{prop}
	Let $(M,\tilde\omega)$ be a K\"ahler-Einstein manifold, then the PCF $\omega(t)$ in class $[\tilde\omega]$ coincides with normalized K\"ahler-Ricci flow. \\
\end{prop}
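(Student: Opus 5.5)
The plan is to compute the right-hand side of the pseudo Calabi flow directly when the class is proportional to $c_1(M)$, and compare it with the normalized Kähler–Ricci flow at the level of potentials. Since $\tilde\omega$ is Kähler–Einstein, $Ric(\tilde\omega)=\lambda\tilde\omega$ with $\lambda\in\{1,0,-1\}$. We may take the background metric $\omega_0=\tilde\omega$ in the flow (\ref{PCF-2}); changing the background only shifts the potential by a fixed smooth function. Then $\underline{R}=n\lambda$, because $\underline{R}$ is the average of $tr_{\omega_\varphi}Ric(\omega_\varphi)$ and $[Ric(\omega_\varphi)]=\lambda[\omega_\varphi]$.

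The key step is to solve for $P$ explicitly. Using $Ric(\omega_0)=\lambda\omega_0=\lambda(\omega_\varphi-\sqrt{-1}\partial\bar\partial\varphi)$, we get
\begin{equation*}
tr_{\omega_\varphi}Ric(\omega_0)=\lambda n-\lambda\Delta_\varphi\varphi .
\end{equation*}
Hence the equation $\Delta_\varphi P=\underline R-tr_{\omega_\varphi}Ric(\omega_0)$ becomes $\Delta_\varphi P=\lambda\Delta_\varphi\varphi$. By uniqueness up to constants on a compact manifold, $P=\lambda\varphi+c(t)$, where $c(t)=-\lambda\int_M\varphi\,\omega_\varphi^n/\int_M\omega_\varphi^n$ is fixed by the normalization $\int_MP\omega_\varphi^n=0$. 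Therefore the pseudo Calabi flow reads
\begin{equation*}
\partial_t\varphi=\log\frac{\omega_\varphi^n}{\omega_0^n}+\lambda\varphi+c(t).
\end{equation*}
Up to the function $c(t)$ of time alone, this is exactly the potential form of the normalized Kähler–Ricci flow $\partial_t\omega=-Ric(\omega)+\lambda\omega$. For the sign check, apply $\sqrt{-1}\partial\bar\partial$:
\begin{equation*}
\partial_t\omega_\varphi=-Ric(\omega_\varphi)+Ric(\omega_0)+\lambda\sqrt{-1}\partial\bar\partial\varphi=-Ric(\omega_\varphi)+\lambda\omega_\varphi .
\end{equation*}

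Finally, since adding a function of $t$ to $\varphi$ does not change $\omega_\varphi$ (as the introduction notes), the metric flows $\omega(t)$ coincide. More precisely, write $\psi=\varphi-\int_0^t e^{\lambda(t-s)}c(s)\,ds$, so that $\psi$ solves the normalized Kähler–Ricci flow potential equation without the $c(t)$ term and has the same initial data. By uniqueness of smooth solutions of this parabolic Monge–Ampère equation, $\omega_\psi=\omega_\varphi$ is the normalized Kähler–Ricci flow.

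The only delicate point I expect is bookkeeping: the choice of background metric and the time-dependent normalization constants. The identity $P=\lambda\varphi+c(t)$ is what makes everything work, and it relies on the background metric being KE. If the background is instead an arbitrary $\omega_0$ in the class, I would write $Ric(\omega_0)=\lambda\omega_0+\sqrt{-1}\partial\bar\partial h$ with $h$ the Ricci potential. Then $P=\lambda\varphi-h+c(t)$, and the same argument goes through with $\log\frac{\omega_\varphi^n}{\omega_0^n}-h$ in place of the log term.
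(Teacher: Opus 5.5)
Your argument is correct and is essentially the paper's proof. In both, uniqueness of solutions to the Poisson equation up to constants, together with the K\"ahler--Einstein condition, identifies the velocity $\partial_t\varphi$ with a function whose $\sqrt{-1}\partial\bar\partial$ is $\lambda\omega_\varphi-Ric(\omega_\varphi)$. The difference is bookkeeping: the paper matches $\Delta_\varphi^{-1}(R_\varphi-\underline R)$ in (\ref{PCF}) directly with the Ricci potential $h_\varphi$ of $\omega_\varphi$, needing no special background, while you use the splitting (\ref{PCF-2}) with a KE background to get $P=\lambda\varphi+c(t)$, which also yields the scalar equation $\partial_t\psi=\log\frac{\omega_\psi^n}{\omega_0^n}+\lambda\psi$ with the same initial data, a step the paper leaves implicit.
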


\begin{proof}
Consider the Pseudo Calabi flow for a time--dependent K\"ahler potential $\varphi(t)$:
\begin{align}
	\omega_\varphi(t)=\omega_0+\sqrt{-1}\,\partial\bar\partial\varphi(t),\qquad \partial_t\varphi(t)=-f(t)
\end{align}\\
where $f(t)$ solves the Poisson equation
\begin{equation}\label{eq51}
\Delta_{\varphi} f \;=\; R_\varphi-\underline{R}.
\end{equation}\\
Since $Ric_\omega = \lambda\omega$, we have
\begin{align}
	[Ric(\omega_\varphi) -\lambda \omega_\varphi]=0 \in H^{1,1}(M,\mathbb{R}).
\end{align}\\
Hence there exists a time--dependent real function $h_\varphi$, a Ricci potential such that
\begin{equation}\label{eq52}
	Ric(\omega_\varphi)-\lambda\omega_\varphi \;=\; \sqrt{-1}\,\partial\bar\partial h_\varphi, \qquad \int_M e^{h_\varphi}\,\omega_\varphi^n=Vol(M),
\end{equation}
where the normalization fixes $h_\varphi$ up to an additive constant. Taking the $\omega_\varphi$--trace of \eqref{eq52} yields
\begin{align}
	\Delta_{\varphi} h_\varphi = tr_{\omega_\varphi}\!\big(Ric(\omega_\varphi)-\lambda\omega_\varphi\big) = R_\varphi-\lambda n
\end{align}\\
On the class $[\omega_\varphi]= \lambda \,c_1(M)$, the average scalar curvature equals
\begin{align}
	\underline R =\frac{n\,c_1(M)\cdot [\omega_\varphi]^{n-1}}{[\omega_\varphi]^n} = \lambda\,n
\end{align}
Therefore
\begin{align}
	\Delta_{\varphi} h_\varphi = R_\varphi-\underline R
\end{align}\\
Comparing with \eqref{eq51}, we see that $f$ and $h_\varphi$ solve the same Poisson equation, hence
\begin{align}
	f \;=\; h_\varphi + C(t)
\end{align}
for some function of time $C(t)$ (fixed by the chosen normalization). Applying $\sqrt{-1}\,\partial\bar\partial$-lemma, it gives
\begin{align}
	\sqrt{-1}\,\partial\bar\partial f = \sqrt{-1}\,\partial\bar\partial h_\varphi = Ric(\omega_\varphi)-\lambda \omega_\varphi.
\end{align}\\
By definition of the Pseudo Calabi Flow,
\begin{align}
	\partial_t\omega_\varphi = \sqrt{-1}\,\partial\bar\partial(\partial_t\varphi) = -\,\sqrt{-1}\,\partial\bar\partial f = -\,Ric(\omega_\varphi)\;+\lambda \;\omega_\varphi
\end{align}
which is precisely the normalized K\"ahler--Ricci flow. \\
\end{proof}

\noindent
The (normalized) K\"ahler-Ricci flow have be well researched in past decades. According to the work by \cite{Cao1985} \cite{TiZh2011}, we have the following theorem. \\ 

\begin{thm}\label{TMCT}
	Let $(M,\tilde\omega)$ be a K\"ahler-Einstein manifold. Then any normalized K\"ahler-Ricci flow $\omega(t)$ in the class $[\tilde\omega]$ always exist on $M\times [0,+\infty)$ and converges to a K\"ahler-Einstein metric. \\  
	\end{thm}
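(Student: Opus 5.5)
The plan is to write the normalized K\"ahler--Ricci flow as a parabolic complex Monge--Amp\`ere equation and argue separately according to the sign of $\lambda$, where $c_1(M)=\lambda[\tilde\omega]$. Take $\tilde\omega$ as background metric and $h_0$ with $Ric(\tilde\omega)-\lambda\tilde\omega=\sqrt{-1}\partial\bar\partial h_0$. Up to a function of $t$, the flow $\partial_t\omega=-Ric(\omega)+\lambda\omega$ is then equivalent to
\begin{equation*}
\partial_t\phi=\log\frac{(\tilde\omega+\sqrt{-1}\partial\bar\partial\phi)^n}{\tilde\omega^n}+\lambda\phi-h_0 .
\end{equation*}
Short-time existence is standard, since the equation is strictly parabolic. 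Long-time existence will follow once I have a priori $C^k$ bounds on every finite time interval. Convergence will follow once these bounds are uniform in $t$ and some monotone functional forces $\partial_t\phi\to$ const.

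\textbf{Case $\lambda\le 0$ (Cao's argument).} First I differentiate the equation in $t$. This gives $(\partial_t-\Delta_\phi)\dot\phi=\lambda\dot\phi$, so the maximum principle yields $|\dot\phi|\le Ce^{\lambda t}$ when $\lambda=-1$, and $|\dot\phi|\le C$ when $\lambda=0$.

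Next comes the $C^0$ bound on $\phi$.
\begin{itemize}
\item For $\lambda=-1$, evaluating the equation at the maximum and minimum points of $\phi$ bounds $\phi$ directly.
\item For $\lambda=0$, I would normalize $\phi$ and use Yau's $C^0$ estimate for $\omega_\phi^n=e^{\dot\phi+h_0}\tilde\omega^n$, whose right-hand side is bounded.
\end{itemize}

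Then I apply the parabolic version of the Aubin--Yau second order estimate to $e^{-A\phi}(n+\Delta\phi)$. This gives uniform equivalence of $\omega_\phi$ and $\tilde\omega$. Calabi's third order estimate, or Evans--Krylov, followed by Schauder bootstrapping then gives all higher derivatives uniformly in $t$.

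For convergence when $\lambda=-1$, $\dot\phi$ decays exponentially, so $\phi$ converges. When $\lambda=0$, I would use Cao's Li--Yau type Harnack inequality to show that the oscillation of $\dot\phi$ decays exponentially. In both cases the limit is stationary and hence K\"ahler--Einstein.

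\textbf{Case $\lambda=1$ (Fano).} This is the main obstacle, since the $C^0$ bound fails in general. I would first invoke Perelman's estimates: along the flow, with the Ricci potential $u=-\dot\phi+c(t)$ suitably normalized, one has uniform bounds on $|R|$, on $\mathrm{diam}$, and on $\|u\|_{C^1}$.

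Since a K\"ahler--Einstein metric exists, Tian's properness theorem (in Darvas--Rubinstein's form modulo $Aut^0$) makes the Mabuchi energy proper. The Mabuchi energy decreases along the flow, so $J(\phi)$ stays bounded, modulo automorphisms. Combining this with Perelman's bounds and a Harnack-type inequality for $\omega_\phi^n=e^{\phi+\dot\phi-h_0}\tilde\omega^n$ gives a uniform bound on $\mathrm{osc}\,\phi$, and hence on $\|\phi\|_0$ after normalization. I expect this to be the delicate step. The approach I would try first is the Tian--Zhu argument, which controls $\sup\phi$ by $-\inf\phi$ via Green's function bounds coming from Perelman's diameter estimate, and then bounds $-\inf\phi$ using properness.

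With the $C^0$ bound in hand, the same Aubin--Yau, Calabi and Schauder chain as in the previous case gives uniform smooth bounds. Since the Mabuchi energy is monotone with derivative $-\int|\nabla u|^2\omega_\phi^n$, along a sequence $t_i\to\infty$ the flow subconverges to a K\"ahler--Einstein metric. Finally, Tian--Zhu's uniqueness argument (Bando--Mabuchi plus a Lojasiewicz-type or exponential-decay argument near the K\"ahler--Einstein metric) upgrades this subsequential convergence to convergence of the whole flow.
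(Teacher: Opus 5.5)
Your outline is essentially the paper's route: the paper gives no argument of its own and simply invokes Cao's theorem for $\lambda\le 0$ and the Perelman/Tian--Zhu convergence theory for the Fano case, and your sketch is a faithful summary of those works. Two small corrections: for $\lambda=1$ your own equation gives $\omega_\phi^n=e^{\dot\phi-\phi+h_0}\tilde\omega^n$ (you reversed the signs of $\phi$ and $h_0$), and when $Aut^0(M)\neq 0$ properness modulo automorphisms only controls the potentials of $\sigma_t^*\omega(t)$ for some $\sigma_t\in Aut^0(M)$, so your $C^0$ bound, smooth bounds and subconvergence hold only in that moving gauge, and it is the final Tian--Zhu stability step near the K\"ahler--Einstein orbit that must remove $\sigma_t$ (this issue does not arise when $Aut^0(M,J)=0$, the case used in Theorem~\ref{t1.3}).
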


According to Lebrun-Simanca \cite{Lesi1994},  the set of DeRham classes swept out by the K\"ahler forms of extremal K\"ahler metrics is an open set in $H^{1,1}(M,\bR)$.  Moreover,  there exists an $h^{1,1}$-dimensional smooth family of extremal K\"ahler metrics near $\omega_0$ where $\omega_0$ is the K\"ahler-Einstein metric.  We denote these metrics to be $\omega_s$.  Since we assumed that $Aut_0(M,J)=0$,  we see that these extremal metrics are actually cscK metrics.  We have the following version of 
Theorem \ref{t1.1} part (2):
\begin{thm}\label{t6.2N}
There exists $\eps_0>0$ and a neighborhood $\mathcal{U}$ of $[\omega_0]$ in $H^{1,1}(M,\bR)$,  such that for any cscK metric $\omega_s$ whose class is in $\mathcal{U}$ and any $\varphi_s\in PSH(M,\omega_s)\cap C^{\infty}(M)$ with $||\varphi_s||_{2,\alpha}\le \eps_0$,  the pseudo Calabi flow in the class $[\omega_s]$ with initial data $\varphi_s$ exists on $M\times [0,+\infty)$ and converges to a cscK metric as $t\rightarrow \infty$.
\end{thm}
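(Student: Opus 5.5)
The plan is to rerun the proof of Theorem \ref{t1.1} part (2) (Chen--Zheng) with every constant tracked, and check that each one depends only on quantities that are uniformly controlled along the Lebrun--Simanca family $\omega_s$. The starting point is that $\omega_s$ depends smoothly on $s$ in a neighborhood of $s=0$, where $s\in H^{1,1}(M,\bR)$ near $[\omega_0]$ and $\omega_0$ is the K\"ahler--Einstein metric. Consequently all geometric quantities of $\omega_s$ are uniformly bounded once $\mathcal{U}$ is small: its curvature and all derivatives, its injectivity radius, the Sobolev and Poincar\'e constants, and the H\"older norms of the coordinate data. Since $Aut^0(M,J)=0$, each $\omega_s$ is cscK rather than merely extremal.

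The first key step is uniform spectral control of the linearized operator. At a cscK metric $\omega_s$, the linearization of the pseudo Calabi flow (\ref{PCF}) is
\begin{equation*}
L_s f=-\Delta_s^{-1}\mathcal{D}_s^*\mathcal{D}_s f,
\end{equation*}
where $\mathcal{D}_s^*\mathcal{D}_s$ is the Lichnerowicz operator. Its kernel consists of holomorphy potentials, which are only the constants because $Aut^0(M,J)=0$. The operator $L_s$ is self-adjoint and nonpositive with respect to the Dirichlet inner product $\int\nabla f\cdot\nabla g\,\omega_s^n$. The first nonzero eigenvalue of $\mathcal{D}_s^*\mathcal{D}_s$ depends continuously on $s$ and is positive at $s=0$, so by perturbation theory there is $\mu>0$ with $\mu_1(s)\ge\mu$ for all $s$ in $\mathcal{U}$. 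This yields a uniform exponential decay rate $e^{-\mu t}$ on mean-zero functions for the linearized flow.

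The second step is the nonlinear argument, which follows Chen--Zheng. I would write the flow as $\partial_t\varphi=L_s\varphi+Q_s(\varphi)$, where $Q_s$ is quadratic in $\|\varphi\|_{C^{2,\alpha}}$ with coefficients bounded uniformly in $s$. I would then combine three ingredients:
\begin{enumerate}[(i)]
\item the short-time existence and smoothing of Theorem \ref{t1.1} part (1), whose existence time and $C^{k,\alpha}$ bounds depend only on the background geometry and so are uniform in $s$;
\item the monotonicity of the $K$-energy (Lemma \ref{Lem21}), with $K$ bounded below on $[\omega_s]$ because $\omega_s$ is cscK;
\item the uniform spectral gap from the first step.
\end{enumerate}
From these I would derive a uniform exponential decay of the Dirichlet energy of $F+P$. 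The continuity argument is that, as long as $\|\varphi(t)\|_{2,\alpha}\le 2\eps_0$, the energy $\int|\nabla_{\varphi}(F+P)|^2\omega_{\varphi}^n$ decays like $e^{-\mu t}$. Parabolic interpolation together with the smoothing estimates then upgrades this to decay of $\|\partial_t\varphi\|_{C^{2,\alpha}}$. Integrating in time bounds the total drift $\int_0^\infty\|\partial_t\varphi\|_{2,\alpha}\,dt$ by $C\eps_0$, so the flow never leaves the $2\eps_0$-ball. This gives global existence together with convergence to a limit which is a stationary point, that is, a cscK metric in $[\omega_s]$.

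The main obstacle is making the constants in the nonlinear step genuinely uniform in $s$. In particular, one must ensure the smallness threshold $\eps_0$ does not degenerate along the family. If tracking the Chen--Zheng argument directly proves awkward, a fallback is an implicit function / stable manifold argument. The idea is to pull back to the fixed background $\omega_0$ via potentials $\omega_s=\omega_0+\sqrt{-1}\partial\bar\partial u_s$, with $u_s$ smooth in $s$. One then treats the family of flows as a single smooth dynamical system in $C^{2,\alpha}$, depending smoothly on the parameter $s$. The cscK metrics $\omega_s$ form a curve of equilibria that is hyperbolic in the normal directions, modulo constants, uniformly for $s$ near $0$. Standard parameter-dependent stability theory for such a system then gives a single $\eps_0$ that works for all $s$ in $\mathcal{U}$.
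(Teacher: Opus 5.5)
Your proposal takes the same route as the paper. The paper gives no separate argument for this statement; it simply presents it as the version of Chen--Zheng's stability result (Theorem \ref{t1.1}, part (2)) that holds uniformly over the LeBrun--Simanca family of cscK metrics $\omega_s$, and your tracking of constants (uniform geometry of $\omega_s$ near $[\omega_0]$, and a uniform spectral gap for the Lichnerowicz operator coming from $Aut^0(M,J)=0$) is exactly the justification the paper leaves implicit.
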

We wish to reduce the proof of Theorem \ref{t1.3} to Theorem \ref{t6.2N} and we just need to show that:
\begin{prop}\label{p6.2}
Let $(M,J,\omega_0)$ be a compact KE manifold with $Aut_0(M,J)=0$.  Then for any $\Gamma>1$,  $\delta_0>0$,  there exists $T_0>0$ and a neighborhood $\mathcal{U}_1$ such that for any class $[\omega]\in \mathcal{U}_1$ and any metric $\omega_1\in [\omega]$ with $\frac{1}{\Gamma}\le \frac{\omega_1^n}{\omega_0^n}\le \Gamma$,  the pseudo Calabi flow in the class $[\omega]$ starting from $\omega_1$ exists on $M\times[0,T_0]$ and satisfies $||\varphi(T_0)||_{2,\alpha}\le \delta_0$,  where $\varphi(T_0)$ is the K\"ahler potential for the pseudo Calabi flow at $t=T_0$ (under the background metric $\omega_s$).  
\end{prop}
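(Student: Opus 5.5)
The plan is a continuity-in-the-class argument built on two facts. First, in the class $[\omega_0]$ itself the pseudo Calabi flow is the normalized Kähler--Ricci flow, by the Proposition above. Second, for a background metric $\omega_s$ near $\omega_0$, the function $P$ is a small perturbation of $\lambda\varphi$, and $\lambda\varphi$ is Hölder continuous. This second fact puts us in the setting of Theorem \ref{MT31} and Corollary \ref{Cor31}. The constants will be chosen in the order $T_0$, then the $C^0$ bounds, then $\delta_*$, then $\mathcal{U}_1$. The conclusion $\|\varphi(T_0)\|_{2,\alpha}\le\delta_0$ will be obtained by contradiction and compactness, as in the proof of Theorem \ref{t1.2}.

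\textbf{Step 1 (uniform convergence in the KE class).} We first show the following. For every $\delta_0>0$ there is $T_0$ such that every normalized Kähler--Ricci flow in $[\omega_0]$ starting from $\omega_1$ with $\Gamma^{-1}\le\omega_1^n/\omega_0^n\le\Gamma$ satisfies $\|\varphi(T_0)\|_{2,\alpha}\le\delta_0/2$, for a suitable normalization of the potential. The input here is Theorem \ref{TMCT}, combined with uniqueness of the KE metric (we have $Aut^0=0$). To get uniformity in the initial data, argue by contradiction: the potentials of admissible initial data are uniformly $C^{0,\alpha}$ by Theorem \ref{TMDD}, hence precompact in $C^0$. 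The K-energy is bounded along the family and decreases along the flow. Properness of the K-energy, which holds since $Aut^0=0$, then forces all limits into a neighborhood of the KE metric after a uniform time. Smoothing for $t>0$ comes from the higher-order estimates of Section 4.

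\textbf{Step 2 ($C^0$ control on $[0,T_0]$ uniformly in the class).} Fix the background $\omega_s$, which depends smoothly on $[\omega]$. While $\|F\|_0$ stays below a fixed large bound, Theorem \ref{MT21} bounds $\|P\|_0$, and Theorem \ref{MT22} then bounds $\|F(t)\|_0$ by a constant $C_1(\Gamma,T_0)$. A continuity argument, exactly as in Lemma \ref{l5.1New} but with $T_0$ fixed and a large threshold in place of a small one, yields $\sup_{[0,T_0]}\|F\|_0\le C_1$. Theorem \ref{TMDD} then gives a uniform $C^{0,\alpha}$ bound on $\varphi(t)$.

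\textbf{Step 3 ($P$ is close to a continuous function).} Write $Ric(\omega_s)=\lambda\omega_s+\eta_s$, where $\|\eta_s\|_{C^k}\to 0$ as $[\omega]\to[\omega_0]$. Then
\[
\Delta_\varphi P=\underline{R}_s-\lambda\,tr_{\omega_\varphi}\omega_s-tr_{\omega_\varphi}\eta_s=\lambda\Delta_\varphi\varphi+(\underline R_s-\lambda n)-tr_{\omega_\varphi}\eta_s .
\]
Hence $P-\lambda\varphi-c(t)=Q$, where $Q$ solves $\Delta_\varphi Q=(\underline R_s-\lambda n)-tr_{\omega_\varphi}\eta_s$. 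The argument of Theorem \ref{MT21}, which is linear in the right-hand side, gives $\|Q\|_0\le C(C_1)\big(\|\eta_s\|_0+|\underline R_s-\lambda n|\big)$; this becomes smaller than any prescribed $\delta_*$ once $\mathcal{U}_1$ is small. So $P$ is $\delta_*$-close to $\lambda\varphi+c(t)$, whose modulus of continuity comes from Step 2. Corollary \ref{Cor31} then gives existence on $[0,T_0]$ together with uniform bounds on all derivatives on $[T_0/2,T_0]$, uniform over the class and the initial data.

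\textbf{Step 4 (conclusion).} Suppose there are classes $[\omega_i]\to[\omega_0]$ and admissible data with $\|\varphi_i(T_0)\|_{2,\alpha}>\delta_0$. The uniform bounds from Step 3 let us extract a subsequence converging smoothly on $(0,T_0]$ to a pseudo Calabi flow in $[\omega_0]$, which is a Kähler--Ricci flow. We need this limit to start from admissible data. Equicontinuity at $t=0$ follows from $\partial_t\varphi=F+P$ being bounded, and the volume bound passes to the limit, so the limit starts from a weak limit of admissible data. Step 1 then gives $\|\varphi_\infty(T_0)\|_{2,\alpha}\le\delta_0/2$, which contradicts the assumed lower bound.

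I expect the main obstacle to be Step 1. It requires the Kähler--Ricci flow to converge uniformly over a noncompact family of merely bounded-volume-form initial data, including weak limits of such data. Handling this needs properness of the K-energy and some care with a flow started from rough data at $t=0$.
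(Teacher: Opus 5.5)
\textbf{Step 2 has a genuine gap, and Steps 3 and 4 depend on it.}

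The continuity argument of Lemma \ref{l5.1New} and Lemma \ref{l6.6} only closes with the quantifiers in the order ``threshold first, then a short time''. The reason is a chain of dependences:
\begin{itemize}
\item the bound $V_1$ on $\|P\|_0$ comes from Theorem \ref{MT21}, or from Lemma \ref{l6.5} together with Theorem \ref{TMDD}, and it depends on the threshold $A$ you impose on $\|F\|_0$;
\item the constants $\alpha_3,\alpha_4$ of Theorem \ref{MT22} depend on $V_1$, exponentially in its proof.
\end{itemize}
To close the argument on a fixed interval $[0,T_0]$ you would need some $A$ with $\log\Gamma\, e^{\alpha_3(A)T_0}+\alpha_4(A)T_0<A$. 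Nothing guarantees such an $A$ exists once $T_0$ is not small, because the left side grows faster than $A$.

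Your $T_0$ is fixed by Step 1, i.e.\ by the convergence time of the K\"ahler--Ricci flow from arbitrary data with density in $[\Gamma^{-1},\Gamma]$, so it is large. Consequently:
\begin{itemize}
\item you have no uniform $C^0$ control of $F$ on $[0,T_0]$;
\item Corollary \ref{Cor31} cannot be applied on that interval;
\item the existence and the uniform derivative bounds on $[T_0/2,T_0]$ used in Step 4 are not established.
\end{itemize}
The a priori estimates alone give a uniform existence and smoothing window only on $[0,t_0]$ with $t_0=t_0(\Gamma)$ small, which is exactly Lemma \ref{l6.6}. The real obstacle is therefore Step 2, not Step 1.

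\textbf{The missing idea is how to pass from the short window to time $T_0$.} The paper does this perturbatively, in two stages:
\begin{itemize}
\item On $[0,t_0]$, Lemma \ref{l6.3} (uniform smoothing, compactness, and uniqueness of the K\"ahler--Ricci flow with continuous initial data) makes $\varphi(t_0)$ $C^{10,\alpha}$-close to the K\"ahler--Ricci flow in $[\omega_0]$. That flow is started from a Monge--Amp\`ere comparison potential with the same density.
\item Lemma \ref{l6.4} is an implicit function theorem around this K\"ahler--Ricci flow, which is itself a pseudo Calabi flow in the KE class. It produces the pseudo Calabi flow on the long interval and keeps it $C^{2,\alpha}$-close to the K\"ahler--Ricci flow, with no a priori estimates needed there.
\end{itemize}

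You could repair your contradiction argument without the implicit function theorem by iterating the short-time step:
\begin{itemize}
\item Along your subsequence, the flows converge smoothly at $t_0$ to the limiting K\"ahler--Ricci flow $\psi$.
\item $\psi$ has volume ratio bounded by some $\Gamma_1$ on $[t_0,T_0]$.
\item Restart Lemma \ref{l6.6} with $2\Gamma_1$, which gives a fixed step length and a neighborhood that $[\omega_i]$ eventually enters.
\item Identify each new limit with $\psi$ by uniqueness.
\item You reach $T_0$ after finitely many steps.
\end{itemize}

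Step 1 also becomes easier after smoothing. At time $t_0$ the relevant K\"ahler--Ricci flow potentials lie in a compact subset of $C^{k,\alpha}$. A uniform $T_0$ then follows from Theorem \ref{TMCT}, continuous dependence on the data, and stability of the KE metric, without appealing to properness of the K-energy.
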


To see that Proposition \ref{p6.2} implies Theorem \ref{t1.3},  we just need to choose $\delta_0=\eps_0$ where $\eps_0$ is given by Theorem \ref{t6.2N}.  Now it only remains to prove Proposition \ref{p6.2},  and it follows from the following lemmas.

The first step is the improvement of regularity.  Let $\mathcal{U}$ be the neighborhood of the canonical class swept out by the K\"ahler classes of cscK K\"ahler metrics.  For $[\omega]\in \mathcal{U}$,  and $\Gamma>1$,  we may denote:
\begin{equation*}
\mathcal{A}_{[\omega],\Gamma}=\{\varphi\in PSH(M,\omega_s)\cap C^{\infty}(M):\frac{1}{\Gamma}\le \frac{(\omega_s+\sqrt{-1}\partial\bar{\partial}\varphi)^n}{\omega_s^n}\le \Gamma\}.
\end{equation*}
Here $\omega_s$ is the cscK metric in $[\omega]$,  which depends smoothly on the class.
\begin{lem}\label{l6.3}
Let $(M,J,\omega_0)$ be a compact K\"ahler-Einstein manifold with $Aut_0(M,J)=0$.  Then for any $\Gamma>1$,  there exists $t_0>0$ and a neighborhood $\mathcal{U}_1$ of the canonical class such that for any class $[\omega]\in \mathcal{U}_1$ and any $\varphi\in \mathcal{A}_{[\omega],\Gamma}$,  the pseudo Calabi flow in the class $[\omega]$ starting from $\varphi$ exists on $M\times [0,t_0]$.  Moreover,  
\begin{enumerate}
\item For any $0<\eps<t_0$,  there exists a constant $C_{\eps,k,l}>0$
\begin{equation*}
\max_{t\in [\eps,t_0]}||\partial_t^kD^l\varphi(\cdot,t)||_0\le C_{\eps,k,l}.
\end{equation*}
\item For any $\eps_0>0$,  there exists a neighborhood $\mathcal{U}_2\subset \mathcal{U}_1$ of the canonical class,  and $\delta_0>0$,  such that for any two K\"ahler classes $[\omega_1],\,[\omega_2]\in \mathcal{U}_2$,  and any two functions $\varphi_i\in \mathcal{A}_{[\omega_i],\Gamma}$ with $||\varphi_1-\varphi_2||_0\le \delta_0$,  then one has:
\begin{equation*}
||\varphi_1(\cdot,t_0)-\varphi_2(\cdot,t_0)||_{10,\alpha}\le \eps_0.
\end{equation*}
Here $\varphi_i(\cdot,t)$ denotes the pseudo Calabi flow in the class $[\omega_i]$ with initial value $\varphi_i$.
\end{enumerate}
\end{lem}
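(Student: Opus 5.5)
The plan is to use the $P$-close-to-continuous route of Section 4 (Theorem \ref{MT31}, Corollary \ref{Cor31}), with $P$ compared to its Kähler--Einstein counterpart. The small-$F$ route of Theorem \ref{MT32} is not available, since here $\Gamma$ is arbitrary.

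\textbf{Step 1: $C^0$ control of $F$ on a short interval.} Fix $[\omega]\in\mathcal{U}_1$ and write $\chi=Ric(\omega_s)-\lambda\omega_s$. This form is exact-small: $\|\chi\|$ is small in every $C^k$ as $[\omega]\to[\omega_0]$, and $[\chi]$ tends to $0$. Choose a smooth potential $h_s$ with $\chi=\lambda'\omega_s+\sqrt{-1}\partial\bar\partial h_s$, where $\lambda'$ is a small constant and $h_s$ is small in every $C^k$. The equation for $P$ then reads
\begin{equation*}
\Delta_\varphi P=\underline{R}-\lambda\, tr_{\omega_\varphi}\omega_s-tr_{\omega_\varphi}\chi .
\end{equation*}
Since $tr_{\omega_\varphi}\omega_s=n-\Delta_\varphi\varphi$, this gives
\begin{equation*}
\Delta_\varphi\bigl(P-\lambda\varphi+h_s\bigr)=\underline{R}-\lambda n-\lambda' tr_{\omega_\varphi}\omega_s ,
\end{equation*}
and the right-hand side is $O(|[\omega]-[\omega_0]|)(1+tr_{\omega_\varphi}\omega_s)$. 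First use Theorem \ref{MT21} and Theorem \ref{MT22}, arguing by continuity as in Lemma \ref{l5.1New}. Starting from $\|F(0)\|_0\le\log\Gamma$, this produces $t_0$ and a bound $\|F\|_0\le 2\log\Gamma+1$, hence $\|P\|_0\le C$, on $[0,\min(t_0,T_{\max}))$, with constants uniform over $\mathcal{U}_1$ because the background data vary smoothly. Uniformity of the Sobolev and $\alpha$-invariant constants in \cite{GPSS2023} is the point to check here.

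\textbf{Step 2: $P$ is close to a continuous function.} Set $Q=P-\lambda\varphi+h_s$ and normalize it by subtracting its $\omega_\varphi^n$-average. Rerun the ABP argument from the proof of Theorem \ref{MT21} on $Q$, with the right-hand side multiplied by the small factor $\lambda'$. This should give $\|Q\|_0\le C|\lambda'|+C\|h_s\|_0+(\text{const})$, where the constant term is the mean value of $\lambda\varphi$, which can be absorbed. The delicate point is that the ABP constant must scale linearly in the size of the right-hand side, which it does since the equation is linear in $Q$. Theorem \ref{TMDD} gives a uniform $C^{0,\alpha}$ bound on $\varphi(t)$, so we may take $\tilde P=\lambda\varphi-h_s$, which has a uniform modulus of continuity. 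Then $|P-\tilde P|\le\delta_*$ once $\mathcal{U}_1$ is small enough. Corollary \ref{Cor31} now extends the flow past any $T_{\max}\le t_0$, so the flow exists on $[0,t_0]$. Theorem \ref{MT31}, Lemma \ref{l4.6N} and Theorem \ref{TM31} give the derivative bounds in (1) uniformly in the class. Time derivatives follow from the equation $\partial_t\varphi=F+P$ together with elliptic regularity for $P$.

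\textbf{Step 3: continuous dependence, item (2).} Argue by contradiction and compactness. Suppose there are classes $[\omega_{1,i}],[\omega_{2,i}]\to[\omega_0]$ and initial data $\varphi_{1,i},\varphi_{2,i}$ with $\|\varphi_{1,i}-\varphi_{2,i}\|_0\to0$ but $\|\varphi_{1,i}(t_0)-\varphi_{2,i}(t_0)\|_{10,\alpha}\ge\eps_0$. By (1), pass to subsequences converging smoothly on $[\eps,t_0]\times M$ to flows $\psi_1,\psi_2$ in the class $[\omega_0]$; by the Proposition above these are normalized Kähler--Ricci flows. The initial data are uniformly Hölder by Theorem \ref{TMDD}, so along a subsequence $\varphi_{j,i}(0)\to\psi_0$ uniformly, with the same limit $\psi_0$ for $j=1,2$. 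Because $|\partial_t\varphi|\le\|F\|_0+\|P\|_0$ is bounded, both limits are $C^0$-continuous at $t=0$ with initial value $\psi_0$. Uniqueness of the Kähler--Ricci flow from bounded-volume-form (continuous potential) initial data, as in the weak Kähler--Ricci flow theory (Song--Tian, Guedj--Zeriahi, Di Nezza--Lu), then gives $\psi_1=\psi_2$, contradicting the lower bound at $t_0$. The normalization constants in $t$ must be matched between the two flows. I expect Step 2, obtaining a smallness constant for $Q$ that is uniform over the class, and this uniqueness input to be the main obstacles.
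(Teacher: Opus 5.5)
Your architecture is the paper's: show that $P$ is uniformly close to $\lambda\varphi$ plus a constant, use Theorem \ref{MT22} and Corollary \ref{Cor31} for a uniform lifespan and the derivative bounds, then prove part (2) by compactness plus uniqueness of the limiting normalized K\"ahler--Ricci flow.

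One step is false as written. We have $[\chi]=[Ric(\omega_s)]-\lambda[\omega]=\lambda([\omega_0]-[\omega])$. This is a multiple of $[\omega_s]=[\omega]$ only when $[\omega]$ is proportional to $[\omega_0]$. So for $\lambda\neq0$ and $h^{1,1}(M)>1$ you cannot write $\chi=\lambda'\omega_s+\sqrt{-1}\partial\bar\partial h_s$ with a constant $\lambda'$. The repair is to drop $h_s$ and $\lambda'$ and use, as you already note, that $\chi$ itself is $C^0$-small. Then $|\underline{R}-\lambda n-tr_{\omega_\varphi}\chi|\le\eps(1+tr_{\omega_\varphi}\omega_s)$. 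The proof of Theorem \ref{MT21} only uses a two-sided bound of this shape on the right-hand side (in the $L^2$ step and in both ABP steps), plus the zero-mean normalization. Applying it to $\eps^{-1}(P-\lambda\varphi-c)$, with $c$ chosen so the $\omega_\varphi^n$-mean vanishes, gives $\|P-\lambda\varphi-c\|_0\le C\eps$. So $\tilde P=\lambda\varphi+c$ works, exactly as in the paper.

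Where you differ from the paper:
\begin{enumerate}
\item \emph{Closeness of $P$ (Lemma \ref{l6.5}).} The paper writes $P-\lambda\varphi$ via the Green's function of $\omega_\varphi$. It uses the uniform bounds $\int_M|G|\,\omega_\varphi^n\le K$ and $G\ge -K$ of Guo--Phong--Song--Sturm, and integrates $tr_{\omega_\varphi}\omega_s=n-\Delta_\varphi\varphi$ by parts. That takes a few lines and is visibly linear in $\eps$. Your rescaled ABP rerun stays within Section 3, but you must redo that argument and check that the Sobolev, $\alpha$-invariant and Ko{\l}odziej constants are uniform over the classes.
\item \emph{Bounding $\|P\|_0$ for the lifespan.} The paper uses Lemma \ref{l6.5} plus Gronwall on $\|\varphi\|_0$ instead of Theorem \ref{MT21}. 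Either works.
\item \emph{Uniqueness in part (2).} You do not need the weak K\"ahler--Ricci flow literature. Both limits are smooth for $t>0$ and $C^0$-continuous at $t=0$, so $e^{-\lambda t}(\varphi_1-\varphi_2)$ solves a linear parabolic equation on $[\eps,T]$. The maximum principle and $\eps\to0$ then suffice.
\end{enumerate}

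Your remark about time-dependent constants is apt, and the paper's uniqueness lemma glosses over it. With $\int_MP\,\omega_\varphi^n=0$, the limits actually solve $\partial_t\varphi=\log\frac{\omega_\varphi^n}{\omega_0^n}+\lambda\varphi-\frac{\lambda}{V}\int_M\varphi\,\omega_\varphi^n$, where $V=\int_M\omega_0^n$. So first add to each solution the function $b(t)$ with $b'=\lambda b+\frac{\lambda}{V}\int_M\varphi\,\omega_\varphi^n$ and $b(0)=0$. Then apply the maximum principle. Finally check that the leftover difference, a function of $t$ alone vanishing at $t=0$, has zero derivative.
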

The second step is somewhat expected,  which roughly says that if two K\"ahler classes are close enough and the two initial K\"ahler potentials are close enough under smooth topology,  then pseudo Calabi flow starting from them will stay close on a finite time interval.  More precisely:

\begin{lem}\label{l6.4}
Denote $\omega_0$ to be the K\"ahler-Einstein metric.  Let $\varphi_0\in PSH(M,\omega_0)\cap C^{\infty}(M)$.  Let $T_0>0$,  $\eps_1>0$,  then there exists a neigborhood $\mathcal{U}_3$ of $[\omega_0]$ and $\eps_2>0$,  such that for any K\"ahler class $[\omega]\in \mathcal{U}_3$,  any $\varphi_s\in PSH(M,\omega_s)\cap C^{\infty}(M)$ with $||\varphi_s-\varphi_0||_{10,\alpha}\le \eps_2$,  the pseudo Calabi flow with initial data $\varphi_s$ in the class $[\omega]$ exists on $M\times [0,T_0]$.  Moreover,  $||\varphi_s(T_0)-\varphi_0(T_0)||_{2,\alpha}\le \eps_1$.  Here $\varphi_s(t)$ denotes the solution to the pseudo Calabi flow with $\varphi_s$ being the initial data,  and $\varphi_0(t)$ denotes the normalized K\"ahler-Ricci flow.  Also $\omega_s$ is the cscK metric in the class $[\omega]$.
\end{lem}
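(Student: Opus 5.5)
The plan is to treat the statement as continuous dependence of the pseudo Calabi flow on its data (Kähler class and initial potential) over the fixed interval $[0,T_0]$, and to get it from the implicit function theorem in parabolic Hölder spaces. The reference solution comes first. By the Proposition identifying PCF with the normalized Kähler–Ricci flow in the KE class, together with Theorem \ref{TMCT}, the flow $\varphi_0(t)$ starting from $\varphi_0$ exists and is smooth on $M\times[0,T_0]$. Hence $\omega_{\varphi_0(t)}$ is uniformly equivalent to $\omega_0$, and all $C^{k,\alpha}$ norms of $\varphi_0(t)$ are bounded on $[0,T_0]$.

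For a class $[\omega]$ near $[\omega_0]$ I write $\omega_s=\omega_0+\theta_s$, where $\theta_s$ is a smooth closed $(1,1)$-form depending smoothly on the class and small in every $C^k$ norm (LeBrun–Simanca). In the class $[\omega]$ the flow reads
\begin{equation*}
\partial_t\varphi=\log\frac{(\omega_s+\sqrt{-1}\partial\bar\partial\varphi)^n}{\omega_s^n}+P_s(\varphi),\qquad \Delta_{\varphi}P_s=\underline{R}_s-tr_{\omega_{s,\varphi}}Ric(\omega_s),\qquad \int_MP_s\,\omega_{s,\varphi}^n=0 .
\end{equation*}
I would define the map
\begin{equation*}
\Phi(\theta,\psi,\varphi)=\big(\partial_t\varphi-F_\theta(\varphi)-P_\theta(\varphi),\ \varphi(0)-\psi\big)
\end{equation*}
from (small $\theta$) $\times\, C^{2,\alpha}(M)\times C^{2+\alpha,1+\alpha/2}(M\times[0,T_0])$ to $C^{\alpha,\alpha/2}(M\times[0,T_0])\times C^{2,\alpha}(M)$. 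By construction $\Phi(0,\varphi_0,\varphi_0(\cdot))=0$. The map is $C^1$: the Monge–Ampère term is a smooth function of $\partial\bar\partial\varphi$, and $P_\theta(\varphi)$ depends smoothly on $\varphi$ by Schauder theory for $\Delta_\varphi$ on the orthogonal complement of constants, with metrics uniformly equivalent near the reference.

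The key step is invertibility of the linearization in $\varphi$ at the reference solution. It has the form $\psi\mapsto(\partial_t\psi-\Delta_{\varphi_0(t)}\psi-\mathcal{L}_t\psi,\ \psi(0))$. Here $\mathcal{L}_t\psi=\delta P$ solves
\begin{equation*}
\Delta_{\varphi_0}\delta P=\langle\sqrt{-1}\partial\bar\partial\psi,\ \sqrt{-1}\partial\bar\partial P-Ric(\omega_0)\rangle_{\varphi_0}+\text{normalization constant}.
\end{equation*}
So $\mathcal{L}_t$ is a nonlocal operator of order zero: it is bounded $C^{2,\alpha}\to C^{2,\alpha}$ in space, with smooth time dependence inherited from $\varphi_0(t)$. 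I would invert $\partial_t-\Delta_{\varphi_0(t)}$ by standard linear parabolic Schauder theory and treat $\mathcal{L}_t$ as a perturbation via Duhamel. On short subintervals the perturbation is a contraction; concatenating finitely many such intervals covers $[0,T_0]$, and linearity keeps the bounds uniform. This gives the bounded inverse.

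The implicit function theorem then produces, for $\theta$ and $\varphi_s-\varphi_0$ small (in $C^{2,\alpha}$, a fortiori under the $C^{10,\alpha}$ hypothesis), a solution on all of $[0,T_0]$. It is close to $\varphi_0(\cdot)$ in $C^{2+\alpha,1+\alpha/2}$, so in particular $\|\varphi_s(T_0)-\varphi_0(T_0)\|_{2,\alpha}\le\eps_1$. By uniqueness of smooth solutions and Theorem \ref{TM31}, this solution coincides with the smooth PCF from $\varphi_s$.

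I expect the main obstacle to be the linear step, specifically verifying that $\mathcal{L}_t$ maps parabolic Hölder spaces to themselves, including Hölder continuity in $t$. I would handle this by differentiating the elliptic equation for $\delta P$ in $t$ and using smoothness of $\varphi_0(t)$. A second point to check is that the normalization of $P$ contributes only time-dependent constants, which I would mod out consistently.
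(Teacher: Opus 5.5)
Your proposal is correct and follows essentially the paper's route: an implicit function theorem in parabolic H\"older spaces around the PCF-normalized K\"ahler--Ricci flow, whose linearization is $\partial_t-\Delta_{\varphi_0(t)}$ plus a zeroth-order nonlocal term. The paper works in $C^{10+\alpha,5+\frac{\alpha}{2}}$ and cites Chen--Zheng's Proposition 5.4 for the linear problem, where you work in $C^{2+\alpha,1+\frac{\alpha}{2}}$ and use a Duhamel/contraction argument. (Minor point: the linearized equation should read $\Delta_{\varphi_0}\delta P=\langle\sqrt{-1}\partial\bar\partial\psi,\sqrt{-1}\partial\bar\partial P+Ric(\omega_0)\rangle_{\varphi_0}$, with a plus sign; in the KE class this gives $\delta P=\lambda\psi$ up to a spatially constant term, so the obstacle you anticipated at the reference solution essentially disappears.)
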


First we explain how to use Lemma \ref{l6.3} and \ref{l6.4} to prove Proposition \ref{p6.2}:
\begin{proof}[Proof (of Proposition \ref{p6.2},  using Lemma \ref{l6.3} and \ref{l6.4})]
 Denote $\omega_{s_0}$ to be the cscK metric in the class $[\omega]$ and we write $\omega_1=\omega_{s_0}+\sqrt{-1}\partial\bar{\partial}\varphi_{s_0}$,  $\sup_M\varphi_{s_0}=0$.  We also denote $F_{s_0}=\log\frac{(\omega_{s_0}+\sqrt{-1}\partial\bar{\partial}\varphi_{s_0})^n}{\omega_{s_0}^n}$.  By choosing $\mathcal{U}_1$ small enough,  we may assume that $\frac{1}{2\Gamma}\le e^{F_{s_0}}\le 2\Gamma$.  We define $\varphi_0$ to be the solution to the following complex Monge-Ampère equation in the canonical class:
\begin{equation*}
(\omega_0+\sqrt{-1}\partial\bar{\partial}\varphi_0)^n=e^{F_{s_0}+c_{s_0}}\omega_0^n.
\end{equation*}
Here $c_{s_0}$ is a constant given by $e^{c_{s_0}}=\frac{\int_M\omega_0^n}{\int_Me^{F_{s_0}}\omega_0^n}$.  Since $F_{s_0}$ satisfies $\int_M\omega_{s_0}^n=\int_Me^{F_{s_0}}\omega_{s_0}^n$,  and $\omega_{s_0}\rightarrow \omega_0$ when $\mathcal{U}_1$ is chosen small enough.  Therefore,  $c_{s_0}\rightarrow 0$ as $\mathcal{U}_1$ is chosen small enough.  Therefore,  from the $C^0$ stability of the solution to the complex Monge-Ampère equation,  we may conclude that $||\varphi_{s_0}-\varphi_0||_0\le \delta_0$,  for any prescribed $\delta_0>0$,  as long as $\mathcal{U}_1$ is chosen small enough. 

Then we may apply Lemma \ref{l6.3},  part (2) to conclude that for any $\eps_0>0$,  one can make
\begin{equation*}
||\varphi_{s_0}(\cdot,t_0)-\varphi_0(\cdot,t_0)||_{10,\alpha}\le \eps_0,
\end{equation*}
by choosing $\delta_0>0$ small enough,  and also shrink $\mathcal{U}_1$ if necessary.  Then we use Lemma \ref{l6.3} and consider the pseudo Calabi flow and the normalized K\"ahler-Ricci flow for $t\in [t_0,t_0+T_0]$,  we would be able to make $||\varphi_s(T_0)-\varphi_0(T_0)||_{2,\alpha}$ as small as desired,  for any $T_0>0$ prescribed.  On the other hand,  with $T_0>0$ large enough,  we know that $\varphi_0(T_0)$ will be close to a constant in $C^{2,\alpha}$ norm,  using the convergence result of normalized K\"ahler-Ricci flow (Theorem \ref{TMCT}).   With such choice of $T_0$ and use the smallness of $||\varphi_s(T_0)-\varphi_0(T_0)||_{2,\alpha}$,  we also get the closeness of $\varphi_s(T_0)$ to a constant in $C^{2,\alpha}$ topology.  Then the existence of pseudo Calabi flow on $M\times [0,+\infty)$ and convergence to the cscK metric is given by Theorem \ref{t6.2N}.
\end{proof}

At this time,  it only remains to prove Lemma \ref{l6.3} and \ref{l6.4}. We first prove Lemma \ref{l6.3}.  Again,  we may assume that $c_1(M)=\lambda[\omega_0]$ where $\lambda=\pm 1$ or 0.  
First we show that
\begin{lem}\label{l6.5}
Assume that $(M,\omega_0)$ be a compact K\"ahler manifold with $c_1(M)=\lambda[\omega_0]$.   
Let $\Gamma>0,\,\delta_*>0$,  then there exists a neighorhood $\mathcal{U}_4$ of $[\omega_0]$,  such that for any $[\omega]\in \mathcal{U}_4$ and any $\varphi\in \mathcal{A}_{[\omega],\Gamma}$,  one has:
\begin{equation*}
|P-\lambda(\varphi-\frac{1}{vol([\omega])}\int_M\varphi\omega_{\varphi}^n)|\le \delta_*.
\end{equation*}
Here $\Delta_{\varphi}P=\underline{R}([\omega])-tr_{\omega_{\varphi}}(Ric(\omega_s))$,  $\omega_s$ denotes the cscK metric in the class $[\omega]$,  and $P$ is normalized so that $\int_MP\omega_{\varphi}^n=0$.
\end{lem}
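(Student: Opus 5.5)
The idea is to compare $P$ with the explicit solution available in the exact Kähler–Einstein situation, and to control the error by the smallness of $\omega_s-\lambda^{-1}Ric(\omega_s)$ as the class approaches the canonical class.

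First consider the model case $[\omega]=[\omega_0]$, so that $\omega_s=\omega_0$ and $Ric(\omega_0)=\lambda\omega_0$. Then
\begin{equation*}
tr_{\omega_\varphi}Ric(\omega_0)=\lambda\, tr_{\omega_\varphi}\omega_0=\lambda(n-\Delta_\varphi\varphi),
\end{equation*}
and $\underline{R}=\lambda n$. Hence $\Delta_\varphi P=\lambda\Delta_\varphi\varphi$. By the normalization $\int_MP\omega_\varphi^n=0$ this forces
\begin{equation*}
P=\lambda\Big(\varphi-\frac{1}{vol}\int_M\varphi\,\omega_\varphi^n\Big)
\end{equation*}
exactly, so the lemma is a perturbation statement.

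For general $[\omega]\in\mathcal{U}_4$, write $Ric(\omega_s)=\lambda\omega_s+\theta_s$, where $\theta_s$ is a smooth real $(1,1)$ form. Since $\omega_s$ depends smoothly on the class (LeBrun–Simanca) and equals $\omega_0$ at the canonical class, $\|\theta_s\|_{C^0(\omega_0)}\to 0$ as $\mathcal{U}_4$ shrinks. Likewise $\underline{R}([\omega])-\lambda n\to 0$. With $\tilde\varphi$ the normalized potential, the difference $Q:=P-\lambda\tilde\varphi$ satisfies
\begin{equation*}
\Delta_\varphi Q=(\underline{R}([\omega])-\lambda n)-tr_{\omega_\varphi}\theta_s,\qquad \int_MQ\,\omega_\varphi^n=0.
\end{equation*}
The right-hand side is bounded by $\eta(1+tr_{\omega_\varphi}\omega_s)$, where $\eta\to 0$ as the class approaches $[\omega_0]$.

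The key step is a $C^0$ estimate for $Q$ that is linear in $\eta$. I would rerun the proof of Theorem \ref{MT21} for $Q$ with source $\eta(1+tr_{\omega_\varphi}\omega_s)$, taking $\omega_s$ as background metric. Alternatively, it suffices to rescale: $Q/\eta$ solves an equation of exactly the type in Theorem \ref{MT21}, with $Ric(\omega_0)$ replaced by a form of norm at most one. That proof needs the following inputs, each uniform over the neighborhood:
\begin{enumerate}
\item the $L^2$ bound via the Sobolev constant of \cite{GPSS2023}, which depends only on $\Gamma$ and the background metric;
\item the gradient term $\int|\nabla_\varphi\varphi|^2\omega_\varphi^n$, which here is bounded without the flow, since $J_{\omega_s}(\varphi)$ is controlled once $\|\varphi\|_0$ is bounded by Theorem \ref{TMDD};
\item the ABP argument with the auxiliary $\psi$ and $C_0\varphi$, all of whose constants depend only on $\Gamma$ and a uniform bound on the geometry of $\omega_s$.
\end{enumerate}
This gives $\|Q/\eta\|_0\le C(\Gamma)$, hence $\|Q\|_0\le C(\Gamma)\eta\le\delta_*$ once $\mathcal{U}_4$ is small.

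The main obstacle is making sure the estimate of Theorem \ref{MT21} really scales linearly. In that proof the $\nabla_\varphi\varphi$ term enters with coefficient one, not with coefficient $\eta$. I would handle this by keeping the $\eta$ factor throughout the integration by parts: the term becomes $\eta\int\frac{Q}{\sqrt{Q^2+1}}\nabla_\varphi Q\cdot\nabla_\varphi\varphi$, which is bounded by $\tfrac12\int|\nabla_\varphi Q|^2+\tfrac{\eta^2}{2}\int|\nabla_\varphi\varphi|^2$. Similarly, in the ABP step I would use the barrier $\eta(\varepsilon_0\psi-C_0\varphi)$, so that every term carries the factor $\eta$ and the final bound is $C\eta$.
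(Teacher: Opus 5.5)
Your argument is correct, but it takes a different route from the paper. The paper never reruns the ABP machinery of Theorem \ref{MT21}. It normalizes $P$ and $\varphi$ to have zero $\omega_\varphi^n$-average and represents $P-\lambda\varphi$ through the Green's function of $\omega_\varphi$. It then invokes the uniform bounds $\int_M|G_{\omega_\varphi}(x,\cdot)|\,\omega_\varphi^n\le K$ and $G_{\omega_\varphi}\ge -K$ from \cite{GPSS2024}. Since the source integrates to zero, the paper can replace the kernel by $G+K\ge 0$ and bound the source by $\eps(n+1-\Delta_\varphi\varphi)$. Integrating the $\Delta_\varphi\varphi$ term back against the kernel recovers $\varphi(x)$, and this gives $|P-\lambda\tilde\varphi|\le \eps\, C(K,\|\varphi\|_0)$ in a few lines.

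You instead exploit linearity: you rescale to $\hat Q=Q/\eta$ and apply the paper's own $C^0$ estimate with $Ric(\omega_0)$ replaced by a closed form of norm at most one. This avoids Green's function estimates and stays within the Section 3 toolkit: the Poincar\'e inequality of \cite{GPSS2023}, ABP, the $\alpha$-invariant, and Theorem \ref{TMDD}. The price is that you must check every constant in that longer proof is uniform along the family $\omega_s$. You must also replace the flow input, Lemma \ref{Lem21}, by the static bound $\int_M|\nabla_\varphi\varphi|^2\omega_\varphi^n=\int_M\varphi\,(tr_{\omega_\varphi}\omega_s-n)\,\omega_\varphi^n\le 2n\,vol([\omega])\,\|\varphi\|_0$, which is what your appeal to $J_{\omega_s}$ amounts to.

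Your ``main obstacle'' is already dissolved by the rescaling. $\hat Q$ solves a normalized linear equation with data of unit size, so Theorem \ref{MT21} bounds $\|\hat Q\|_0$ by a constant and $\|Q\|_0\le C\eta$ follows automatically. The issue only appears if you run the written proof verbatim on $Q$, because there the source constant in front of $\int\frac{P}{\sqrt{P^2+1}}\nabla_\varphi P\cdot\nabla_\varphi\varphi$ is suppressed. In any case the lemma only needs $\|Q\|_0\to 0$ as $\eta\to 0$, not linear dependence.
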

Note that since $\frac{\omega_{\varphi}^n}{\omega_0^n}$ is bounded from above,  we see that one actually has $\varphi\in C^{\alpha}(M)$ (Demailly \cite{DDGH2008}).  Therefore,  this allows us to show that $P$ is $\delta_*$-close to a continuous function with modulus of continuity $\omega(r)=Cr^{\alpha}$ in the sense of Definition \ref{Def31}.  (with $\tilde{P}=c+\lambda\varphi$ with $c$ a suitable constant.)

\begin{proof}
(of Lemma \ref{l6.5}) We choose $\omega_s$ to be the background (reference) metric in the class $[\omega]$. 
Note that $\Delta_{\varphi}P=\underline{R}([\omega])-tr_{\omega_{\varphi}}(Ric(\omega_s))$.  On the other hand,  $\Delta_{\varphi}\varphi=n-tr_{\omega_{\varphi}}\omega_s$,  we see that:
\begin{equation*}
\Delta_{\varphi}(P-\lambda\varphi)=\underline{R}([\omega])-\lambda n-tr_{\omega_{\varphi}}(Ric(\omega_s)-\lambda \omega_s).
\end{equation*}
Without loss of generality,  we may normalize $P$ and $\varphi$ so that $\int_MP\omega_{\varphi}^n=0,\,\int_M\varphi\omega_{\varphi}^n=0$,  so that one has:
\begin{equation*}
(P-\lambda\varphi)(x)=\int_MG_{\omega_{\varphi}}(x,y)\big(tr_{\omega_{\varphi}}(\lambda\omega_s-Ric(\omega_s)+\underline{R}([\omega])-\lambda n)\omega_{\varphi}^n(y).
\end{equation*}
Here $G_{\omega_{\varphi}}(x,y)$ denotes the Green's function associated with the metric $\omega_{\varphi}$.  

Since $\omega_s$ is a smooth family of cscK metrics,  and $\underline{R}([\omega_0])=\lambda n,\,Ric(\omega_0)=\lambda\omega_0$,  we see that with $\mathcal{U}_4$ chosen small enough,  one can make:
\begin{equation*}
|\underline{R}([\omega])-\lambda n|\le \eps,\,\,-\eps\omega_s\le \lambda\omega_s-Ric(\omega_s)\le \eps\omega_s.
\end{equation*}
From \cite{GPSS2024},  Theorem 1.1,  we know that  there exists $K>0$,  which depends on $\Gamma$,  and is uniform once $\mathcal{U}_4$ is small enough:
\begin{equation}\label{6.11N}
\int_M|G_{\omega_{\varphi}}(x,y)|\omega_{\varphi}^n(y)\le K,\,\,\inf_{y\in M}G(x,y)\ge -K,\,\,\text{ for any $x\in M$}.
\end{equation}
Therefore,  one can write:
\begin{equation*}
\begin{split}
&(P-\lambda\varphi)(x)=\int_M(G_{\omega_{\varphi}}(x,y)+K)(tr_{\omega_{\varphi}}(\lambda \omega_s-Ric(\omega_s))+\underline{R}([\omega])-\lambda n)\omega_{\varphi}^n(y)\\
&-K\int_M(tr_{\omega_{\varphi}}(\lambda \omega_s-Ric(\omega_s))+\underline{R}([\omega])-\lambda n)\omega_{\varphi}^n(y).
\end{split}
\end{equation*}
Therefore one can estimate:
\begin{equation*}
\begin{split}
&|(P-\lambda \varphi)(x)|\le \int_M(G_{\omega_{\varphi}}(x,y)+K)\eps(tr_{\omega_{\varphi}}\omega_s+1)\omega_{\varphi}^n(y)+K\int_M\eps(tr_{\omega_{\varphi}}\omega_s+1)\omega_{\varphi}^n(y)\\
&=\int_M(G_{\omega_{\varphi}}(x,y)+K)\eps(n+1-\Delta_{\varphi}\varphi)\omega_{\varphi}^n(y)+K\int_M\eps(n+1-\Delta_{\varphi}\varphi)\omega_{\varphi}^n(y)\\
&=\eps\bigg((n+1)\int_MG_{\omega_{\varphi}}(x,y)\omega_{\varphi}^n(y)-\varphi(x)+2(n+1)Kvol([\omega])\bigg).
\end{split}
\end{equation*}
In view of (\ref{6.11N}),  one can make the above $\le \delta_*$ if $\eps$ is chosen small enough.
\end{proof}

Next,  we wish to establish an analogue of Lemma \ref{l5.1New} which establishes uniform lower bound of the life span of the pseudo Calabi flow for K\"ahler classes in a neighborhood of $[\omega_0]$.  More precisely we have:
\begin{lem}\label{l6.6}
Let $(M,\omega_0)$ be a compact K\"ahler-Einstein manifold.  For any $\Gamma>1$,  any $\delta_*>0$,  there exists a neighborhood $\mathcal{U}_5$ of $[\omega_0]$ in $H^{1,1}(M,\bR)$ and $t_0>0$,  such that for any $[\omega]\in \mathcal{U}_5$,  any $\varphi\in\mathcal{A}_{[\omega],\Gamma}$,  the pseudo Calabi flow starting from $\varphi$ in the class $[\omega]$ exists on $M\times [0,t_0]$.  Moreover,  $\varphi(t)\in \mathcal{A}_{[\omega],2\Gamma}$ for any $t\in [0,t_0]$.
\end{lem}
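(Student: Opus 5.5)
We model the argument on the proof of Lemma \ref{l5.1New}: a continuity argument on the maximal existence time, combined with the extension criterion in Corollary \ref{Cor31}. Throughout we work in a class $[\omega]\in\mathcal{U}_5$ with the cscK metric $\omega_s$ as background, and write $F=\log\frac{\omega_{\varphi}^n}{\omega_s^n}$. The key point is that the constants in Theorems \ref{MT21}, \ref{MT22} and Corollary \ref{Cor31} depend only on bounds for $|Ric(\omega_s)|$ and on the background geometry. Since $\omega_s$ depends smoothly on the class, these constants are uniform for $[\omega]$ in a small neighborhood of $[\omega_0]$.

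\emph{Step 1: uniform $C^0$ control of $F$.} Let $T_{\max}$ be the maximal smooth existence time of the flow starting from $\varphi\in\mathcal{A}_{[\omega],\Gamma}$. On any interval $[0,t_1]$ on which $\varphi(t)\in\mathcal{A}_{[\omega],2\Gamma}$, we have $\|F\|_0\le\log 2\Gamma$, so Theorem \ref{MT21} gives $\|P\|_0\le C_1(\Gamma)$. Theorem \ref{MT22} then yields
\[
\|F(t)\|_0\le \|F(0)\|_0e^{\alpha_3 t}+\alpha_4 t\le (\log\Gamma)e^{\alpha_3t}+\alpha_4t.
\]
Choose $t_0$ so small that the right-hand side is strictly less than $\log 2\Gamma$ for $t\le t_0$. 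A first-exit-time argument, exactly as in Lemma \ref{l5.1New}, then shows $\varphi(t)\in\mathcal{A}_{[\omega],2\Gamma}$ for all $t<\min(T_{\max},t_0)$.

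\emph{Step 2: the flow cannot stop before $t_0$.} Apply Lemma \ref{l6.5} with $2\Gamma$ in place of $\Gamma$ and with $\delta_*$ equal to the constant required by Corollary \ref{Cor31} for $C_1=\log 2\Gamma$, shrinking $\mathcal{U}_5\subset\mathcal{U}_4$ if necessary. Lemma \ref{l6.5} gives $|P-\lambda(\varphi-c(t))|\le\delta_*$, so $P(t)$ is $\delta_*$-close to $\tilde P=\lambda(\varphi-c(t))$. By Theorem \ref{TMDD}, since $e^F\le 2\Gamma$, the function $\varphi(t)$ is uniformly $C^{\alpha}$, with a bound independent of $t$ and of the class in $\mathcal{U}_5$. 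Hence $\tilde P$ has a uniform modulus of continuity $\omega(r)=Cr^{\alpha}$. If $T_{\max}\le t_0$, Corollary \ref{Cor31} would extend the flow beyond $T_{\max}$, which is a contradiction. Therefore $T_{\max}>t_0$, and by Step 1 together with continuity, $\varphi(t)\in\mathcal{A}_{[\omega],2\Gamma}$ on all of $[0,t_0]$.

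\emph{Main obstacle.} The delicate point is the uniformity of all constants over the family of classes. The constants in Theorem \ref{MT22} (the $\alpha$-invariant and $|Ric(\omega_s)|$), the $C^{\alpha}$ bound of Theorem \ref{TMDD}, and the threshold $\delta_*$ of Corollary \ref{Cor31} must all be independent of $[\omega]\in\mathcal{U}_5$. We would handle this by noting that each of these constants depends continuously on the smooth background $\omega_s$, which stays in a compact family as the class varies near $[\omega_0]$.

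A second point needs care. The $C^{\alpha}$ bound controls $\varphi$ only up to an additive constant, while $\tilde P$ must have an oscillation modulus. This is harmless, because Definition \ref{Def31} only concerns the modulus of continuity, and adding a constant to $\varphi$ does not change it.
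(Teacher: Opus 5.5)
Your proof is correct and has the same architecture as the paper's. Both use a first-exit argument for $\|F\|_0$ driven by Theorem \ref{MT22}, then Lemma \ref{l6.5}, the H\"older bound of Theorem \ref{TMDD} and Corollary \ref{Cor31} to rule out $T_{\max}\le t_0$; your Step 2 is essentially the paper's second half.

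The genuine difference is how you control $P$ inside the continuity argument. You use the static estimate of Theorem \ref{MT21} with background $\omega_s$, exactly as in Lemma \ref{l5.1New}, so $\|P\|_0$ is bounded by the volume-form bounds alone. The paper instead applies Lemma \ref{l6.5} (with $10\Gamma$ and $\delta_*=1$) to get $|P|\le 2\|\varphi\|_0+1$. It then bounds $\|\varphi(t)\|_0$ by integrating $\partial_t\varphi=F+P$ and closes with Gronwall. This costs an extra smallness condition $t(\log 2\Gamma+V_1)\le 1$ and a uniform bound on $\sup_{\mathcal{A}_{[\omega],\Gamma}}\|\varphi\|_0$, i.e.\ a normalization of the initial potentials. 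In exchange, the paper never uses Theorem \ref{MT21} and inherits the uniformity over classes already built into Lemma \ref{l6.5}.

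Your route is shorter and needs no $C^0$ control of $\varphi$ along the flow. It also uses the near-K\"ahler--Einstein structure only in Step 2. Its price is the point you flag yourself. The constants of Theorem \ref{MT21} (Sobolev constant, $\alpha$-invariant, $C^0$ bound for Monge--Amp\`ere potentials) must be uniform as $\omega_s$ varies, and this holds because $\omega_s$ ranges over a compact smooth family.

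Two small remarks. First, the proof of Theorem \ref{MT21} as written bounds $\int_M|\nabla_\varphi\varphi|^2\omega_\varphi^n$ through the flow-dependent Lemma \ref{Lem21}. This is harmless: that integral is at most $2n\,\mathrm{vol}([\omega])\,\mathrm{osc}_M\varphi$, which Theorem \ref{TMDD} controls, so the estimate you invoke really is static. Second, you work consistently with $F=\log(\omega_\varphi^n/\omega_s^n)$, which matches the definition of $\mathcal{A}_{[\omega],\Gamma}$. This avoids the paper's factor-$1.01$ bookkeeping between $\omega_0^n$ and $\omega_s^n$.
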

\begin{proof}
First we show that one has uniform lower and upper bound for $\frac{\omega_{\varphi(t)}^n}{\omega_0^n}$ on $t\in [0,t_0']$ for some $t_0'>0$.

For this we can choose $\mathcal{U}_5$ small enough,  so that for any $[\omega]\in \mathcal{U}_5$,  one has $\frac{1}{1.01}\le \frac{\omega_s^n}{\omega_0^n}\le 1.01$ so that for any $\varphi\in \mathcal{A}_{[\omega],\Gamma}$ one has 
$\frac{1}{1.01\Gamma}\le \frac{\omega_{\varphi}^n}{\omega_s^n}\le 1.01\Gamma$.  Here $\omega_s$ is the cscK metric in $[\omega]$.

We now apply Lemma \ref{l6.5},  with $\Gamma$ replaced by $10\Gamma$,  $\delta_*=1$,  then Lemma \ref{l6.5} gives us a neighborhood $\mathcal{U}_4$ and we may assume that $\mathcal{U}_5\subset \mathcal{U}_4$.  Now we go back to Theorem \ref{MT22},  with the following choice of constants:
\begin{equation*}
V_1=10\sup_{[\omega]\in \mathcal{U}_4,\,\varphi\in\mathcal{A}_{[\omega],\Gamma}}||\varphi||_0+10.
\end{equation*}
Let $\alpha_3,\,\alpha_4$ be the two constants given by Theorem \ref{MT22}.  Now we choose $t_0'$ small enough so that for any $t\in [0,t_0']$ one has:
\begin{equation*}
\log(1.5\Gamma)\ge \log(1.01\Gamma)e^{\alpha_3t}+\alpha_4t,\,\,t(\log(2\Gamma)+V_1)\le 1.
\end{equation*}
Let $\varphi(t)$ denotes the solution to the pseudo Calabi flow starting from $\varphi\in\mathcal{A}_{[\omega],\Gamma}$,  which exists on $M\times [0,T_{\max})$.  We show that $\varphi(t)\in \mathcal{A}_{[\omega],2\Gamma}$ if $t\le t_0',\,t<T_{\max}$.  Assume otherwise,  then we can find $0<t_0''<\min(t_0',T_{\max})$ to be the supremum of $t_*$ so that $\varphi(t)\in \mathcal{A}_{[\omega],2\Gamma}$ for $t\in [0,t_*]$.  We know that for $t\in [0,t_0'']$,  one has $\frac{1}{2\Gamma}\le \frac{\omega_{\varphi(t)}^n}{\omega_0^n}\le 2\Gamma$ so that $\frac{1}{2.02\Gamma}\le \frac{\omega_{\varphi}^n}{\omega_s^n}\le 2.02\Gamma$.

Next we use Lemma \ref{l6.5} to bound $P$: for $t\in[0,t_0'']$,
\begin{equation*}
\begin{split}
&|P(t)|\le 2||\varphi||_0+1\le 2||\varphi_0||_0+2\int_0^t(||F(t')||_0+||P(t')||_0)dt'+1\\
&\le 2\sup_{[\omega]\in \mathcal{U}_4,\varphi\in \mathcal{A}_{[\omega],\Gamma}}||\varphi||_0+1+2t\log(2.02\Gamma)+2\int_0^t||P(t')||_0dt'\\
&\le 2(\sup_{[\omega]\in \mathcal{U}_4,\varphi\in \mathcal{A}_{[\omega],\Gamma}}||\varphi||_0+1)+2\int_0^t||P(t')||_0dt'.
\end{split}
\end{equation*}
Therefore,  we may use Gronwall's inequality to get that:
\begin{equation*}
||P(t)||_0\le (2\sup_{[\omega]\in \mathcal{U}_4,\varphi\in \mathcal{A}_{[\omega],\Gamma}}||\varphi||_0+1)(2e^{2t}-1)\le V_1,\,\,t\in [0,t'].
\end{equation*}
On the other hand,  if we use Theorem \ref{MT22},  we get:
\begin{equation*}
||F(t)||_0\le ||F(0)||_0e^{\alpha_3t}+\alpha_4t\le \log(1.01\Gamma)e^{\alpha_3t}+\alpha_4t\le \log(1.5\Gamma).
\end{equation*}
That is,  $\frac{1}{1.5\Gamma}\le \frac{\omega_{\varphi(t)}^n}{\omega_s^n}\le 1.5\Gamma$.  Therefore,  $\frac{1}{1.5\times 1.01\Gamma}\le \frac{\omega_{\varphi(t)}^n}{\omega_0^n}\le 1.5\times 1.01\Gamma$,  $t\in[0,t_0'']$.  Since $1.5\times 1.01<2$,  we see that $\varphi(t)\in \mathcal{A}_{[\omega],2\Gamma}$ for some $t>t_0''$,  which contradicts that $t_0''$ is the supremum.

Next we show that $T_{\max}$ has a uniform lower bound.  For this we need to use Corollary \ref{Cor31}.  Let $C_1=\log(2.02\Gamma)$.  We have shown that for any $0\le t<T_{\max}$ and $t\le t_0''$,  one has $||F(t)||_0\le C_1$.  Let $\delta_*>0$ be the constant given by Corollary \ref{Cor31}.  Next we apply Lemma \ref{l6.5} with $\Gamma$ replaced by $2\Gamma$,  we get that for a possibly smaller neighborhood $\mathcal{U}_4'$,  any $[\omega]\in \mathcal{U}_4'$ and any $\varphi\in \mathcal{A}_{[\omega],2\Gamma}$,  $P$ is $\delta_*$-close to the function $\lambda(\varphi-\frac{1}{vol([\omega])}\int_M\varphi\omega_{\varphi}^n)$,  which has modulus of continuity $\omega(r)=Cr^{\alpha}$,  which follows  from the uniform boundedness of the volume form on $[0,t_0'']$.  Then we can conclude from Corollary \ref{Cor31} that one can bound all the derivatives of $\varphi$ for $t\in (\eps,t_0'')$,  when the neighborhood to be $\mathcal{U}_4'$ as explained above.
\end{proof}
So far we have proved part (1) of Lemma \ref{l6.3}.
Now we prove the second part.
\begin{proof}
(Lemma \ref{l6.3},  part (2)) We wish to argue by contradiction.  Assume that it is false,  then for some $\eps_0>0$,  we may find K\"ahler classes $[\omega_{1,i}],\,[\omega_{2,i}]$ which converges to $[\omega_0]$, as well as $\varphi_{1,i}\in \mathcal{A}_{[\omega_{1,i}],\Gamma}$,  $\varphi_{2,i}\in \mathcal{A}_{[\omega_{2,i}],\Gamma}$,  with $||\varphi_{1,i}-\varphi_{2,i}||\rightarrow 0$ as $i\rightarrow \infty$,  but still 
\begin{equation}\label{6.12}
||\varphi_{1,i}(\cdot,t_0)-\varphi_{2,i}(\cdot,t_0)||_{10,\alpha}\ge \eps_0.
\end{equation}
From Lemma \ref{l6.6} and Corollary \ref{Cor31},  we see that there exists $t_0>0$ such that with $i$ large enough,  $\varphi_{1,i}(t)$ and $\varphi_{2,i}(t)$ has uniform bounds on all derivatives on $M\times [\eps,t_0]$ for any $\eps>0$.  

Therefore,  we may find a subsequence $i_k$ such that we may define (smooth limit):
\begin{equation*}
\varphi_{1,\infty}(t)=\lim_{k\rightarrow \infty}\varphi_{1,i_k}(t),\,\,\,\varphi_{2,\infty}(t)=\lim_{k\rightarrow \infty}\varphi_{2,i_k}(t).
\end{equation*}
From the pseudo Calabi flow equation and that we have uniform $C^0$ bound for 
\begin{equation*}
	F_{1,i_k}(t),\,F_{2,i_k}(t),\,P_{1,i_k}(t),\,P_{2,i_k}(t),\,t\in [0,t_0]
\end{equation*} 
we see that $\varphi_{1,i_k}(t),\,\varphi_{2,i_k}(t)$ is equi-continuous at $t=0$ under $C^0$ norm.  

Therefore,  we can observe that:
\begin{itemize}
\item $\varphi_{1,\infty}(t),\,\varphi_{2,\infty}(t)$ are smooth and solve the normalized K\"ahler-Ricci flow on $M\times (0,t_0]$.
\item $[0,t_0]\ni t\mapsto \varphi_{1,\infty}(t),\,\varphi_{2,\infty}(t)$ is continuous under $C^0$ norm and $\varphi_{1,\infty}(0)=\varphi_{2,\infty}(0)$.
\end{itemize}
The following lemma shows that $\varphi_{1,\infty}(t)=\varphi_{2,\infty}(t)$.  On the other hand,  if we pass (\ref{6.12}) to limit (which we can because $\varphi_{1,i_k},\,\varphi_{2,i_k}$ converges smoothly),  we see that:
\begin{equation*}
||\varphi_{1,\infty}(t_0)-\varphi_{2,\infty}(t_0)||_{10,\alpha}\ge \eps_0.
\end{equation*}
This is a contradiction.  
\end{proof}
The above proof requires the following result:
\begin{lem}
Let $(M,\omega_0)$ be a compact K\"ahler-Einstein manifold with $c_1(M)=\lambda[\omega_0]$ where $\lambda=1$ or $-1$ or $0$.  Let $\varphi\in C(M)\cap PSH(M,\omega_0)$.  Let $\varphi_1,\,\varphi_2\in C^{\infty}(M\times (0,T))$ for some $T>0$ and solve the normalized K\"ahler-Ricci flow on $M\times (0,T)$.  Assume also that $[0,T]\ni t\mapsto \varphi_1(\cdot,t),\,\varphi_2(\cdot,t)$ are continuous under $C^0$ norm,  with $\varphi_1|_{t=0}=\varphi_2|_{t=0}=\varphi$.  Then we have $\varphi_1=\varphi_2$ on $M\times [0,T]$.
\end{lem}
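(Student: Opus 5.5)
We will prove uniqueness by a comparison (maximum principle) argument at the level of potentials, exploiting that the normalized Kähler--Ricci flow can be written as a parabolic complex Monge--Ampère equation
\begin{equation*}
\partial_t\varphi_i=\log\frac{(\omega_0+\sqrt{-1}\partial\bar\partial\varphi_i)^n}{\omega_0^n}+\lambda\varphi_i+c_i(t),\qquad i=1,2,
\end{equation*}
where we use $Ric(\omega_0)=\lambda\omega_0$ to absorb the Ricci potential of the background metric. The function $c_i(t)$ records the normalization freedom. The first step is to pin it down: the flows are given as potentials, so $\partial_t\varphi_i$ agrees with the right-hand side only up to a function of $t$. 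I will fix this by redefining $\tilde\varphi_i=\varphi_i+a_i(t)$, choosing $a_i$ so that $\tilde\varphi_i$ satisfies the equation with $c_i\equiv 0$. The point to check is that $a_i$ is continuous on $[0,T]$ with $a_i(0)=0$. This should follow from the $C^0$ continuity of $t\mapsto\varphi_i(t)$: integrating the equation against a fixed smooth measure expresses $a_i$ through quantities that are continuous at $t=0$. If that is delicate, I will instead note that the statement is about metrics, and compare potentials only up to such additive functions of time.

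The second step is the comparison itself. Set $w=\tilde\varphi_1-\tilde\varphi_2$ on $M\times(0,T)$. Subtracting the two equations and using concavity of $\log\det$, at a spatial maximum of $w$ we have $\sqrt{-1}\partial\bar\partial w\le 0$, so the $\log$ term of $\tilde\varphi_1$ is at most that of $\tilde\varphi_2$. Hence
\begin{equation*}
\frac{d}{dt}\max_M w\le \lambda\max_M w
\end{equation*}
in the sense of forward upper Dini derivatives, for $t\in(0,T)$; the function $t\mapsto\max_M w(t)$ is Lipschitz on compact subintervals of $(0,T)$ because the flows are smooth there. Consequently $e^{-\lambda t}\max_M w(t)$ is nonincreasing on $(0,T)$. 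Symmetrically, $e^{-\lambda t}\max_M(-w)(t)$ is nonincreasing.

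The third step passes to $t=0$. By hypothesis $\|w(t)\|_0\to\|\varphi-\varphi\|_0=0$ as $t\to0$, since the correction $a_1-a_2$ also tends to $0$. Letting $s\to0$ in $e^{-\lambda t}\max w(t)\le e^{-\lambda s}\max w(s)$ gives $\max w(t)\le 0$, and likewise $\min w(t)\ge0$. So $w\equiv0$ on $(0,T)$, and by continuity on $[0,T]$. The main obstacle is the normalization in the first step, namely showing that the time-dependent constants are continuous at $0$ even though $\partial_t\varphi_i$ may blow up as $t\to0$. I would try first to write
\begin{equation*}
a_i(t)-a_i(s)=\int_s^t\!\Big(\partial_\tau\varphi_i-\log\tfrac{\omega_{\varphi_i}^n}{\omega_0^n}-\lambda\varphi_i\Big)d\tau,
\end{equation*}
and to control it by pairing with $\omega_0^n$, using the $C^0$ continuity of $\varphi_i$ together with Jensen-type bounds on $\int_M\log\frac{\omega_{\varphi_i}^n}{\omega_0^n}\,\omega_0^n$. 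The uniqueness conclusion is insensitive to such constants at the level of Kähler metrics, so even a weaker control suffices for the application.
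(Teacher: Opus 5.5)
Your steps 2--3 are the paper's argument. The paper writes $\partial_t(\varphi_1-\varphi_2)=a_{i\bar j}\partial_{i\bar j}(\varphi_1-\varphi_2)+\lambda(\varphi_1-\varphi_2)$, applies the maximum principle to $e^{-\lambda t}(\varphi_1-\varphi_2)$ on $[\eps,T]$ and lets $\eps\to 0$, which is what your comparison at spatial maxima does.

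Your step 1, the ``main obstacle'', is vacuous. The paper reads the hypothesis as the exact equation $\partial_t\varphi_i=\log\frac{\omega_{\varphi_i}^n}{\omega_0^n}+\lambda\varphi_i$, so $c_i\equiv 0$ and $\tilde\varphi_i=\varphi_i$. It has to be read this way: with an arbitrary $c_i(t)$ the lemma is false, since $\varphi_1+t$ solves the flow at the level of metrics with the same initial data. Consequently your fallback of comparing only modulo functions of time would give $\omega_{\varphi_1}=\omega_{\varphi_2}$, not the claimed $\varphi_1=\varphi_2$. That potential-level equality is what part (2) of Lemma~\ref{l6.3} uses, since it compares potentials in $C^{10,\alpha}$.
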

\begin{proof}
By assumption,  we know that both $\varphi_1,\,\varphi_2$ solve the following equation:
\begin{equation*}
\partial_t\varphi=\log\frac{\omega_{\varphi}^n}{\omega_0^n}+\lambda\varphi.
\end{equation*}
Plugging in $\varphi_1,\,\varphi_2$ into the above equation,  and take difference,  we see that on $M\times (0,T)$:
\begin{equation}\label{6.13}
\partial_t(\varphi_1-\varphi_2)=\sum_{i,j}a_{i\bar{j}}\partial_{i\bar{j}}(\varphi_1-\varphi_2)+\lambda(\varphi_1-\varphi_2).
\end{equation}
Here $a_{i\bar{j}}$ is uniformly elliptic on $M\times [\eps,T]$.  From (\ref{6.13}),  we see that if we put $\psi=e^{-\lambda t}(\varphi_1-\varphi_2)$,  we get:
\begin{equation*}
\partial_t\psi=\sum_{i,j}a_{i\bar{j}}\partial_{i\bar{j}}\psi.
\end{equation*}
Therefore,  for every $\eps>0$,  we may use maximum principle to conclude that:
\begin{equation*}
\max_{M\times [\eps,T]}\psi= \max_M\psi(\cdot, \eps),\,\min_{M\times[\eps,T]}\psi=\min_M\psi(\cdot,\eps).
\end{equation*}
That is,  we obtain:
\begin{equation*}
\max_{M\times [\eps,T]}e^{-\lambda t}|\varphi_1-\varphi_2|\le e^{-\lambda \eps}\max_M|\varphi_1(\eps)-\varphi_2(\eps)|.
\end{equation*}
Let $\eps\rightarrow 0$,  and we use that $\varphi_1(\eps),\,\varphi_2(\eps)$ tends to $\varphi$ uniformly,  so that the right hand side above tends to 0 as $\eps\rightarrow 0$.  We get that $\varphi_1=\varphi_2$.
\end{proof}
So far we have finished the proof of Lemma \ref{l6.3}.

Now we prove Lemma \ref{l6.4},  to which we wish to apply the implicit function theorem and we first explain the set-up of the implicit function theorem.

Denote $d=h^{1,1}(M,\bR)$,  then there is a neighborhood $\mathcal{U}$ of $[\omega_0]$ such that each $[\omega]\in \mathcal{U}$ contains a unique cscK metric (given our assumption that $Aut_0(M,J)=0$),  which we denote it to be $\omega_s,\,s\in \bR^d$.  Moreover,  $\omega_s$ depends smoothly on $s$.  

On the other hand,  without loss of generality,  we may assume that $\int_M\varphi_0\omega_{\varphi_0}^n=0$.  
We can solve the normalized K\"ahler-Ricci flow starting from $\varphi_0$,  and we denote $\tilde{\varphi}_0(t)$ to be the solution,  namely:
\begin{equation*}
\partial_t\tilde{\varphi}_0=\log\frac{\omega_{\tilde{\varphi}_0(t)}^n}{\omega_0^n}+\lambda\tilde{\varphi}_0,\,\,\,\,\tilde{\varphi}_0(\cdot,0)=\varphi_0.
\end{equation*}
Then we put $\varphi_0(t)=\tilde{\varphi}_0(t)+c(t)$ where $c(t)$ is a function of $t$ with $c(0)=0$ so that $\int_M\varphi_0(t)\omega_{\varphi_0(t)}^n=0$.  We know that $\varphi_0\in C^{\infty}(M\times [0,T])$.  

Denote $C^{10+\alpha,5+\frac{\alpha}{2}}(M\times [0,T])$ to be the parabolic H\"older space,  namely the space of functions on $M\times [0,T]$,  such that 
\begin{equation*}
\sum_{k+2l\le 10}\sup_{(x_1,t_1),\,(x_2,t_2)\in M\times [0,T]}\frac{|D^k\partial_t^l\varphi(x_1,t_1)-D^k\partial_t^l\varphi(x_2,t_2)|}{(d(x_1,x_2)+|t_1-t_2|^{\frac{1}{2}})^{\alpha}}<\infty.
\end{equation*}
In the above $D^k$ means $k$-th order derivative in the $M$ direction.  We choose a sufficiently small neighborhood $\mathcal{V}$ of $[\omega_0]$ in $\bR^d$,  and a sufficiently small neighborhood $U_1$ of $\varphi_0(t)$ in $C^{10+\alpha,5+\frac{\alpha}{2}}(M\times [0,T])$,  such that for any $[\omega]\in \mathcal{V}$,  any $\varphi\in U_1$,  one has:
\begin{equation*}
\omega_s+\sqrt{-1}\partial\bar{\partial}\varphi>0,\,\,\text{for any $t\in [0,T]$}.
\end{equation*}
Here $\omega_s$ is the cscK metric on $[\omega]$.
Now we define the nonlinear map:
\begin{equation*}
\begin{split}
\mathcal{F}&:\mathcal{V}\times U_1\rightarrow C^{8+\alpha,4+\frac{\alpha}{2}}(M\times [0,T])\times C^{10,\alpha}(M)\\
&([\omega],\,\,\varphi)\mapsto \big(\partial_t\varphi-\log\frac{\omega_{\varphi}^n}{\omega_s^n}-\Delta_{\varphi}^{-1}(\underline{R}([\omega])-tr_{\omega_{\varphi}}(Ric(\omega_s))),\,\,\varphi|_{t=0}\big)
\end{split}
\end{equation*}
In the above $\omega_s$ is the cscK metric on $[\omega]$ and $\omega_{\varphi}=\omega_s+\sqrt{-1}\partial\bar{\partial}\varphi$.  Moreover,  $\Delta_{\varphi}^{-1}(\cdots)$ is normalized so that its integral with respect to $\omega_{\varphi}^n$ is zero.  We just need to show that:
\begin{prop}
The Frechet derivative of $\mathcal{F}$ with respect to $\varphi$ evaluated at $([\omega_0],\varphi_0(t))$ is invertible.  
\end{prop}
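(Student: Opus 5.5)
Our plan is to compute the linearization explicitly at $([\omega_0],\varphi_0(t))$ and observe that, because $\omega_0$ is K\"ahler--Einstein, it reduces to a linear uniformly parabolic initial value problem of second order on $M\times[0,T]$. Such a problem is uniquely solvable in parabolic H\"older spaces by standard Schauder theory.

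First we compute the derivative in the direction of a variation $\eta\in C^{10+\alpha,5+\frac{\alpha}{2}}(M\times[0,T])$, keeping $[\omega]=[\omega_0]$ (so $\omega_s=\omega_0$). For the first component, $\frac{d}{d\epsilon}\log\frac{\omega_{\varphi+\epsilon\eta}^n}{\omega_0^n}=\Delta_{\varphi}\eta$. For the $P$-term we use the proposition at the start of this section: on the KE class the pseudo Calabi flow potential satisfies $P=\lambda(\varphi-\frac{1}{V}\int_M\varphi\,\omega_\varphi^n)$ for all $\varphi$. Indeed, $\Delta_\varphi(P-\lambda\varphi)=\underline{R}-\lambda n-tr_{\omega_\varphi}(Ric(\omega_0)-\lambda\omega_0)=0$, and $P$ is normalized with zero $\omega_\varphi^n$-average. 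Hence the map is $\varphi\mapsto\partial_t\varphi-\log\frac{\omega_\varphi^n}{\omega_0^n}-\lambda\varphi+\frac{\lambda}{V}\int_M\varphi\,\omega_\varphi^n$, which holds exactly, not only to first order. Its derivative is
\begin{equation*}
L\eta=\partial_t\eta-\Delta_{\varphi_0(t)}\eta-\lambda\eta+\frac{\lambda}{V}\int_M\big(\eta+\varphi_0\Delta_{\varphi_0}\eta\big)\omega_{\varphi_0}^n ,
\end{equation*}
and the second component of the derivative is $\eta\mapsto\eta|_{t=0}$. The last integral equals $\frac{\lambda}{V}\int_M(\eta+\eta\Delta_{\varphi_0}\varphi_0)\omega_{\varphi_0}^n=:\ell_t(\eta(t))$. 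This is a bounded linear functional of $\eta(t)$ that depends only on time, with coefficients smooth in $t$, since $\varphi_0(t)$ is smooth on $M\times[0,T]$.

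Next we prove invertibility. Given $(f,g)\in C^{8+\alpha,4+\frac{\alpha}{2}}\times C^{10,\alpha}$, we must solve $L\eta=f$ with $\eta(0)=g$. We decompose $\eta=\xi+c(t)$ with $c$ a function of time only. Since $\Delta_{\varphi_0}c=0$, the equation becomes $\partial_t\xi-\Delta_{\varphi_0}\xi-\lambda\xi-f=-(c'-\lambda c)-\ell_t(\xi)-\ell_t(1)c$. We solve it by letting $\xi$ solve the honest linear parabolic problem $\partial_t\xi-\Delta_{\varphi_0(t)}\xi-\lambda\xi=f$ with $\xi(0)=g$. Schauder theory for uniformly parabolic equations with smooth coefficients on a compact manifold gives a unique solution with $\|\xi\|_{C^{10+\alpha,5+\frac{\alpha}{2}}}\le C(\|f\|+\|g\|_{10,\alpha})$, given compatibility at $t=0$. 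The remaining condition is the linear ODE $c'-\lambda c+\ell_t(1)c=-\ell_t(\xi)$ with $c(0)=0$, which has a unique smooth solution. Injectivity follows the same way: if $L\eta=0$ and $\eta(0)=0$, then writing $\eta=\xi+c$ with $\xi$ solving the homogeneous problem gives $\xi=0$ by the maximum principle, and then $c=0$ by ODE uniqueness. The open mapping theorem then gives a bounded inverse.

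The main obstacle is the mismatch of regularity and compatibility between the source and target spaces. The initial datum $g$ lies only in $C^{10,\alpha}$, while $\eta$ must lie in $C^{10+\alpha,5+\frac{\alpha}{2}}$, and parabolic Schauder estimates up to $t=0$ require compatibility conditions relating $\partial_t^k\eta(0)$ to $g$ and $f$. To handle this, we plan to replace the target by the closed subspace of pairs $(f,g)$ satisfying the compatibility conditions of order up to $4$. The range of $\mathcal{F}$ near $\varphi_0(t)$ lies in this subspace. The implicit function theorem is then applied on this subspace, which suffices for Lemma \ref{l6.4}.
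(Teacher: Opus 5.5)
\paragraph{Comparison with the paper.} Your argument is correct, but it reaches invertibility by a different route than the paper.

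The paper writes the derivative as $v\mapsto(\partial_t v-\Delta_{\varphi_0(t)}v-Q(v),\,v|_{t=0})$, with $Q(v)$ defined implicitly through a Poisson equation and a normalization. For existence and uniqueness of this nonlocal linear problem (Lemma \ref{l6.9}) it then defers to Chen--Zheng's Proposition 5.4 in $C^{2,1}$-type spaces, followed by a bootstrap via standard parabolic regularity up to $C^{10+\alpha,5+\frac{\alpha}{2}}$.

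You instead use that the derivative is taken exactly at the K\"ahler--Einstein class. There $P=\lambda(\varphi-\frac{1}{V}\int_M\varphi\,\omega_\varphi^n)$ holds identically, so the paper's $Q(v)$ is just $\lambda v-\ell_t(v)$ (note $\langle\sqrt{-1}\partial\bar\partial v,\omega_{\varphi_0(t)}\rangle_{\omega_{\varphi_0(t)}}=\Delta_{\varphi_0(t)}v$). Your splitting $\eta=\xi+c(t)$ then reduces everything to a local, uniformly parabolic Cauchy problem plus a scalar ODE. Since $\ell_t(1)=\lambda$, that ODE is simply $c'=-\ell_t(\xi)$ with $c(0)=0$, so the inverse is completely explicit.

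Your route is self-contained and avoids both the external citation and the regularity bootstrap. The paper's route is more general, since the Chen--Zheng result covers linearizations of the pseudo Calabi flow at non-KE backgrounds, but that generality is not needed here.

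For injectivity, make the decomposition explicit. Set $h(t)=\ell_t(\eta(t))$ and let $c$ solve $c'-\lambda c=-h$, $c(0)=0$. Then $\eta-c$ solves the homogeneous local problem with zero data, so $\eta=c(t)$, and $L c=c'=0$ forces $c=0$. A minor point: $c$ is only as regular in $t$ as $\ell_t(\xi)$ allows, not smooth, but that is enough.

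\paragraph{The compatibility issue.} The obstacle you flag as the main one does not exist. On a closed manifold there is no lateral boundary, so the Cauchy problem carries no compatibility conditions. For $f\in C^{8+\alpha,4+\frac{\alpha}{2}}$ and $g\in C^{10,\alpha}$, the values $\partial_t^k\xi(0)$ are determined by $g$ and the $\partial_t^j f(0)$, and they automatically have the required regularity $C^{10-2k,\alpha}$. Schauder theory therefore gives $\xi\in C^{10+\alpha,5+\frac{\alpha}{2}}$ directly, and the target space needs no modification.

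Your last paragraph should be dropped. Restricting to a proper closed subspace would contradict your own result: if the range of $\mathcal{F}$ near $\varphi_0(t)$ lay in a closed linear subspace, so would the range of $\delta_\varphi\mathcal{F}$. You have just shown that $\delta_\varphi\mathcal{F}$ maps onto the whole product space.
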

Therefore,  we may conclude from Implicit Function Theorem that there exists a neighborhood $\mathcal{V}'$ of $[\omega_0]$,  and a neighborhood $V_2$ of $\varphi_0$ in $C^{10,\alpha}(M)$,  such that for any $[\omega]\in \mathcal{V}'$ and any $\tilde{\varphi}\in V_2$,  there exists a unique $\varphi\in C^{10+\alpha,5+\frac{\alpha}{2}}(M\times [0,T])$ such that $\varphi$ solves the pseudo Calabi flow with initial data $\tilde{\varphi}$.  Denote this map to be $\mathcal{G}$.  This map $\mathcal{G}:\mathcal{V}'\times V_2\rightarrow C^{10+\alpha,5+\frac{\alpha}{2}}(M\times [0,T])$ is continuous.  This would imply the statement of Lemma \ref{l6.4}.

Now we need to compute the Frechet derivative of $\mathcal{F}$ with respect to $\varphi$,  and one can compute:
\begin{equation}
\begin{split}
\delta_{\varphi}\mathcal{F}([\omega_0],\varphi_0(t))&:C^{10+\alpha,5+\frac{\alpha}{2}}(M\times [0,T])\rightarrow C^{8+\alpha,4+\frac{\alpha}{2}}(M\times [0,T])\times C^{10,\alpha}(M)\\
&v\mapsto \big(\partial_tv-\Delta_{\varphi_0(t)}v-Q(v),\,v|_{t=0}\big).
\end{split}
\end{equation}
In the above,  $Q(v)$ satisfies:
\begin{equation}
\Delta_{\varphi}Q(v)=\lambda\langle\sqrt{-1}\partial\bar{\partial}v,\omega_{\varphi_0(t)}\rangle_{\omega_{\varphi_0(t)}},\,\,\,\int_MQ(v)\omega_{\varphi_0}^n+\int_M\lambda \varphi_0\cdot n\omega_{\varphi_0}^{n-1}\wedge \sqrt{-1}\partial\bar{\partial}v=0.
\end{equation}
Here $\langle\sqrt{-1}\partial\bar{\partial}v,\omega_{\varphi_0(t)}\rangle_{\omega_{\varphi_0(t)}}=g_{\varphi_0(t)}^{i\bar{q}}g_{\varphi_0(t)}^{p\bar{j}}v_{i\bar{j}}(\omega_{\varphi_0(t)})_{p\bar{q}}$.
So it only remains to show that:
\begin{lem}\label{l6.9}
For any $f\in C^{8+\alpha,4+\frac{\alpha}{2}}(M\times [0,T])$ and any $g\in C^{10,\alpha}(M)$,  there exists a unique $v\in C^{10+\alpha,5+\frac{\alpha}{2}}(M\times [0,T])$ such that:
\begin{equation}\label{Linear}
\partial_tv-\Delta_{\varphi_0(t)}v-Q(v)=f,\,\,v|_{t=0}=g.
\end{equation}
Here $Q(v)$ solves $\Delta_{\varphi}Q(v)=\lambda\langle\sqrt{-1}\partial\bar{\partial}v,\omega_{\varphi_0(t)}\rangle_{\omega_{\varphi_0(t)}}$ and $\int_MQ(v)\omega_{\varphi_0(t)}^n+\int_M\lambda\varphi_0\cdot n\omega_{\varphi_0}^{n-1}\wedge \sqrt{-1}\partial\bar{\partial}v=0$.
\end{lem}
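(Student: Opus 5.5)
The key observation is that the nonlocal term $Q(v)$ is in fact of order zero. Since $\langle\sqrt{-1}\partial\bar{\partial}v,\omega_{\varphi_0(t)}\rangle_{\omega_{\varphi_0(t)}}=g_{\varphi_0}^{i\bar{q}}g_{\varphi_0}^{p\bar{j}}v_{i\bar{j}}(g_{\varphi_0})_{p\bar{q}}=\Delta_{\varphi_0(t)}v$, the defining equation for $Q$ reads $\Delta_{\varphi_0(t)}(Q(v)-\lambda v)=0$. Hence
\begin{equation*}
Q(v)=\lambda v+c_v(t)
\end{equation*}
for a function $c_v(t)$ of $t$ alone, fixed by the normalization. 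Integrating by parts in the normalization gives $\int_M\varphi_0\, n\,\omega_{\varphi_0}^{n-1}\wedge\sqrt{-1}\partial\bar{\partial}v=\int_M v\,\Delta_{\varphi_0}\varphi_0\,\omega_{\varphi_0}^n$. Therefore
\begin{equation*}
c_v(t)=-\frac{\lambda}{\mathrm{vol}}\int_M v(\cdot,t)\big(1+\Delta_{\varphi_0(t)}\varphi_0(t)\big)\omega_{\varphi_0(t)}^n=:\ell_t(v).
\end{equation*}
This is a linear functional of $v(\cdot,t)$, integrated against a weight that is smooth on $M\times[0,T]$ (recall $\varphi_0(t)$ is smooth there). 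So (\ref{Linear}) becomes
\begin{equation*}
\partial_tv-\Delta_{\varphi_0(t)}v-\lambda v=f+\ell_t(v),\qquad v|_{t=0}=g.
\end{equation*}
This is a local, uniformly parabolic linear operator with smooth coefficients, perturbed by a spatially constant, Volterra-type (instantaneous in $t$) lower-order term.

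I would first treat the local problem $Lw:=\partial_tw-\Delta_{\varphi_0(t)}w-\lambda w=h$, $w|_{t=0}=g$. Standard linear parabolic Schauder theory on the compact manifold $M$ gives a unique solution with the estimate $\|w\|_{C^{10+\alpha,5+\frac{\alpha}{2}}}\le C(\|h\|_{C^{8+\alpha,4+\frac{\alpha}{2}}}+\|g\|_{10,\alpha})$. Here I take the parabolic Hölder setting of the paper as given, with whatever compatibility conventions it implicitly uses.

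Next I would solve the full problem by a fixed point. Define $\Phi(v)=w$, where $Lw=f+\ell_t(v)$ and $w(0)=g$. Because the weight is smooth in $(x,t)$, every $t$-derivative of $\ell_t(v)$ up to order $4$ is controlled by $\sup_{s\le t}\sum_{j\le 4}\|\partial_t^jv(s)\|_{C^0}$, and similarly for its $t$-Hölder seminorm. Thus $\ell$ is bounded from $C^{10+\alpha,5+\frac{\alpha}{2}}$ into $C^{8+\alpha,4+\frac{\alpha}{2}}$. More importantly, it is lower order, which yields smallness on short time intervals.

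The cleanest route is to write $v=w+a(t)$, where $w$ solves the local problem with right-hand side $f$ and $a(t)$ is spatially constant. Since $\Delta a=0$, the equation reduces to the scalar linear ODE
\begin{equation*}
a'-\lambda a=\ell_t(w)+a\,\ell_t(1),\qquad a(0)=0,
\end{equation*}
whose coefficients are smooth in $t$. It has a unique global solution on $[0,T]$, smooth to the required order, and $v=w+a$ then solves (\ref{Linear}). This avoids the contraction argument entirely.

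Uniqueness follows the same way. If $v$ solves the homogeneous problem ($f=0$, $g=0$), then $u(t)=\ell_t(v)$ satisfies $Lv=u(t)$, so $v-b(t)$ with $b'-\lambda b=u$, $b(0)=0$, solves the homogeneous local problem and therefore vanishes. Thus $v=b(t)$ is constant in space, and the scalar ODE with zero initial data forces $b\equiv0$.

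The step I expect to need the most care is the regularity bookkeeping: verifying that $\ell_t(w)$ lies in $C^{4+\frac{\alpha}{2}}([0,T])$, so that $a(t)$ has the $t$-regularity needed for $v\in C^{10+\alpha,5+\frac{\alpha}{2}}$. This reduces to differentiating under the integral, using that the weight $(1+\Delta_{\varphi_0}\varphi_0)\omega_{\varphi_0}^n$ is $C^\infty$ in $t$ and that $\partial_t^jw\in C^{\alpha,\frac{\alpha}{2}}$ for $j\le5$.
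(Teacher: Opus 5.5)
Your argument is correct, but it is not the paper's route. The paper gives no direct proof. It quotes Chen--Zheng's Proposition 5.4 (Proposition \ref{p6.10} above) for existence and uniqueness of a $C^{2,1}$ solution when $f\in C^0([0,T],C^{\alpha}(M))$ and $g\in C^{2,\alpha}(M)$. It then asserts that standard parabolic regularity upgrades this to $C^{10+\alpha,5+\frac{\alpha}{2}}$.

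You instead notice that the nonlocal term is essentially trivial at the point where the derivative is taken. The formula for $Q$ already encodes the K\"ahler--Einstein structure: in general $\sqrt{-1}\partial\bar\partial v$ is contracted against $Ric(\omega_s)+\sqrt{-1}\partial\bar\partial P$, which at $[\omega_0]$ equals $\lambda\omega_{\varphi_0}$. Contracting against $\omega_{\varphi_0}$ itself just gives $\Delta_{\varphi_0}v$, so $Q(v)=\lambda v+\ell_t(v)$ with $\ell_t$ a spatially constant functional. This reduces everything to a local parabolic Cauchy problem plus a scalar linear ODE. On a closed manifold that Cauchy problem needs no compatibility conditions beyond $g\in C^{10,\alpha}(M)$, so your hedge on that point is unnecessary. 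The reduction yields existence, uniqueness and the full H\"older regularity at once, without appeal to \cite{Chen-Zheng}.

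The paper's citation is built for the linearized pseudo Calabi flow at a general background, where $Q$ is genuinely nonlocal; that generality is not needed here. Moreover, its regularity-upgrade step tacitly requires knowing that $Q(v)$ is as regular as $v$, which is exactly what your identity supplies.

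You can also simplify the ODE. Since $\int_M\Delta_{\varphi_0}\varphi_0\,\omega_{\varphi_0}^n=0$, one has $\ell_t(1)=-\lambda$, i.e.\ $Q(1)=0$; this reflects that $P$ is unchanged when a constant is added to $\varphi$. So your ODE collapses to $a'=\ell_t(w)$, $a(0)=0$, i.e.\ $a(t)=\int_0^t\ell_s(w)\,ds$. The time regularity of $a$ is then immediate: $\ell_t(w)\in C^{5+\frac{\alpha}{2}}([0,T])$, so $a\in C^{6+\frac{\alpha}{2}}([0,T])$.
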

Lemma \ref{l6.9} has essentially being proved in Chen-Zheng \cite{Chen-Zheng},  Proposition 5.4.  The precise version of what they proved (stated using our notations):
\begin{prop}\label{p6.10}
For any $f\in C^0([0,T],C^{\alpha}(M))$ and any $g\in C^{2,\alpha}(M)$,  there exists a unique solution $v\in C^{2,1}(M\times [0,T])$ (first order differentiable in $t$ and second order differentiable along $M$) with $\max_{[0,T]}\big(||\partial_t\varphi||_{C^{\alpha}(M)}+||\varphi||_{C^{2,\alpha}(M)}\big)<\infty$ that solves the problem (\ref{Linear}).
\end{prop}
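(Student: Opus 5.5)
The starting observation is that the nonlocal term $Q(v)$ is much simpler than it appears. Since
\begin{equation*}
\langle\sqrt{-1}\partial\bar{\partial}v,\omega_{\varphi_0(t)}\rangle_{\omega_{\varphi_0(t)}}=g_{\varphi_0(t)}^{i\bar{q}}g_{\varphi_0(t)}^{p\bar{j}}v_{i\bar{j}}(g_{\varphi_0(t)})_{p\bar{q}}=\Delta_{\varphi_0(t)}v,
\end{equation*}
the defining equation for $Q$ reads $\Delta_{\varphi_0(t)}\big(Q(v)-\lambda v\big)=0$. Hence $Q(v)=\lambda v+c_v(t)$, where $c_v(t)$ depends only on $t$ and is fixed by the normalization. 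Integrating by parts in that normalization gives $\int_M\varphi_0\, n\,\omega_{\varphi_0}^{n-1}\wedge\sqrt{-1}\partial\bar{\partial}v=\int_M v\,\Delta_{\varphi_0}\varphi_0\,\omega_{\varphi_0}^n$, so
\begin{equation*}
c_v(t)=-\frac{\lambda}{V}\int_M v\,(1+\Delta_{\varphi_0(t)}\varphi_0(t))\,\omega_{\varphi_0(t)}^n=:\ell_t(v),\qquad V=\mathrm{vol}([\omega_0]).
\end{equation*}
The map $\ell_t$ is a linear functional, bounded by $C\|v(t)\|_{L^1}$ and continuous in $t$, because $\varphi_0(t)$ is smooth on $M\times[0,T]$. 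Problem (\ref{Linear}) therefore becomes the local parabolic equation
\begin{equation*}
\partial_tv-\Delta_{\varphi_0(t)}v-\lambda v=f+\ell_t(v),\qquad v|_{t=0}=g,
\end{equation*}
with a rank-one, zeroth-order, nonlocal perturbation.

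To decouple this, I will first solve the purely local problem $\partial_tw-\Delta_{\varphi_0(t)}w-\lambda w=f$ with $w|_{t=0}=g$. I then look for $v=w+a(t)$ with $a$ a function of time only. Since $\Delta_{\varphi_0(t)}$ kills constants, $v$ solves the equation exactly when $a'-\lambda a=\ell_t(w)+a\,\ell_t(1)$. Using $\int_M\Delta_{\varphi_0}\varphi_0\,\omega_{\varphi_0}^n=0$ we get $\ell_t(1)=-\lambda$, so this reduces to
\begin{equation*}
a'(t)=\ell_t(w(t)),\qquad a(0)=0.
\end{equation*}
This is solved explicitly by $a(t)=\int_0^t\ell_s(w(s))\,ds$, which is $C^1$ in $t$. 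So $v$ inherits exactly the regularity of $w$. (If one prefers not to rely on the exact cancellation $\ell_t(1)=-\lambda$, the same decoupling still gives a linear ODE for $a$ with continuous coefficients, which is still trivially solvable.)

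Uniqueness follows from the same reduction. If $f=0$ and $g=0$, write $v=w+a$ with $a(t)=\int_0^t\ell_s(v(s))\,ds$ (using $\ell_s(w+a)=\ell_s(w)-\lambda a$ appropriately, or simply run the argument directly on $v$). Then $w=v-a$ solves the homogeneous local problem with zero data, and the maximum principle applied to $e^{-\lambda t}w$ gives $w\equiv0$. Consequently $a'=\ell_t(0)=0$, so $a\equiv0$ and $v\equiv0$. Alternatively, a Gronwall argument on $\|v(t)\|_0$ works, using $|\ell_t(v)|\le C\|v(t)\|_0$.

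The main step, and the one I expect to be the real obstacle, is existence and regularity for the local problem for $w$ when $f$ is only in $C^0([0,T],C^\alpha(M))$, that is, with no Hölder regularity in time. Classical parabolic Schauder theory in $C^{2+\alpha,1+\alpha/2}$ needs $f$ Hölder in $t$. I would therefore invoke the Schauder theory for data Hölder in space only (Lunardi's analytic-semigroup framework, or Krylov's estimates). Here $\Delta_{\varphi_0(t)}+\lambda$ is a family of uniformly elliptic operators with coefficients smooth in $(x,t)$, hence a family of sectorial operators on $C^\alpha(M)$ with domain $C^{2,\alpha}(M)$ and Hölder dependence on $t$. The Acquistapace--Terreni / Lunardi theory then yields a unique $w$ with
\begin{equation*}
w\in C^0([0,T],C^{2,\alpha}(M)),\qquad \partial_tw\in C^0([0,T],C^\alpha(M)),
\end{equation*}
with norms controlled by $\sup_t\|f\|_{C^\alpha}+\|g\|_{C^{2,\alpha}}$. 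There is a subtlety at $t=0$ that needs care: continuity of $w$ into $C^{2,\alpha}$ up to $t=0$ may require a compatibility or interpolation-space condition on $g$ (little-Hölder space). If the bare theory falls short there, I would instead use the Krylov–Schauder estimate $\sup_t(\|\partial_tw\|_{C^\alpha}+\|w\|_{C^{2,\alpha}})\le C(\sup_t\|f\|_{C^\alpha}+\|g\|_{C^{2,\alpha}})$. I would prove that estimate by freezing $t$ and using elliptic Schauder estimates plus the parabolic maximum principle, and then obtain existence by approximating $f$ by functions smooth in $t$ and passing to the limit. Once $w$ is obtained, $v=w+a$ is the required solution.
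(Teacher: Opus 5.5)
Your proof is correct, and it takes a genuinely different route from the paper. The paper gives no argument for Proposition~\ref{p6.10}; it quotes Chen--Zheng's Proposition~5.4. That is a linear theory for the linearized pseudo Calabi flow along a general path, where $\Delta_{\varphi}Q(v)=\langle\sqrt{-1}\partial\bar\partial v,\sqrt{-1}\partial\bar\partial P+\mathrm{Ric}(\omega_0)\rangle_{\omega_\varphi}$ makes $Q$ a genuinely nonlocal operator of order zero.

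You exploit the special base point instead. Along the normalized K\"ahler--Ricci flow in the K\"ahler--Einstein class, $\sqrt{-1}\partial\bar\partial P+\mathrm{Ric}(\omega_0)=\lambda\omega_{\varphi_0(t)}$, so $Q(v)=\lambda v+\ell_t(v)$. Your formula for $\ell_t$ and the cancellation $\ell_t(1)=-\lambda$ are both correct. So the rank-one term is absorbed by the time-only correction $a(t)=\int_0^t\ell_s(w(s))\,ds$, and uniqueness reduces to the maximum principle; your Gronwall variant is the cleanest form of it.

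What your route buys: the proof is self-contained modulo linear theory for $\partial_t-\Delta_{\varphi_0(t)}-\lambda$. It also shows that the operator is the linearized K\"ahler--Ricci flow up to a rank-one term, consistent with the two flows coinciding in this class. The price is that it works only at this base point. That is all Lemma~\ref{l6.9} needs, whereas Chen--Zheng's result covers arbitrary paths.

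Three points need fixing; none affects the conclusion.

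\textbf{Continuity claim.} With $f$ only in $C^0([0,T],C^\alpha(M))$ you cannot expect $w\in C^0([0,T],C^{2,\alpha}(M))$ or $\partial_tw\in C^0([0,T],C^\alpha(M))$. Take $\Delta$ on a flat torus, $g=0$, and $f\equiv f_0\in C^\alpha$ not in the little H\"older space. Then $\Delta w(t)=e^{t\Delta}f_0-f_0$ does not tend to $0$ in $C^\alpha$. The obstruction sits in $f(0)$, not in $g$. What the semigroup or Schauder theory actually gives is boundedness, $\sup_t(\|\partial_tw\|_{C^\alpha}+\|w\|_{C^{2,\alpha}})<\infty$, with $w\in C^{2,1}$ by interpolation against $C^0$-continuity. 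That is exactly what the statement asks, so state this and drop the continuity claim rather than hedging it.

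\textbf{How to get the a priori estimate.} Freezing $t$ and applying elliptic Schauder estimates does not close. At fixed $t$ it bounds $\|w(t)\|_{C^{2,\alpha}}$ by $\|\partial_tw(t)\|_{C^\alpha}$, which is of the same order and not controlled. The standard proofs freeze $x$ instead: use the explicit Gaussian kernel for coefficients depending only on $t$, where the bound involves only $\sup_s\|f(s)\|_{C^\alpha}$, then perturb. Alternatively, use Brandt--Knerr type maximum-principle arguments. Cite one of these. Your approximation step (mollify $f$ in $t$, apply the uniform estimate, pass to the limit by the maximum principle and interpolation) is fine once the estimate is in hand.

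\textbf{Uniqueness formula.} With $a(t)=\int_0^t\ell_s(v(s))\,ds$ you get $(\partial_t-\Delta_{\varphi_0}-\lambda)(v-a)=\lambda a$, not $0$. You need $a'-\lambda a=\ell_t(v)$, i.e.\ $a(t)=\int_0^te^{\lambda(t-s)}\ell_s(v(s))\,ds$. Then the maximum principle gives $v\equiv a(t)$, hence $\ell_t(v)=-\lambda a$, so $a'=0$ and $a\equiv0$.
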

The only difference betwen Lemma \ref{l6.9} and Proposition \ref{p6.10} is just in terms of the regularity requirements.  However,  if we assume that $f\in C^{8+\alpha,4+\frac{\alpha}{2}}(M\times [0,T])$ and $g\in C^{10,\alpha}(M)$,  it is easy to apply standard parabolic regularity results to see $v\in C^{10+\alpha,5+\frac{\alpha}{2}}(M\times [0,T])$.

\end{document}